\documentclass[english]{amsart}
\usepackage{preamble}
\begin{document}

\begin{abstract}
    We prove that for finitely generated abelian groups $A$ and $B$, the space of $\E_\infty$-ring maps between the spherical groups rings $\Sph[A] \to \Sph[B]$ is equivalent to the discrete set of group homomorphisms $A \to B$. We also prove generalizations where the sphere is replaced by other ring spectra, e.g. we give a formula for the strict units in group rings of the form $R[A]$ for $A$ a finite $p$-group and $R$ $p$-completely chromatically complete.
\end{abstract}

\title{Maps between Spherical Group Rings}
\author{Shachar Carmeli, Thomas Nikolaus, and Allen Yuan}
\date{\today}
\maketitle

\tableofcontents{} 

\section{Introduction}
This paper deals with a question about commutative ring spectra, also known as $\mathbb{E}_\infty$-rings.  In recent years, many concepts from the ordinary algebra of rings have been generalized to the higher algebra of ring spectra, leading to significant breakthroughs in many classical areas, such as algebraic $K$-theory \cite{BHM, HM} and $p$-adic Hodge theory \cite{BMS}. However, some very basic questions in the area remain open --- most notably, understanding maps between ring spectra is generally very hard.  For an abelian group $A$, we consider the commutative ring spectrum 
\[
\Sph[A] := \Sigma^\infty_+ A ,
\] 
called the \emph{spherical group ring of $A$}, which is
a lift of the usual group ring $\Z[A]$ from the integers $\Z$ to the sphere spectrum $\Sph$. The main question that we address in this paper is the question of which maps exist between these ring spectra.
Our answer can be summarized as follows. 

\begin{thm}\label{theorem_one}
The spherical group ring functor $A \mapsto \Sph[A]$
from the category of finitely generated abelian groups to the $\infty$-category of commutative ring spectra  
is fully faithful. 
\end{thm}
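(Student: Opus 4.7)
The plan is to use the free--forgetful adjunction $\Sigma^\infty_+ \dashv \Omega^\infty$ between commutative monoids in spaces and $\E_\infty$-rings (viewing the latter via its multiplicative structure). Since $A$ is an abelian group, hence grouplike, the mapping space factors through the units, giving
\[
\mathrm{Map}_{\mathrm{CAlg}}(\Sph[A], \Sph[B]) \simeq \mathrm{Map}_{\mathrm{Sp}_{\geq 0}}\bigl(HA,\, \mathrm{gl}_1 \Sph[B]\bigr),
\]
where $HA$ is the Eilenberg--MacLane spectrum and $\mathrm{gl}_1$ is the connective units spectrum. The theorem is then equivalent to showing this mapping space is discretely $\mathrm{Hom}(A, B)$; in the case $A = \Z$, this amounts to computing the \emph{strict units} spectrum $\mathrm{gl}_1^s(\Sph[B]) := \mathrm{Map}_{\mathrm{Sp}}(H\Z,\, \mathrm{gl}_1 \Sph[B])$ and identifying it with $HB$.

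I would first reduce to cyclic $A$ by the classification of finitely generated abelian groups: both sides convert direct sums to products, since $\Sph[A_1 \oplus A_2] \simeq \Sph[A_1] \otimes \Sph[A_2]$ (because $\Sigma^\infty_+$ is symmetric monoidal and $\otimes$ is coproduct in $\mathrm{CAlg}$) and $\mathrm{Hom}(A_1 \oplus A_2, B) = \mathrm{Hom}(A_1, B) \times \mathrm{Hom}(A_2, B)$. So it suffices to treat $A = \Z$ and $A = \Z/p^k$. The cofiber sequence $H\Z \xrightarrow{n} H\Z \to H\Z/n$ then further reduces the $\Z/n$ case to the computation of $\mathrm{gl}_1^s(\Sph[B])$ together with its multiplication-by-$n$ structure.

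The central step is to compute $\mathrm{gl}_1^s(\Sph[B])$ and show it is $HB$. I would proceed via the arithmetic fracture square, expressing $\Sph[B]$ as the pullback of $\Sph[B]_{\mathbb{Q}}$ and $\prod_p \Sph[B]_p^\wedge$ over the rationalization of the latter. Rationally $\Sph[B]_{\mathbb{Q}} = H\mathbb{Q}[B]$ and the strict units reduce to classical computations with $\mathbb{Q}[B]^\times$. At each prime $p$, decompose $B \cong \Z^r \oplus B_p \oplus B_{(p')}$ into its free, $p$-primary, and prime-to-$p$ torsion summands; after $p$-completion, the $\Z^r$ and $B_{(p')}$ factors should be controlled by tame/\'etale arguments (the latter by idempotent decomposition of $\Z_p[B_{(p')}]$), while $B_p$ is the source of the essential difficulty.

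The main obstacle is the computation of strict units in $\Sph_p^\wedge[A]$ for $A$ a finite abelian $p$-group, which is exactly the technical formula advertised in the abstract (strict units of $R[A]$ for $R$ $p$-completely chromatically complete). I would expect the proof to rely on chromatic machinery, including Rezk's logarithm at each chromatic height, the Bousfield--Kuhn functor, and careful analysis of $K(n)$-local group rings, in order to show that no ``exotic'' strict units arise beyond those coming from $B$ itself. In particular, classical obstructions like the fact that $-1 \in \mathrm{GL}_1(\Sph)$ fails to lift to a strict unit must be packaged systematically into a chromatic vanishing statement. Granting this formula, assembling via the fracture square and the cyclic reduction yields the theorem.
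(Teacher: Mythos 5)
Your overall architecture --- reduce to $A=\Z$ via the cocontinuity of $\Sph[-]$, identify the mapping space with strict units $\Gm(\Sph[B])$, fracture into rational and $p$-complete pieces, and decompose $B$ prime by prime --- is exactly the skeleton of the paper's proof. But two essential steps are missing or wrong. First, your claim that after $p$-completion the free summand $\Z^r$ is ``controlled by tame/\'etale arguments'' is incorrect: $\Sph_p[\Z^r]^\wedge_p$ is not an \'etale (or formally \'etale) extension of $\Sph_p$, and the computation of its strict units is a major independent difficulty, occupying all of Section 5 of the paper. The mechanism there is the cofiber sequence $\Gm^{\mathrm{red}}(R[\Lambda]^\wedge_p)\xrightarrow{[p]_*}\Gm^{\mathrm{red}}(R[\Lambda]^\wedge_p)\to\Gm^{\mathrm{red}}(R[\Lambda/p])$ induced by $\Lambda\xrightarrow{p}\Lambda\to\Lambda/p$, together with the identification (via the Tate-valued Frobenius and the Segal conjecture) of $[p]_*$ with multiplication by $p$ on strict units up to the arithmetic Frobenius; this bootstraps the torsion case to the free case but is nothing like a tame/\'etale argument. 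Second, even granting the $p$-complete computations, identifying the image in $\pi_0$ requires ruling out classical units of $\Z[B]$ (e.g.\ $-1$ and the Bass cyclic units); the paper does this by showing that any $\E_\infty$-map induces a map of $\widehat\delta$-rings on $\pi_0$ (via the Tate-valued Frobenius on Moore spectra), then combining an Artin--Schreier-type analysis of rank-one units in $\delta_p$-rings with Higman's theorem on torsion units in integral group rings. Your sketch contains no substitute for this Frobenius-lift constraint.

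On the finite abelian $p$-group case, you correctly locate the essential difficulty but only gesture at ``Rezk's logarithm'' and the ``Bousfield--Kuhn functor.'' The paper's actual tool is different: the chromatic Fourier transform $R[M]\simeq R^{B^nM^*}$ for $T(n)$-local $R$ admitting a primitive higher root of unity, combined with the fact that $\Gm(R)$ is $(n+1)$-truncated, which computes $\Gm^{\mathrm{red}}(R[M])\simeq\Map_{\Mod_\Z}(\Sigma^nM^*,\Gm(R))\simeq\underline{M}(R)$; one then descends along the higher cyclotomic Galois extensions and passes to limits (chromatic convergence) to reach $\Sph_p$. Without some such identification, your plan for this case is a statement of intent rather than a proof.
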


In other words: for finitely generated abelian groups $A$ and $B$, the space of $\E_\infty$-ring maps $\Sph[A] \to \Sph[B]$ is discrete and in bijection with the set of group homomorphisms $A \to B$. 

This result is very surprising, at least to the authors. Indeed, the analogous result in ordinary algebra is completely false: there are many maps $\Z[A]\to \Z[B]$ which do not arise from group homomorphisms $A\to B$.  For instance, when $A= \Z$, such maps are simply units in $\Z[B]$, and the question of units in group rings has been extensively studied \cite{higman1940units, karpilovsky1983finite, jespers1996units, Gardam}.  In addition to the unit $-1$, which does not come from an element of $B$, we have for instance the \emph{Bass cyclic units}, see, e.g., \cite[\S 2]{jespers1996units}.

\Cref{theorem_one} asserts that none of these units lift to $\Sph$, implying that much of the number theory underlying these units, such as the theory of cyclotomic fields and cyclotomic units, does not extend to the sphere. The theorem can also be seen as a strong rigidity result about group rings over $\Sph$. In fact, it says more --- the discreteness of the mapping spaces asserts that the maps that do lift, lift in a unique way. More precisely, the fibers of the map of spaces 
\begin{equation}\label{mapone}
\Map_{\CAlg(\Sp)}(\Sph[A], \Sph[B]) \to \Map_{\mathrm{Ring}}(\Z[A], \Z[B])
\end{equation}
given by base-change along $\Sph \to \Z$ (or equivalently by taking $\pi_0$) are empty or contractible.

\subsection{Background and motivation}



There is a slick way to see that the image of (\ref{mapone}) consists only of maps induced by group homomorphisms $A\to B$.  The idea is that such maps must be compatible with certain natural operations acting on the homotopy groups of commutative ring spectra.


Namely, suppose $R$ is an ordinary ring and we have a lift $\Sph_R$ to a commutative ring spectrum such that $\Sph_R \otimes_\Sph \Z \simeq R$ (i.e., a Moore spectrum)\footnote{Note that the underlying spectrum of $\Sph_R$ is determined by $R$, so this is really about the $\E_\infty$-structure.}.  Then, the Tate-valued Frobenius map of \cite{NS} naturally endows $R^{\wedge}_p$ with a ($p$-)Frobenius lift for every prime $p$.  In fact, $R$ acquires a natural $\delta_p$-ring structure on each of its $p$-completions -- together, we refer to this as a $\widehat\delta$-ring structure.\footnote{We warn the reader that this is non-standard terminology and there are some subtleties about derived completion hidden.} 

In the special case of group rings $\Z[A]$, the $\widehat\delta$-ring structure arising from the spherical lift $\Sph[A]$ is simple: the corresponding Frobenius lifts are induced from the group homomorphisms $A\oto{p} A$ for every prime $p$.  While there can be many interesting ring homomorphisms $\Z[A]\to\Z[B]$, one can show by elementary algebra that the only ones compatible with this $\widehat\delta$-ring structure are the ones induced by group homomorphisms $A\to B$.  

Summarizing this discussion, the map (\ref{mapone}) naturally lands in morphisms of $\widehat \delta$-rings and our main result is equivalent to the assertion that the resulting map
\begin{equation*}
\Map_{\CAlg(\Sp)}(\Sph[A], \Sph[B]) \to \Map_{\hat\delta\text{-}\mathrm{Ring}}(\Z[A], \Z[B])
\end{equation*}
is an equivalence.   We view this result as answering an important special case of the following general question: 

\begin{question}\label{question}
For which $\widehat\delta$-rings $R$ does there exist an $\E_\infty$-lift to the sphere $\Sph_R$, and for which pairs of lifts $\Sph_R, \Sph_{R'}$ is the induced map 
\begin{equation}\label{map_delta}
\Map_{\CAlg(\Sp)}(\Sph_R, \Sph_{R'}) \to \Map_{\hat\delta\text{-}\mathrm{Ring}}(R, R')
\end{equation}
an equivalence?
\end{question}

\begin{exm}
    If $R$ is a $\mathbb{Q}$-algebra, then each $p$-completion of $R$ vanishes and thus a $\widehat\delta$-ring structure is no additional data.  Moreover, we have $\Sph_R = R$, so (\ref{map_delta}) is tautologically an equivalence.
\end{exm}

On the other hand, a $p$-complete $\widehat\delta$-ring is just a $\delta_p$-ring, as all other completions vanish.  In this case, \Cref{question} is understood to have a positive answer in formally \'{e}tale situations, such as the spherical Witt vectors $\Sph_{W(\kappa)}$ of $R=W(\kappa)$ by \cite[Example 5.2.7]{Lurie_Ell2}.  A variant of our main theorem extends this to the case of $p$-complete group rings:

\begin{thm}[\Cref{cor:maps_group_alg_p_complete}]\label{thm:intro_p_main}
Suppose that $S, S'$ are $\E_{\infty}$-rings of the form $\Sph_{W(\kappa)}[M]^{\wedge}_p$ for some perfect ring $\kappa$ of characteristic $p$ and some finitely generated abelian group $M$.  Then the natural map
\[
\pi_0: \Map_{\CAlg(\Sp)}(S, S') \to \Map_{\dpring}(\pi_0(S), \pi_0(S'))
\]
is an equivalence (cf. \Cref{cnstr:groupalgdelta} for the $\delta_p$-structures in the target).  
\end{thm}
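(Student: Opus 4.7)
The plan is to decompose the mapping space using the coproduct structure $\Sph_{W(\kappa)}[M] \simeq \Sph_{W(\kappa)} \otimes_{\Sph} \Sph[M]$ in $\CAlg(\Sp)$. Since $S'$ is $p$-complete, we obtain
\[
\Map_{\CAlg(\Sp)}(S, S') \simeq \Map_{\CAlg(\Sp)}(\Sph_{W(\kappa)}, S') \times \Map_{\CAlg(\Sp)}(\Sph[M], S'),
\]
with a matching product decomposition of $\delta_p$-ring maps out of $W(\kappa)[M]^\wedge_p$ on the target side (since a $\delta_p$-ring map is determined by its restrictions to the two tensor factors). It therefore suffices to treat each factor separately and match them against their algebraic counterparts.

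For the Witt vectors factor, I would invoke \cite[Example 5.2.7]{Lurie_Ell2}: $\Sph_{W(\kappa)}$ is the formally \'etale lift of $W(\kappa)$ to $\E_\infty$-rings over $\Sph^\wedge_p$, so $\Map_{\CAlg(\Sp)}(\Sph_{W(\kappa)}, S')$ is discrete and in bijection with $\Hom_{\mathrm{Ring}}(W(\kappa), \pi_0 S')$; this coincides with $\delta_p$-ring maps since $W(\kappa)$ is $p$-torsion-free perfect and thus carries a unique Frobenius lift. For the group-ring factor, use the adjunction $\Map_{\CAlg(\Sp)}(\Sph[M], S') \simeq \Map_{\mathrm{Sp}_{\geq 0}}(HM, \mathrm{gl}_1(S'))$ and decompose $M = \Z^r \oplus T_p \oplus T_{p'}$ into free, $p$-primary torsion, and prime-to-$p$ torsion summands. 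The prime-to-$p$ summand $\Sph[T_{p'}]^\wedge_p$ is a $p$-adically \'etale extension of $\Sph^\wedge_p$, so its contribution to the mapping space is detected discretely on $\pi_0$ and matches $\delta_p$-data automatically (prime-to-$p$ roots of unity carry a unique $\delta_p$-structure).

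The remaining case $M = \Z^r \oplus T_p$ is where the paper's main technical input is needed: a computation of the strict units of $p$-complete group rings, flagged in the abstract. That computation should identify $\Map_{\CAlg(\Sp)}(\Sph[\Z^r \oplus T_p], S')$ with the discrete set of pairs consisting of $r$ strict units of $\pi_0 S'$ together with a homomorphism $T_p \to (\pi_0 S')^\times$ landing in strict $p^n$-th roots of unity, where a strict unit means $\delta_p(u) = 0$ (equivalently $\phi(u) = u^p$); a direct algebraic check then matches this with the space of $\delta_p$-ring maps out of $W(\kappa)[\Z^r \oplus T_p]^\wedge_p$. The main obstacle is precisely this strict-units computation: ruling out higher-homotopy lifts of group characters $T_p \to \mathrm{GL}_1(S')$ and showing that the space of strict units captures exactly the $\delta_p$-invariant units of $\pi_0 S'$. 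This is the paper's core technical content, presumably obtained by analyzing the Tate-valued Frobenius on $p$-complete $\E_\infty$-rings together with the chromatic completeness hypothesis on the target.
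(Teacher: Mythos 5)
Your reduction is essentially the one the paper uses to prove \Cref{cor:maps_group_alg_p_complete}: split off the Witt-vector factor and handle it by the universal property of $\Sph_{W(\kappa)}$ from \cite[Example 5.2.7]{Lurie_Ell2}, and identify the group-ring factor with $\Map_{\Mod_\Z^{\cn}}(M,\Gm(S'))$, to be compared with $\Hom_{\mathrm{Ab}}(M,\Gmdp{\pi_0 S'})$. (The paper phrases this as a map of fiber sequences over $\Map_{\CAlg}(\Sph_{W(\kappa)},S')$ rather than as a product decomposition; since $\Sph_{W(\kappa)}[M]$ is a coproduct these amount to the same thing.) Your separate treatment of the prime-to-$p$ torsion is likewise consistent with the paper, which splits $M=M_1\oplus M_2$ and absorbs the prime-to-$p$ part into the Witt vectors via $\Sph_{W(\kappa)}[M_1]^{\wedge}_p\simeq \Sph_{W(\kappa[M_1])}$.

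The genuine gap is the input you defer: the identification $\Gm(\Sph_{W(\kappa')}[M']^{\wedge}_p)\iso \Gmdp{W(\kappa')[M']^{\wedge}_p}$ (\Cref{thm:strict_units_delta_units}), without which the deduction above proves nothing. You guess that it is ``presumably obtained by analyzing the Tate-valued Frobenius on $p$-complete $\E_\infty$-rings together with the chromatic completeness hypothesis on the target,'' but no such direct Frobenius argument is available to rule out higher homotopy in $\Gm$ or to pin down its $\pi_0$. The paper assembles the statement from three separate ingredients: (i) a purely algebraic computation of rank-$1$ units in $p$-complete $\delta_p$-group algebras, via square-zero deformation theory along the augmentation ideal together with an Artin--Schreier argument (\Cref{thm:deltarig}); (ii) rigidity for finite abelian $p$-groups over $T(n)$-local rings, proved using the chromatic Fourier transform $R[M]\simeq R^{B^nM^*}$ over the higher cyclotomic extensions and Galois descent, then propagated to all $p$-complete chromatically complete rings by closure under limits (\Cref{cor:p-rigid}); and (iii) rigidity for free abelian groups $\Lambda$ over $\Sph_{W(\kappa)}$, which is where the Tate-valued Frobenius actually enters --- not to control homotopy directly, but to show that $[p]_*$ on $\AG{\Sph_{W(\kappa)}[\Lambda]^{\wedge}_p}$ agrees with multiplication by $p$ up to the invertible arithmetic Frobenius, so that the cofiber sequence induced by $\Lambda\xrightarrow{p}\Lambda\to\Lambda/p$ becomes a mod-$p$ bootstrap from case (ii), with staticity plus the algebraic computation (i) finishing the argument (\Cref{thm:free-rig}). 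In short, your deduction of the theorem from the strict-units computation is correct and matches the paper, but the proposal contains no proof of that computation, and the strategy you sketch for it is not the one that works.
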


We point out that unlike in the integral case, the equivalent mapping spaces in \Cref{thm:intro_p_main} are \emph{not} in general equivalent to the corresponding set of group homomorphisms, unless the groups involved have no prime to $p$ torsion (\Cref{thm:deltarig}).

We believe the above results to be the first systematic computations of mapping spaces between such flat $\Sph$-algebras that are not (formally) \'etale extensions of the sphere spectrum or completions thereof.  Yet, some fundamental cases of \Cref{question} remain open: for instance, one can study spherical \emph{monoid} rings, rather than group rings.   In fact, to the knowledge of the authors, even the case of polynomial rings is open in all nontrivial cases:

\begin{question}
    What is the space of maps $\Map_{\CAlg(\Sp)}(\Sph[\mathbb{N}^i],\Sph[\mathbb{N}^j])$ for any $i,j\geq 1$?
\end{question}

\begin{exm}
    The map (\ref{map_delta}) is not an equivalence in general.  For example, if $\Sph_R = \Sph \oplus \Sph$ is a square zero extension of $\Sph$ by itself, then the space of $\EE_\infty$-maps $\Sph_R \to \Sph_R$ contains $\Omega^\infty\Sph$ is a retract and hence it is not discrete. 
\end{exm}

As for the first part of \Cref{question}, the lifting question is equivalent to the question of highly structured multiplications on Moore spectra and has been extensively studied \cite{MR760188,burklund2022multiplicative,bhattacharya2022higher}, though we do not know many interesting examples of spherical lifts in the $\E_{\infty}$ case beyond the spherical Witt vectors of \cite[Example 5.2.7]{Lurie_Ell2}.  For instance, we believe the following is open:

\begin{question}
    Does the free $\delta_p$-ring on one generator lift to $\Sph$? 
\end{question}


Our interest in studying \Cref{question} arose in work of the second author joint with B. Antieau and A. Krause on prismatic cohomology: for a given $\delta_p$-ring $R$ and any $R$-algebra $S$, there is prismatic cohomology $\Prism_{S/R} \in \mathcal{D}(\Z)$, an important invariant in $p$-adic geometry generalizing crystalline cohomology to mixed characteristic \cite{BMS,bhatt2022prisms,AKN}. The development of this theory was inspired by topological periodic homology and  one can generally ask if $\Prism_{S/R}$ admits a topological refinement, i.e. if it is the associated graded of a filtration on topological periodic homology of a cyclotomic spectrum\footnote{We are suppressing Breuil-Kisin twists, Frobenius twists and Nygaard completions for simplicity.}. It will be shown in forthcoming work of the second author with  Antieau and Krause that this is indeed the case if $R$ admits a lift $\Sph_R$ to the sphere  with the cyclotomic spectrum in question given by $\mathrm{THH}(S / \Sph_R)$. The cyclotomic spectrum $\mathrm{THH}(S / \Sph_R)$
is clearly functorial in the lift $\Sph_R$. This relates \Cref{question} to a question of P. Scholze about which prisms can be realized by cyclotomic spectra and how functorially this can be done \cite{386521}.


\subsection{Outline and further results}
The core content of the paper is \Cref{thm:intro_p_main} in the case $S=\Sph[\Z]^\wedge_p$ and $S'=\Sph[M]^\wedge_p$, i.e. proving the equivalence
\[
\Map_{\calg(\Sp)}(\Sph[\Z]^\wedge_p,\Sph[M]^\wedge_p) \iso \Map_{\dpring}(\ZZ[\Z]^\wedge_p,\ZZ[M]^\wedge_p).
\]

Our approach is to compute both sides.  The right-hand side is purely algebraic, and we start by completely understanding it in the self-contained \S\ref{sec:delta}.  The mapping space on the left-hand side is known as the \emph{strict units} of $\Sph[M]^{\wedge}_p$, denoted by $\Gm(\Sph[M]^{\wedge}_p)$. 

\begin{rem}
    Strict units of commutative ring spectra are of significant interest in homotopy theory \cite{Fung}, as they are closely related to questions of adjoining roots to ring spectra \cite{lawson2020roots} and multiplicative twists of cohomology theories by ordinary cohomology classes \cite{MaySig, ABG, SatiWesterland}.  Despite this, the difficulty of computing maps between commutative ring spectra has allowed for only a few computations so far --- notably the cases of Lubin--Tate theories (unpublished work of Rezk and Hopkins--Lurie, cf. also \cite{Null}) and the sphere and its completions \cite{CarmeliStrict}.
\end{rem}

Using the augmentation of $\Sph[M]^{\wedge}_p$, we obtain a splitting
\[
\Gm(\Sph[M]^{\wedge}_p) \simeq \Gmrp{\Sph}{M} \oplus \Gm(\Sph_p)
\]
into $\Gm(\Sph_p)$, which has been computed in \cite{CarmeliStrict}, and what we call the \emph{reduced units} $\Gmrp{\Sph}{M}$.  We study these via the ``obvious'' map $\theta: M \to \Gmr{\Sph}{M}$ given informally by $m \mapsto [m]$.  A key idea is that $\theta$ is a special case of a map
 \[
\Theta_R : \underline{M}(R) \to \Gmr{R}{M}
 \]
 which is natural in the commutative ring spectrum $R$ and, critically, often an equivalence.\footnote{Here, $\underline{M}(R)$ denotes locally constant functions $\Spec(\pi_0(R)) \to M$.}  This property, which we call \emph{rigidity} (``$R$ is \emph{$M$-rigid}''), is an organizing principle in this paper, as we can analyze its closure under changing $R$ and $M$.  After compiling some basic techniques for proving rigidity in \S\ref{sec:groups}, the remainder of the paper proceeds as follows:



 

\textbf{\S\ref{sec:rig}: When $R$ is $p$-complete and $M$ is a finite abelian $p$-group.}  We show:

\begin{thm}\label{thm2}
Assume that $R$ is $p$-complete, $R/p$ is chromatically complete, and $M$ is a finite abelian $p$-group. Then $R$ is $M$-rigid.
\end{thm}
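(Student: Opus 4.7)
The plan is to proceed by dévissage on $M$, reducing to $M = \Z/p$, and then handle the base case using the chromatic completeness hypothesis on $R/p$.

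Every finite abelian $p$-group $M$ admits a composition series with simple factors $\Z/p$, so it suffices to show that the class of $R$-rigid groups is closed under extensions: given a short exact sequence $0 \to M' \to M \to M'' \to 0$ of finite abelian $p$-groups, if $R$ is both $M'$-rigid and $M''$-rigid, then $R$ is $M$-rigid. Both $\underline{M}(R)$ and $\Gmr{R}{M}$ should fit into fiber sequences whose outer terms are the analogous objects for $M'$ and $M''$ (on the source side this is clear; on the target side it comes from the structure of the reduced group algebra under the quotient $M \twoheadrightarrow M''$), and $\Theta_R$ is compatible with these sequences, so this closure principle ought to follow from the general machinery assembled in \S\ref{sec:groups}. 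Iterating reduces us to the case $M = \Z/p$.

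For the base case, the $p$-completeness of $R$ lets us reduce the comparison $\Theta_R$ to the analogous statement for $R/p$, using that the reduced units of $R[\Z/p]$ are controlled by their $p$-adic behavior. The hypothesis that $R/p$ is chromatically complete means $R/p$ is the limit of its chromatic (telescopic or $K(n)$-local) localizations, and the key is that both $\underline{\Z/p}(-)$ and $\Gmr{-}{\Z/p}$ are compatible with this limit. The rational layer vanishes because $R/p$ is $p$-torsion, while each positive-height layer is handled using Bousfield--Kuhn or Rezk logarithm techniques together with the existing computations of strict units in telescopically localized ring spectra (along the lines of \cite{CarmeliStrict}).

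The main obstacle is precisely this chromatic analysis: one must verify that the reduced strict units of the group algebras over the chromatic localizations of $R/p$ assemble, in the limit, to exactly the discrete $\underline{\Z/p}(R/p)$, with no residual or exotic contributions. The chromatic completeness hypothesis (as opposed to mere $p$-completeness) is essential here, as it ensures the chromatic tower genuinely recovers $R/p$; without it, infinitely many higher chromatic layers could in principle contribute stray strict units not visible at any finite chromatic truncation.
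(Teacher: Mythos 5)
Your proposal diverges substantially from the paper's argument, and both halves of it have genuine gaps. On the dévissage step: the claim that rigidity is closed under extensions of groups does not follow from the machinery of \S\ref{sec:groups} as stated. The relevant decomposition is \Cref{prop:red_units_dsum}, which gives $\Gmr{R}{M'\oplus M''}\simeq \Gmr{R}{M'}\oplus\Gmr{R[M']}{M''}$ --- note the second summand involves the group algebra $R[M']$, not $R$. So even in the split case, deducing $(M'\oplus M'')$-rigidity of $R$ requires $M''$-rigidity of $R[M']$, not of $R$; and for non-split extensions the cofiber sequence you want (the paper's \Cref{prop:units_red_exact}) has as \emph{hypotheses} rigidity statements for auxiliary group algebras $R[N\oplus L^{\oplus\ell}]$ --- exactly the kind of statement you are trying to prove. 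This is not necessarily fatal (one could try a simultaneous induction over a class of rings closed under $(-)[M']$), but as written the reduction to $M=\Z/p$ is circular. The paper avoids this entirely: the chromatic Fourier transform handles all finite abelian $p$-groups at once, with no induction on $M$.

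The base case is where the essential idea is missing. First, ``reduce to the analogous statement for $R/p$'' does not typecheck: $R/p$ is not a commutative ring spectrum, so $\Gmr{R/p}{\Z/p}$ is not defined; the paper uses $R/p$ only as a \emph{spectrum}, to argue that chromatic convergence of $R/p$ forces $R\iso (\lim_n L_n^f R)^{\wedge}_p$, placing $R$ itself in the limit-closure of $T(n)$-local rings (\Cref{cor:p-rigid}). Second, and more importantly, the positive-height layers are not ``handled by Bousfield--Kuhn or Rezk logarithm techniques'': the logarithm computes $gl_1$ of a $K(n)$-local ring in terms of the ring, but the problem here is to compute maps $\Z\to gl_1(R[M])$ relative to $gl_1(R)$, which requires understanding $R[M]$ as an $R$-algebra. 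The key input in the paper is the chromatic Fourier transform $R[M]\simeq R^{B^nM^*}$ over the cyclotomic extension $R_{n,r}^f$ (\Cref{cor:fourier}), which converts $\Gmr{R}{M}$ into pointed maps out of $B^nM^*$ and hence, using the $(n+1)$-truncatedness of $\G_m(R)$ (\Cref{lem:Gmtrunc}), into $\ct{M}(R)$; one then checks $\Theta_M$ is an isomorphism by a scalar argument and descends along the Galois extension $\Sph_{T(n)}\to R_{n,r}^f$. Nothing in your sketch supplies a substitute for this identification, so the assertion that the chromatic layers ``assemble to exactly $\underline{\Z/p}(R/p)$'' is precisely the content of the theorem, not a step toward it. Note also that \cite{CarmeliStrict} computes strict units of the sphere and its completions, not of group algebras over $T(n)$-local rings.
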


In fact, we prove a more general form of this theorem as \Cref{cor:p-rigid}.  We start with the case where $R$ is instead a $T(n)$-local ring spectrum that admits a primitive higher root of unity in the sense of  \cite{CSYCycl} (e.g., any algebra over a height $n$ Lubin--Tate theory).  Here, we can verify \Cref{thm2} directly by computation using the chromatic Fourier equivalence \cite{BCSY, HL}
\[
R[M] \simeq R^{B^nM^*}.
\]
 Then, closure under limits allows us to bootstrap to the general setting.  

\textbf{\S\ref{sec:free}: Extending to torsion free $M = \Z^m$.}  Here, we specialize to $R = \Sph_{W(\kappa)}$, the spherical Witt vectors of a perfect ring $\kappa$ (though intermediate results are proven in greater generality).  We show that the short exact sequence $M\xrightarrow{p} M \to M/p$ induces a cofiber sequence of spectra
\begin{equation}\label{eqn:seq}
\AG{\Sph_{W(\kappa)}[M]^\wedge_p} \to \AG{\Sph_{W(\kappa)}[M]^\wedge_p} \to \AG{\Sph_{W(\kappa)}[M/p]^\wedge_p}.
\end{equation}
It turns out that the first map can be identified with the map induced by the Tate-valued Frobenius, which for general reasons induces multiplication by $p$ on strict units.  This allows us to bootstrap our knowledge of $\AG{\Sph_{W(\kappa)}[M/p]^\wedge_p}$ from \Cref{thm2} to obtain:

\begin{thm}
    Let $\kappa$ be a perfect $\F_p$-algebra and  $M$ be a finitely generated abelian group.  Then $\Sph_{W(\kappa)}$ is $M$-rigid.  
\end{thm}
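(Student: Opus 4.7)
The plan is to reduce the theorem to cases already handled in the paper via the structure theorem for finitely generated abelian groups. Write $M \simeq \Z^r \oplus T_p \oplus T_{p'}$, where $T_p$ is the $p$-primary torsion subgroup and $T_{p'}$ the prime-to-$p$ torsion subgroup. Since $\Sph_{W(\kappa)}[M_1 \oplus M_2] \simeq \Sph_{W(\kappa)}[M_1] \otimes_{\Sph_{W(\kappa)}} \Sph_{W(\kappa)}[M_2]$, and both $\underline{-}(R)$ and $\Gmr{R}{-}$ turn direct sums of groups into tensor products or (respectively) products of spectra, $\Theta$ is compatible with such decompositions (a formal consequence of the generalities of \S\ref{sec:groups}). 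Hence it suffices to verify $M$-rigidity of $\Sph_{W(\kappa)}$ for each of $\Z^r$, $T_p$, and $T_{p'}$ separately.

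The two torsion cases are relatively quick. For $T_p$, apply \Cref{thm2} directly, noting that $\Sph_{W(\kappa)}$ is $p$-complete by construction and $\Sph_{W(\kappa)}/p \simeq \kappa$ is discrete, hence trivially chromatically complete. For $T_{p'}$, the order of $T_{p'}$ is a unit in $W(\kappa)$, so $W(\kappa)[T_{p'}]$ is finite \'etale over $W(\kappa)$; after a faithfully flat perfect extension $\kappa \to \kappa'$ it splits as a product of copies of $W(\kappa')$. Using that spherical Witt vectors are formally \'etale extensions of $\Sph_p$, this lifts to identify $\Sph_{W(\kappa)}[T_{p'}]^\wedge_p$ (after the same base change) with a product of spherical Witt vectors, reducing the rigidity claim to the (tautological) case of the trivial group via \'etale descent.

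The heart of the argument is the case $M = \Z^r$, which is the content of \S\ref{sec:free}. The mechanism is the cofiber sequence (\ref{eqn:seq}) applied to the short exact sequence $0 \to \Z^r \xrightarrow{p} \Z^r \to (\Z/p)^r \to 0$, compared via the natural transformation $\Theta$ with the corresponding cofiber sequence on $\underline{-}(\Sph_{W(\kappa)})$. The rightmost vertical comparison is an equivalence by \Cref{thm2} applied to the finite abelian $p$-group $(\Z/p)^r$, and the leftmost and middle verticals are the same self-map (namely $\Theta$ for $\Z^r$). A diagram chase combined with $p$-completeness of the strict units then promotes the mod-$p$ equivalence to an integral one. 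The principal obstacle is the construction of (\ref{eqn:seq}) itself: one must identify the first map with multiplication by $p$ on strict units via the Tate-valued Frobenius, which is the key technical input of \S\ref{sec:free} and the main reason the torsion-free case requires genuinely new ideas beyond \Cref{thm2}.
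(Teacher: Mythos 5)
Your proposal has two genuine gaps. First, the prime-to-$p$ torsion step is wrong, and in fact the statement is false as literally printed: rigidity fails whenever $M$ has prime-to-$p$ torsion, and your own \'etale splitting exhibits the failure. After base change to a perfect $\kappa'$ splitting $T_{p'}$, the ring $\Sph_{W(\kappa')}[T_{p'}]^\wedge_p$ is a product of $|T_{p'}|$ copies of $\Sph_{W(\kappa')}$, so its reduced strict units are $\Gm(\Sph_{W(\kappa')})^{|T_{p'}|-1}\simeq ((\kappa')^{\times})^{|T_{p'}|-1}$ (using $\Gm(\Sph_{W(\kappa')})\simeq (\kappa')^\times$, cf.\ \Cref{exm:wittunit}); for $\kappa'=\overline{\F}_p$ this is far larger than $\ct{T_{p'}}(\kappa')=T_{p'}$. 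This is exactly the failure mode of \Cref{exm:rigfail}(1), and the paper consistently excludes prime-to-$p$ torsion (\Cref{cor:freeprig}, and the remark following \Cref{thm:intro_p_main}). The theorem you were given summarizes \S\ref{sec:free}, which is explicitly about $M=\Z^m$; it should read ``finitely generated \emph{free} abelian group'' and is proved as \Cref{thm:free-rig}. (Also minor: $\Sph_{W(\kappa)}/p$ is not the discrete ring $\kappa$ --- it has homotopy wherever $\Sph/p$ does; the $T_p$ case instead follows from \Cref{cor:p-rigid}(3) because $\Sph_{W(\kappa)}$ has finite Tor-amplitude.)

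The more substantive gap is in your treatment of $\Lambda=\Z^r$. The spectrum $\AG{\Sph_{W(\kappa)}[\Lambda]^\wedge_p}$ is \emph{not} $p$-complete --- its expected value $\ct{\Lambda}(\Sph_{W(\kappa)})\cong \Z^r$ certainly is not --- so ``$p$-completeness of the strict units'' cannot promote the mod-$p$ comparison to an integral one. The cofiber sequence together with the Frobenius identification only yields \Cref{cor:rigid_after_p_completing_the_units}: $\widehat{\Theta}_\Lambda$ is an isomorphism \emph{after $p$-completion}. The paper closes the remaining gap in \S\ref{sub:Gmstaticity} with two further inputs: (i) staticity of $\AG{-[\Lambda]^\wedge_p}$ (\Cref{prop:red_units_static}), proved by splitting $\AG{-}$ into its $p$-completion and the $p$-divisible part $V_p\AG{-}$ and disposing of the latter via \Cref{prop:p-invert_static}; and (ii) the purely algebraic computation of reduced rank-$1$ units in the $\delta_p$-ring $W(\kappa)[\Lambda]^\wedge_p$ (\Cref{thm:deltarig}, resting on the Artin--Schreier theory of \S\ref{sec:delta}), imported through \Cref{prop:Moore_delta_ring} and \Cref{prop:rank1pi0} and assembled by \Cref{prop:static_rigid}. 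Your sketch contains neither ingredient; supplying (ii) is the reason \S\ref{sec:delta} exists at all, so this omission is not a routine diagram chase.
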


Combined with \Cref{thm2}, we obtain an analogous result when $M$ is additionally allowed to have $p$-power torsion part (\Cref{cor:freeprig}).





\textbf{\S\ref{sec:end}: Assembling the pieces.}  We first add in the prime to $p$ torsion in $M$, in particular proving \Cref{thm:intro_p_main}.  Then, we use an arithmetic fracture square argument to combine the $p$-complete results of the previous sections to prove our main theorem about $R=\Sph$.






\subsection{Acknowledgements}
The authors would like to thank Zhouhang Mao, Tomer Schlank, and the entire Copenhagen homotopy theory seminar group for useful discussions related to this material, as well as Achim Krause for sharing his ideas related to \S\ref{sub:moore}. We would also like to thank Edith H\"ubner and Robert Szafarczyk for helpful discussions and suggestions concerning Section 2.
The first author was partially supported by the Danish National Research Foundation through the Copenhagen Centre for Geometry and Topology (DNRF151).
  The second author was funded by the Deutsche Forschungsgemeinschaft
(DFG, German Research Foundation) - Project-ID 427320536 - SFB 1442, as well as
under Germany's Excellence Strategy EXC 2044 390685587, Mathematics M\"{u}nster:
Dynamics-Geometry-Structure. 
The third author was supported in part by NSF grant DMS-2002029.

\section{Rank $1$ units in $\delta$-rings}\label{sec:delta}
In this section, we consider the algebraic analogue of the main results of this paper.  We start by reviewing the relevant structures to set notation.  

\begin{defn}\label{defn:delta}
    For a prime $p$, a $\delta_p$-ring\footnote{These are often simply called $\delta$-rings, but we add the $p$ to the notation to allow for different primes.} is a commutative ring $R$ together with an operation $\delta_p:R \to R$ satisfying the identities:
    \begin{align*}
        \delta_p(0)&=\delta_p(1) = 0,\\
        \delta_p(x+y) &= \delta_p(x)+\delta_p(y) + \frac{1}{p}(x^p+y^p - (x+y)^p), \\
\delta_p(xy) &= x^p\delta_p(y) + y^p \delta_p(x) + p \delta_p(x)\delta_p(y).
    \end{align*}
    We denote the category of these by $\dRing$.  
\end{defn}

The identities are rigged so that the operation $\psi(x) = x^p + p\delta_p(x)$ is a Frobenius lift.   In fact, a $\delta_p$-structure on a torsion-free ring is simply a Frobenius lift.  This becomes even cleaner in the animated setting.

\begin{defn}
Let $\Acr$ denote the category of animated commutative rings (cf. e.g. \cite[\S 1.2]{cesnavicius2019purity}).  Then an animated $\delta_p$-ring is an animated commutative ring $R$ together with a lift 
\[
\begin{tikzcd}
    R \arrow[r,dashed]\arrow[d] & R\arrow[d]\\
    R\mm p \arrow[r,"\varphi_{R\mm p}"] & R\mm p.
\end{tikzcd}
\]
Here, $R\mm p$ denotes the derived reduction mod $p$, and $\varphi_{R\mm p}$ denotes the animated Frobenius.  We let $\Acr^{\delta_p}$ denote the category of animated $\delta_p$-rings.  
\end{defn}

This definition is compatible with \Cref{defn:delta} in the sense that the objects in $\Acr^{\delta_p}$ with discrete underlying ring are exactly $\delta_p$-rings, and $\Acr^{\delta_p}$ is the animation of $\dRing$ \cite[Thm. 2.4.4]{Holeman}.


We will be particularly interested in the following  $\delta_p$-rings:

\begin{cnstr}\label{cnstr:groupalgdelta}
Let $M$ be an abelian group. Since the group ring $\ZZ[M]$ is torsion free, a $\delta_p$-ring structure on it is the same as a lift of Frobenius.  Accordingly, we equip the group rings $\ZZ[M]$ with the $\delta_p$-structures corresponding to the lift of Frobenius 
\[
\varphi(\sum_{m\in M} a_m\cdot [m]) = \sum_{m\in M} a_m \cdot [pm]. 
\]  
More generally, for a $\delta_p$-ring $R$, we endow $R[M] = R\otimes \ZZ[M]$ with the tensor product $\delta_p$-ring structure, i.e. $\varphi_{R[M]} = \varphi_R \otimes \varphi_{\ZZ[M]}$. 
\end{cnstr}

The analogue of strict units in this setting are the following:

\begin{defn}\label{defn:delta-unit}
    A \tdef{rank $1$ unit} in a $\delta_p$-ring $A$ is a morphism of $\delta_p$-rings
\[
\ZZ[\ZZ] =: \ZZ[t^{\pm 1}] \to A.
\]
We denote by $\mdef{\Gmdp{A}}$ the set of rank $1$ units, which can be described as the units $t\in \GG_m(A)$ such that  $\delta_p(t)=0$.  In fact, the identities for $\delta_p$ immediately imply that $\Gmdp{A}$ is a \emph{subgroup} of $\Gm(A)$.  
\end{defn}

\begin{rem}\label{rem:delta_p_right_adjoint}
The functor $A\mapsto \Gmdp{A}$ from $\delta_p$-rings to abelian groups is right adjoint to the group algebra functor of \Cref{cnstr:groupalgdelta}.
\end{rem}

\begin{exm}[Witt vectors, cf. e.g. \cite{BhattNotes}]\label{exm:wittunit}
    For any perfect ring $\kappa$, the ring of $p$-typical Witt vectors $W(\kappa)$ admits a unique $\delta_p$-structure, with Frobenius lift induced by the Frobenius map $\kappa \to \kappa$. In fact, these are precisely the perfect $\delta_p$-rings, i.e. those with invertible lift of Frobenius.  In this case, 
the multiplicative lift and the projection $W(\kappa) \to \kappa$ induce isomorphisms
\[
\kappa^{\times} \iso \Gmdp{W(\kappa)} \iso \kappa^\times.
\]  
\end{exm}

Our first goal in this section is to compute the rank $1$ units of group algebras.  The techniques are quite different from those in the $\Sph$-linear situation of the later sections -- they rely on the deformation theory of the functor $\Gmdp{-}$, which we study in \S\ref{sub:deltadef}.  Applying this in \S\ref{sub:deltarig}, we show:

\begin{thm}\label{thm:delta_p_main}
    Let $A$ be a $p$-complete $\delta_p$-ring and $M$ be a finitely generated abelian group whose torsion part is $p$-power torsion.  Then there is a natural isomorphism
    \[
    \Gmdp{A[M]^{\wedge}_p} \cong \Gmdp{A} \oplus \underline{M}(A).
    \]
    Here, $\underline{M}(A)$ denotes the abelian group of locally constant $M$-valued functions on $\Spec(A)$.  
\end{thm}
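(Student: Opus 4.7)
My plan is to construct a natural map $\Phi\colon \Gmdp{A}\oplus\underline{M}(A)\to\Gmdp{A[M]^\wedge_p}$, split off the $\Gmdp{A}$-summand using the augmentation, and then prove the remaining comparison by induction on the number of cyclic factors of $M$. The first summand of $\Phi$ is induced by the $\delta_p$-ring inclusion $A\hookrightarrow A[M]^\wedge_p$. For the second, a locally constant function $f\colon\Spec(A)\to M$, encoded by a finite orthogonal idempotent decomposition $1=\sum_i e_i$ and values $m_i\in M$, is sent to $\sum_i e_i\cdot [m_i]$; this lies in the rank $1$ units because $p$-completeness of $A$ forces idempotents to lift uniquely from $A/p$ and to be fixed by every Frobenius lift, so $\varphi(\sum_i e_i[m_i]) = \sum_i e_i[pm_i] = (\sum_i e_i[m_i])^p$. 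Since the augmentation $A[M]^\wedge_p\to A$ is a $\delta_p$-ring map that retracts $\Phi|_{\Gmdp{A}}$, the theorem reduces to the comparison $\underline{M}(A)\xrightarrow{\sim}\Gmdp{A[M]^\wedge_p}^{\mathrm{red}}$ of reduced rank $1$ units (those augmenting to $1\in A$).

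I would next reduce to cyclic $M$ by induction on the number of cyclic factors in the invariant factor decomposition. For $M = M_1\oplus M_2$ with $M_1$ cyclic and $M_2$ having strictly fewer factors, we have a natural equivalence $A[M]^\wedge_p\simeq (A[M_1]^\wedge_p)[M_2]^\wedge_p$ of $p$-complete $\delta_p$-rings. Applying the theorem inductively to $(A[M_1]^\wedge_p, M_2)$ and then to $(A, M_1)$ decomposes $\Gmdp{A[M]^\wedge_p}$ as $\Gmdp{A}\oplus\underline{M_1}(A)\oplus\underline{M_2}(A[M_1]^\wedge_p)$, and the identification $\underline{M_2}(A[M_1]^\wedge_p) = \underline{M_2}(A)$ finishes the induction. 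The latter amounts to showing that $\Spec(A[M_1]^\wedge_p)$ has the same connected components as $\Spec(A)$: by $p$-completeness it suffices to check mod $p$, and $(A/p)[M_1]$ has only the idempotents of $A/p$ because the augmentation ideal is nilpotent when $M_1$ is a finite $p$-group, and because $\Spec(R[t^\pm])$ has the same $\pi_0$ as $\Spec(R)$ when $M_1 = \mathbb{Z}$.

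For the base cases I would use the deformation theory of $\Gmdp{-}$ developed in \S\ref{sub:deltadef}: a reduced rank $1$ unit $u = 1+y$ in a $\delta_p$-ring $B$ satisfies $\varphi_B(y) = (1+y)^p - 1$, which can be analyzed order-by-order when $y$ lives in a suitably filtered ideal. For $M = \mathbb{Z}/p^k$ the augmentation ideal $(t-1)$ of $A[t]/(t^{p^k}-1)^\wedge_p$ is $(p,t-1)$-adically nilpotent since $(t-1)^{p^k}\in (p)$, so successive approximation along this filtration identifies the reduced rank $1$ units with $\underline{\mathbb{Z}/p^k}(A)$. The main obstacle is the case $M=\mathbb{Z}$: here $A[t^\pm]^\wedge_p$ has non-nilpotent augmentation ideal and admits $p$-adically convergent Laurent series as genuine units, so ruling out exotic rank $1$ units requires substantially more work. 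The equation $\varphi(u) = u^p$ with $u = \sum_n a_n t^n$ forces the coefficient at each degree $m\notin p\mathbb{Z}$ in $u^p$ to vanish; combined with idempotent decompositions of $\Spec(A)$ this should pin each local piece of $u$ to the form $a\cdot t^n$ with $a\in\Gmdp{A}$ and $n\in\mathbb{Z}$, yielding $\underline{\mathbb{Z}}(A)$ as required.
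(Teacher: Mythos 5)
Your splitting via the augmentation, the construction of the tautological map $\Phi$, and the treatment of finite cyclic $M$ all track the paper's argument: the paper likewise reduces to the reduced units $\Gmdred{A[M]^\wedge_p}$, shows $A[M]$ is $(p,J)$-complete for $J$ the augmentation ideal, applies square nil-invariance to identify $\Gmdred{A[M]}$ with $(J/\cl{J^2})^{\delta_p=1}\cong (A\otimes M)^{\varphi_A\otimes\id}$, and then invokes Artin--Schreier for Frobenius lifts to get $\underline{M}(A)$. (Two small points: your ``successive approximation'' step silently absorbs this Artin--Schreier identification, which is where the locally constant functions actually come from; and your verification that $\sum_i e_i[m_i]$ is a rank $1$ unit only shows $\varphi(u)=u^p$, i.e.\ $p\,\delta_p(u)=0$ --- when $A$ has $p$-torsion you need $\delta_p(u)=0$ itself, which does hold because $\delta_p(e)=0$ for any idempotent, but requires the separate computation $(1-2e)\delta_p(e)=0$ with $1-2e$ a unit.) Your reduction to cyclic $M$ by induction on invariant factors is a harmless reorganization of the paper's \Cref{prop:red_units_dsum}-style splitting.

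The genuine gap is the case $M=\ZZ$, which you correctly flag as the crux but do not actually prove. Your proposed mechanism --- that $\varphi(u)=u^p$ forces $u^p$ to be supported on $p\ZZ$ --- carries no information at the first level: $A/p$ is an $\F_p$-algebra, so for \emph{every} Laurent series $\bar u$ one has $\bar u^p=\sum \bar a_n^p t^{pn}$, and since $\varphi_A\equiv (-)^p\bmod p$ the equation $\varphi(u)\equiv u^p \pmod p$ is satisfied identically. So a direct coefficient analysis must dig into higher $p$-adic order, where the multinomial cross-terms make ``pinning each local piece to $a\cdot t^n$'' exactly the statement to be proved, with no argument supplied. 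The paper's proof of \Cref{thm:deltarig} avoids this by bootstrapping from the finite case: given $f=\sum x_m[m]$, it notes that $f_{\infty,s}\in A[M/p^s]$ is tautological by \Cref{prop:red_delta_p_taut_finite} for every $s$, that $f\bmod p^r$ has \emph{finite} support $T'$ which injects into $M/p^s$ for $s\gg0$, and then transfers tautologicality back from $A/p^r[M/p^s]$ to $A/p^r[M]$, using Hensel's lemma to lift the resulting idempotent decomposition and a comparison of coefficients (the coefficient of $0\in M$ being a unit forces the others to vanish) to conclude $f=[m]$. This interplay between the mod-$p^r$ finiteness of support and the already-established finite-group case is the key idea missing from your sketch, and without it the torsion-free case does not close.
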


We finish with some material connecting this story to the remainder of the paper.  
In \S\ref{sub:deltahat}, we globalize these results (across primes $p$) using the notion of a $\widehat \delta$-ring, which is essentially a ring with a $\delta_p$-structure on its $p$-completion for each $p$. 
Finally, in \S\ref{sub:moore}, we relate this discussion back to the topological story by showing that if $R$ is a commutative ring spectrum such that $R\otimes \Z$ is discrete (i.e., a \emph{Moore spectrum}), then $\pi_0(R)$ acquires a natural $\widehat \delta$-ring structure. 

\subsection{Deformation theory of rank 1 units}\label{sub:deltadef}

We study rank $1$ units of group algebras by deformation theory, starting in \S \ref{subsub:ideals} with the case of a square-zero extension of $\delta_p$-rings.  Here, by explicit analysis, we obtain an exact sequence  
\begin{equation}\label{eqn:fundexact}
0\to I^{\delta_p=1} \to \Gmdp{A} \to \Gmdp{A/I}.
\end{equation}
The key ingredient to going further is the observation that units of rank $1$ satisfy a certain ``square nil-invariance.''  That is, under appropriate completeness conditions on an ideal $I\subset A$, we prove in \S \ref{subsub:nil} an isomorphism $\Gmdp{A}\cong \Gmdp{A/I^2}$.   This gives us (\ref{eqn:fundexact}) more generally.  

The final ingredient we need is to identify the fiber term in the case where $A$ is a group algebra.  It turns out that this can be done with a form of Artin--Schreier theory for $\delta_p$-rings, which we discuss in \S \ref{subsub:AS}.

\subsubsection{$\delta_p$-ideals and square-zero extensions}\label{subsub:ideals}
\begin{defn}
We say an ideal $I$ in a $\delta_p$-ring $A$ is a \mdef{$\delta_p$-ideal} if $\delta_p(I)\subseteq I$.  
\end{defn}


$\delta_p$-ideals  are particularly easy to analyze in the case where $I$ is \emph{square-zero}, that is, $I^2=0$.  

\begin{prop}\label{prop:ugly-delta_identity}
Let $A$ be a $\delta_p$-ring and let $I\subset A$ be a square zero $\delta_p$-ideal.  Then:
\begin{enumerate}
\item The map $\delta_p\colon I\to I$ is a $\varphi$-semilinear morphism of $A$-modules. Namely, it is a group homomorphism and satisfies 
\[
\delta_p(xa) = \varphi(x)\delta_p(a), \quad \forall x\in A, a\in I.
\]
\item For $x\in A$ and $a\in I$ we have 
\[
\delta_p(x+a) = \delta_p(x) + \delta_p(a) - x^{p-1}a. 
\]
\end{enumerate}

\end{prop}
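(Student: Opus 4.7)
The plan is to verify both parts by direct manipulation of the defining identities of $\delta_p$ from \Cref{defn:delta}, exploiting the square-zero hypothesis $I^2 = 0$ to kill higher-order terms in $I$. The assumption that $I$ is a $\delta_p$-ideal ensures that $\delta_p(a) \in I$ for every $a \in I$, so every product we form lies in the expected ideal and the assertions make sense.

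For part (2), I would apply the additive identity with $y = a \in I$. The ``correction'' term expands as
\[
\frac{1}{p}\bigl(x^p + a^p - (x+a)^p\bigr) = -\sum_{k=1}^{p-1} \frac{1}{p}\binom{p}{k}\, x^{p-k} a^k,
\]
which is a polynomial with integer coefficients since $p \mid \binom{p}{k}$ for $1 \leq k \leq p-1$. Because $I^2 = 0$ forces $a^k = 0$ for all $k \geq 2$, only the $k = 1$ summand survives, contributing $-x^{p-1}a$. Together with $a^p = 0$, this yields $\delta_p(x+a) = \delta_p(x) + \delta_p(a) - x^{p-1}a$ directly.

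For part (1), additivity of $\delta_p|_I$ would follow from the same additive identity applied with $x, y \in I$: now every cross term $\binom{p}{k}\,x^{p-k} y^k$ with $1 \leq k \leq p-1$ is a product of two elements of $I$ and hence vanishes, while $x^p$ and $y^p$ vanish as well since $p \geq 2$. So the correction term is zero and $\delta_p(x+y) = \delta_p(x) + \delta_p(y)$. For $\varphi$-semilinearity, I would apply the multiplicative identity
\[
\delta_p(xa) = x^p \delta_p(a) + a^p \delta_p(x) + p\,\delta_p(x)\delta_p(a),
\]
discard the middle term (since $a^p = 0$), and factor to obtain $\bigl(x^p + p\,\delta_p(x)\bigr)\delta_p(a) = \varphi(x)\,\delta_p(a)$, which is the claimed formula.

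I do not anticipate a serious obstacle here: the proof is a pure bookkeeping exercise with the $\delta_p$ axioms, with the square-zero condition conveniently annihilating every nonlinear term. The only point requiring a moment's care is to read $\tfrac{1}{p}(x^p + y^p - (x+y)^p)$ as the integer polynomial obtained by dividing the interior binomial coefficients by $p$, which is the standard convention used to make the formulas of \Cref{defn:delta} well-defined over $\mathbb{Z}$.
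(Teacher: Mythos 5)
Your proof is correct and is essentially identical to the paper's: both verify each identity by plugging into the $\delta_p$ axioms of \Cref{defn:delta}, using $I^2=0$ to kill the cross terms and $a^p$, and folding $x^p\delta_p(a)+p\,\delta_p(x)\delta_p(a)$ into $\varphi(x)\delta_p(a)$. Your explicit binomial expansion of the correction term in part (2) is just a slightly more detailed version of the same computation.
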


\begin{proof}
Both statements follow immediately from the additive and multiplicative identities of $\delta_p$, using the fact that $I$ is a square zero ideal. Specifically: 
\[
\delta_p(a+b) = \delta_p(a)+\delta_p(b) + \frac{a^p+b^p - (a+b)^p}{p} = \delta_p(a)+\delta_p(b) \quad \forall a,b\in I,
\]
\[
\delta_p(xa) = x^p\delta_p(a) + a^p \delta_p(x) + p \delta_p(x)\delta_p(a) = (x^p + p \delta_p(x))\delta_p(a) = \varphi(x)\delta_p(a) \quad \forall x\in A, a\in I,  
\]
and 
\[
\delta_p(x+a) = \delta_p(x) + \delta_p(a) + \frac{x^p + a^p - (x+a)^p}{p} = \delta_p(x) + \delta_p(a) - x^{p-1}a \quad \forall x\in A, a\in I.  
\]
\end{proof}

This gives us the following statement about how $\Gmdp{-}$ changes in square-zero extensions.

\begin{prop}\label{prop:sq0exactseq}
Let $A$ be a $\delta_p$-ring and let $I$ be a square-zero $\delta_p$-ideal in $A$. Then, we have an exact sequence 
\[
0 \too I^{\delta_p = 1} \too  \Gmdp{A} \too \Gmdp{A/I}  
\]
\end{prop}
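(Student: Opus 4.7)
The plan is to construct the exact sequence by hand. I would define the map $\iota \colon I^{\delta_p=1} \to \Gmdp{A}$ by $a \mapsto 1+a$, where I understand $I^{\delta_p=1}$ to mean $\{a \in I : \delta_p(a) = a\}$, viewed as an abelian group under addition (this is a subgroup of $(I,+)$ because $\delta_p$ is additive on $I$ by \Cref{prop:ugly-delta_identity}(1)). The goal is then to show that $\iota$ is an injective group homomorphism whose image is exactly the kernel of the reduction map $\Gmdp{A} \to \Gmdp{A/I}$. The only real ingredient is the explicit formula from \Cref{prop:ugly-delta_identity}(2), which translates the condition ``$\delta_p(u) = 0$'' into a linear condition on the part of $u$ living in $I$.

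More concretely: for any $a \in I$, the element $1+a$ is a unit in $A$ with inverse $1-a$, since $(1+a)(1-a) = 1 - a^2 = 1$ by the hypothesis $I^2 = 0$. Applying \Cref{prop:ugly-delta_identity}(2) with $x = 1$, and using $\delta_p(1) = 0$, I obtain
\[
\delta_p(1+a) \;=\; \delta_p(1) + \delta_p(a) - 1^{p-1}a \;=\; \delta_p(a) - a,
\]
which vanishes precisely when $a \in I^{\delta_p=1}$. The map $\iota$ is an additive-to-multiplicative homomorphism because $(1+a)(1+b) = 1+(a+b)+ab = 1+(a+b)$ (again by $I^2=0$), and injectivity is immediate.

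It remains to verify exactness at $\Gmdp{A}$. Any $u \in \Gmdp{A}$ with trivial image in $\Gmdp{A/I}$ satisfies $u \equiv 1 \pmod I$, so it can be written uniquely as $u = 1+a$ with $a \in I$; the computation above then shows $\delta_p(u) = 0 \iff \delta_p(a) = a$, so such $u$ are exactly the elements of $\iota(I^{\delta_p=1})$. The reverse inclusion is trivial. I do not expect any real obstacle in carrying this out: the delicate calculations are already packaged into \Cref{prop:ugly-delta_identity}, and the present statement is essentially a formal consequence, foreshadowing the Artin--Schreier style fiber sequence the authors presumably develop in \S\ref{subsub:AS}.
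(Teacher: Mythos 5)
Your proposal is correct and follows essentially the same route as the paper: the paper's proof consists of exactly the computation $\delta_p(1+x) = \delta_p(1)+\delta_p(x)-x$ via \Cref{prop:ugly-delta_identity}(2), leaving the remaining verifications (that $1+a$ is a unit, that $a\mapsto 1+a$ is a homomorphism, injectivity) implicit. You have simply spelled out those routine details, all of which are correct.
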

\begin{proof}
Let $1+x \in \Gmdp{A}$ be an element of $\ker(\Gmdp{A}\to\Gmdp{A/I})$, where $x\in I$.  Then we have (using \Cref{prop:ugly-delta_identity}(2))
\[
0 = \delta_p(1+x) = \delta_p(1) + \delta_p(x) - x.
\]
Noting that $\delta_p(1)=0$, we obtain the conclusion.  
\end{proof}

\begin{rem}
As we analyze the behavior of $\Gmdp{-}$ along deformations, we focus on the case when the relevant ideals are closed in the $p$-adic topology.  However, even if $I\subseteq A$ is closed, its powers $I^n$ may not be, so we let $\cl{I^n}$ denote their closures in the $p$-adic topology.  (In any case, for our applications, this distinction will not be important as our ideals will be finitely generated.)  
\end{rem}

The following lemma ensures that these operations play well with the $\delta_p$-structure:

\begin{lem}\label{lem:deltapideals}
Let $I\subset A$ be a $\delta_p$-ideal.  Then:
\begin{enumerate}
    \item For $n\geq 0$, $I^n$ is a $\delta_p$-ideal.
    \item The ideal $\cl{I}\subset A$ is a $\delta_p$-ideal.
\end{enumerate}
\end{lem}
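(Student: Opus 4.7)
The plan is to prove (1) by induction on $n$ using the $\delta_p$-identities directly, and to prove (2) by exploiting the favorable $p$-divisibility behavior of $\delta_p$ on $p$-multiples.

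For (1), the cases $n=0$ (where $I^0 = A$) and $n = 1$ (the hypothesis) are trivial. Suppose $\delta_p(I^k) \subseteq I^k$ for all $k \leq n$; I show $\delta_p(I^{n+1}) \subseteq I^{n+1}$. Every element of $I^{n+1}$ is a finite sum of products $xy$ with $x \in I$ and $y \in I^n$. For such a product, the Leibniz-type identity
\[
\delta_p(xy) = x^p \delta_p(y) + y^p \delta_p(x) + p\, \delta_p(x) \delta_p(y)
\]
has every summand in $I^{n+1}$: the first has $x^p \in I^p$ and $\delta_p(y) \in I^n$ by induction, giving $x^p \delta_p(y) \in I^{p+n} \subseteq I^{n+1}$; the second has $y^p \in I^{np}$; and the third has $\delta_p(x) \in I$ together with $\delta_p(y) \in I^n$. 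To pass from single products to arbitrary sums, it suffices by iteration to check that if $u, v \in I^{n+1}$ with $\delta_p(u), \delta_p(v) \in I^{n+1}$, then $\delta_p(u+v) \in I^{n+1}$. This follows from the additive identity
\[
\delta_p(u+v) - \delta_p(u) - \delta_p(v) = -\sum_{k=1}^{p-1} \tfrac{1}{p}\binom{p}{k} u^k v^{p-k},
\]
since each cross term lies in $I^{(n+1)p} \subseteq I^{n+1}$.

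For (2), the set $\cl{I}$ is automatically an ideal since addition and multiplication are continuous in the $p$-adic topology. It remains to show $\delta_p(\cl{I}) \subseteq \cl{I}$, where $\cl{I} = \bigcap_n (I + p^n A)$. Fix $x \in \cl{I}$ and $n \geq 0$; choosing $m \geq n+1$, we may write $x = y + p^m z$ with $y \in I$ and $z \in A$. The additive identity yields
\[
\delta_p(x) = \delta_p(y) + \delta_p(p^m z) - \sum_{k=1}^{p-1} \tfrac{1}{p}\binom{p}{k} y^{p-k}(p^m z)^k.
\]
Here $\delta_p(y) \in I$, and each cross term has $y^{p-k} \in I$ (since $p-k \geq 1$) together with a factor of $p^{mk}$, so lies in $I$. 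For the middle term, the universal computation $\delta_p(p^m) = p^{m-1}(1 - p^{m(p-1)}) \in p^{m-1} A$, fed into the product rule, gives $\delta_p(p^m z) \in p^{m-1} A \subseteq p^n A$. Altogether $\delta_p(x) \in I + p^n A$, and as $n$ was arbitrary, $\delta_p(x) \in \cl{I}$.

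The main obstacle is the bookkeeping of $p$-divisibilities alongside the orders of vanishing in $I$; once the additive and multiplicative identities of $\delta_p$ are written out, the required estimates are elementary.
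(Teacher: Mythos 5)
Your proof is correct and follows the same route as the paper, which simply asserts that (1) is immediate from the addition and multiplication formulas and that (2) follows from the $p$-adic continuity of $\delta_p$; your write-up supplies the explicit divisibility bookkeeping (in particular the estimate $\delta_p(y+p^mz)\in I+p^{m-1}A$, which is exactly the continuity statement the paper invokes). No gaps.
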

\begin{proof}
    The first statement is immediate from the addition and multiplication formulas for $\delta_p$, and the latter follows from the fact that $\delta_p$ is $p$-adically continuous.  
\end{proof}

\subsubsection{Square nil-invariance}\label{subsub:nil}
While the functor $\Gmdp{-}$ is not nil-invariant in general, we will show here that it is invariant under quotient by \emph{the square} of an ideal with respect to which a $\delta_p$-ring is complete.  The key input is the following lemma:

\begin{lem}\label{lem:deltatangent}
Let $A$ be a $\delta_p$-ring and $I\subset A$ be a $\delta_p$-ideal.  Then, for $m\geq 2$, the images of the induced maps
\begin{align*}
\delta_p: I^m/I^{m+1} &\to I^m/I^{m+1}\\
\delta_p: \cl{I^m}/\cl{I^{m+1}} &\to \cl{I^m}/\cl{I^{m+1}}
\end{align*}
are contained in $p\cdot I^m/I^{m+1}$ and $p\cdot \cl{I^m}/\cl{I^{m+1}}$ respectively.  
\end{lem}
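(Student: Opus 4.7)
The plan is to prove the uncompleted containment $\delta_p(I^m) \subseteq pI^m + I^{m+1}$ directly from the defining identities of a $\delta_p$-ring, and to deduce the closure version by a continuity argument. For the uncompleted case, I would first observe that $\delta_p$ is additive modulo $I^{m+1}$ on $I^m$: the defect in the addition formula is
\[
-\sum_{k=1}^{p-1}\tfrac{1}{p}\tbinom{p}{k}\,x^{k}y^{\,p-k},
\]
which has integer coefficients, and each monomial $x^{k}y^{p-k}$ with $x,y\in I^m$ lies in $I^{pm}\subseteq I^{m+1}$ for any $m\geq 1$ and prime $p$. By this additivity it suffices to verify the conclusion on a pure product $x=ab$ with $a\in I$ and $b\in I^{m-1}$, since such products generate $I^m$ as an abelian group.

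For such an $x = ab$, the multiplication formula reads
\[
\delta_p(ab) \;=\; a^{p}\delta_p(b) + b^{p}\delta_p(a) + p\,\delta_p(a)\delta_p(b).
\]
Since $I^{m-1}$ is a $\delta_p$-ideal by \Cref{lem:deltapideals}(1), we have $\delta_p(b)\in I^{m-1}$ and $\delta_p(a)\in I$. Then $a^{p}\delta_p(b) \in I^{p+m-1}\subseteq I^{m+1}$ using $p\geq 2$, while $b^{p}\delta_p(a) \in I^{(m-1)p+1}$, which is contained in $I^{m+1}$ exactly when $m(p-1)\geq p$; this holds for every prime $p$ as soon as $m\geq 2$, and is the unique place in the argument where the hypothesis $m\geq 2$ enters. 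The remaining term $p\,\delta_p(a)\delta_p(b) \in p\cdot I\cdot I^{m-1} = pI^m$ lies in $pI^m$ by inspection, completing the uncompleted case.

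For the closure statement, the plan is to extend by the $p$-adic continuity of $\delta_p$ together with the density of $I^m$ in $\cl{I^m}$. For $x\in\cl{I^m}$ approximated by $x_n\in I^m$ with $x_n\to x$ in the $p$-adic topology, the uncompleted case furnishes $\delta_p(x_n) = py_n + z_n$ with $y_n\in I^m$ and $z_n\in I^{m+1}$, and by continuity $\delta_p(x) = \lim \delta_p(x_n)$. The main obstacle, and the subtlest step, is that the sum $p\cl{I^m}+\cl{I^{m+1}}$ of two $p$-adically closed subgroups need not itself be closed in $A$, so the limit does not automatically lie in the desired subset. I would address this by working modulo each $p^{n}A$ separately, using the elementary identity $\cl{I^{j}} + p^{n}A = I^{j} + p^{n}A$ (valid for all $j,n$) to reduce the closure statement at each finite level to the uncompleted one already proved, and then assembling the resulting compatible conclusions as $n\to\infty$ to conclude that $\delta_p(x)\in p\cl{I^m}+\cl{I^{m+1}}$ in $A$.
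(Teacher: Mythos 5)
Your proof of the uncompleted statement is essentially the paper's: both arguments reduce, via additivity of $\delta_p$ modulo $I^{m+1}$ (the defect term lands in $I^{pm}\subseteq I^{m+1}$; the paper phrases this by applying \Cref{prop:ugly-delta_identity} to the square-zero ideal $I^m/I^{m+1}$), to a product $ab$ with $a\in I$, $b\in I^{m-1}$, and then read off from the multiplication formula that $a^p\delta_p(b)$ and $b^p\delta_p(a)$ vanish in $I^m/I^{m+1}$ while $p\,\delta_p(a)\delta_p(b)\in pI^m$. Your degree bookkeeping, including the observation that $m\ge 2$ is needed exactly for the term $b^p\delta_p(a)$, is correct and more explicit than the paper's.

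For the completed statement the paper simply says it is immediate from $p$-adic continuity. You rightly flag the subtlety this hides, namely that $p\,\cl{I^m}+\cl{I^{m+1}}$ need not be closed; but your proposed repair does not quite dispose of it. Working modulo $p^nA$ and using $\cl{I^{j}}+p^nA=I^{j}+p^nA$ yields only that $\delta_p(x)\in\bigcap_n\bigl(p\,\cl{I^m}+\cl{I^{m+1}}+p^nA\bigr)$, i.e.\ membership in the \emph{closure} of $p\,\cl{I^m}+\cl{I^{m+1}}$, and passing from that to genuine membership is precisely the non-closedness issue you identified; the phrase ``assembling the compatible conclusions'' is where an argument is still owed. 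One honest way to finish is to choose the witnesses compatibly (writing $\delta_p(x_n)=pu_n+v_n$ with $(u_n)$, $(v_n)$ Cauchy) and use that $\cl{I^m}$ and $\cl{I^{m+1}}$ are closed subgroups of the complete ring $A$, hence complete; alternatively, note that for the only downstream use, the convergence of the geometric series in \Cref{prop:sqnil}, containment in the closure already suffices. In short: your argument matches the paper's and is more careful about its weakest point, but that point is not yet fully closed in your sketch either.
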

\begin{proof}
    The second statement is immediate from the first because $\delta_p$ is $p$-adically continuous.  
    
    To see the first statement, we note that by the additivity statement of \Cref{prop:ugly-delta_identity}, it suffices to consider elements of the form $ab \in I^m$ for $a\in I$ and $b\in I^{m-1}$.  Then we have 
\begin{align*}
\delta_p(ab) &= a^p\delta_p(b) + b^p\delta_p(a) + p\delta_p(a)\delta_p(b) \\
&\equiv p\delta_p(a)\delta_p(b)\pmod{I^{m+1}}.
\end{align*}
\end{proof}

Given this, we have our main statement:

\begin{prop}\label{prop:sqnil}
Let $A$ be a $\delta_p$-ring and let $I$ be a $\delta_p$-ideal such that $A$ is classically $(I,p)$-complete. Then, the map 
\[
A\to A/\cl{I^2}
\]
induces an isomorphism 
\[
\Gmdp{A} \iso \Gmdp{A/\cl{I^2}}.
\]
\end{prop}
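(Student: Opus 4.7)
The plan is to reduce the statement to a question about the behavior of $\Gmdp{-}$ along successive square-zero extensions. Writing $A_n := A/\overline{I^{n+1}}$ for $n \geq 1$ and $J_n := \overline{I^n}/\overline{I^{n+1}}$ for $n \geq 2$, Lemma~\ref{lem:deltapideals} ensures that each $\overline{I^n}$ is a $\delta_p$-ideal, so $J_n$ is a square-zero $\delta_p$-ideal in $A_n$ (using $n\geq 2$). Proposition~\ref{prop:sq0exactseq} then yields an exact sequence
\[
0 \to J_n^{\delta_p = 1} \to \Gmdp{A_n} \to \Gmdp{A_{n-1}},
\]
and the main claim I would establish is that each transition map on the right is an isomorphism for every $n \geq 2$. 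Given this, $\Gmdp{A_n} \cong \Gmdp{A_1} = \Gmdp{A/\overline{I^2}}$ for all $n$, and the statement follows by noting that $\Gmdp{-}$ preserves limits (Remark~\ref{rem:delta_p_right_adjoint}) together with $A \cong \lim_n A_n$, which I will argue follows from classical $(I,p)$-completeness.

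For injectivity of $\Gmdp{A_n} \to \Gmdp{A_{n-1}}$, i.e. $J_n^{\delta_p=1} = 0$, Lemma~\ref{lem:deltatangent} gives $\delta_p(J_n) \subseteq pJ_n$, so any $x \in J_n$ with $\delta_p(x) = x$ already lies in $pJ_n$. Writing $x = py$, the $\varphi$-semilinearity of $\delta_p$ on the square-zero ideal $J_n$ from Proposition~\ref{prop:ugly-delta_identity}(1) gives $\delta_p(py) = \varphi(p)\delta_p(y) = p\delta_p(y) \in p^2 J_n$, the last inclusion by another application of Lemma~\ref{lem:deltatangent}. Iterating shows $x \in \bigcap_k p^k J_n$, which vanishes because $\overline{I^{n+1}}$ is $p$-adically closed in $A$ by construction, making $J_n$ $p$-adically separated.

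For surjectivity, I would lift a class $u \in \Gmdp{A_{n-1}}$ to a unit $\tilde u \in A_n$ (possible since the kernel is nilpotent) and seek a correction $\tilde u(1+x)$ with $x \in J_n$ so that $\delta_p(\tilde u(1+x)) = 0$. Expanding with the product formula for $\delta_p$, using $\delta_p(1+x) = \delta_p(x) - x$ from Proposition~\ref{prop:ugly-delta_identity}(2), the identity $(1+x)^p = 1 + px$ (which holds in $A_n$ since $x^2 = 0$), and repeatedly killing cross terms in $J_n^2 = 0$, this reduces to the linear equation
\[
(\mathrm{id} - T)(x) = \tilde u^{-p}\,\delta_p(\tilde u), \qquad T := \delta_p|_{J_n}.
\]
Since $T(J_n) \subseteq pJ_n$ by Lemma~\ref{lem:deltatangent}, we have $T^k(J_n) \subseteq p^k J_n$, so the Neumann series $x = \sum_{k \geq 0} T^k(\tilde u^{-p}\delta_p(\tilde u))$ converges $p$-adically in $J_n$ and solves the equation, provided $J_n$ is $p$-adically complete.

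The main obstacle I anticipate is in handling the completeness/topological hypotheses with care: one must verify that each $J_n$ is $p$-adically complete (so the Neumann series genuinely defines an element of $J_n$), and more delicately that $A \cong \lim_n A/\overline{I^{n+1}}$ follows from classical $(I,p)$-completeness alone. The latter is subtle because the filtration by $\overline{I^n}$ is in general neither finer nor coarser than the $(I,p)$-adic filtration defining the completeness hypothesis, so the two systems are not obviously cofinal. Once these bookkeeping issues are resolved, the essential mechanism is entirely modular: a deformation-theoretic step from Proposition~\ref{prop:sq0exactseq} driven by $p$-adic convergence coming from Lemma~\ref{lem:deltatangent}.
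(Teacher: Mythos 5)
Your proposal is correct and follows essentially the same route as the paper: reduce via $A \cong \lim_n A/\cl{I^{n+1}}$ to showing each transition map $\Gmdp{A/\cl{I^{n+1}}} \to \Gmdp{A/\cl{I^{n}}}$ is an isomorphism for $n\ge 2$, then solve the resulting linear equation on the square-zero ideal $\cl{I^n}/\cl{I^{n+1}}$ by a Neumann series converging thanks to $\delta_p(\cl{I^n}/\cl{I^{n+1}})\subseteq p\cdot\cl{I^n}/\cl{I^{n+1}}$ (Lemma~\ref{lem:deltatangent}). The only cosmetic differences are that you use a multiplicative correction $\tilde u(1+x)$ where the paper uses an additive one $\tilde x + a$, and you treat injectivity and surjectivity separately where the paper establishes bijectivity of a single map $a\mapsto \delta_p(a)-\tilde x^{p-1}a$; the completeness bookkeeping you flag is genuine but resolves exactly as the paper's (terse) argument does, since $\cl{I^m}\subseteq (I,p)^m$ and the relevant quotients are complete and separated.
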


\begin{proof}
Our assumption that $A$ is $(I,p)$-complete implies that 
\[
A\simeq \invlim A/\cl{I^m}. 
\]
To see this, note that the map from $A$ to the limit is clearly surjective by assumption, and so it is enough to check that $\cap \cl{I^m} = \{ 0\}$.  But if $x \in \cap \cl{I^m}$, then for all $j\in \NN$,  $x \in \cap I^m$ modulo $p^j$, and hence $x\equiv 0 \mod p^j$ since $A/p^j$ is $I$-complete. Since $A$ is $p$-complete this implies that $x=0$. 

It therefore suffices to show that the induced map
\[
\Gmdp{A/\cl{I^{m+1}}} \to \Gmdp{A/\cl{I^m}}
\]
is an isomorphism when $m\geq 2$.  For this, let $x\in \Gmdp{A/\cl{I^m}}$ and let $\tilde{x}$ be a lift of $x$ to $A/\cl{I^{m+1}}$.  We will show there is a unique $a\in \cl{I^m}/\cl{I^{m+1}}$ such that $\tilde{x}+a \in \Gmdp{A/\cl{I^{m+1}}}$.  As $A$ is $I$-complete, $\tilde{x}+a$ is certainly a unit, so $\tilde{x}+a\in \Gmdp{A/\cl{I^{m+1}}}$ if and only if $\delta_p(\tilde{x}+a) = 0$.  Since the ideal $\cl{I^m} / \cl{I^{m+1}}\subset A/\cl{I^{m+1}}$ is square-zero, we have by \Cref{prop:ugly-delta_identity}(2)
\[
\delta_p(\tilde{x}+a) = \delta_p(\tilde{x}) + \delta_p(a)-\tilde{x}^{p-1}a
\]
Since $x \in \Gmdp{A/\cl{I^m}}$, the first term is in $\cl{I^m}/\cl{I^{m+1}}$.  It therefore suffices to show that the function
\begin{align*}
\cl{I^m}/\cl{I^{m+1}} &\to \cl{I^m}/\cl{I^{m+1}}\\
a &\mapsto \delta_p(a) - \tilde{x}^{p-1}a
\end{align*}
is a bijection.  But $\tilde{x}$ is a unit, so it's enough to show that $\id - \frac{1}{\tilde{x}^{p-1}}\delta_p$ is an isomorphism.  But this has inverse given by 
\[
b\mapsto \sum_{j=0}^{\infty} \left(\frac{1}{\tilde{x}^{p-1}}\delta_p \right)^{(j)}(b),
\]
which is a well-defined element of $\cl{I^{m}}/\cl{I^{m+1}}$ because the $j$-th term of this series is divisible by $p^j$ by \Cref{lem:deltatangent}.

\end{proof}

Combining this with \Cref{prop:sqnil}, we have the following extension of \Cref{prop:sq0exactseq}:

\begin{prop}\label{prop:nil-inv_delta_units}
Let $A$ be a $\delta_p$-ring and let $I$ be a $\delta_p$-ideal in $A$ such that $A$ is classically $(I,p)$-complete.  Then $A\to A/\cl{I^2}$ induces an isomorphism on $\Gmdp{-}$ and we obtain an exact sequence
\[
\xymatrix{
0 \ar[r] & (\cl{I}/\cl{I^2})^{\delta_p = 1} \ar[r] & \Gmdp{A} \ar[r] & \Gmdp{A/\cl{I}} 
}
. 
\]
Here, the left term is the kernel of the homomorphism $\delta_p -\id \colon \cl{I}/\cl{I^2} \to \cl{I}/\cl{I^2}   $. 
\end{prop}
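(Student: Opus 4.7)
The plan is to combine the two main inputs of this subsection directly. First, I would apply \Cref{prop:sqnil} to $A$ and the $\delta_p$-ideal $I$, obtaining the isomorphism $\Gmdp{A} \iso \Gmdp{A/\cl{I^2}}$ asserted in the first part of the statement. Second, I would recognize that $\cl{I}/\cl{I^2}$ sits inside $A/\cl{I^2}$ as a \emph{square-zero} $\delta_p$-ideal; once this is checked, \Cref{prop:sq0exactseq} applied to $A/\cl{I^2}$ with this ideal produces an exact sequence
\[
0 \to (\cl{I}/\cl{I^2})^{\delta_p = 1} \to \Gmdp{A/\cl{I^2}} \to \Gmdp{(A/\cl{I^2})/(\cl{I}/\cl{I^2})} = \Gmdp{A/\cl{I}},
\]
which combined with the isomorphism from the first step yields the desired sequence.

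The verifications needed are routine. By \Cref{lem:deltapideals}, both $\cl{I^2}$ and $\cl{I}$ are $\delta_p$-ideals of $A$ (the former because $I^2$ is a $\delta_p$-ideal, then take closure), so the $\delta_p$-structure descends to $A/\cl{I^2}$ and the image of $\cl{I}$ there is a $\delta_p$-ideal. For the square-zero property, I would argue that $\cl{I} \cdot \cl{I} \subseteq \cl{I^2}$ via the $p$-adic continuity of multiplication: if $x, y \in \cl{I}$ and $x_n, y_n \in I$ are $p$-adic approximating sequences, then $x_n y_n \in I^2$ and $x_n y_n \to xy$, so $xy \in \cl{I^2}$. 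The identification of the kernel as the set of $a \in \cl{I}/\cl{I^2}$ satisfying $\delta_p(a) = a$, i.e.\ the kernel of $\delta_p - \id$, is already built into the proof of \Cref{prop:sq0exactseq} via the computation $0 = \delta_p(1+a) = \delta_p(a) - a$ in a square-zero extension.

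I do not expect a genuine obstacle here, since the proposition is essentially a packaging of the preceding \Cref{prop:sqnil} and \Cref{prop:sq0exactseq}. The only point requiring even a moment of thought is the commutation of closure with squaring, which is handled by the $p$-adic continuity argument above and fits neatly into the framework of \Cref{lem:deltapideals}.
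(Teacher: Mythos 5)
Your proposal is correct and is exactly the argument the paper intends (the paper states this proposition without proof, introducing it as the combination of \Cref{prop:sqnil} with \Cref{prop:sq0exactseq}). The one point needing verification — that $\cl{I}/\cl{I^2}$ is a square-zero $\delta_p$-ideal of $A/\cl{I^2}$, via $\cl{I}\cdot\cl{I}\subseteq\cl{I^2}$ and \Cref{lem:deltapideals} — is handled adequately by your continuity argument.
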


\begin{rem}
In the situation of \Cref{prop:nil-inv_delta_units}, the map $\Gmdp{A} \to \Gmdp{A/\cl{I}}$ is not generally surjective.  However, it will be in our cases of interest, as we take $A \to A/\cl{I}$ to be the augmentation of an augmented algebra (the group ring), which has a section.  
\end{rem}


\subsubsection{Artin--Schreier theory}\label{subsub:AS}

Another general result regarding $\delta_p$-rings that we need is Artin-Schreier theory. 
In this sub-section, for an animated commutative ring $R\in \Acr$, we regard the derived scheme $\Spec(R)$ as endowed with the \'{e}tale topology on its small \'{e}tale site (see, e.g.,  \cite[\S 5.2.3]{cesnavicius2019purity}). We refer to sheaves on $\Spec(R)$ simply as \'{e}tale sheaves, and unless stated otherwise consider spectral coefficients. For an abelian group $A$, we denote by $\ct{A}^{\et}$ the constant \'{e}tale sheaf on $\Spec(R)$. 
\begin{defn}
Let $R\in \Acr_{\ZZ/p^r}$ and let $\ol{R}:= R\otimes _{\ZZ/p^r} \ZZ/p$. A \tdef{lift of Frobenius} for $R$ is a map $\varphi_R\colon R\to R$ which lifts the derived Frobenius map $\varphi_{\ol{R}}\colon \ol{R}\to \ol{R}$ along the reduction map $R\onto \ol{R}$. More precisely, it is the data of a commutative square of the form:
\[
\xymatrix{
R \ar[d] \ar@{..>}^{\varphi}[r]& R \ar[d] \\ 
\ol{R} \ar^{\ol{\varphi}}[r] & \ol{R}
}
\]
\end{defn}
Our main example for lifts of Frobenius is the following.
\begin{exm}\label{ex:delta_frob_lifts}
Let $A$ be a $\delta_p$-ring, and $\varphi\colon A\to A$ the corresponding Frobenius map. Then, by definition, the map $A\mm p^r \to A\mm p^r$ induced from $\varphi$ by reducing mod $p^r$ extends canonically to a lift of Frobenius. Moreover, if $A$ is derived $p$-complete, we may view the $\delta_p$-ring structure on $A$ as a compatible choice of such lifts of Frobenius.  
\end{exm} 

It turns out that a lift of Frobenius for $R$ automatically extends to all \'{e}tale $R$-algebras. 
\begin{lem}\label{Frob_unique}
Let $R$ be an animated commutative $\ZZ/p^r$-algebra endowed with a lift of Frobenius $\varphi_R\colon R\to R$. Then, every \'{e}tale $R$-algebra $S$ admits a unique lift of Frobenius $\varphi_S\colon S\to S$ compatible with $\varphi_R$.  
\end{lem}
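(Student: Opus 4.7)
The plan is to reformulate a compatible lift of Frobenius on $S$ as a morphism between \'{e}tale $R$-algebras, and then invoke the topological invariance of the small \'{e}tale site for the nilpotent thickening $R\to \ol R$. To this end, I would introduce the ``twist'' notation $\varphi_R^\ast S$ for the $R$-algebra whose underlying animated ring is $S$ but whose $R$-algebra structure is the composite $R\oto{\varphi_R} R\to S$. Unraveling definitions, a compatible lift $\varphi_S\colon S \to S$ is precisely an $R$-algebra map $\varphi_S\colon S\to \varphi_R^\ast S$ whose reduction modulo $p$ coincides with the absolute Frobenius $\ol{\varphi}_S\colon \ol{S} \to \ol{\varphi}_R^\ast \ol{S}$; here, naturality of the animated Frobenius ensures that this reduction is an $\ol{R}$-algebra map into the twist.

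Next, I would observe that both $S$ and $\varphi_R^\ast S$ are \'{e}tale $R$-algebras, and similarly for their reductions over $\ol{R}$. Since $R$ is an animated $\ZZ/p^r$-algebra, the element $p$ is nilpotent in $\pi_0(R)$, so the surjection $\pi_0(R)\onto \pi_0(\ol{R})$ has nilpotent kernel. Topological invariance of the small \'{e}tale site therefore gives an equivalence between the categories of \'{e}tale $R$-algebras and \'{e}tale $\ol{R}$-algebras via base change, inducing in particular a bijection
\[
\Map_{R\text{-}\mathrm{alg}}(S,\varphi_R^\ast S) \iso \Map_{\ol{R}\text{-}\mathrm{alg}}(\ol{S},\ol{\varphi}_R^\ast \ol{S}).
\]
The distinguished element $\ol{\varphi}_S$ on the right-hand side then lifts uniquely to the desired morphism $\varphi_S$ on the left.

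The one place that requires some care is justifying topological invariance in this animated setting, and in particular the identification of the reduction of $\varphi_R^\ast S$ with $\ol{\varphi}_R^\ast \ol{S}$. I do not expect this to be a serious obstacle: the small \'{e}tale site of an animated commutative ring depends only on its $\pi_0$, so the question reduces to the classical invariance of the small \'{e}tale site under nilpotent thickenings of ordinary rings, e.g.\ \cite[\S 5.2.3]{cesnavicius2019purity}, combined with the fact that \'{e}tale algebras over an animated ring are flat and hence their derived and classical reductions agree.
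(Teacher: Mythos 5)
Your reformulation of a compatible Frobenius lift as an $R$-algebra map $S\to \varphi_R^\ast S$ whose reduction is $\varphi_{\ol{S}}$ is correct, but the key step fails as stated: $\varphi_R^\ast S$, the restriction of scalars of $S$ along $\varphi_R$, is \emph{not} an \'{e}tale $R$-algebra in general. Its structure map is the composite $R\oto{\varphi_R} R\to S$, and a Frobenius lift is essentially never \'{e}tale --- already for $R=S=\FF_p[x]$ with $u=\id$, the twisted structure map is $x\mapsto x^p$, which is ramified everywhere. Consequently the equivalence between \'{e}tale $R$-algebras and \'{e}tale $\ol{R}$-algebras does not apply to the pair $(S,\varphi_R^\ast S)$, and the asserted bijection $\Map_R(S,\varphi_R^\ast S)\iso \Map_{\ol{R}}(\ol{S},\varphi_{\ol{R}}^\ast\ol{S})$ does not follow from topological invariance of the small \'{e}tale site.

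That bijection is nevertheless true, and the argument can be repaired in two ways. Either replace the restriction-of-scalars twist by the base-change twist $S\otimes_{R,\varphi_R}R$, which \emph{is} \'{e}tale over $R$, and use the adjunction $\Map_R(S\otimes_{R,\varphi_R}R,\,S)\simeq \Map_R(S,\varphi_R^\ast S)$ before invoking the equivalence of \'{e}tale sites; or, more directly, note that $\varphi_R^\ast S\to \varphi_{\ol{R}}^\ast\ol{S}$ is a nil-thickening of $R$-algebras (its underlying map is $S\to\ol{S}$) and apply the infinitesimal lifting criterion for the formally \'{e}tale map $R\to S$, i.e.\ the vanishing of $L^{\mathrm{alg}}_{S/R}$, which makes $\Map_R(S,-)$ invariant under nil-thickenings of the target. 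The latter is essentially the paper's proof: it encodes the compatible lift as a single lifting problem against the nil-thickening $S\to\ol{S}$ and solves it by the vanishing of the cotangent complex, without passing through the equivalence of \'{e}tale sites. As written, your proof has a genuine gap at the \'{e}taleness claim, though the overall strategy is sound and close to the correct argument.
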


\begin{proof}
Let $\ol{S}:= S\otimes_{\ZZ/p^r} \ZZ/p$, so that we have a reduction mod $p$ map $S\to \ol{S}$. 
A compatible lift of Frobenius for $S$ is a solution to the lifting problem 
\[
\xymatrix{
R\ar^{\varphi_R}[r] \ar[d]  &  R\ar[r]&  S \ar[d] \\
S \ar[r]\ar@{..>}[rru]  & \ol{S} \ar^{\varphi_{\ol{S}}
}[r]& \ol{S}
}.
\]
Since $S$ is an animated $\ZZ/p^r$-algebra as well, the right vertical map is a nil-thickening, and the uniqueness of the lift now follows from the lifting property of \'{e}tale morphisms of animated commutative rings (i.e. by the vanishing of the cotangent complex $L^{\mathrm{alg}}_{S/R}$). 
\end{proof}

\begin{cor}
Let $R$ be an animated commutative $\ZZ/p^r$-algebra with a lift of Frobenius $\varphi_R\colon R\to R$. Then, the map $\varphi_R$ extends uniquely to an endomorphism of sheaves of $\ZZ/p^r$-algebras $\varphi \colon \mO_R \to \mO_R$ on the small \'{e}tale site of $\Spec(R)$. Here, $\mO_R$ denotes the structure sheaf of $\Spec(R)$.   
\end{cor}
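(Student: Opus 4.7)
The plan is to construct $\varphi$ section-wise via \Cref{Frob_unique} and then verify naturality (and hence the sheaf property) using the uniqueness clause of that lemma.

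First, I would unpack the small \'{e}tale site of $\Spec(R)$: its objects are \'{e}tale $R$-algebras $S$ and $\mO_R$ is the sheaf $S \mapsto S$. For each such $S$, \Cref{Frob_unique} produces a unique lift of Frobenius $\varphi_S\colon S \to S$ compatible with $\varphi_R$. The compatibility with the $\ZZ/p^r$-algebra structure is automatic: $\varphi_S$ is a $\ZZ/p^r$-algebra endomorphism because Frobenius on $\ZZ/p$-algebras is $\FF_p$-linear, and both $\varphi_S$ and the identity on $\ZZ/p^r$ provide the same composite $\ZZ/p^r \to R \to S \to S$ after reduction mod $p$, so uniqueness forces $\varphi_S$ to be $\ZZ/p^r$-linear.

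Next I would verify functoriality. Given a morphism $f\colon S \to S'$ in the small \'{e}tale site (automatically \'{e}tale), both composites $\varphi_{S'} \circ f$ and $f \circ \varphi_S$ are maps $S \to S'$ whose reductions mod $p$ agree with $\varphi_{\ol{S'}}\circ \ol{f} = \ol{f}\circ \varphi_{\ol{S}}$, and both are compatible with $\varphi_R$ via the structure map $R \to S \xrightarrow{f} S'$. Applying the uniqueness part of \Cref{Frob_unique} to $S'$ as an \'{e}tale algebra over $R$ (through this composite), these two maps must coincide. Thus the collection $\{\varphi_S\}_S$ assembles into a morphism of presheaves $\varphi\colon \mO_R \to \mO_R$; since $\mO_R$ is already a sheaf, this is automatically a sheaf morphism of $\ZZ/p^r$-algebras.

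Finally, uniqueness of the extension as a sheaf morphism is inherited directly from the uniqueness in \Cref{Frob_unique} applied to each \'{e}tale $R$-algebra $S$. The main (minor) point to be careful about is the naturality check, where one must remember that \Cref{Frob_unique} applies not only to \'{e}tale algebras over $R$ but also transitively to \'{e}tale algebras over any intermediate $S$; this is what makes the two composites $\varphi_{S'}\circ f$ and $f\circ \varphi_S$ both qualify as the unique lift of Frobenius on $S'$ compatible with $\varphi_S$, forcing their equality.
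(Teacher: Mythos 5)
Your overall strategy — construct $\varphi_S$ section-wise via \Cref{Frob_unique} and use the uniqueness clause to get compatibility with morphisms — is the same as the paper's, and the section-wise part of your argument is fine. But there is a gap at the step "the collection $\{\varphi_S\}_S$ assembles into a morphism of presheaves." In the animated setting, $\mO_R$ is a functor from the small \'{e}tale site into an $\infty$-category of animated commutative rings, so a morphism of (pre)sheaves is a point of a mapping space in a functor $\infty$-category. Specifying maps on objects and checking that the naturality squares commute up to homotopy does not by itself produce such a point: a priori one must also supply coherence data for composites, and uniqueness "objectwise" does not immediately give uniqueness (contractibility) of the space of such natural transformations.

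The paper closes exactly this gap. It rewrites the desired morphism as a map of sheaves of animated commutative $R$-algebras $\varphi_R^*\mO_R \to \mO_R$ (both sides valued in \'{e}tale $R$-algebras), and then invokes the fact that the $\infty$-category of \'{e}tale $R$-algebras is equivalent to the ordinary $1$-category of \'{e}tale $\pi_0R$-algebras. Consequently the relevant mapping spaces are discrete and a natural transformation really is determined by its restriction to objects and morphisms; at that point your objectwise existence/uniqueness argument applies verbatim. So your proof becomes correct once you add this $1$-categorical reduction (or some equivalent truncatedness statement); without it, the assembly and the uniqueness claim in the corollary are not justified. A secondary, more minor point: the $\ZZ/p^r$-linearity of $\varphi_S$ is structure rather than a property in the animated world, and it is cleanest to obtain it by noting that $\varphi_S$ is produced as a map of (Frobenius-twisted) $R$-algebras, hence in particular of $\ZZ/p^r$-algebras, rather than by a separate uniqueness argument.
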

\begin{proof}
The data of a morphism $\varphi\colon \mO_R \to \mO_R$ lifting $\varphi_R$ is the same as that of a map of sheaves of animated commutative $R$-algebras $\varphi_R^*\mO_R \to \mO_R$. Note that both the source and target consist of sheaves of \'{e}tale algebras over $R$.

Since the $\infty$-category of \'{e}tale $R$-algebras is equivalent to the $1$-category of \'{e}tale $\pi_0R$-algebras (see, e.g., \cite[Proposition 5.2.4]{cesnavicius2019purity}), it follows that a natural transformation $\varphi_R^*\mO_R \to \mO_R$ is determined by its restriction to objects and morphisms in the category of \'{e}tale $R$-algebras. Individual lifts at the objects exist uniquely by \Cref{Frob_unique}, and they are compatible with morphisms immediately by the uniqueness. 
\end{proof}

Since $\varphi_R$ is a morphism of sheaves of $\ZZ/p^r$-algebras, we have a canonical null sequence of sheaves 
\[
(\ct{\ZZ/p^r})^{\et} \too \mO_R \oto{\varphi - 1} \mO_R,  
\]
where $(\ct{\ZZ/p^r})^{\et}$ is the constant \'{e}tale sheaf on the value $\ZZ/p^r$. 

\begin{lem} \label{prop:Frob_nilp_higher_homotopies}
Let $R$ be an animated commutative $\ZZ/p^r$-algebra with a lift of Frobenius $\varphi_R\colon R\to R$. For all $i>0$, the map $\varphi_R$ induces a nilpotent endomorphism of $\pi_iR$. In particular, $\varphi_R-1$ is an isomorphism on $\pi_iR$.
\end{lem}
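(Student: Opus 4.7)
The plan is to induct on $r$, using as a black box the standard fact that the derived Frobenius $\varphi_A$ on an animated commutative $\ZZ/p$-algebra $A$ acts as zero on $\pi_i A$ for all $i > 0$. One can verify this by Yoneda: the natural transformation on $\pi_i$ induced by $\varphi$ is pinned down by its effect on the tautological class of the free animated $\ZZ/p$-algebra on a degree-$i$ generator, and a direct computation (e.g.\ using the Koszul presentation $\ZZ/p \otimes^L_{\ZZ/p[x]} \ZZ/p$ in the case $i = 1$) shows this tautological class is sent to zero.

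The base case $r = 1$ is immediate, since then $R = \ol{R}$ and $\varphi_R = \varphi_{\ol{R}}$ annihilates $\pi_i R$ for $i > 0$ by the key fact. For the inductive step, let $R' := R \otimes_{\ZZ/p^r} \ZZ/p^{r-1}$, an animated $\ZZ/p^{r-1}$-algebra whose induced Frobenius lift $\varphi_{R'} := \varphi_R \otimes \mathrm{id}$ satisfies $\varphi_{R'}^{r-1} = 0$ on $\pi_i R'$ by inductive hypothesis. Now consider the fiber sequence $p^{r-1} R \to R \to R'$ of $\ZZ/p^r$-modules: multiplication by $p^{r-1}$ yields a canonical isomorphism $\ol{R} \cong p^{r-1} R$ (since $p^{r-1}\ZZ/p^r \cong \ZZ/p$ as $\ZZ/p^r$-modules), and the $\ZZ$-linearity of $\varphi_R$ identifies $\varphi_R|_{p^{r-1} R}$ with $\varphi_{\ol{R}}$ under this iso (using that $\varphi_R(x)$ is a lift of $\varphi_{\ol{R}}(\bar x)$). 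The key fact therefore forces $\varphi_R$ to vanish on the image of $\pi_i(p^{r-1} R) \to \pi_i R$, namely on $p^{r-1} \cdot \pi_i R$. For $x \in \pi_i R$ with $i > 0$, the inductive hypothesis gives $\varphi_R^{r-1}(x) \in \ker(\pi_i R \to \pi_i R') = p^{r-1} \cdot \pi_i R$ via the long exact sequence, so $\varphi_R^r(x) = 0$. Invertibility of $\varphi_R - 1$ on $\pi_i R$ then follows from the finite Neumann sum $-\sum_{k=0}^{r-1} \varphi_R^k$.

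The principal obstacle is the black-box fact about animated $\ZZ/p$-algebras; everything else is a clean two-level induction exploiting the $p$-adic filtration on $R$ together with the compatibility of the Frobenius lift with multiplication by $p^{r-1}$.
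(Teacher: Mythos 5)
Your proof is correct and is essentially the paper's argument: the paper tensors the $p$-adic filtration of $\ZZ/p^r$ with $R$ to obtain a finite filtration with associated graded $\ol{R}[t]/t^r$ and then quotes the vanishing of Frobenius on the higher homotopy of animated $\FF_p$-algebras (Bhatt--Scholze, Prop.\ 11.6), which is exactly your black-box fact; your induction on $r$ via the extension $\ol{R}\to R\to R'$ simply unrolls that same filtration one layer at a time. The only blemish is your identification of $\mathrm{im}(\pi_i\ol{R}\to\pi_iR)=\ker(\pi_iR\to\pi_iR')$ with $p^{r-1}\pi_iR$, which need not hold on the nose (Tor terms contribute to $\pi_i\ol{R}$), but this is harmless: your argument only uses that $\varphi_R$ kills the image of $\pi_i\ol{R}$ and that this image equals the kernel, both of which are true.
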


\begin{proof}
Let $\ol{R}:=R\otimes_{\ZZ/p^r}\ZZ/p$. 
Tensoring the $p$-adic filtration of $\ZZ/p^r$ with $R$, we obtain a natural finite filtration $R_\bullet$ with associated graded 
\[
\gr
(R_\bullet) \simeq R\otimes_{\ZZ/p^r} \gr((\ZZ/p^r)_\bullet) \simeq  \ol{R}[t]/t^r.
\] 
Since the filtration is finite and natural in $R$, it is  stable under $\varphi_R$ and it suffices to show that $\varphi_R$ is nilpotent on $\pi_i\gr(R_\bullet)$; in fact we will show it is zero.  Indeed, we have 
\[
\varphi_R(at^j) = \varphi_{\ol{R}}(a)t^j \quad \forall a\in \pi_i \ol{R},
\] 
which is $0$ because $\varphi_{\ol{R}}$ vanishes in positive degrees (cf. \cite[Prop 11.6, Rem 11.8]{BS}).
\end{proof}

\begin{cor}\label{Frob_fixed_cartesian_pi_zero}
Let $R$ be an animated commutative $\ZZ/p^r$-algebra with a lift of Frobenius $\varphi_R\colon R\to R$. Then, the square 
\[
\xymatrix{
\mO_R\ar^{\varphi-1}[r]  \ar[d]    & \mO_R\ar[d] \\ 
\pi_0\mO_R \ar^{\varphi-1}[r]& \pi_0\mO_R
}
\]
is a Cartesian square of \'{e}tale sheaves on $\Spec(R)$. 
\end{cor}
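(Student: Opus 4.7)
The plan is to reduce the claim to the single assertion that $\varphi - 1$ acts as an equivalence on $\tau_{\geq 1}\mO_R$. Since sheaves of spectra on the small \'{e}tale site of $\Spec(R)$ form a stable $\infty$-category, a commutative square in this category is Cartesian if and only if the induced map on the fibers of either pair of parallel arrows is an equivalence. In the given square, both vertical maps are the truncation $\mO_R \to \pi_0\mO_R$, whose fiber is $\tau_{\geq 1}\mO_R$, and the map between these two fibers is exactly $\varphi - 1$. Thus the square is Cartesian precisely when $\varphi - 1 \colon \tau_{\geq 1}\mO_R \to \tau_{\geq 1}\mO_R$ is an equivalence of \'{e}tale sheaves.

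To verify this, I would work at the level of homotopy sheaves: the task is to show that $\varphi - 1$ is an isomorphism on the \'{e}tale sheaf $\pi_i \mO_R$ for every $i \geq 1$. Since such isomorphisms can be checked at the level of the presheaves they sheafify from, it is enough to show that for every \'{e}tale $R$-algebra $S$ and every $i \geq 1$, the map $\varphi_S - 1$ is an isomorphism on $\pi_i S$. By \Cref{Frob_unique}, any such $S$ inherits a unique compatible lift of Frobenius $\varphi_S$, which is also the value of the sheaf map $\varphi$ on $S$. Applying \Cref{prop:Frob_nilp_higher_homotopies} to the pair $(S,\varphi_S)$, one concludes that $\varphi_S$ is nilpotent on $\pi_i S$ for every $i > 0$, so $\varphi_S - 1$ is an isomorphism on these higher homotopy groups, as desired.

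I do not anticipate any real obstacle: the corollary is essentially a formal consequence of \Cref{Frob_unique} and \Cref{prop:Frob_nilp_higher_homotopies}, combined with the stable fibers-of-squares criterion. The only delicate point is the transition from a statement about sheaves on $\Spec(R)$ to one about sections over \'{e}tale $R$-algebras; this is handled precisely by the uniqueness of the compatible Frobenius lift on each such algebra provided by \Cref{Frob_unique}, which guarantees that the value of the sheaf morphism $\varphi - 1$ on any section is the map to which \Cref{prop:Frob_nilp_higher_homotopies} applies.
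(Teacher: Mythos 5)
Your proposal is correct and follows essentially the same route as the paper: both reduce to the fiber $\tau_{\geq 1}$ of the vertical truncation maps, evaluate on \'etale $R$-algebras $S$ (using \Cref{Frob_unique} to identify the sheaf map there with $\varphi_S - 1$), and invoke \Cref{prop:Frob_nilp_higher_homotopies} to see that $\varphi_S - 1$ is an equivalence on higher homotopy. The only cosmetic difference is the order of operations (fibers first versus sections first), which does not affect the argument.
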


\begin{proof}
It suffices to prove that the square becomes Cartesian after evaluation at $\Spec(S)$ for an \'{e}tale $R$-algebra $S$, namely that
\[
\xymatrix{
S \ar^{\varphi_S -1}[r]\ar[d] & S \ar[d] \\ 
\pi_0S \ar[r] \ar^{\varphi_S -1}[r] & \pi_0 S 
}
\]
is Cartesian. 
The fiber of both vertical maps is $\tau_{\ge 1}S$, and $\varphi_S-1$ is an isomorphism on this fiber by  \Cref{prop:Frob_nilp_higher_homotopies}. This implies the result. 
\end{proof}
We are ready to state and prove a version of Artin-Schreier theory for Frobenius lifts. 

\begin{prop}[Artin-Schreier for Frobenius Lifts]\label{prop:as-main}
Let $R\in \Acr_{\ZZ/p^r}$ with a lift of Frobenius $\varphi_R\colon R\to R$, and let $\varphi\colon \mO_R\to \mO_R$ be its unique extension from \Cref{Frob_unique}. Then, 
the null sequence 
\[
(\ct{\ZZ/p^r})^{\et} \too \mO_R \oto{\varphi - 1} \mO_R 
\]
is a fiber sequence of sheaves of $\ZZ/p^r$-module spectra on $\Spec(R)$, and it induces a short exact sequence   
\[
0\too 
(\ct{\ZZ/p^r})^{\et} \too \pi_0\mO_R \oto{\varphi - 1} \pi_0\mO_R \too 0 
\]
of sheaves of abelian groups on $\Spec(R)$. 
\end{prop}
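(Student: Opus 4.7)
The plan is to deduce (2) from (1) and then prove (1) by induction on $r$. For the reduction, assuming (1) holds, the long exact sequence in homotopy sheaves reads
\[
\pi_1\mO_R \xrightarrow{\varphi-1} \pi_1\mO_R \to (\ct{\ZZ/p^r})^{\et} \to \pi_0\mO_R \xrightarrow{\varphi-1} \pi_0\mO_R \to 0,
\]
where the first map is surjective by \Cref{prop:Frob_nilp_higher_homotopies}; this immediately yields the short exact sequence in (2).

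To prove (1), I proceed by induction on $r \geq 1$. For the base case $r = 1$, the statement concerns an animated commutative $\F_p$-algebra $R$ with its canonical Frobenius. Using \Cref{Frob_fixed_cartesian_pi_zero} to replace $\mO_R$ by $\pi_0\mO_R$ for the purpose of computing the fiber of $\varphi-1$, one reduces to the classical Artin--Schreier short exact sequence of \'etale sheaves of abelian groups
\[
0 \to \F_p \to \pi_0\mO_R \xrightarrow{x \mapsto x^p - x} \pi_0\mO_R \to 0,
\]
where surjectivity is witnessed by the \'etale covers cut out by $y^p - y - a$, and the kernel identification uses the factorization $y^p - y = \prod_{i \in \F_p}(y-i)$.

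For the inductive step, tensoring the cofiber sequence of $\ZZ/p^r$-modules $\ZZ/p \xrightarrow{p^{r-1}} \ZZ/p^r \to \ZZ/p^{r-1}$ with $R$ over $\ZZ/p^r$ yields a cofiber sequence of sheaves of $\ZZ/p^r$-module spectra
\[
\mO_R/p \to \mO_R \to \mO_R/p^{r-1}.
\]
The reductions of $\varphi_R$ define Frobenius lifts on $R/p$ and $R/p^{r-1}$, and the structure map $\ZZ/p^r \to R$ induces a map from the constant cofiber sequence $(\ct{\ZZ/p})^{\et} \to (\ct{\ZZ/p^r})^{\et} \to (\ct{\ZZ/p^{r-1}})^{\et}$ into this one that lifts term-wise to the fibers of $\varphi-1$ (since $\varphi$ fixes the prime subring $\ZZ/p^r$). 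Taking fibers of $\varphi-1$ gives a cofiber sequence
\[
\fib(\varphi-1|_{\mO_R/p}) \to \fib(\varphi-1|_{\mO_R}) \to \fib(\varphi-1|_{\mO_R/p^{r-1}}),
\]
whose outer terms are identified with $(\ct{\ZZ/p})^{\et}$ and $(\ct{\ZZ/p^{r-1}})^{\et}$ via the base case and inductive hypothesis, compatibly with the comparison maps from the constant diagram. Hence the middle comparison map is an equivalence by the two-out-of-three property for maps of cofiber sequences in the stable $\infty$-category of sheaves of $\ZZ/p^r$-module spectra, identifying $\fib(\varphi-1|_{\mO_R})$ with $(\ct{\ZZ/p^r})^{\et}$.

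The main obstacle is the formal setup of the inductive step: verifying that the cofiber sequence $\mO_R/p \to \mO_R \to \mO_R/p^{r-1}$ sheafifies correctly on the small \'etale site, that the reductions of $\varphi_R$ furnish compatible Frobenius lifts on the derived reductions (so that the inductive hypothesis applies to $R/p^{r-1}$), and that the map from the constant diagram is suitably functorial after taking fibers of $\varphi-1$. Given \Cref{Frob_unique} and the naturality of the Frobenius lift across the \'etale site, these compatibilities are essentially formal, and the induction concludes cleanly.
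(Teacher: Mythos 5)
Your proof is correct, and the reduction of the second statement to the first (via the long exact sequence and \Cref{prop:Frob_nilp_higher_homotopies}) together with the $r=1$ base case via classical Artin--Schreier matches the paper. Where you genuinely diverge is the passage from $r=1$ to general $r$: you run a d\'evissage, filtering $\ZZ/p^r$ by the short exact sequence $\ZZ/p \xrightarrow{p^{r-1}} \ZZ/p^r \to \ZZ/p^{r-1}$, identifying the induced endomorphisms on the outer terms of $\mO_R/p \to \mO_R \to \mO_R/p^{r-1}$ with the relevant Frobenius lifts, and concluding by two-out-of-three for a map of cofiber sequences. The paper instead base-changes the entire null sequence along $\ZZ/p^r \to \ZZ/p$ in one step and invokes that $\ZZ/p \otimes_{\ZZ/p^r}(-)$ is exact and conservative on $\ZZ/p^r$-module (sheaves of) spectra, so that the $r=1$ case immediately detects the general case. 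The paper's route is shorter and sidesteps exactly the compatibilities you flag as "the main obstacle": it never needs to identify the fiber term $\fib(\mO_R \to \mO_R/p^{r-1}) \simeq \mO_{\ol{R}}$ together with its induced endomorphism, nor to track the coherence of the null-homotopies in the $3\times 3$ diagram. Your route is more elementary and makes the comparison with the constant sheaf explicit at every stage, but it does require one identification you should spell out: that the endomorphism of the fiber term induced by $\varphi_R$ agrees with the animated Frobenius $\varphi_{\ol{R}}$ of $\ol{R}$ (and likewise that the reduction of $\varphi_R$ to $R/p^{r-1}$ is a Frobenius lift in the sense of the definition). Both facts follow from the $\ZZ/p^r$-linearity of $\varphi_R$ (automatic, since $\ZZ \to \ZZ/p^r$ is a ring epimorphism) and the defining square of the Frobenius lift, so this is not a gap, but it is the load-bearing point of the induction and deserves a sentence in a final writeup.
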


\begin{proof}
First, observe that the first sequence being a fiber sequence and the second sequence being exact are equivalent, by \Cref{Frob_fixed_cartesian_pi_zero}. Now, the claim for $r=1$ follows from classical Artin-Schreier theory, which says that for a (classical) $\ZZ/p$-scheme $X$ the sequence 
\[
0 \too (\ct{\ZZ/p})^{\et} \too \mO_X \oto{\varphi_X - 1} \mO_X \too 0  
\]
is a short exact sequence of \'{e}tale sheaves. 
For the general case, let $R\in \Acr_{\ZZ/p^r}$ with lift of Frobenius $\varphi_R$, and set $\ol{R} := R\otimes_{\ZZ/p^r} \ZZ/p$. Then, via the equivalence of \'{e}tale sites $\Spec(R)\simeq \Spec(\ol{R})$, the base-change of the null sequence 
\[
(\ct{\ZZ/p^r})^{\et} \too \mO_R \oto{\varphi - 1} \mO_R 
\]
along the map $\ZZ/p^r\to \ZZ/p$ identifies with the sequence 
\[
(\ct{\ZZ/p})^{\et} \too \mO_{\ol{R}} \oto{\varphi_{\ol{R}}-1} \mO_{\ol{R}}. 
\]
Since the latter is a fiber sequence by the case $r=1$ and the functor $\ZZ/p \otimes_{\ZZ/p^r} (-) $ is exact and conservative, we deduce that the former sequence is a fiber sequence and the result follows. 
\end{proof}

\begin{cor}\label{cor:Artin_Schreier_delta}
Let $A$ be a derived $p$-complete $\delta_p$-ring. For every $r\in \NN$ we have an exact sequence 
\[
0 \too \ct{\ZZ/p^r}(A) \to A/p^r \oto{\varphi_A-1} A/p^r.
\]
\end{cor}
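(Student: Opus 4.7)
My plan is to deduce the corollary from the sheaf-theoretic Artin--Schreier theorem \Cref{prop:as-main} applied to $R := A/p^r$, followed by passage to global sections. Concretely, the first step is to observe that $R$ inherits a canonical lift of Frobenius $\varphi_R \colon R \to R$ from the $\delta_p$-structure on $A$ by \Cref{ex:delta_frob_lifts}; this is exactly the input \Cref{prop:as-main} requires. Applying that proposition yields a short exact sequence of \'{e}tale sheaves of abelian groups on $\Spec(R)$,
\[
0 \to (\ct{\ZZ/p^r})^{\et} \to \pi_0 \mO_R \oto{\varphi-1} \pi_0 \mO_R \to 0.
\]

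Next, I take global sections. This functor is only left exact on sheaves of abelian groups---the obstruction to right exactness being $H^1_{\et}(\Spec(R),(\ct{\ZZ/p^r})^{\et})$, which generally does not vanish---which is precisely why the corollary asserts only a left-exact sequence rather than a short exact one. The resulting sequence
\[
0 \to \Gamma\bigl(\Spec(R),(\ct{\ZZ/p^r})^{\et}\bigr) \to A/p^r \oto{\varphi_A-1} A/p^r
\]
uses the identification $\Gamma(\Spec(R),\pi_0 \mO_R) = A/p^r$ together with the fact that the sheaf map $\varphi - 1$ restricts on global sections to the morphism induced by $\varphi_A - 1$.

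The remaining step is to identify the leftmost term with $\ct{\ZZ/p^r}(A)$. By definition of a constant \'{e}tale sheaf, this group is $\ct{\ZZ/p^r}(A/p^r)$, which is determined by the Boolean algebra of idempotents of $A/p^r$. Because $A$ is derived $p$-complete, it is classically $p$-complete and hence Henselian along $(p)$, so idempotents lift uniquely along the surjection $A \onto A/p^r$, giving the identification $\ct{\ZZ/p^r}(A/p^r) \cong \ct{\ZZ/p^r}(A)$. I do not anticipate any serious obstacle here: all the real structural work is packaged into \Cref{prop:as-main}, and what remains is the routine left-exactness of global sections combined with a standard lifting-of-idempotents argument.
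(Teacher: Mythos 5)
Your argument is essentially the paper's proof: apply \Cref{prop:as-main}, take the $H^0$ part of the resulting long exact sequence in \'etale cohomology, identify $H^0$ of the constant sheaf with locally constant functions, and use Henselianity along $(p)$ to lift them to $A$. Two small points of care: the canonical lift of Frobenius supplied by \Cref{ex:delta_frob_lifts} lives on the \emph{derived} reduction $A\mm p^r$, not on the classical quotient $A/p^r$ (the two differ when $A$ has $p$-power torsion, which is why the paper applies the proposition to $R = A\mm p^r$ and then works with $\pi_0\mO_R$, whose global sections are still $A/p^r$). Also, derived $p$-completeness does not imply classical $p$-completeness in general; the correct statement, and the one the paper cites, is that derived $p$-completeness directly makes $(A,(p^r))$ a Henselian pair, which is all your idempotent-lifting step requires.
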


\begin{proof}
Let $R= A\mm p^r$, which is an animated $\ZZ/p^r$-algebra with lift of Frobenius. Applying \Cref{prop:as-main} to it, we get an exact sequence 
\[
0 \too (\ct{\ZZ/p^r})^{\et} \too \pi_0\mO_R \oto{\varphi - 1} \pi_0\mO_R \too 0  
\]
of \'{e}tale sheaves on $\Spec(R)$. Taking the long exact sequence of cohomologies associated with this exact sequence of sheaves, we get an exact sequence 
\[
0 \too H^0_{\et}(\Spec(R);\ZZ/p^r) \to A/p^r \to A/p^r.
\] 
Now, the zeroth \'{e}tale cohomology with constant coefficients is locally constant functions so 
\[
H^0_{\et}(\Spec(R);\ZZ/p^r) \simeq \ct{\ZZ/p^r}(R) = \ct{\ZZ/p^r}(A/p^r). 
\]  
Finally, the assumption that $A$ is derived $p$-complete implies that $(A,(p^r))$ is an Henselian pair (\cite[Lemma 15.93.10]{stacks-project}), so that $A\to A/p^r$ induces a bijection on Zariski-components \cite[Lemma 15.11.6]{stacks-project}, and hence on global sections of the constant sheaf $\ct{\ZZ/p^r}$. This concludes the proof. 
\end{proof}

\subsection{Rigidity for $\delta_p$-group algebras}\label{sub:deltarig}

In this section, we compute the rank $1$ units of a $p$-complete group algebra $A[M]^{\wedge}_p$ in terms of $M$ and the rank $1$ units in $A$.  It will be convenient to phrase our results in terms of a slight variant of $\Gmdp{-}$.

\begin{defn}
For an abelian group $M$, set 
\begin{align*}
\mdef{\Gmdred{A[M]}} &:= \Ker(\Gmdp{A[M]} \to \Gmdp{A}),\\
\mdef{\Gmdred{A[M]^\wedge_p}} &:= \Ker(\Gmdp{A[M]^\wedge_p} \to \Gmdp{A}).
\end{align*}
Here, the $\delta_p$-structure on the $p$-completion of a $\delta_p$-ring $R$ is the unique one for which $R \to R^\wedge_p$ is a $\delta_p$-ring map, see \cite[Lemma 2.17]{bhatt2022prisms}.  We refer to these as the \emph{reduced units} of rank $1$.
\end{defn}

Since the augmentation $A[M]^{\wedge}_p \to A$ is split, \Cref{thm:delta_p_main} is equivalent to:

\begin{thm}\label{thm:delta_p_main_red}
    Let $A$ be a $p$-complete $\delta_p$-ring and $M$ be a finitely generated abelian group whose torsion part is $p$-power torsion.  Then there is a natural isomorphism
    \[
    \Gmdred{A[M]^{\wedge}_p} \cong \underline{M}(A).
    \]
\end{thm}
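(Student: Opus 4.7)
The plan is to reduce the theorem to two atomic cases, $M = \Z/p^k$ and $M = \Z$, via multiplicativity, and then attack each with the deformation theory of \S\ref{sub:deltadef} combined with Artin--Schreier. For the multiplicativity reduction, observe that for $M = M_1 \oplus M_2$, setting $B := A[M_1]^{\wedge}_p$, the augmentation $A[M]^{\wedge}_p \cong B[M_2]^{\wedge}_p \to B \to A$ is a composition of two split augmentations of $\delta_p$-rings, so
\[
\Gmdred{A[M_1 \oplus M_2]^{\wedge}_p} \;\cong\; \Gmdred{A[M_1]^{\wedge}_p}\oplus \Gmdred{B[M_2]^{\wedge}_p}.
\]
A short check that the idempotents of $B = A[M_1]^{\wedge}_p$ agree with those of $A$ (reducing modulo $p$ and using $p$-completeness plus the structure of group rings over $\F_p$-algebras for $M_1$ finitely generated with $p$-power torsion) ensures $\underline{M_2}(B) = \underline{M_2}(A)$, so induction reduces the theorem to the atomic cases.

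For $M = \Z/p^k$, let $R := A[\Z/p^k]^{\wedge}_p = A[t]/(t^{p^k}-1)$ with augmentation ideal $I = (t-1)$. The relation $(1+(t-1))^{p^k} - 1 = 0$ in $R$ shows both that $p^k(t-1) \in I^2$ (so $I/\cl{I^2}\cong A/p^k$ as an $A$-module) and that $(t-1)^{p^k}\in pR$ (so $R$ is $(I,p)$-complete). Applying \Cref{prop:nil-inv_delta_units} gives $\Gmdred{R}\cong (I/\cl{I^2})^{\delta_p=1}$. The $\varphi$-semilinearity of \Cref{prop:ugly-delta_identity} combined with the direct calculation $\delta_p(t-1)\equiv t-1 \pmod{I^2}$ (obtained from $\varphi(t-1) = (1+(t-1))^p - 1$) identifies the action of $\delta_p$ on $A/p^k$ as the Frobenius. \Cref{cor:Artin_Schreier_delta} then identifies the Frobenius fixed points with $\ct{\Z/p^k}(A) = \underline{\Z/p^k}(A)$.

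For $M = \Z$, the ring $R := A[t^{\pm 1}]^{\wedge}_p$ is \emph{not} $(I,p)$-complete, so the previous argument must be modified. The plan is to first pass to the $(I,p)$-completion $\widehat R \cong A[[t-1]]^{\wedge}_p$ and apply the previous argument to $\widehat R$ (now with $I/\cl{I^2}\cong A$, and using Artin--Schreier in the inverse limit over $r$) to obtain $\Gmdred{\widehat R} \cong \underline{\Z_p}(A)$. The containment $\underline{\Z}(A)\subseteq \mathrm{image}\bigl(\Gmdred{R}\to \Gmdred{\widehat R}\bigr)$ is supplied by the $\Theta$-map $n \mapsto t^n$, and the theorem reduces to showing the reverse inclusion. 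The main obstacle is this last step: ruling out ``exotic'' reduced rank-$1$ units with non-integral $p$-adic exponent. I would attack it by reducing modulo $p$: for $A/p$ reduced, a unit in $(A/p)[t^{\pm 1}]$ augmenting to $1$ decomposes as $\sum e_\alpha t^{n_\alpha}$ with $n_\alpha\in\Z$, so any $u\in\Gmdred{R}$ factors as $t^n(1+pw)$, and iteratively applying $\varphi(u) = u^p$ together with $p$-completeness forces $w = 0$. The general case of non-reduced $A/p$ requires an additional nil-invariance argument --- extending the square-zero deformation theory of \S\ref{subsub:ideals} across the nilradical of $A/p$ --- which is the main technical difficulty in the $\Z$-case.
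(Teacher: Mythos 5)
Your reduction to the atomic cases $\Z/p^k$ and $\Z$ is a genuinely different organization from the paper, which never isolates $M=\Z$. Your treatment of $M=\Z/p^k$ is essentially the paper's argument (completeness of $A[M]$ at $(p,J)$, \Cref{prop:nil-inv_delta_units}, identification of $\delta_p$ on $J/J^2$ with Frobenius, then \Cref{cor:Artin_Schreier_delta}), so that part is fine. The problem is the case $M=\Z$, and you have correctly located where it is: ruling out exotic units after passing to $\widehat R = A[[t-1]]^{\wedge}_p$. But the argument you sketch for that step has a genuine gap which your proposed repair does not close. Your mod-$p$ descent needs two hypotheses that are not part of the theorem: that $A/p$ is reduced (to invoke the classical description of units of $(A/p)[t^{\pm 1}]$ and to conclude $w^p\equiv 0 \Rightarrow w\equiv 0$ from $\varphi(w)\equiv w^p$), and that $A$ has no $p$-torsion (to pass from $p\varphi(w)\equiv 0 \pmod{p^2}$ to $\varphi(w)\equiv 0\pmod p$). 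The suggested fix --- running the square-zero deformation theory of \S\ref{subsub:ideals} across the nilradical $N$ of $A/p$ --- does not work as stated: \Cref{prop:sqnil} requires classical $(I,p)$-completeness, and a general nil ideal $N$ is neither nilpotent nor finitely generated, so $A/p$ need not be $N$-complete and the inductive limit over $N^m$-quotients is unavailable. Moreover $N$ need not be a $\delta_p$-ideal issue aside, the real obstruction is that nothing controls $\bigcap_m N^m$. So as written the $\Z$-case is only proved for $A$ with $A/p$ reduced and $A$ $p$-torsion-free.

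For comparison, the paper (\Cref{thm:deltarig}) sidesteps all of this by never leaving the finite-group world: given $f=\sum_m x_m[m]\in\Gmdred{A[M]^{\wedge}_p}$, it considers the reductions $f_{r,s}\in A/p^r[M/p^s]$, knows each $f_{\infty,s}$ is tautological by the finite case, and exploits that any finite subset of $M$ injects into $M/p^s$ for $s\gg 0$ (this is exactly where the hypothesis that the torsion of $M$ is $p$-power torsion enters). After a Hensel lifting of idempotents one reduces to $f\equiv 1 \pmod p$, and then a purely coefficientwise argument --- if some $x_{m_0}\neq 0$, choose $r$ with $x_{m_0}\not\equiv 0\pmod{p^r}$ and $s$ separating the mod-$p^r$ support, and derive a contradiction from the tautologicality of $f_{r,s}$ --- finishes the proof with no reducedness or torsion-freeness assumptions on $A$. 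If you want to salvage your decomposition into atomic cases, you would still need an argument of this approximation type (or some other device) for $M=\Z$ over a general $p$-complete $\delta_p$-ring; the Frobenius-iteration trick alone does not suffice.
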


We handle the case where $M$ is a finite abelian $p$-group in \S \ref{subsub:rigtors} and the general case in \ref{subsub:riggen}.

\subsubsection{The torsion case}
\label{subsub:rigtors}

Our strategy is to apply the square nil-invariance results to the augmentation ideal in $A[M]$ when $M$ is a finite abelian $p$-group.  
To do so, we need the following lemma:

\begin{lem}
Let $A$ be a $p$-complete commutative ring, let $M$ be a finite abelian $p$-group, and let $J\subset A[M]$ denote the kernel of the augmentation $A[M]\to A$.  Then $A[M]$ is $(p,J)$-complete.
\end{lem}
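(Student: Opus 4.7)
The plan is to reduce classical $(p,J)$-completeness of $A[M]$ to classical $p$-completeness of $A[M]$, by arguing that when $M$ is a finite abelian $p$-group, the $(p,J)$-adic and $p$-adic topologies on $A[M]$ coincide.

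The first and key step is to show that there exists $r \geq 1$ with $J^r \subseteq pA[M]$. This is the classical fact that the augmentation ideal of the group algebra of a finite $p$-group over a ring of characteristic $p$ is nilpotent. Explicitly, if $p^N$ is the exponent of $M$, then for every $m \in M$,
\[
([m]-1)^{p^N} \equiv [p^N m] - 1 = 0 \pmod{p},
\]
since the $p$-th power is additive modulo $p$. Since $M$ is finite, $J$ is finitely generated by elements of the form $[m]-1$, each of which is nilpotent modulo $p$, so a pigeonhole argument on generators yields some uniform $r$ with $J^r \subseteq pA[M]$.

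Second, from $J^r \subseteq pA[M]$ I would deduce $(p,J)^{rn} \subseteq p^n A[M]$ for all $n \geq 1$. Indeed, any monomial $p^i J^j$ with $i+j = rn$ satisfies $J^j \subseteq p^{\lfloor j/r \rfloor}A[M]$, and a short numerical check gives $i + \lfloor j/r \rfloor \geq n$. Together with the trivial inclusion $(p)^n \subseteq (p,J)^n$, this shows the two filtrations are cofinal, hence the natural map of towers $\{A[M]/(p,J)^n\} \to \{A[M]/p^n\}$ is a pro-isomorphism, and the two inverse limits agree.

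Finally, since $A[M] \cong A^{|M|}$ as an $A$-module and $A$ is classically $p$-complete, $A[M]$ is classically $p$-complete, and hence $(p,J)$-complete by the previous step. The only substantive point is the nilpotence of $J$ modulo $p$, which is standard; the rest is formal.
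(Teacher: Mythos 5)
Your proof is correct and rests on exactly the same key computation as the paper's: $([m]-1)^{p^j}\equiv [p^j m]-1 = 0 \pmod p$ for $p^j$ at least the exponent of $M$, so that $J$ is nilpotent mod $p$ and the $(p,J)$-adic topology coincides with the $p$-adic one. The paper states the reduction more tersely ("since $A$ is $p$-complete, it suffices to show some power of each $[m]-1$ is divisible by $p$"), while you spell out the cofinality of the two filtrations, but the argument is essentially identical.
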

\begin{proof}
Note that $J$ is generated by the finite set of elements $[m]-1$ for $m\in M$.  
Since $A$ is assumed to be $p$-complete, it suffices to show that some power of each $[m]-1$ is divisible by $p$.  But we have that
\[
([m]-1)^{p^j} \equiv [m]^{p^j}  -1 = [p^jm]-1\pmod{p}
\]
and so this is $0 \pmod{p}$ for $p^j$ larger than the order of torsion in $M$.
\end{proof}

It follows from \Cref{prop:nil-inv_delta_units} that we may identify $\Gmdred{A[M]}$ with $(J/J^2)^{\delta_p=1}$.  We now compute this group.

\begin{lem}\label{lem:tangentstr}
Let $A$ be a $\delta_p$-ring, let $M$ be a finite abelian $p$-group, and let $J\subset A[M]$ denote the kernel of the augmentation $A[M]\to A$.  Then there is a canonical isomorphism of abelian groups
\begin{align*}
A\otimes M &\simeq J/J^2 \\
a\otimes m &\mapsto a([m]-1)\in J.
\end{align*}
Under this isomorphism, the operation $\delta_p$ on the right-hand side is identified with the endomorphism $\varphi_A \otimes \id_M$ on the left-hand side. 

\end{lem}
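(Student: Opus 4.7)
The plan is to split the argument into two parts: first establishing the abelian group isomorphism, and then identifying the action of $\delta_p$ under it. For the isomorphism $A \otimes_\ZZ M \cong J/J^2$, I would exploit that $\ZZ[M]$ is free as a $\ZZ$-module, so the augmentation sequence $0 \to I_M \to \ZZ[M] \to \ZZ \to 0$ splits $\ZZ$-linearly; tensoring with $A$ then yields $J \cong A \otimes_\ZZ I_M$ and $J^2 \cong A \otimes_\ZZ I_M^2$, reducing the statement to the classical isomorphism $I_M/I_M^2 \cong M$, $[m] - 1 \mapsto m$. The latter I would verify by noting that $m \mapsto [m] - 1$ is a homomorphism modulo $I_M^2$ (using the identity $([m] - 1)([n] - 1) = ([m+n] - 1) - ([m] - 1) - ([n] - 1)$) and writing down the evident $\ZZ$-linear inverse that sends each basis element $[m] - 1$ to $m$.

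For the $\delta_p$-compatibility, the key is to invoke \Cref{prop:ugly-delta_identity}(1), by which $\delta_p$ restricts to an additive, $\varphi_A$-semilinear endomorphism of the square-zero ideal $J/J^2 \subset A[M]/J^2$. In particular, $\delta_p$ is determined by its values on the generators $[m] - 1$, and semi-linearity then yields
\[
\delta_p(a \cdot ([m] - 1)) \equiv \varphi_A(a) \cdot ([m] - 1) \pmod{J^2},
\]
matching $(\varphi_A \otimes \id_M)(a \otimes m)$ under the identification from the first part.

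The core computation is thus to show $\delta_p([m] - 1) \equiv [m] - 1 \pmod{I_M^2}$ in $\ZZ[M]$. Since $\varphi([m]) = [pm] = [m]^p$ in the $p$-torsion-free ring $\ZZ[M]$, the division formula gives
\[
\delta_p([m] - 1) = \frac{[m]^p - 1 - ([m] - 1)^p}{p};
\]
substituting $u := [m] - 1$, the numerator expands as $(1 + u)^p - 1 - u^p = pu + \sum_{k=2}^{p-1} \binom{p}{k} u^k$, so after dividing by $p$ every term with $k \geq 2$ lies in $I_M^2$. I do not anticipate a serious obstacle; the only slight subtlety is to perform the $\delta_p$-computation in $\ZZ[M]$, where the division formula is available, rather than in the possibly $p$-torsion ring $A[M]$.
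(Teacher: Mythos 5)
Your argument is essentially the paper's: the same identification of $J/J^2$ with $A\otimes M$ via the elements $[m]-1$ and the relation $([m]-1)([n]-1)=0$, the same reduction to $a=1$ via the $\varphi_A$-semilinearity of \Cref{prop:ugly-delta_identity}(1), and the same computation of $\delta_p([m]-1)$ carried out in the torsion-free ring $\ZZ[M]$ (the paper phrases this last step as a remark about the integer polynomial $q$ with $q(1)=0$ and $q'(1)=1$ rather than your explicit binomial expansion, but the content is identical). One small imprecision: the intermediate claim $J^2\cong A\otimes_\ZZ I_M^2$ is false in general --- since $I_M/I_M^2\cong M$ has $p$-torsion, $I_M^2$ is not a $\ZZ$-module direct summand of $I_M$, and the map $A\otimes_\ZZ I_M^2\to A\otimes_\ZZ I_M$ can fail to be injective (e.g.\ $A=\F_p$, $M=\Z/p$). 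What is true is that $J^2$ is the \emph{image} of this map, which is all you need: right-exactness of $A\otimes_\ZZ(-)$ then gives $J/J^2\cong A\otimes_\ZZ(I_M/I_M^2)\cong A\otimes M$ as desired.
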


\begin{proof}
To see this isomorphism of abelian groups, we note that $J$ is a free $A$-module with generators $[m]-1$ for $m\in M$, and that $J^2$ is generated by the relations $([m]-1)([n]-1)=0$, that is,
\[
([m]-1) +([n]-1) \equiv [m+n]-1 \mod J^2.
\]
Thus, the quotient $J/J^2$ is visibly the same as the left-hand side, so it remains to identify these two operations $\varphi_A\otimes \id_M$ and $\delta_p$.

Since $\delta_p(a([m]-1)) \equiv \varphi_A(a) \delta_p([m]-1)$ on $J/J^2$ by \Cref{prop:ugly-delta_identity}(1), it suffices to prove the claim for $a=1$. In this case we have
\[
\delta_p([m]-1) = \frac{\varphi([m]-1) - ([m]-1)^p}{p} = \frac{([m]^p-1) - ([m]-1)^p}{p}.  
\]
The right-hand side is a polynomial $q([m])$ with integer coefficients such that $q(1) = 0$ and $q'(1) = 1$, and hence 
\[
q([m]) \equiv [m]-1 \mod ([m]-1)^2
\]
as desired.
\end{proof}

Given this identification, we can use the results of \S\ref{subsub:AS} to prove the desired result:
\begin{prop}\label{prop:red_delta_p_taut_finite}
        Let $A$ be a $p$-complete $\delta_p$-ring and $M$ be a finite abelian $p$-group. Then there is a natural isomorphism
    \[
    \Gmdred{A[M]^{\wedge}_p} \simeq \underline{M}(A).
    \]
\end{prop}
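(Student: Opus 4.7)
The plan is to combine the square nil-invariance framework of \S\ref{subsub:ideals}--\ref{subsub:nil} with the Artin--Schreier calculation of \S\ref{subsub:AS}. A preliminary observation is that since $M$ is finite, $A[M]$ is a finite free $A$-module, hence already $p$-complete; so $A[M]^\wedge_p = A[M]$ and we may work with the uncompleted group algebra throughout.

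First, I would apply \Cref{prop:nil-inv_delta_units} to $A[M]$ with the augmentation ideal $J$, which is a $\delta_p$-ideal since the Frobenius lift of \Cref{cnstr:groupalgdelta} preserves $J$. The preceding lemma guarantees $A[M]$ is $(p,J)$-complete, so the proposition yields an exact sequence
\[
0 \to (J/\cl{J^2})^{\delta_p = 1} \to \Gmdp{A[M]} \to \Gmdp{A}.
\]
The unit map $A \to A[M]$ is a $\delta_p$-ring section of the augmentation, so the right-hand map is surjective and $\Gmdred{A[M]}$ is identified with $(J/\cl{J^2})^{\delta_p = 1}$.

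Next, by \Cref{lem:tangentstr}, $J/J^2$ is canonically identified as an abelian group with $A \otimes M$, with $\delta_p$ corresponding to $\varphi_A \otimes \id_M$. Since $M$ is a finite abelian $p$-group, $A \otimes M$ is a finite direct sum of quotients $A/p^{r_i}$, each $p$-adically complete; in particular $J^2$ is closed and $\cl{J^2} = J^2$. Choosing a decomposition $M \cong \bigoplus_i \ZZ/p^{r_i}$, the kernel splits as $\bigoplus_i (A/p^{r_i})^{\varphi_A = 1}$, and by \Cref{cor:Artin_Schreier_delta} each summand is $\underline{\ZZ/p^{r_i}}(A)$, reassembling to $\underline{M}(A)$.

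The main obstacle has essentially been handled by the preparatory results: \Cref{prop:nil-inv_delta_units} does the deformation-theoretic bookkeeping and \Cref{cor:Artin_Schreier_delta} supplies the Artin--Schreier input. The only residual subtleties are checking $\cl{J^2}=J^2$ (which follows from the $p$-completeness of $A \otimes M$) and naturality in $A$, which is immediate from the functoriality of every step above.
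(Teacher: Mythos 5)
Your proposal is correct and follows essentially the same route as the paper: apply \Cref{prop:nil-inv_delta_units} to the augmentation ideal, identify $(J/\cl{J^2})^{\delta_p=1}$ with the fixed points of $\varphi_A\otimes\id_M$ on $A\otimes M$ via \Cref{lem:tangentstr}, reduce to $M=\ZZ/p^r$, and conclude with \Cref{cor:Artin_Schreier_delta}. The only difference is that you spell out two points the paper leaves implicit ($A[M]^\wedge_p=A[M]$ and $\cl{J^2}=J^2$), which is harmless and, if anything, a useful clarification.
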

\begin{proof}
By \Cref{prop:nil-inv_delta_units} and \Cref{lem:tangentstr}, we may identify $\Gmdred{A[M]^{\wedge}_p}$ with the fixed points of $\varphi_A \otimes \mathrm{id}_M$ on $A\otimes M$. Since $M$ is a finite abelian $p$-group, it is a finite direct sum of quotients of the form $\ZZ/p^r$, so we may reduce to the case $M= \ZZ/p^r$. In this case, we have $A\otimes M = A/p^r$ and  
\Cref{cor:Artin_Schreier_delta} identifies the fixed points of $\varphi_A \otimes \id_M$ on this object with $\underline{\ZZ/p^r}(A) = \underline{M}(A)$.  
\end{proof}

\subsubsection{The general case}
\label{subsub:riggen}

\begin{thm}\label{thm:deltarig}
Let $M$ be a finitely generated abelian group whose torsion part is $p$-power torsion and let $A$ be a $p$-complete $\delta_p$-ring. Then
\[
\Gmdred{A[M]^{\wedge}_p} \simeq \underline{M}(A).
\]
\end{thm}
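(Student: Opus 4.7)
The plan is to reduce first to the torsion-free case $M = \Z^r$ by splitting off the finite $p$-torsion via \Cref{prop:red_delta_p_taut_finite}, and then to handle $M = \Z^r$ by approximating it by its finite quotients $(\Z/p^N)^r$ (to which \Cref{prop:red_delta_p_taut_finite} also applies) and showing a posteriori that the ``exponent'' of a rank $1$ unit is integer-valued rather than merely $p$-adic.

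Write $M = F \oplus T$ with $F \cong \Z^r$ free and $T$ a finite abelian $p$-group, and set $B := A[F]^{\wedge}_p$, a $p$-complete $\delta_p$-ring. Then $A[M]^{\wedge}_p \simeq B[T]^{\wedge}_p$ and the successive augmentations $B[T]^{\wedge}_p \to B \to A$ are split $\delta_p$-ring maps, giving a direct sum splitting
\[
\Gmdred{A[M]^{\wedge}_p} \;\cong\; \Gmdred{A[F]^{\wedge}_p} \oplus \Gmdred{B[T]^{\wedge}_p}.
\]
Applying \Cref{prop:red_delta_p_taut_finite} to $B$ identifies the second summand with $\underline{T}(B)$, and since $\Spec B \to \Spec A$ has connected fibers (copies of $\GG_m^r$) the idempotents of $B$ and $A$ agree, giving $\underline{T}(B) = \underline{T}(A)$. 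Comparing with $\underline{M}(A) = \underline{F}(A) \oplus \underline{T}(A)$ reduces the problem to the torsion-free statement $\Gmdred{A[\Z^r]^{\wedge}_p} \cong \underline{\Z^r}(A)$.

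For the torsion-free case, each quotient $\Z^r \twoheadrightarrow (\Z/p^N)^r$ induces a $\delta_p$-ring map $A[\Z^r]^{\wedge}_p \to A[(\Z/p^N)^r]^{\wedge}_p$. By \Cref{prop:red_delta_p_taut_finite}, every $u \in \Gmdred{A[\Z^r]^{\wedge}_p}$ yields a compatible family $(N_\alpha)_{\alpha \geq 1}$ with $N_\alpha \in \underline{(\Z/p^\alpha)^r}(A)$, whose inverse limit is an element $N_\infty \in \underline{\Z_p^r}(A)$. Together with the evident section $n \mapsto t^n$ of $\underline{\Z^r}(A) \hookrightarrow \underline{\Z_p^r}(A)$, the goal is equivalent to showing that the resulting map $\Gmdred{A[\Z^r]^{\wedge}_p} \to \underline{\Z_p^r}(A)$ is injective with image exactly the subgroup $\underline{\Z^r}(A)$.

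The main obstacle, and the crucial input, is showing $N_\infty$ is integer-valued rather than merely $p$-adic. The decisive observation is that reducing $u$ modulo $p$ produces an honest finite-support Laurent polynomial $\bar u \in (A/p)[\Z^r]$, even though a general element of $A[\Z^r]^{\wedge}_p$ is only a $p$-adically convergent series. Working Zariski-locally on $\Spec A$, we may assume $A$ is connected; then by \Cref{prop:red_delta_p_taut_finite} the image of $\bar u$ in $(A/p)[(\Z/p^N)^r]$ is the monomial $\tau^{N_\infty \bmod p^N}$, and for $N$ large enough that $p^N$ exceeds the diameter of $\mathrm{supp}(\bar u) \subset \Z^r$, the reduction map $\Z^r \to (\Z/p^N)^r$ is injective on this support, forcing $\mathrm{supp}(\bar u) = \{n\}$ for a single $n \in \Z^r$ and $N_\infty = n$. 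Injectivity of the map $\Gmdred{A[\Z^r]^{\wedge}_p} \to \underline{\Z_p^r}(A)$ reduces to the statement that $\bigcap_N \ker\bigl(A[\Z^r]^{\wedge}_p \to A[(\Z/p^N)^r]^{\wedge}_p\bigr) = 0$, which one verifies by the same bounded-support analysis applied to each reduction modulo $p^n$ of an element of the intersection.
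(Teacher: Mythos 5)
Your proof is essentially the paper's: both arguments push the unit into the finite quotients $A[M/p^N]$ (where \Cref{prop:red_delta_p_taut_finite} applies) and then use the fact that the reduction mod $p^n$ has finite support, which injects into $M/p^N$ for $N$ large, to force the resulting pro-$p$ exponent to be an honest element of $M$; your preliminary splitting $M=F\oplus T$ is only an organizational difference, since the paper runs the same argument on all of $M$ at once, using that $M/p^N$ is finite under the stated torsion hypothesis. One point to tighten: ``working Zariski-locally we may assume $A$ is connected'' is not a legitimate reduction for an arbitrary ring; instead, as in the paper, observe that the coefficients of $\bar u$ on its finite support must be orthogonal idempotents summing to $1$, giving a \emph{finite} clopen decomposition of $\Spec(A/p)$ which lifts to $A$ by Hensel's lemma, and argue on each factor. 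Similarly, $\underline{T}(B)=\underline{T}(A)$ is correct but should be justified by the idempotent computation as in \Cref{group_algebra_components} rather than by ``connected fibers.''
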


\begin{proof}
Note that there is a canonical map $\ct{M}(A)\to \Gmdred{A[M]^{\wedge}_p}$ locally given by $m\mapsto [m]$. We shall refer to its image as the ``tautological units''. Note further that the assumption on $M$ implies that $\bigcap_{r}p^rM = \{0\}$. As a result, for every finite subset $T\subseteq M$, there exists $r>0$ such that $T$ injects into $M/p^r$. 

Given a reduced unit $f\in \Gmdred{A[M]^{\wedge}_p}$ of rank $1$, we wish to show that we can find a \emph{finite} decomposition 
$A\simeq \prod_i A_i$ such that the image of $f$ in $A_i[M]^\wedge_p$ is of the form $[m]$ for some $m\in M$. 
Expand $f$ as a $p$-adically convergent series  
\[
f= \sum_{m\in M} x_m [m],   
\]
and let $f_{r,s}$ be the image of $f$ in $A/p^r[M/p^s]$.  In this notation, we will allow $r=\infty$ or $s=\infty$, in which case we write $f_{\infty,s}$ (resp. $f_{r,\infty}$) to mean the image of $f$ in $A[M/p^s]$ (resp. $A/p^r[M]$). Thus, our aim is to show that $f:=f_{\infty,\infty}$ is a tautological unit.  

Since $f_{\infty,s}$ is a reduced unit of rank $1$, by \Cref{prop:red_delta_p_taut_finite} it is a tautological unit for every $s\in \NN$. On the other hand, $f_{1,\infty}$ is an element of $A/p[M]$ so it is a sum $f_{1,\infty} = \sum_{m\in T} \ol{x}_m[m]$, for some finite $T\subseteq M$. Choosing $s\in \NN$ for which the map $T\to M/p^s$ is injective, we see that the fact that $f_{1,s}$ is tautological implies the same for $f_{1,\infty}$. This implies that there is a finite decomposition $A/p\simeq \prod_i \ol{A}_i$ such that the image of $f_{1,\infty}$ in $\ol{A}_i[M]$ is tautological. 
By Hensel's lemma, we can lift the mentioned decomposition to a decomposition $A \simeq \prod_i A_i$. Replacing $A$ by one of the $A_i$'s, we may thus assume that $f_{1,\infty} = [m]$ for some $m\in M$. Multiplying by $[-m]$, we are thus reduced to the case where $f_{1,\infty} = 1$. 
We claim that in this case, we already have $f = 1$, and in particular that it is tautological. Indeed, assuming the contrary, let $0\ne m_0\in M$ be an element for which $x_{m_0}\ne 0$. Choose $r$ large enough so that $x_m \neq 0 \mod p^r$, and write again 
\[
f_{r,\infty} = \sum_{m\in T'}x_m [m]
\] 
for some finite set $T'\subseteq M$. Choose $s$ large enough so that the composition $T' \into M \onto M/p^s$ is injective. Then, the fact that $x_{m_0}\ne 0 \mod p^r$ implies that 
\[
f_{r,s} = 1 + \ol{x}_{m_0}[m_0] + \dots,
\]
 where $\ol{x}_{m_0}$ is the (non-vanishing) reduction of $x_{m_0}$ modulo $p^r$. Since $f_{r,s}$ is the image of $f_{\infty,s}$ under the map $A\to A/p^r$, it is also tautological, and since the coefficient of $0\in M$ is invertible, this forces all other coefficients, including $\ol{x}_{m_0}$, to be $0$. This is a contradiction and we are done.
\end{proof}

\subsection{$\widehat\delta$-rings}\label{sub:deltahat}

We now introduce a global variant of $\delta_p$-rings which we call $\widehat{\delta}$-rings. In contrast with the theory of $\lambda$-rings, our notion of $\widehat{\delta}$-rings ``decouples'' different primes. 

\begin{defn}
    Define the category of $\widehat \delta$-rings by the pullback
    \[
    \begin{tikzcd}
        \widehat \delta\mathrm{-Ring} \arrow[r]\arrow[d] & \prod_p \Acr^{\delta_p}\arrow[d]\\
        \CAlg_\Z^{\heartsuit} \arrow[r,"\prod (-)^{\wedge}_p"] & \prod_p \Acr^{\wedge}_p.
    \end{tikzcd}        
    \]
    In other words, a $\widehat \delta$-ring is a (discrete) commutative ring $R$ with a lift of each derived $p$-completion $R^{\wedge}_p$ to an animated $\delta_p$-ring.  
\end{defn}

Although we assume the underlying ring $R$ is discrete, its (derived) $p$-completions may not be, and thus we need to work in the animated context for the above definition.  Nevertheless, it is easy to see that the category of $\widehat\delta$-rings is (equivalent to) a 1-category. 
Additionally, we will generally only consider $\widehat \delta$-rings with bounded torsion, so this point will not be important.  

\begin{rem}
    If $R$ is already derived $p$-complete, then a $\widehat{\delta}$-ring structure on $R$ is exactly given by a $\delta_p$-ring structure, since all other completions vanish. On the other hand, if $R$ is rational, then a $\widehat \delta$-ring structure on $R$ is no data at all. 
\end{rem}

As in \Cref{cnstr:groupalgdelta}, $\Z[M]$ for an abelian group $M$ acquires a natural $\widehat \delta$-ring structure.

\begin{defn}
    Define the group of rank $1$ $\widehat \delta$-units of $R$ by 
    \[
\mdef{\Gmdhat{R} := \Map_{\dhring}(\Z[t^{\pm 1}],R)}.
    \]
\end{defn}

While $\Gmdhat{R}$ can in general be non-discrete, we will typically consider $R$ with bounded torsion, in which case $\Gmdhat{R}$ can be described as the subgroup of units $t \in R^{\times}$ such that $\delta_p(t)=0$ for all $p$.

This notion of $\widehat{\delta}$-rings allows us to make an integral analogue of the previous results in the section, which we view as the algebraic analogue of our main theorem (\Cref{theorem_one}).

\begin{thm}\label{thm:delta_gp_alg_ff}
Let $M$ and $N$ be finitely generated abelian groups.  Then the natural map
\[
\Hom_{\mathrm{Ab}}(N,M) \to \Hom_{\dhring}(\Z[N], \Z[M])
\]
is an isomorphism. In particular, $\Gmdhat{\ZZ[M]}\simeq M$.  
\end{thm}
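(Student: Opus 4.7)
The plan is to reduce the statement via adjunction to the claim that the natural map $M \to \Gmdhat{\Z[M]}$, $m \mapsto [m]$, is an isomorphism. Globalizing \Cref{rem:delta_p_right_adjoint} across primes (which follows formally from the pullback description of $\dhring$ and the $\delta_p$-adjunction at each prime) yields
\[
\Hom_{\dhring}(\Z[N], \Z[M]) \cong \Hom_{\mathrm{Ab}}(N, \Gmdhat{\Z[M]}),
\]
so it suffices to prove $\Gmdhat{\Z[M]} \cong M$. Injectivity of the map $m \mapsto [m]$ is clear, and membership $[m] \in \Gmdhat{\Z[M]}$ follows from $\psi_p([m]) = [pm] = [m]^p$, which forces $\delta_p([m]) = 0$ in each $\Z[M]^\wedge_p$.

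For surjectivity, I would first dispose of signs by computing $\Gmdhat{\Z} = \{1\}$: the only nontrivial unit in $\Z$ is $-1$, and the identity $\delta_2(-1) = (-1-1)/2 = -1 \ne 0$ shows $-1$ is not $\delta_2$-rank-$1$. Applying this to the $\widehat\delta$-ring augmentation $\epsilon\colon \Z[M] \to \Z$ yields $\epsilon(u) = 1$ for any $u \in \Gmdhat{\Z[M]}$.

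Next, fix a prime $p$, write $M = N_p \oplus T_{p'}$ where $T_{p'}$ is the prime-to-$p$ part of the torsion of $M$ and $N_p$ has only $p$-power torsion, and set $A_p := \Z_p[T_{p'}]$. Since $|T_{p'}|$ is invertible in $\Z_p$, the ring $A_p$ is étale over $\Z_p$ and in fact isomorphic to $W(\F_p[T_{p'}])$, with canonical $\delta_p$-structure by \Cref{exm:wittunit}. Identifying $\Z[M]^\wedge_p \cong A_p[N_p]^\wedge_p$, \Cref{thm:deltarig} furnishes an augmentation-compatible splitting
\[
\Gmdp{\Z[M]^\wedge_p} \cong \Gmdp{A_p} \oplus \underline{N_p}(A_p),
\]
under which $u$ writes uniquely as $u = v_p \cdot [f_p]$ for some $v_p \in \Gmdp{A_p}$ and a locally constant function $f_p\colon \Spec(A_p) \to N_p$. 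The constraint $\epsilon(u) = 1$ established above forces $v_p$ to lie in the kernel of the augmentation $\Gmdp{A_p} \to \Gmdp{\Z_p}$.

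The final step, and the main obstacle, is to combine these $p$-local decompositions with the integrality of $u$ (i.e., the injection $\Z[M] \hookrightarrow \Z[M]^\wedge_p$, valid since $\Z[M]$ is $p$-torsion-free) to conclude $u = [m]$ for a unique $m \in M$. The rough strategy is: for primes $p$ not dividing $|T_{\mathrm{tors}}|$ we have $N_p = \Z^r$ and $\Spec(A_p)$ is finite discrete, so the decomposition $u = v_p \cdot [f_p]$ constrains $\mathrm{supp}(u) \subset M$ Zariski-locally, and varying $p$ forces this support to collapse to a single element. The delicate point is controlling the Teichmüller factor $v_p \in \Gmdp{A_p} \cong \F_p[T_{p'}]^\times$: at each prime $v_p$ can a priori be any unit with trivial augmentation, and one must combine the compatibility of the pairs $(v_p, f_p)$ across all primes with the integrality of $u$ to show that $v_p$ necessarily comes from the embedding $T_{p'} \hookrightarrow \F_p[T_{p'}]^\times$, and hence contributes exactly the torsion part of the final $m$.
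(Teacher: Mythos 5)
Your reduction via the adjunction to computing $\Gmdhat{\Z[M]}$, the elimination of the sign via $\delta_2(-1)=-1$, and the prime-by-prime decomposition $u = v_p\cdot[f_p]$ coming from \Cref{thm:deltarig} applied to $A_p[N_p]^{\wedge}_p$ with $A_p = \Z_p[T_{p'}]\cong W(\F_p[T_{p'}])$ are all correct. But the proof stops exactly where the real difficulty begins, and what you call the ``main obstacle'' is a genuine gap, not a routine verification. The group $\Gmdp{A_p}\cong \F_p[T_{p'}]^{\times}$ is in general strictly larger than $T_{p'}$ (e.g.\ for $M=C_5$ and $p=2$ the reduced part of $\F_2[C_5]^{\times}$ is cyclic of order $15$, containing the image of $C_5$ with index $3$), so at each individual prime there really are extra rank-$1$ units beyond the tautological ones, and only the integrality of $u$ can rule them out. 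Showing that an integral unit whose $p$-completions are all ``tautological up to Teichm\"uller'' must itself be tautological is essentially equivalent to ruling out Bass-cyclic-type units, and no argument for this is given; the cross-prime compatibility of the pairs $(v_p,f_p)$ by itself does not obviously force $v_p\in T_{p'}$.

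The paper closes this gap with two inputs absent from your proposal. First, it splits $M = M'\oplus\Lambda$ with $M'$ finite and $\Lambda$ free and disposes of the free part using the classical description of units of $\Z[M'][\Lambda]$ over the connected ring $\Z[M']$ (Karpilovsky), reducing to $M$ finite; this replaces your sketched ``support collapses as $p$ varies'' argument. Second, for $M$ finite it observes that $\Gmdhat{\Z[M]}$ injects into $\Gmdp{\Z[M]^{\wedge}_p}$ for a \emph{single} prime, and the latter is finite by \Cref{prop:red_delta_p_taut_finite} together with \Cref{exm:wittunit}; hence every $\hat\delta$-unit of $\Z[M]$ is a torsion unit of augmentation $1$, and Higman's theorem identifies those with the image of $M$. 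If you want to complete your route, you either need to import Higman's theorem (or its Berman--Higman trace argument) at this point, or prove the finiteness statement and deduce torsion as the paper does; without one of these the Teichm\"uller factors $v_p$ are not controlled.
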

\begin{proof}
Both sides send colimits in $N$ to limits, so it suffices to consider $N = \ZZ$, where the statement becomes $\Gmdhat{\ZZ[M]}\simeq M$.  Here, write $M = M'\oplus \Lambda$ where $\Lambda$ is free and $M'$ is finite. 
Since $\ZZ[M']$ has connected Zariski spectrum, we have an isomorphism $\G_m(\ZZ[M'][\Lambda])\simeq  \G_m(\ZZ[M'])\oplus \Lambda$ (cf. \cite[Theorem 1]{karpilovsky1983finite}).  It follows that
\[
\Gmdhat{\ZZ[M'][\Lambda]}\simeq  \Gmdhat{\ZZ[M']}\oplus \Lambda.
\]
Hence, it suffices to prove the result for $M$ finite.  We start by showing that in this case $\Gm^{\hat{\delta}}(\ZZ[M])$ is finite. Choose a prime $p$.  Since the $p$-completion map
\[
\Gm^{\hat{\delta}}(\ZZ[M])\to \Gm^{\delta_p}(\ZZ[M]^\wedge_p)
\]
is injective, it suffices to show that 
$\Gm^{\delta_p}(\ZZ[M]^\wedge_p)$ is finite. Writing $M = M_p \oplus M_{\bar{p}}$ where $M_p$ is a $p$-group and $M_{\bar{p}}$ is of order prime to $p$, we have by \Cref{prop:red_delta_p_taut_finite} that
\[\Gm^{\delta_p}(\ZZ[M]^\wedge_p) \simeq \ct{M_{p}}(\ZZ[M_{\bar{p}}]^{\wedge}_p) \oplus \Gm^{\delta_p}(\ZZ[M_{\bar{p}}]^\wedge_p).
\] 
The first summand is finite because $\Spec(\ZZ[M_{\bar{p}}]^\wedge_p)$ has finitely many connected components, and the second is finite because $\ZZ[M_{\bar{p}}]^\wedge_p$ is perfect and hence by \Cref{exm:wittunit} we have 
\[
\Gm^{\delta_p}(\ZZ[M_{\bar{p}}]^\wedge_p) \simeq \Gm(\FF_p[M_{\bar{p}}]).
\]

Finally, once we know $\Gm^{\hat{\delta}}(\ZZ[M])$ is finite, the result follows from \cite[Theorem 2]{higman1940units}, which asserts that all torsion reduced units in the group algebra are in the image of the map $M\to \Gm(\ZZ[M])$.
\end{proof}

\subsection{$\delta$-rings and commutative ring spectra}\label{sub:moore}
Now we will link the existence of $\widehat \delta$-ring structures on $R$ to lifts $\mathbb{S}_R$ to the sphere spectrum. 
Recall from \cite[\S IV.1]{NS} that a commutative ring spectrum $R$ admits two natural maps to the Tate construction $R^{tC_p}$: the Tate-valued Frobenius
\[
\frob\colon R\to R^{tC_p},
\]
given by the composite of the Tate diagonal $R\to (R^{\otimes p})^{tC_p}$ with multiplication, and the map 
\[
\triv\colon R\to R^{tC_p}.
\]
given by the composite $R\to R^{hC_p} \to R^{tC_p}$. Note that the latter is defined for an arbitrary spectrum and does not depend on the multiplication, but when $R$ is a commutative ring spectrum then $\triv$ is a commutative ring map.  



The Segal conjecture supplies a large collection of spectra for which the $\triv$ map is an isomorphism. 
\begin{defn}\label{defn:finiteTor}
Let $X$ be a spectrum. We say that $X$ is of \tdef{finite Tor-amplitude} if $X$ is bounded below and $X\otimes \ZZ$ is bounded. Equivalently, $X$ admits a cell structure with cells of bounded dimension. We further say that $X$ is a \tdef{Moore spectrum} if $X\otimes \ZZ$ is concentrated in degree $0$.      
\end{defn}


\begin{prop}[{{\cite[Theorem 1.2, Example 4.4]{burklund2024note}}, cf.\ also \cite[Lemma 6.7]{yuan2022integral}}]\label{prop:Moore_triv_iso}
Let $X$ be a spectrum of finite Tor-amplitude. Then, 
$\triv: X\to X^{tC_p}$
exhibits $X^{tC_p}$ as the $p$-completion of $X$.
\end{prop}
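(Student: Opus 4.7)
The plan is to reduce the claim to Lin's theorem (the Segal conjecture for $C_p$), which asserts that $\mathbb{S} \to \mathbb{S}^{tC_p}$ exhibits $\mathbb{S}^{tC_p}$ as $\mathbb{S}^\wedge_p$. First, observe that $X^{tC_p}$ is automatically $p$-complete: it carries a natural module structure over the ring spectrum $\mathbb{S}^{tC_p} \simeq \mathbb{S}^\wedge_p$. Hence $\mathrm{triv}$ factors canonically as $X \to X^\wedge_p \to X^{tC_p}$, and it suffices to show that the second map is an equivalence. Since both sides are $p$-complete, this can be tested after smashing with $\mathbb{S}/p$, and the dualizability of $\mathbb{S}/p$ ensures that $(-)^{tC_p}$ commutes with $-\otimes \mathbb{S}/p$. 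Consequently, the question reduces to whether $X/p \to (X/p)^{tC_p}$ is an equivalence of $\mathbb{S}/p$-modules.

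Next, I would proceed by devissage using the cell structure afforded by finite Tor-amplitude. Such a structure writes $X$ as built from $\mathbb{S}$ using cells in some bounded range of dimensions $[a,b]$, and tensoring with $\mathbb{S}/p$ yields an analogous cell structure for $X/p$ over $\mathbb{S}/p$, with layers of the form $\bigoplus_{I_k} \Sigma^k(\mathbb{S}/p)$. Induct on the amplitude $b-a$: the cofiber sequence peeling off the top-dimensional cells, combined with the fact that $(-)^{tC_p}$ preserves cofibers, reduces the problem to the base case in which $X/p$ is a (possibly infinite) coproduct of shifted copies of $\mathbb{S}/p$.

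The base case in the finite situation — a single copy or any finite coproduct of $\Sigma^k(\mathbb{S}/p)$ — is immediate from Lin's theorem together with dualizability of finite cell spectra, which yields $(\Sigma^k\mathbb{S}/p)^{tC_p} \simeq \Sigma^k\mathbb{S}/p$ via the identity $F \otimes \mathbb{S}^{tC_p} \simeq F^{tC_p}$ for dualizable $F$. The main technical obstacle is the extension to arbitrary \emph{infinite} coproducts, since $(-)^{tC_p}$ does not commute with coproducts in general. This is where the uniform boundedness constraint from finite Tor-amplitude becomes essential: for a uniformly bounded-below, $p$-complete family, the Tate construction does commute with coproducts in the appropriate sense, and combining this with $(\mathbb{S}/p)^{tC_p} \simeq \mathbb{S}/p$ settles the base case and completes the induction.
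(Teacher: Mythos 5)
The paper offers no proof of this statement --- it is imported from \cite{burklund2024note} and \cite{yuan2022integral} --- so there is nothing internal to compare against; your argument is essentially the standard proof found in those references: factor $\mathrm{triv}$ through the $p$-completion, reduce mod $p$, run d\'evissage over the uniformly bounded cell structure, and bootstrap from Lin's theorem. The skeleton is sound, and you correctly isolate the two genuine inputs: Lin's theorem for the bottom cell, and the fact that $(-)^{tC_p}$ commutes with coproducts of uniformly bounded below spectra after $p$-completion (see \cite[\S I.2]{NS}). That second fact is exactly where finite Tor-amplitude, rather than mere bounded-belowness, is used, and you are right to flag it as the crux; you assert it rather than prove it, but it is a citable standard lemma (it follows from the finite-skeleton description of the Tate construction together with $p$-completeness of each term).

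One justification is wrong, although the claim it supports is true under the hypotheses. You argue that $X^{tC_p}$ is $p$-complete because it is a module over $\mathbb{S}^{tC_p}\simeq \mathbb{S}^{\wedge}_p$. Modules over $p$-complete ring spectra need not be $p$-complete --- $\mathbb{S}^{\wedge}_p[1/p]$ is already a counterexample --- so this deduction is invalid, and indeed the $p$-completeness of $X^{tG}$ genuinely fails without some hypothesis on $X$. The correct reason is that $X$ is bounded below: for a bounded below spectrum with an action of a finite $p$-group, the Tate construction is $p$-complete (\cite[Lemma I.2.9]{NS}; concretely, the conditionally convergent Tate spectral sequence has $E_2$-page consisting of Tate cohomology groups, which are killed by $p$). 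With that repair the factorization $X\to X^{\wedge}_p\to X^{tC_p}$ and the mod-$p$ reduction are legitimate, and the rest of the argument goes through as written.
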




Thus, any commutative ring spectrum $R$ of finite Tor-amplitude admits a canonical map 
\[
\mdef{\varphi_R}:= \triv^{-1}\circ \frob \colon R\to R^\wedge_p,
\]
which we call the \tdef{Frobenius map} of $R$.  Our goal for the rest of the section is to show that in the special case where $R$ is a Moore spectrum (i.e., $R\otimes \Z$ is \emph{discrete}), these Frobenius maps for varying $p$ induce a $\widehat{\delta}$-ring structure on $\pi_0R$.  We need a few preliminary lemmas.  

\begin{lem}\label{lem:acrkan}
    Let $F,G :\Acr \to \mathcal{C}$ be functors and $F_0,G_0$ denote their restriction to the full subcategory $\mathrm{Poly}_{\Z}\subset \Acr$ of (discrete) polynomial algebras.  If $F$ commutes with sifted colimits, then there is an isomorphism of spaces of natural transformations
    \[
    \mathrm{Nat}(F,G) \simeq \mathrm{Nat}(F_0,G_0).
    \]
\end{lem}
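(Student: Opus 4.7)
The plan is to identify $F$ with the left Kan extension of $F_0$ along the inclusion $j\colon \mathrm{Poly}_{\Z} \hookrightarrow \Acr$, and then to conclude by the universal property of left Kan extension.

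The key input is that $\Acr$ is, by definition, the animation of $\mathrm{Poly}_{\Z}$; that is, $\Acr$ is the free sifted cocompletion $\mathcal{P}_{\Sigma}(\mathrm{Poly}_{\Z})$. Concretely, for each $R \in \Acr$ the slice category $(\mathrm{Poly}_{\Z})_{/R}$ is sifted, and the forgetful diagram $(\mathrm{Poly}_{\Z})_{/R} \to \Acr$ sending $(P\to R)$ to $P$ has colimit $R$. Applying the sifted-colimit-preserving functor $F$ to this diagram yields a natural equivalence
\[
F(R) \;\simeq\; \underset{(P\to R)\in (\mathrm{Poly}_{\Z})_{/R}}{\mathrm{colim}}\, F_0(P)
\]
in $\mathcal{C}$. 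This is precisely the pointwise formula for $\mathrm{Lan}_j F_0$ evaluated at $R$, so the pointwise left Kan extension $\mathrm{Lan}_j F_0$ exists, and the canonical comparison map $\mathrm{Lan}_j F_0 \to F$ is an equivalence --- even though $\mathcal{C}$ is not assumed cocomplete, the relevant sifted colimits are all realized in $\mathcal{C}$ as the values of $F$.

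From here, the usual adjunction between $\mathrm{Lan}_j$ and restriction along $j$ (which only requires the Kan extension in question to exist, not global cocompleteness of $\mathcal{C}$) supplies the natural equivalence
\[
\mathrm{Nat}(F, G) \;=\; \mathrm{Nat}(\mathrm{Lan}_j F_0,\, G) \;\simeq\; \mathrm{Nat}(F_0,\, G\circ j) \;=\; \mathrm{Nat}(F_0, G_0),
\]
proving the lemma. No real obstacle arises; the one point worth being careful about is precisely that $\mathcal{C}$ is not assumed to admit sifted colimits, which is why the argument is phrased in terms of \emph{pointwise} Kan extensions whose existence is supplied directly by the sifted-colimit preservation of $F$, rather than by invoking the universal property of $\mathcal{P}_{\Sigma}$ into a cocomplete target.
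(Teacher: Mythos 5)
Your proof is correct and is essentially the paper's argument: the paper simply cites that $\Acr = P_{\Sigma}(\mathrm{Poly}_{\Z})$ is the animation of $\mathrm{Poly}_{\Z}$, and your writeup is a careful unwinding of exactly that fact via pointwise left Kan extension along the Yoneda embedding. The attention you pay to the target $\mathcal{C}$ not being assumed cocomplete (so that one argues with pointwise Kan extensions realized by the values of $F$) is a valid and worthwhile clarification, not a deviation.
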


\begin{proof}
    This follows from the fact that $\Acr = P_{\Sigma}(\mathrm{Poly}_{\Z})$ is the the animation of the category of polynomial rings. 
\end{proof}

\begin{lem} \label{lem:Tate_derived_mod_p}
    Let $R$ be a discrete commutative ring, so that the Frobenius and trivial maps $R\to R^{tC_p}$ factor uniquely through $\tau_{\ge 0}R^{tC_p}$. 
Then, there is a natural isomorphism $\tau_{[0,1]} R^{tC_p} \simeq R\mm p$ in $\calg_\ZZ^\cn$ fitting into the commutative diagrams
\[
\begin{tikzcd}
R \arrow[r,"\triv"]\arrow[d]& \tau_{\geq 0}R^{tC_p} \arrow[d,"\wr"] & & & R \arrow[r,"\frob"]\arrow[d]& \tau_{\geq 0}R^{tC_p}\arrow[d,"\wr"]\\
R\mm p \arrow[r,"="]& R\mm p \simeq \tau_{[0,1]}R^{tC_p} & & & R\mm p \arrow[r,"\varphi_{R\mm p}"]& R\mm p \simeq \tau_{[0,1]}R^{tC_p}.\\
\end{tikzcd}
\]
\end{lem}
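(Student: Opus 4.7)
The first step is to construct the desired equivalence $\tau_{[0,1]} R^{tC_p}\simeq R\mm p$ of $\E_\infty$-rings. Since $\ZZ$ is $p$-torsion free, the odd Tate cohomology groups $\hat H^{\mathrm{odd}}(C_p;\ZZ)$ vanish, so $\tau_{\geq 0}\ZZ^{tC_p}\simeq \ZZ/p$ is discrete. Hence the unit map $\ZZ \to \tau_{\geq 0} R^{tC_p}$ factors canonically through $\ZZ/p$, endowing $\tau_{\geq 0} R^{tC_p}$ with a natural $\E_\infty$-algebra structure over $\ZZ/p$. The trivial map $\triv\colon R\to \tau_{\geq 0} R^{tC_p}$ then extends uniquely to a $\ZZ/p$-algebra map $R\mm p = R\otimes_\ZZ \ZZ/p \to \tau_{\geq 0} R^{tC_p}$, and post-composition with the truncation yields the desired natural transformation $\alpha_R\colon R\mm p\to \tau_{[0,1]} R^{tC_p}$. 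Compatibility with $\triv$ (the first diagram) holds by construction.

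Next, I would verify that $\alpha_R$ is an equivalence. Both sides are $1$-truncated connective $\E_\infty$-rings, so it suffices to check isomorphism on $\pi_0$ and $\pi_1$. The Tate spectral sequence collapses for discrete $R$, yielding $\pi_n R^{tC_p}=\hat H^{-n}(C_p;R)$, which equals $R/p$ in even degrees and $R[p]$ in odd degrees; these match $\pi_0, \pi_1$ of $R\mm p$. The map $\alpha_R$ is the identity on $\pi_0$ by construction. To verify the $\pi_1$-isomorphism, I would apply \Cref{lem:acrkan} to the functors $(-)\mm p$ and $\tau_{[0,1]} (-)^{tC_p}$ on $\Acr$: the source preserves sifted colimits, and on polynomial $\ZZ$-algebras $R = \ZZ[x_1,\ldots,x_n]$ both sides are discrete equal to $R/p$ (since such rings are $p$-torsion free, so $R[p]=0$), with $\alpha$ restricting to the identity.

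For compatibility with Frobenius (the second diagram), the classical fact from \cite{NS} is that the Tate-valued Frobenius of a discrete commutative ring $R$ induces the ordinary Frobenius $x\mapsto x^p$ on $\pi_0 R^{tC_p}=R/p$, which matches $\varphi_{R\mm p}$ post-composed with the reduction $R\to R\mm p$ on $\pi_0$. Promoting this to a homotopy of $\E_\infty$-ring maps uses that both composites factor through $R\mm p$ (thanks to the $\ZZ/p$-algebra structure on the target) and the animated characterization of $\varphi_{R\mm p}$ as the essentially unique lift of the classical Frobenius. The main obstacle I anticipate is the $\pi_1$-step in the equivalence argument: the Kan-extension approach via \Cref{lem:acrkan} only cleanly yields an equivalence if the target functor $\tau_{[0,1]}(-)^{tC_p}$ on $\Acr$ also commutes with sifted colimits, which requires separate verification; a fallback is to reduce by naturality (via a torsion-free resolution of $R$) to the case $R=\ZZ/p^n$ and match the $k$-invariant of $\tau_{[0,1]} R^{tC_p}$ against the Bockstein for $R\mm p$ by explicit calculation in low degrees.
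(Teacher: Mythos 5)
Your skeleton matches the paper's (construct a natural map, check it on polynomial rings, extend via \Cref{lem:acrkan}), but there are two genuine problems. First, the opening claim that $\tau_{\ge 0}\ZZ^{tC_p}\simeq \ZZ/p$ is discrete is false: $\pi_n\ZZ^{tC_p}=\hat H^{-n}(C_p;\ZZ)$ is $\ZZ/p$ in \emph{every} even degree, so $\tau_{\ge 0}\ZZ^{tC_p}$ has nontrivial $\pi_2,\pi_4,\dots$ and there is no canonical factorization of the unit through $\ZZ/p$ of the kind you describe. This is repairable --- $\tau_{[0,1]}\ZZ^{tC_p}\simeq\ZZ/p$ \emph{is} discrete and $\tau_{[0,1]}$ of a connective $\E_\infty$-ring is again an $\E_\infty$-ring, so $\tau_{[0,1]}R^{tC_p}$ does carry a canonical $\ZZ/p$-algebra structure and $\triv$ extends over $R\mm p$ --- but as written the construction does not go through. (The paper sidesteps this entirely by producing the map $R\mm p\to\tau_{[0,1]}R^{tC_p}$ directly from \Cref{lem:acrkan}, since both functors take the value $R/p$ on polynomial rings.)

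Second, and more seriously, the $\pi_1$ step is exactly the crux, and your fallback does not close it. Reducing ``by a torsion-free resolution to $R=\ZZ/p^n$'' does not reach a general discrete ring with $p$-torsion (e.g.\ $\ZZ[x]/(px)$): neither $(-)^{tC_p}$ nor $\tau_{[0,1]}$ interacts with such resolutions in a way that computes $\pi_1\cong R[p]$ functorially, and matching $k$-invariants for $\ZZ/p^n$ alone says nothing about other rings. The paper's key idea, which is what is missing here, is to replace $\tau_{[0,1]}(-)^{tC_p}$ by $\tau_{[0,1]}G(-)$, where $G(R)$ is the cofiber of the composite $R_{hC_p}\to R^{hC_p}\to R$: this functor agrees with $\tau_{[0,1]}(-)^{tC_p}$ on discrete rings (the discrepancy $\mathrm{cofib}(R^{hC_p}\to R)$ is coconnective) but, unlike the Tate construction, preserves colimits as a functor to $\Mod_{\Z}^{[0,1]}$, so the isomorphism propagates from polynomial rings to all discrete rings. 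The same device --- check on polynomial rings, where everything is discrete and the Frobenius statement is \cite[Example IV.1.2]{NS}, then invoke \Cref{lem:acrkan} --- is also what rigorously gives the commutativity of the second diagram; your appeal to ``the essentially unique lift of the classical Frobenius'' is not sufficient on its own, since two $\E_\infty$-maps into a $1$-truncated ring that agree on $\pi_0$ need not be homotopic.
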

\begin{proof}
Note that for $R\in \mathrm{Poly}_\Z$, the constructions $R\mm p$ and $\tau_{[0,1]}R^{tC_p}$ are both given by the discrete ring $R/p$ and thus are naturally identified.  By \Cref{lem:acrkan}, we obtain a natural transformation 
\[
R\mm p \xrightarrow{\nu} \tau_{[0,1]}R^{tC_p}
\]
which is an isomorphism at least for $R\in \mathrm{Poly}_\Z$.  Moreover, the diagrams commute because they commute for $R\in \mathrm{Poly}_\Z$; this is clear for the first one and for the second, it amounts to the statement that $\mathrm{frob}$ induces the usual Frobenius on $\pi_0R/p$ \cite[Example IV.1.2]{NS}.

It remains to check that $\nu$ is an isomorphism for \emph{all} discrete rings.
Let $G(R): \CAlg^{\mathrm{an}}_{\Z} \to \Mod_{\Z}$ be the cofiber of the composite
\[
R_{hC_p} \xrightarrow{\mathrm{Nm}} R^{hC_p} \xrightarrow{\mathrm{res}} R.
\]
Note that we have a natural transformation $\tau_{[0,1]}R^{tC_p} \to \tau_{[0,1]}G(R)$ which is an isomorphism on discrete rings.  Therefore, it suffices to show that for $R$ discrete, the induced map
\[
\nu': R\mm p \to \tau_{[0,1]}G(R)
\]
is an isomorphism.  But the source and target both preserve colimits, as functors to $\Mod_{\Z}^{[0,1]}$, so the conclusion follows from the fact that $\nu$, and therefore $\nu'$, is an isomorphism for $R\in \Poly_{\Z}$.







\end{proof}

\begin{lem} \label{lem:Acr2Comm_truncated}
Let  $R,S\in \Acr$ and assume that $S$ is $1$-truncated. Then 
\[
\Map_{\Acr}(R,S) \to \Map_{\calg_\ZZ}(R,S)
\]
is an inclusion of connected components, which is an equivalence if $\pi_1(S)$ does not have $2$-torsion. 
\end{lem}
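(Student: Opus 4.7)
The plan is to reduce, via the Postnikov decomposition of $S$, to a cotangent-complex comparison, and then to identify the remaining obstruction as $2$-torsion coming from the difference between derived symmetric squares in $\Acr$ and $\calg_\ZZ$.

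Since $S$ is $1$-truncated, it admits the standard Postnikov presentation as a square-zero pullback
\[
S \simeq \pi_0(S) \times_{\pi_0(S) \oplus \pi_1(S)[2]} \pi_0(S),
\]
which is preserved by the forgetful functor $\Acr \to \calg_\ZZ$. Since both $\Map_{\Acr}(R,-)$ and $\Map_{\calg_\ZZ}(R,-)$ preserve limits, it suffices to compare the two mapping spaces separately into $\pi_0(S)$ and into $\pi_0(S)\oplus \pi_1(S)[2]$. For the discrete target $\pi_0(S)$, both are tautologically identified with $\Hom_{\mathrm{CRing}}(\pi_0(R),\pi_0(S))$. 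For the trivial square-zero extension $A\oplus M[2]$ (with $A=\pi_0 S$ and $M=\pi_1 S$), each mapping space splits as $\Hom(\pi_0(R),A)\times \mathrm{Der}(R,M[2])$, and the respective derivation spaces are identified, via the universal property of the cotangent complex, as
\begin{align*}
\mathrm{Der}_{\Acr}(R,M[2]) &\simeq \Map_{\Mod_A}\bigl(L^{\mathrm{alg}}_{R/\ZZ}\otimes_R A,\; M[2]\bigr),\\
\mathrm{Der}_{\calg_\ZZ}(R,M[2]) &\simeq \Map_{\Mod_A}\bigl(L^{\E_\infty}_{R/\ZZ}\otimes_R A,\; M[2]\bigr),
\end{align*}
where $L^{\mathrm{alg}}$ and $L^{\E_\infty}$ denote the cotangent complexes computed in $\Acr$ and $\calg_\ZZ$, respectively. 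The forgetful functor induces a natural $R$-module map $L^{\E_\infty}_{R/\ZZ}\to L^{\mathrm{alg}}_{R/\ZZ}$; base-changing to $A$ and applying $\Map_{\Mod_A}(-,M[2])$ produces the comparison map between these derivation spaces.

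The main obstacle is the following cotangent-complex input: the cofiber $C$ of $L^{\E_\infty}_{R/\ZZ}\to L^{\mathrm{alg}}_{R/\ZZ}$ satisfies $\pi_0(C)=\pi_1(C)=0$ and $\pi_2(C)$ is $2$-torsion. Heuristically, this reflects that derived symmetric squares in $\Acr$, where $\mathrm{Sym}^2(N[1])\simeq \Lambda^2(N)[2]$ for discrete $N$, differ from those in $\calg_\ZZ$, where $\mathrm{Sym}^2(N[1])\simeq (N^{\otimes 2})_{hC_2}[2]$ with sign action; the discrepancy is entirely $2$-torsion and first appears in homotopical degree $2$. Granted this, the long exact sequence associated to the fiber sequence
\[
\Map_{\Mod_A}(C\otimes_R A, M[2])\to \Map_{\Mod_A}(L^{\mathrm{alg}}_{R/\ZZ}\otimes_R A, M[2])\to \Map_{\Mod_A}(L^{\E_\infty}_{R/\ZZ}\otimes_R A, M[2])
\]
gives isomorphisms on $\pi_i$ for $i\geq 1$, which translates via the Postnikov decomposition into the claimed inclusion of connected components for $\Map_{\Acr}(R,S)\to \Map_{\calg_\ZZ}(R,S)$. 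The only remaining obstruction to surjectivity on $\pi_0$ is controlled by $\mathrm{Ext}^2(C\otimes_R A,M)\cong \Hom_A(\pi_2(C)\otimes_R A, M)$, which vanishes whenever $M=\pi_1(S)$ has no $2$-torsion, upgrading the inclusion of components to an equivalence in that case.
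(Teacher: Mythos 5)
Your overall strategy---decompose the target $S$ via its square-zero Postnikov presentation and reduce everything to a comparison of the animated and $\E_\infty$ cotangent complexes over $\ZZ$---is genuinely different from the paper's proof, which instead writes $R$ as a sifted colimit of polynomial rings and then builds an explicit $\E_\infty$-cell presentation of $\ZZ[x]$ through dimension $2$ using the computation $\pi_1\ZZ\{x\}=\beta\ZZ/2[x]$. Your reduction steps are sound: the forgetful functor preserves the pullback square, discrete targets cause no discrepancy, and your bookkeeping locating the inclusion-of-components statement in the vanishing of $[C\otimes_R A, M[j]]$ for $j\le 1$ (i.e.\ in the $2$-connectivity of $C\otimes_R A$) and the surjectivity obstruction in $[C\otimes_R A, M[2]]$ is correct. (One small imprecision: $\Map(R,A\oplus M[2])$ is a disjoint union over $f\in\Hom(\pi_0R,A)$ of derivation spaces depending on $f$, not a product, but this does not affect the argument.)

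The gap is that the entire content of the lemma has been concentrated into the ``cotangent-complex input'' ($\pi_0C=\pi_1C=0$ and $\pi_2C$ is $2$-torsion), which you assert rather than prove; it is not a standard citable fact, and the heuristic you offer does not establish it. The discrepancy between $\mathrm{Sym}^2(N[1])$ in $\Acr$ and $(N[1]^{\otimes 2})_{hC_2}$ governs free algebras on $1$-shifted modules, whereas the map $L^{\E_\infty}_{R/\ZZ}\to L^{\mathrm{alg}}_{R/\ZZ}$ is controlled---after resolving $R$ by polynomial rings, over which both cotangent complexes commute with sifted colimits---by the difference between the free $\E_\infty$-$\ZZ$-algebra $\ZZ\{x\}=\bigoplus_n\ZZ[B\Sigma_n]$ and the polynomial ring $\ZZ[x]$ on a \emph{degree-zero} generator, i.e.\ by $H_{*>0}(\Sigma_n;\ZZ)$, whose first contribution is $H_1(\Sigma_n;\ZZ)=\ZZ/2$. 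So proving your input requires exactly the computation the paper performs. Moreover, if you carry it out (say for $R=\ZZ[x]$, using the cell structure $\ZZ\{x\}\to P\to\ZZ[x]$), you find $\pi_1 L^{\E_\infty}_{\ZZ[x]/\ZZ}=0$ and hence $\pi_2C=0$, with the $2$-torsion first appearing in $\pi_3C$; your claim is therefore (vacuously) true in degree $2$, but the off-by-one indexing is a sign that the input was guessed rather than verified, and it is precisely the degree of this first nonvanishing homotopy group of $C$ that decides whether the $2$-torsion hypothesis on $\pi_1(S)$ is ever actually needed. Until the connectivity and torsion statement for $C$ is established, the proof is incomplete.
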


\begin{proof}
    First, since the condition of the theorem is closed under limits, it suffices to consider the case $R = \Z[x]$ where we consider the map $\Omega^{\infty} S \to \Map_{\calg_\ZZ}(\Z[x],S)$.
    Letting $\ZZ\{ x\} = \bigoplus_{n\geq 0} \Z [B\Sigma_n]$ denote the free commutative $\ZZ$-algebra on a degree 0 class, we note that $\pi_0 \ZZ\{ x\} = \Z[x]$ and $\pi_1 \ZZ\{ x\} = \beta \Z/2[x]$ (for instance because the sign character $\Sigma_n \to C_2$ is the abelianization map).  Consequently, defining $P$ by the pushout
    \[
    \begin{tikzcd}
        \ZZ\{ S^1/2\} \arrow[r]\arrow[d,"\beta"]& \ZZ \arrow[d] \\
        \ZZ \{ x \} \arrow[r] & P,
    \end{tikzcd}
    \]
    the natural map $P \to \Z[x]$ is an isomorphism in degrees $0,1$ and surjective on $\pi_2$.  It follows that $\Map(P,S) \simeq \Map(\Z[x],S)$, so it suffices to show that $\Omega^{\infty} S \to \Map_{\calg_\ZZ}(P,S)$. The conclusion then follows by considering commutative $\Z$-algebra maps from the above pushout square into $S$.
\end{proof}

\begin{lem}\label{lem_completion}
Let $X$ be a bounded below spectrum. Then 
\[
(X^\wedge_p) \otimes \ZZ = (X \otimes \ZZ)^\wedge_p.
\]
\end{lem}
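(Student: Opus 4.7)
I plan to establish the lemma via two intermediate claims:
(a) the natural map induces an equivalence $(X \otimes \ZZ)^\wedge_p \simeq (X^\wedge_p \otimes \ZZ)^\wedge_p$; and
(b) $X^\wedge_p \otimes \ZZ$ is already $p$-complete.
Granted both, the composite
\[
X^\wedge_p \otimes \ZZ \;\simeq\; (X^\wedge_p \otimes \ZZ)^\wedge_p \;\simeq\; (X \otimes \ZZ)^\wedge_p
\]
yields the desired equivalence.

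For (a), I consider the fiber sequence $F \to X \to X^\wedge_p$. Because $X$ is bounded below, the map $X \to X^\wedge_p$ is an $\SS/p$-equivalence, which forces $F \otimes \SS/p \simeq 0$; equivalently, multiplication by $p$ acts invertibly on $F$, so $F$ is a module over $\SS[1/p]$. Tensoring the fiber sequence with $\ZZ$, the term $F \otimes \ZZ$ remains a $\ZZ[1/p]$-module, and in particular its $p$-completion vanishes. Since $p$-completion is a left Bousfield localization (hence preserves cofiber sequences), applying $(-)^\wedge_p$ to $F \otimes \ZZ \to X \otimes \ZZ \to X^\wedge_p \otimes \ZZ$ immediately yields the claimed equivalence of $p$-completions.

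For (b), I use the formula $X^\wedge_p \simeq \lim_n X/p^n$ valid for bounded-below $X$. Each $X/p^n$ is annihilated by $p^n$, so the same is true for $(X/p^n) \otimes \ZZ \simeq (X \otimes \ZZ)/p^n$; each such term is then $p$-complete, and so is the inverse limit $\lim_n \bigl((X/p^n) \otimes \ZZ\bigr) \simeq (X \otimes \ZZ)^\wedge_p$. It remains to verify that the natural comparison map
\[
(\lim_n X/p^n) \otimes \ZZ \;\longrightarrow\; \lim_n \bigl( (X/p^n) \otimes \ZZ \bigr)
\]
is an equivalence, so that the left-hand side inherits $p$-completeness from the right. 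This interchange of tensor with an inverse limit is the main technical obstacle, since it fails in general; my plan is to argue on homotopy groups via the Milnor short exact sequences for both sides, leveraging the uniform lower bound on the tower $\{X/p^n\}$ to control the $\lim^1$ contributions. Alternatively, one can reduce via the Postnikov tower of $X$ to the case of Eilenberg--MacLane spectra concentrated in a single degree, where the statement can be verified directly by hand.
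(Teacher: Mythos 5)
Your overall architecture matches the paper's: the proof splits into (i) showing that $(X^\wedge_p)\otimes \ZZ$ is already $p$-complete, and (ii) showing that the comparison map is an equivalence after reduction mod $p$. Your step (a) is a correct and complete account of (ii): the fiber of $X\to X^\wedge_p$ is $\Sph/p$-acyclic, hence a $\Sph[1/p]$-module, hence dies after tensoring with $\ZZ$ and $p$-completing, and $p$-completion is exact. This is exactly the paper's second sentence, fleshed out.

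The gap is in step (b), which is where all the content of the lemma actually lives (the paper disposes of it by citing an external result, \cite[Lemma 3.3]{BB}). You reduce the $p$-completeness of $X^\wedge_p\otimes\ZZ$ to the assertion that the interchange map $(\lim_n X/p^n)\otimes\ZZ\to\lim_n((X/p^n)\otimes\ZZ)$ is an equivalence, and you correctly flag that this interchange fails in general — but the two remedies you sketch do not close it. The Milnor-sequence idea does not apply to the left-hand side: there is no Milnor sequence expressing $\pi_*((\lim_n Y_n)\otimes\ZZ)=H_*(\lim_n Y_n;\ZZ)$ in terms of $H_*(Y_n;\ZZ)$, because homology does not commute with inverse limits (already $H_*(\Sph_p;\QQ)=\QQ_p\neq\lim_n H_*(\Sph/p^n;\QQ)=0$ shows the mechanism you would need is false without further input). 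Indeed, for $X=\Sph$ your interchange statement \emph{is} the assertion $\Sph_p\otimes\ZZ\simeq\ZZ_p$, which modulo your step (a) is the lemma itself — so this route is circular. The Postnikov-tower reduction is more promising (both $(-)^\wedge_p$ and $\ZZ\otimes(-)$ commute with Postnikov limits of uniformly bounded-below spectra, since each homotopy group of the output depends on only finitely many Postnikov stages of the input), but the resulting case $X=\Sigma^jHA$ is not a "by hand" verification: it amounts to showing that $H_*(H(A);\ZZ)$ has derived $p$-complete homology after $p$-completing $A$, which in turn rests on the standard facts that a bounded-below spectrum is $p$-complete iff its homotopy groups are derived $p$-complete, and that tensoring such a spectrum with a bounded-below finite-type spectrum like $H\ZZ$ preserves this property (via a K\"unneth argument). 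That is precisely the content of the result the paper cites; until you supply it, step (b) is unproven.
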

\begin{proof}
This follows because $(X^\wedge_p) \otimes \ZZ$ is $p$-complete \cite[Lemma 3.3]{BB} and the natural map $
(X^\wedge_p) \otimes \ZZ\to (X \otimes \ZZ)^\wedge_p$
becomes an isomorphism after tensoring with $\Sph/p$.

\end{proof}

We can now prove the main statement about the relationship between commutative ring Moore spectra and $\widehat \delta$-rings:

\begin{prop} \label{prop:Moore_delta_ring}
Let $\CAlg^{\mathrm{M}}\subset \CAlg(\Sp)$ denote the full subcategory of commutative ring Moore spectra.  Then for $R\in \CAlg^{\mathrm{M}}$, $\pi_0(R)$ acquires a natural $\widehat{\delta}$-ring structure, i.e., $\pi_0(-)$ lifts to a functor
\[
\pi_0: \CAlg^{\mathrm{M}}\to \dhring.
\]
\end{prop}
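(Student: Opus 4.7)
The plan is to use the Tate-valued Frobenius of $R$ together with the Moore condition to produce natural animated $\delta_p$-structures on the derived $p$-completions of $A := \pi_0(R)$, for each prime $p$.

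First, I would construct the underlying $\CAlg_\ZZ$-map. Since every Moore spectrum has finite Tor-amplitude, \Cref{prop:Moore_triv_iso} supplies a natural Frobenius $\varphi_R : R \to R^\wedge_p$ in $\CAlg(\Sp)$. Tensoring with $\ZZ$ and using the Moore condition $R \otimes \ZZ \simeq A$ together with \Cref{lem_completion} to identify $R^\wedge_p \otimes \ZZ \simeq A^\wedge_p$ (derived $p$-completion), I obtain a natural $\CAlg_\ZZ$-map $\bar\varphi_R^{(p)} : A \to A^\wedge_p$.

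Next, I would verify that the mod-$p$ reduction of $\bar\varphi_R^{(p)}$ agrees with the animated Frobenius $\varphi_{A\mm p}$ precomposed with the reduction $A \to A \mm p$. For this, I invoke naturality of the Tate-valued Frobenius along $R \to A = R \otimes \ZZ$, combined with lax monoidality of $(-)^{tC_p}$, to obtain the commuting square
\[
\begin{tikzcd}
R \arrow[r,"\frob"]\arrow[d] & R^{tC_p}\arrow[d] \\
A \arrow[r,"\frob_A"] & A^{tC_p}
\end{tikzcd}
\]
in $\CAlg(\Sp)$, and then apply \Cref{lem:Tate_derived_mod_p} to the discrete ring $A$ to identify the composite $A \xrightarrow{\frob_A} \tau_{\geq 0} A^{tC_p} \to \tau_{[0,1]} A^{tC_p} \simeq A\mm p$ with $\varphi_{A\mm p} \circ \mathrm{can}$ as an $\Acr$-map.

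The final step is to upgrade $\bar\varphi_R^{(p)}$ from $\CAlg_\ZZ$ to $\Acr$. Since $A^\wedge_p$ is $1$-truncated (as derived $p$-completion of a discrete ring), \Cref{lem:Acr2Comm_truncated} gives a component inclusion $\Map_\Acr(A, A^\wedge_p) \hookrightarrow \Map_{\CAlg_\ZZ}(A, A^\wedge_p)$; using derived $p$-completeness, this reduces to analogous inclusions along the tower $\{A\mm p^n\}$. Since the mod-$p$ reduction already lies in the $\Acr$-component by the previous step, one can lift $\bar\varphi_R^{(p)}$ level-wise and take a limit. The resulting $\Acr$-map $A \to A^\wedge_p$ then extends uniquely by the universal property of derived $p$-completion in $\Acr$ to an $\Acr$-endomorphism $\hat\varphi_R^{(p)} : A^\wedge_p \to A^\wedge_p$, producing the required animated $\delta_p$-structure; naturality in $R$ is inherited from that of $\varphi_R$, and assembling across primes defines the $\widehat\delta$-ring structure. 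The main technical obstacle is this last upgrade: for odd $p$, $\pi_1(A^\wedge_p)$ is a $\ZZ^\wedge_p$-module and has no $2$-torsion, so \Cref{lem:Acr2Comm_truncated} gives the upgrade immediately; for $p = 2$ with $A$ having $2$-torsion, the level-wise lifting along $\{A \mm 2^n\}$ (or an obstruction-theoretic refinement) is essential and must be handled carefully.
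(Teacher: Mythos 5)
Your overall strategy is the same as the paper's: use the Frobenius $\varphi_R\colon R\to R^\wedge_p$ supplied by \Cref{prop:Moore_triv_iso}, tensor with $\ZZ$ and invoke \Cref{lem_completion} to land in $(\pi_0R)^\wedge_p$, use \Cref{lem:Tate_derived_mod_p} to match the reduction mod $p$ with the animated Frobenius, and then upgrade from $\CAlg_\ZZ$ to $\Acr$ via \Cref{lem:Acr2Comm_truncated}. The first two steps are fine.

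The gap is in your last step, and you have flagged it yourself: you do not actually produce the animated lift when $p=2$ and $A=\pi_0(R)$ has $2$-torsion. Moreover, the level-wise strategy you propose along the tower $\{A\mm 2^n\}$ does not obviously close it: $\pi_1(A\mm 2^n)\cong A[2^n]$ certainly can have $2$-torsion, so \Cref{lem:Acr2Comm_truncated} only gives an \emph{inclusion of components} at each finite stage and never guarantees \emph{existence} of a lift there; knowing that the further composite to $A\mm 2$ lies in the $\Acr$-component does not by itself put the map to $A\mm 2^n$ in that component. The fix is to apply \Cref{lem:Acr2Comm_truncated} directly to the target $S=(\pi_0R)^\wedge_2$ and observe that $\pi_1\bigl((\pi_0R)^\wedge_2\bigr)$ has \emph{no} $2$-torsion for any discrete $A$: if it did, then $\pi_2\bigl((\pi_0R)^\wedge_2\mm 2\bigr)$ would be nonzero, but $(\pi_0R)^\wedge_2\mm 2\simeq \pi_0R\mm 2$ is $1$-truncated since $\pi_0R$ is discrete. (Equivalently, $\pi_1(A^\wedge_2)$ is the Tate module $\varprojlim A[2^n]$, which is torsion-free.) With that observation the lemma gives a unique animated lift of $\varphi_R\otimes\ZZ$ in one step, uniformly in $p$, and no tower argument or obstruction theory is needed.
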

\begin{proof}
For each prime $p$, consider the diagram of commutative ring spectra

\[
\begin{tikzcd}[column sep=1cm, row sep=1.5cm]
R^{\wedge}_p \arrow[rd,"\frob"] \arrow[rr]\arrow[dd,"\varphi_R"']        &       & (\pi_0R)^{\wedge}_p\arrow[rd,"\frob"]\arrow[rr]       & &\pi_0R\mm p\arrow[dd,"\varphi_{\pi_0R\mm p}"]   \\
 & R^{tC_p}\arrow[rr]    & &  \tau_{\ge 0}(\pi_0R)^{tC_p}\arrow[rd] &\\
R^\wedge_p\arrow[rr]\arrow[ru,"\triv"']  & &(\pi_0R)^\wedge_p\arrow[ru,"\triv"'] \ar[rr]  & & \pi_0R\mm p
\end{tikzcd}
\]
where the right vertical map is induced by the animated Frobenius.  
The commutativity of the diagram is clear other than the right two triangles, which commute by \Cref{lem:Tate_derived_mod_p}.  
Since the right vertical map is $\Z$-linear and $\Z \otimes R^{\wedge}_p \simeq (\pi_0 R)^{\wedge}_p$ by \Cref{lem_completion}, the outer square induces a square
\[
\begin{tikzcd}
    (\pi_0 R)^{\wedge}_p \arrow[r,"\varphi_R \otimes \Z"]\arrow[d] & (\pi_0 R)^{\wedge}_p\arrow[d]\\
    \pi_0R\mm p \arrow[r,"\varphi_{\pi_0 R\mm p}"] & \pi_0 R\mm p.
\end{tikzcd}
\]
To see that this induces an animated $\delta_p$-ring structure on $(\pi_0 R)^{\wedge}_p$, it suffices to lift the left vertical map to an animated ring map.  But $(\pi_0 R)^{\wedge}_p$ is $1$-truncated, and we claim there is no $2$-torsion in $\pi_1$ in case $p=2$.  Indeed,  if there were, then $\pi_0(R)^{\wedge}_2 \mm 2 = \pi_0(R)\mm 2$ would have nontrivial $\pi_2$, which is impossible.  Thus, by \Cref{lem:Acr2Comm_truncated}, $\varphi_R \otimes \Z$ lifts uniquely to an animated map, as desired.
\end{proof}

\begin{rem}
    In fact, A. Krause has shown that $\widehat{\delta}$-ring structures on a torsion-free commutative ring are \emph{equivalent} to $H_{\infty}$-ring structures on the corresponding Moore spectrum \cite{KrauseH}.    
\end{rem}

\section{Tautological units, rigidity, and staticity}\label{sec:groups}
Let $R_0$ be a (discrete) commutative ring and let $M$ be a finitely generated abelian group.  Then there is a natural map of abelian groups
\begin{align*}
    \theta: M &\to R_0[M]^{\times}\\
    m &\mapsto  [m]
\end{align*}
picking out what we call the \emph{tautological units}.  There are at least two reasons why this map fails to be an isomorphism:
\begin{enumerate}
    \item The target also contains units coming from $R_0^{\times}$.  
    \item If $R_0$ splits as a product $R_0' \times R_0''$, then we have units of the form $([m],[m'])\in (R_0'\times R_0''[M])^{\times}$.
\end{enumerate}

These obstructions carry over verbatim when $R_0$ is replaced by a commutative ring spectrum $R$ and units are replaced by strict units.  To side-step (1), we introduce the following definition:

\begin{defn}
Let $R$ be a commutative ring spectrum and $S$ be an augmented commutative $R$-algebra (e.g. $R[M]$ or $R[M]^{\wedge}_p$ with their canonical augmentations).  Then we denote
\[
\mdef{\Gmred(S) := \fib(\Gm(S) \to \Gm(R))}
\]
so that there is a splitting 
\[
\Gm(S) \simeq \Gm(R) \oplus \Gmred(S).
\]
We refer to $\Gmred(S)$ as the \mdef{reduced strict units} of $S$.  \footnote{We warn that the notation is slightly abusive, as $\Gmred(S)$ depends on both $S$ and its augmentation, but we will clarify whenever there is risk for confusion.}
\end{defn}

Thus, the question of computing $\Gm(R[M])$ reduces to separately computing $\Gm(R)$ and $\Gmred(R[M])$.  In our case of interest $R=\Sph$, the former vanishes by \cite{CarmeliStrict}, and so we begin our analysis in \S\ref{sub:gmred} with the basic properties of reduced units $\Gmred(-)$.

Next, while the map $\theta$ lands in reduced units, (2) still obstructs it from being an isomorphism.  Thus, we consider instead a certain natural map
\[
\Theta: \underline{M}(R) \to \Gmred(R[M])
\]
where $\underline{M}(R)$ denotes the group of locally constant $M$-valued functions on $\Spec(R)$.  The map $\Theta$ is a better approximation to $\Gmred(R[M])$ than $\theta$, and turns out to be an isomorphism for a large class of commutative ring spectra $R$.  We refer to this feature of $(R,M)$ as \emph{rigidity}.  Much of the paper is dedicated to proving rigidity in various situations, so we set notation more carefully in \S\ref{sub:rigidity}.

Finally, in \S\ref{sub:staticity}, we discuss a condition under which rigidity is easy to check.  We say that a pair $(R,M)$ is $\Gm^{\mathrm{red}}(-)$-\emph{static} if the natural map
\[
\Gmred(R[M]) \to \Gmred(\pi_0R[M])
\]
is an inclusion of connected components.  In this case, rigidity reduces to an essentially algebraic question of checking whether all (reduced) strict units of $R[M]$ are tautological at the level of $\pi_0$ (cf. \Cref{prop:static_rigid}).  

\subsection{Strict units, reduced units, and descent}\label{sub:gmred}

The main object of study in this paper is the following:

\begin{defn}\label{defn:strictunit}
    A \mdef{strict unit} in a commutative ring spectrum $R$ is a map of commutative ring spectra
    \[
    \Sph[\Z] =: \Sph[t^{\pm 1}] \to R.
    \]
    We denote the spectrum of strict units by $\Gm(R)$, so that
    \[
    \Omega^{\infty}\Gm(R) = \Map_{\CAlg}(\Sph[t^{\pm 1}],R) \simeq \Map_{\Sp}(\Z, gl_1(R)).
    \]
    In fact, $\Gm(R)$ naturally has a connective $\Z$-module spectrum structure \cite[Cons. 1.6.10]{Lurie_Ell2}.  
\end{defn}


As mentioned earlier, we will need the following variant of strict units:

\begin{defn}[Reduced units]
Let $R$ be a commutative ring spectrum and $S$ be an augmented commutative $R$-algebra.  Then we denote
\[
\mdef{\Gmred(S) := \fib(\Gm(S) \to \Gm(R))}
\]
so that there is a splitting 
\[
\Gm(S) \simeq \Gm(R) \oplus \Gmred(S).
\]
We refer to $\Gmred(S)$ as the \mdef{reduced strict units} of $S$.  \footnote{We warn that the notation is slightly abusive, as $\Gmred(S)$ depends on both $S$ and its augmentation, but we will clarify whenever there is risk for confusion.}
\end{defn}

For us, we will generally take $S$ to be $R[M]$ or $R[M]^{\wedge}_p$ with their canonical augmentations.

\begin{rem}\label{rem:augmented}
The reduced strict units can alternately be interpreted as a space of \emph{augmented maps}: 
\[
\Omega^\infty\Gmr{R}{M} \simeq \Map_{\calg_R^\aug}(R[\ZZ],R[M]).
\]
Accordingly, we have for every $R\in \CAlg$ an adjunction
\[
R[-]\colon \Mod_\ZZ^\cn \adj \calg^{\aug}_{R}: \Gmred(-).
\]
\end{rem}

\subsubsection{Descent for reduced units}

The construction $\Gmred(R[M])$ is functorial in both the ring $R$ and the group $M$.  In the ring $R$, we have the following descent statement:

\begin{prop}\label{prop:red_units_sheaf_affine}
For $M\in \Mod_{\Z}^{\cn}$, the functors $\G_m(-[M]), \Gmr{-}{M} \in \Fun(\CAlg, \Mod_{\Z}^{\cn})$ are sheaves for the Zariski topology.  If $M$ is additionally finite and discrete, then $\G_m(-[M])$ and $\Gmr{-}{M}$ are moreover affine group schemes (i.e., they are co-representable by commutative $R$-algebras). 
\end{prop}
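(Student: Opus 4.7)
The plan is to establish both claims for $\Gm(-[M])$ first, then transfer them to $\Gmr{-}{M}$ via the augmentation splitting $\Gm(R[M]) \simeq \Gm(R) \oplus \Gmr{R}{M}$. Since $\Gm$ is itself corepresented by $\Sph[t^{\pm 1}]$, it is trivially an affine Zariski sheaf, and both properties pass to $\Gmr{-}{M}$ as it is the fiber of the morphism $\Gm(-[M]) \to \Gm$ (a limit of sheaves is a sheaf, and a fiber of a morphism of affine group schemes is affine).

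For the Zariski sheaf property of $\Gm(-[M])$, the key observation is that the endofunctor $(-)[M] \colon \CAlg \to \CAlg$ preserves Zariski covers. Concretely, a family $\{f_i\} \subset \pi_0 R$ generates the unit ideal if and only if its image in $\pi_0 R[M]$ does---the nontrivial direction uses the augmentation $R[M] \to R$ as a retraction---and the localizations satisfy $R[M][f_i^{-1}] \simeq R[f_i^{-1}][M]$. Composing with the standard Zariski descent for $\Gm$ on $\CAlg$ (which follows from Zariski descent for $\CAlg$ itself together with the corepresentability of $\Gm$) yields descent for the composite $\Gm(-[M])$.

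For affineness when $M$ is finite discrete, the equivalence $\Sph[M] \simeq \bigoplus_{m \in M} \Sph$ exhibits $\Sph[M]$ as a perfect (dualizable) $\Sph$-module. Consequently, the base change functor $(-)\otimes_\Sph \Sph[M] \colon \CAlg \to \CAlg_{\Sph[M]}$ preserves all limits (in addition to the colimits it preserves as a left adjoint to restriction of scalars), and thus admits a left adjoint $L$ by the adjoint functor theorem---this $L$ is the spectral analogue of Weil restriction along the finite flat morphism $\Sph \to \Sph[M]$. Unwinding adjunctions gives
\[
\Gm(R[M]) \simeq \Map_{\CAlg_{\Sph[M]}}(\Sph[M]\otimes_\Sph \Sph[t^{\pm 1}],\, R \otimes_\Sph \Sph[M]) \simeq \Map_{\CAlg}(L(\Sph[M]\otimes_\Sph \Sph[t^{\pm 1}]),\, R),
\]
exhibiting $\Gm(-[M])$ as corepresented by the commutative ring spectrum $L(\Sph[M]\otimes_\Sph \Sph[t^{\pm 1}])$, hence as an affine group scheme.

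The main technical obstacle is ensuring that the spectral Weil restriction $L$ indeed exists as a functor landing in $\CAlg$. This reduces entirely to the perfectness of $\Sph[M]$ as an $\Sph$-module, which is exactly what the finite discreteness of $M$ provides: this perfectness makes base change limit-preserving, so the adjoint functor theorem for presentable $\infty$-categories supplies the corepresenting object directly.
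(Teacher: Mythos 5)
Your proof is correct and follows essentially the same route as the paper: reduce to $\G_m(-[M])$ via the splitting, get Zariski descent from the compatibility of $(-)[M]$ with localization (the paper phrases this as the composite preserving the finite limits that encode the sheaf condition), and get corepresentability from the adjoint functor theorem using that $(-)[M]$ preserves limits when $M$ is finite. Your Weil-restriction packaging of the corepresenting object $L(\Sph[M]\otimes_\Sph \Sph[t^{\pm 1}])$ is just a more explicit instance of the paper's direct appeal to the adjoint functor theorem.
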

\begin{proof} 
Since $\Gmred{-[M]}$ is a retract of $\Gm(-[M])$, it suffices to prove the claims for the latter functor.
Since the Zariski topology is finitary, the sheaf condition for $\Gm(-[M])$ amounts to the construction $R \mapsto \Gm(R[M])$ commuting with a finite limit, which follows from the facts that $R\mapsto R[M]$ preserves finite limits and $\Gm$ preserves all limits.  If $M$ is finite, then $R\mapsto R[M]$ and $\Gm$ are accessible and limit preserving, so the result follows from the Adjoint Functor Theorem.
\end{proof}

The behavior of $\Gmred(R[M])$ as $M$ changes is more subtle, but there are some cases when it interacts with limits or colimits in a predictable way.  First, while $\Gmred(R[-])$ does not preserve direct sums, one can always decompose $\AG{R[M\oplus N]}$ into simpler pieces.

\begin{obs} \label{prop:red_units_dsum}
Let $R$ be a commutative ring spectrum and let $M,N\in \Mod_\ZZ^\cn$. Then there are canonical isomorphisms
\begin{align*}
\Gmr{R}{M\oplus N} &\simeq \Gmr{R}{M}\oplus \Gmr{R[M]}{N}, \\
\Gmrp{R}{M\oplus N} &\simeq \Gmrp{R}{M}\oplus \Gmrp{R[M]}{N}
\end{align*}
where we regard $R[M][N]$ as being augmented over $R[M]$ (and similarly for the $p$-complete version).
\end{obs}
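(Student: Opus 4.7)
The plan is to reduce the claim to a formal splitting of a fiber sequence in the stable $\infty$-category $\Mod_\ZZ$, by first factoring the augmentation of $R[M\oplus N]$ through $R[M]$.

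First, I will use that the group-algebra functor $R[-]\colon \Mod_\ZZ^\cn \to \calg_R$ is symmetric monoidal (it sends $\oplus$ to $\otimes_R$), which yields a canonical equivalence
\[
R[M\oplus N] \simeq R[M]\otimes_R R[N] \simeq R[M][N].
\]
Under this identification, the augmentation $R[M\oplus N]\to R$ factors as
\[
R[M][N] \xrightarrow{\id\otimes \epsilon_N} R[M] \xrightarrow{\epsilon_M} R,
\]
and each of these two maps admits a canonical section, given by the unit of the respective group algebra (i.e., induced by $0\in N$ and $0\in M$). Applying the right adjoint $\Gm(-)$ will produce a composable pair of split surjective maps of connective $\ZZ$-modules
\[
\Gm(R[M][N]) \to \Gm(R[M]) \to \Gm(R),
\]
whose fibers, by the very definition of reduced units, are $\Gmr{R[M]}{N}$, $\Gmr{R}{M}$, and (for the composite) $\Gmr{R}{M\oplus N}$.

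To conclude, I will invoke the following formal observation in the stable $\infty$-category $\Mod_\ZZ$: given a composable pair of split epimorphisms $A\to B\to C$, the splittings give $A \simeq \fib(A\to B)\oplus B$ and $B \simeq \fib(B\to C)\oplus C$, hence $A \simeq \fib(A\to B)\oplus \fib(B\to C)\oplus C$, and under this decomposition the composite $A\to C$ is the projection onto the last summand. In particular $\fib(A\to C) \simeq \fib(A\to B) \oplus \fib(B\to C)$, which applied to the diagram above yields the desired splitting of $\Gmr{R}{M\oplus N}$.

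For the $p$-complete assertion, I will use that $p$-completion commutes with the identification $R[M\oplus N]\simeq R[M][N]$ at the level of underlying spectra, and that the factorization of the augmentation together with its two sections $p$-complete to split surjections with targets $R^\wedge_p$ and $R[M]^\wedge_p$. The same formal splitting argument applied to $\Gm((R[M][N])^\wedge_p) \to \Gm(R[M]^\wedge_p) \to \Gm(R^\wedge_p)$ will yield the second decomposition. I do not expect any real obstacle: the entire argument is essentially formal once the factorization of the augmentation and its two sections are in hand.
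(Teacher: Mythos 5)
Your argument is correct and is exactly the routine splitting the paper has in mind (the statement is labelled an observation and given without proof): identify $R[M\oplus N]\simeq R[M][N]$, factor the augmentation through $R[M]$ with its two canonical sections, and use that a composite of split epimorphisms of connective $\ZZ$-modules has fiber the direct sum of the fibers of the two stages. The only cosmetic point is in the $p$-complete case: the paper's convention takes the fiber over $\Gm(R)$ rather than $\Gm(R^\wedge_p)$, but these agree in every instance where the statement is applied (there $R$ is $p$-complete), so nothing is lost.
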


We also have the following descent statement along injections $M\to N$ of abelian groups.  

\begin{defn} \label{def:Cech_Nerve}
For a morphism $\alpha \colon M\to N$ in $\Mod_\ZZ$, its \tdef{\v{C}ech nerve} is the cosimplicial diagram
\[
\mdef{C^\bullet(\alpha)} :=\left(
\begin{tikzcd}
	{N} & {N\oplus_M N} & {N\oplus_M N\oplus_M N} & {}
	\arrow[shift right=0.8, from=1-1, to=1-2]
	\arrow[shift left=0.8, from=1-1, to=1-2]
	\arrow[from=1-2, to=1-3]
	\arrow[shift right=1.6, from=1-2, to=1-3]
	\arrow[shift left=1.6, from=1-2, to=1-3]
	\arrow[dotted, no head, from=1-3, to=1-4]
\end{tikzcd}    
\right) \qin (\Mod_\ZZ^{\cn})^{\Delta}
\]
obtained as an augmented cosimplicial object from $\alpha$ by left Kan extension along $\Delta_+^{\le 0} \into \Delta_+$.  
\end{defn}

\begin{prop}\label{prop:descent_inclusion_groups}
Let $\alpha \colon M \into N$ be an injective map of abelian groups.  Then the natural maps
\begin{align*}
\Gmr{R}{M} &\to \invlim\Gmr{R}{C^\bullet(\alpha)}  \\
\Gmrp{R}{M}&\to \invlim\Gmrp{R}{C^\bullet(\alpha)} 
\end{align*}
are isomorphisms.
\end{prop}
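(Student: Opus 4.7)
The plan is to use the adjunction $R[-] \dashv \Gmr{R}{-}$ of \Cref{rem:augmented} to convert both assertions into a descent statement for the map $R[\alpha]\colon R[M] \to R[N]$, which we will then settle by observing that this map is descendable in the sense of Mathew.

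Since $R[-]$ is a left adjoint, it sends the pushouts $N^{\oplus_M (n+1)}$ defining $C^\bullet(\alpha)$ to iterated relative tensor products $R[N]\otimes_{R[M]}\cdots\otimes_{R[M]}R[N]$, so $R[C^\bullet(\alpha)]$ is the \v{C}ech nerve of $R[\alpha]$ in $\calg^{\aug}_R$. As $\Gmr{R}{-}$ is a right adjoint it preserves limits, so the first claimed equivalence reduces to the assertion
\[
R[M] \xrightarrow{\simeq} \invlim R[C^\bullet(\alpha)]
\]
in $\calg^{\aug}_R$; and since the forgetful functor $\calg^{\aug}_R \to \CAlg$ preserves limits, this may be checked on underlying commutative ring spectra.

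The descent input comes from a strong splitting of $R[\alpha]$. Choosing a set-theoretic section $s\colon N/M \to N$ of the projection with $s(0)=0$ identifies $R[N]$ with the free $R[M]$-module on the basis $\{[s(\bar n)]\}_{\bar n \in N/M}$; in particular $R[M]$ (the summand indexed by $\bar n = 0$) is a direct summand of $R[N]$ in $\Mod_{R[M]}$. This is enough to conclude that $R[\alpha]$ is descendable (of index $1$) in the sense of Mathew, and Mathew's descent theorem then yields $R[M] \simeq \invlim R[C^\bullet(\alpha)]$ in $\CAlg$, proving the first assertion.

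For the $p$-complete version, we repeat the argument inside the full subcategory $\calg^{\aug,\wedge_p}_R \subseteq \calg^{\aug}_R$ of derived $p$-complete augmented $R$-algebras. Here $R[-]^\wedge_p$ is left adjoint to $\Gmrp{R}{-}$, and the \v{C}ech nerve of $R[\alpha]^\wedge_p$ in $\calg^{\aug,\wedge_p}_R$ is the levelwise $p$-completion $R[C^\bullet(\alpha)]^\wedge_p$, because the pushouts computing the \v{C}ech nerve in $\calg^{\aug}_R$ commute with the symmetric monoidal localization to $p$-complete objects. Since descendability is preserved by this localization, $R[\alpha]^\wedge_p$ remains descendable and Mathew descent gives the second claim. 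The main technical point to verify is the identification of the $p$-complete \v{C}ech nerve with the levelwise completion, but given the explicit free module structure of $R[N]$ over $R[M]$ produced above, this reduces to a routine $p$-adic comparison.
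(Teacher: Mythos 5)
Your proposal is correct and follows essentially the same route as the paper: both reduce, via the limit-preservation of the units functor, to the statement that $R[M]\to R[N]$ satisfies descent, which holds because the coset decomposition exhibits $R[M]$ as an $R[M]$-module retract of $R[N]$, making the map descendable in Mathew's sense; the $p$-complete case is handled the same way using the persistence of the retract after ($p$-completed) base change. The only cosmetic difference is that the paper passes from $\Gmred$ to $\Gm$ via the fiber sequence over the constant functor $\G_m(R)$ rather than invoking the adjunction $R[-]\dashv \Gmred$ directly.
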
 
\begin{proof}
We consider the non $p$-complete case first.  Since $\Gmr{R}{M}$ is the fiber of the map $\Gm(R[M]) \to \G_m(R)$ and constant functors preserve contractible limits, it suffices to prove the analogous statement for $\Gm(R[M])$.  This follows from the fact that $\Gm$ preserves limits and $R[M]\to R[N]$ is descendable in the sense of \cite[Definition 3.18, Proposition 3.20]{AkhilGalois}, as it admits a retract.  The $p$-complete case follows similarly, using that $R[M]/p \to R[N]/p$ admits a retract.


\end{proof}

\subsection{Rigidity}\label{sub:rigidity}

We begin by constructing the inclusion of the tautological units.

\begin{defn}[Constant group scheme {\cite[Exm. 2.5.7]{Lurie_Ell2}}]\label{defn:const}
    For $M\in \Mod^{\cn}_\Z$, denote by 
    \[
    \ct{M}:\CAlg \to \Mod^{\cn}_{\Z}
    \]
    the Zariski sheafification of the constant functor valued at $M$.
\end{defn}

\begin{rem}\label{rem:const}
    For an abelian group $M$ and commutative ring spectrum $R$, we have
\[
\ct{M}(R) = \{ \text{Locally constant functions } f: |\Spec (R)| \to M\}.
\]
Moreover, when $M$ is finite, the functor $\ct{M}$ is corepresented by the commutative ring spectrum $\Sph^M$.  
\end{rem}

\begin{cnstr}[Tautological units]\label{cnstr:theta}
    A consequence of \Cref{defn:const} is that for any Zariski sheaf $F$ on $\CAlg$, we have
    \[
    \mathrm{Nat}(\ct{M}(-), F) = \Map_{\Mod_\Z^{\cn}}(M, F(\Sph)),
    \]
    since the constant functor construction is adjoint to evaluation at the initial object.  
    Applying this to the Zariski sheaf $F=\Gmred(-[M])$ (\Cref{prop:red_units_sheaf_affine}), we define
    \[
    \mdef{\Theta_M}: \ct{M}(-) \to \Gmred(-[M])
    \]
    to be the unique natural transformation whose value on $\Sph$ agrees with the unit map $M \to \Gmred(\Sph[M])$ of the adjunction of \Cref{rem:augmented}.  
\end{cnstr}

The bulk of this paper is dedicated to analyzing the pairs $(R \in \CAlg, M \in \Ab)$ for which $\Theta_M(R)$ is an isomorphism.

\begin{defn}\label{def:rigidity}
Let $M$ be an abelian group and let $R$ be a commutative ring spectrum.  
\begin{enumerate}
    \item We say that $R$ is \tdef{$M$-rigid} if the map $\Theta_M(R)\colon \ct{M}(R)\to \Gmr{R}{M}$ is an isomorphism.
    \item  We say that $R$ is $\tdef{$p$-rigid}$ if it is $M$-rigid for every finite abelian $p$-group $M$. 
\end{enumerate}
\end{defn}

\begin{var}\label{var:p-complete_rigid}
If $R$ is additionally $p$-complete, we say $R$ is $\tdef{$p$-completely $M$-rigid or $p$-rigid}$ if the analogous conditions hold for the composite $\widehat{\Theta}_M(R)\colon \ct{M}(R)\xrightarrow{\Theta} \Gmr{R}{M} \to \Gmrp{R}{M}.$  
Note that for $M$ finite, $R[M]\simeq R[M]^{\wedge}_p$ so this coincides with the above definition.
\end{var}


\begin{exm}[see, e.g., {{\cite[Theorem 1]{karpilovsky1983finite}}}]\label{ex:rigidity_Z_discrete}
Let $R$ be a discrete commutative ring and let $\Lambda$ be a finitely generated free abelian group, i.e. $\Lambda\simeq \ZZ^k$. A commutative ring $R$ is $\Lambda$-rigid if and only if $R$ is \emph{reduced}, that is, contains no non-zero nilpotent elements. More generally, for every connected commutative ring $R$, the units of $R[\Lambda]$ are the products $r\cdot \lambda_0\cdot f$ where $r\in R^\times$, $\lambda_0\in \Lambda$, and $f\in R[\Lambda]$ is congruent to $1$ modulo the nil-radical of $R$.
\end{exm}

We next discuss closure properties of rigidity with respect to $M$ and $R$.
First, 
since $\Theta_M$ is a morphism of Zariski sheaves, the collection of $M$-rigid commutative ring spectra $R$ is always closed under Zariski covers. For \emph{finite} $M$, we have a much stronger closure property.

\begin{prop} \label{good_rings_closed_limits}
Let $M$ be a finite abelian group. Then the collection of $M$-rigid (resp. $p$-rigid) commutative ring spectra is closed under all limits in  $\calg(\Sp)$. 
\end{prop}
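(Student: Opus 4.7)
The plan is to reduce everything to the observation that, for finite $M$, both the source and target of $\Theta_M(-)\colon \ct{M}(-)\to \Gmr{-}{M}$ are corepresentable functors $\CAlg(\Sp)\to \Mod_{\Z}^{\cn}$, and hence preserve arbitrary limits.

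Concretely, I would argue as follows. Suppose $\{R_i\}$ is a diagram in $\CAlg(\Sp)$ with limit $R$, and each $R_i$ is $M$-rigid. By \Cref{rem:const}, since $M$ is finite, the functor $\ct{M}$ is corepresented by the commutative ring spectrum $\Sph^M$, so $\ct{M}(R)=\Map_{\CAlg}(\Sph^M,R)\simeq \lim_i \ct{M}(R_i)$. On the other hand, by \Cref{prop:red_units_sheaf_affine}, the functor $\Gmr{-}{M}$ is an affine group scheme, i.e.\ corepresentable, and therefore also sends the limit $R$ to the limit of $\Gmr{R_i}{M}$. Since $\Theta_M$ is a natural transformation, we obtain a commuting square
\[
\begin{tikzcd}
\ct{M}(R) \arrow[r,"\Theta_M(R)"]\arrow[d,"\wr"'] & \Gmr{R}{M} \arrow[d,"\wr"]\\
\lim_i \ct{M}(R_i) \arrow[r,"\lim \Theta_M(R_i)"'] & \lim_i \Gmr{R_i}{M}
\end{tikzcd}
\]
in which the vertical maps are isomorphisms. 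By assumption, each $\Theta_M(R_i)$ is an isomorphism, so the bottom map is an isomorphism, and therefore so is $\Theta_M(R)$, proving $R$ is $M$-rigid. The $p$-rigid case follows by quantifying over all finite abelian $p$-groups $M$.

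There is essentially no obstacle here: the content is already packaged in \Cref{prop:red_units_sheaf_affine} (corepresentability of $\Gmr{-}{M}$ for finite $M$) and \Cref{rem:const} (corepresentability of $\ct{M}$ for finite $M$), and the result reduces to the formal fact that a natural transformation between limit-preserving functors which is an isomorphism on a diagram is an isomorphism on the limit. The only thing to keep in mind is that finiteness of $M$ is genuinely used in both invocations of corepresentability, so the argument does not immediately extend to infinite $M$.
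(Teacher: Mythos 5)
Your argument is correct and is essentially identical to the paper's proof, which likewise reduces the claim to the corepresentability of $\ct{M}(-)$ and $\Gmr{-}{M}$ for finite $M$ (via \Cref{rem:const} and \Cref{prop:red_units_sheaf_affine}) and the formal fact that a natural transformation between limit-preserving functors inherits invertibility from the terms of the diagram. No gaps.
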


\begin{proof}
The second statement clearly follows from the first.  For fixed finite $M$, it suffices to note that $\Gmr{-}{M}$ and $\ct{M}(-)$ are representable (\Cref{prop:red_units_sheaf_affine}, \Cref{rem:const}).
\end{proof}

\subsection{Staticity}\label{sub:staticity}

Consider the forgetful map
\begin{equation}\label{eqn:static}
\G_m(R) \to \G_m(\pi_0(R)) = \pi_0(R)^{\times}.
\end{equation}
One difficulty in computing strict units is that $\G_m(R)$ may not be discrete, and so while it can be relatively easy to see which units in $\pi_0(R)$ lift to strict units (as we saw in the introduction), there may be a nontrivial space of ways in which they lift.  In this paper, we are interested in when this does \emph{not} happen -- that is, when (\ref{eqn:static}) is just an inclusion of connected components.  We call this phenomenon \emph{staticity}.  


\begin{defn} \label{def:staticity}
Suppose $F:\CAlg^{\cn} \to \Spaces$ and $R\in \CAlg^{\cn}$.  Then we say $R$ is \tdef{$F$-static} (or symmetrically $F$ is $R$-static) if the map $F(R) \to F(\pi_0(R))$ is an inclusion of spaces.\footnote{Note that here, an inclusion (of spaces) means a map which is the inclusion of a union of connected components, or equivalently, a $(-1)$-truncated map.}  For functors $G$ taking values in spectra, we say $R$ is $G$-static if it is $\Omega^{\infty}G$-static.  
\end{defn}

\begin{exm}\label{prop:const_static}
Let $M$ be an abelian group. Then every connective commutative ring spectrum $R$ is $\ct{M}$-static. 
\end{exm}




Staticity exhibits closure with respect to many operations.  

\begin{lem}\label{prop:static_limits_groups}
Let $R$ be a connective commutative ring spectrum. 
\begin{enumerate}
    \item If $\{F_i\}_{i\in I}$ is a diagram of functors in $\Fun(\CAlg, \Spaces)$ and $F_i$ is $R$-static for every $i$, then $\invlim F_i$ is $R$-static.
    \item If $
F_0 \to F_1 \to F_2 $
is a fiber sequence in $\Fun(\CAlg,\Sp^{\cn})$ with $F_0$ and $F_2$ both $R$-static, then $F_1$ is $R$-static. 
\end{enumerate} 
\end{lem}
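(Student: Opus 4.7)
The plan is to reformulate staticity as a vanishing condition on a suitable fiber, after which both parts become standard closure properties. The key observation I will first record is that for a functor $F \colon \CAlg \to \Sp^{\cn}$, the ring $R$ is $F$-static if and only if the spectrum-level fiber $K := \fib(F(R) \to F(\pi_0 R))$ lies in $\Sp^{\le -1}$, i.e.\ has $\pi_n K = 0$ for all $n \ge 0$. This is a direct comparison via the long exact sequence: the map $\Omega^\infty F(R) \to \Omega^\infty F(\pi_0 R)$ is $(-1)$-truncated iff $\pi_0$ is injective and $\pi_n$ is an isomorphism for $n \ge 1$, and these conditions translate, via the long exact sequence of $K \to F(R) \to F(\pi_0 R)$, to the vanishing of $\pi_n K$ for $n \ge 0$.

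For part (1), I would argue directly at the level of spaces. Since limits in $\Fun(\CAlg, \Spaces)$ are computed pointwise, the map $(\invlim_i F_i)(R) \to (\invlim_i F_i)(\pi_0 R)$ is the limit (in the arrow category) of the maps $F_i(R) \to F_i(\pi_0 R)$, each of which is $(-1)$-truncated by hypothesis. Since $(-1)$-truncated maps of spaces are stable under limits (they are characterized by having empty or contractible fibers, a property preserved by limits in the arrow category), the limit map is again $(-1)$-truncated.

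For part (2), I would use the reformulation. Setting $K_i := \fib(F_i(R) \to F_i(\pi_0 R))$ and using that fibers commute with fibers, the sequence $K_0 \to K_1 \to K_2$ is again a fiber sequence in $\Sp$. By hypothesis $K_0, K_2 \in \Sp^{\le -1}$, so the long exact sequence
\[
\pi_{n+1} K_2 \to \pi_n K_0 \to \pi_n K_1 \to \pi_n K_2
\]
forces $\pi_n K_1 = 0$ for all $n \ge 0$, giving $K_1 \in \Sp^{\le -1}$ and hence the desired staticity of $F_1$.

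There is no real obstacle here; the one minor subtlety worth flagging is that even though $F_i$ takes values in connective spectra, the fibers $K_i$ need not be connective (they live in $\Sp^{\ge -1}$), so the reformulation really is a nontrivial constraint — but once it is in place, both statements reduce to formal properties of truncated maps and fiber sequences of spectra.
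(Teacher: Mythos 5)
Your proof is correct and takes essentially the same route as the paper's one-line argument: part (1) is verbatim the closure of $(-1)$-truncated maps under pointwise limits, and part (2) simply unwinds the paper's appeal to closure under ``passage to total spaces of principal fibrations'' via the long exact sequence of the spectrum-level fibers $K_i$. The only point to watch is that a fiber sequence in $\Fun(\CAlg,\Sp^{\cn})$ has $F_0 \simeq \tau_{\ge 0}\fib(F_1\to F_2)$, so $K_0 \to K_1 \to K_2$ is a fiber sequence in $\Sp$ only after replacing $K_0$ by the fiber of the non-truncated $\tilde F_0 := \fib(F_1\to F_2)$; since this replacement does not change $\pi_n$ for $n\ge 0$, your argument goes through unchanged.
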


\begin{proof}
The claims follow by using that limits of functors are computed pointwise
 and $(-1)$-truncated maps are closed under limits and passage to total spaces of principal fibrations in $\Spc$. 
\end{proof}

The behavior with respect to the rings is more subtle, but at least we have the following:

\begin{lem}\label{prop:static_limits_rings}
Let $F: \CAlg \to \Spaces$ be representable (equivalently, limit preserving) and let 
$\{R_i\}_{i\in I}$ be a diagram of connective commutative ring spectra. If each of the $R_i$ is $F$-static and the coassembly map 
\[
\pi_0\invlim R_i \to\invlim\pi_0 R_i
\]
is an isomorphism\footnote{Here, on the right hand side, the limit is computed in $\CAlg(\Sp)$, i.e., it is the ``derived'' limit of a diagram of discrete commutative rings.}, then $\invlim R_i$ is $F$-static. 
\end{lem}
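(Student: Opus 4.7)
The plan is to reduce the claim to the staticity hypothesis at each level by commuting $F$ with the limit on both the ``spectrum'' side and the ``discrete'' side. Write $R := \invlim R_i$, and note that the goal is to show the forgetful map $F(R) \to F(\pi_0 R)$ is $(-1)$-truncated.

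First, I would apply the representability (equivalently, limit-preservation) of $F$ to obtain $F(R) = F(\invlim R_i) \simeq \invlim F(R_i)$. On the other side, the coassembly hypothesis gives an isomorphism $\pi_0 R \simeq \invlim \pi_0 R_i$ in $\CAlg(\Sp)$, so applying $F$ and using limit-preservation again yields $F(\pi_0 R) \simeq F(\invlim \pi_0 R_i) \simeq \invlim F(\pi_0 R_i)$. The forgetful map $F(R) \to F(\pi_0 R)$ is induced by naturality of the truncations $R_i \to \pi_0 R_i$ and functoriality of $F$, so under these identifications it becomes precisely the map $\invlim F(R_i) \to \invlim F(\pi_0 R_i)$ assembled level-wise from the forgetful maps $F(R_i) \to F(\pi_0 R_i)$.

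Next, by the $F$-staticity of each $R_i$, every level-wise map $F(R_i) \to F(\pi_0 R_i)$ is $(-1)$-truncated. Since $(-1)$-truncated maps in $\Spaces$ are closed under limits -- the same input used in \Cref{prop:static_limits_groups}(1) -- the induced map on limits is again $(-1)$-truncated, completing the argument.

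I do not anticipate any substantive obstacle here; the only point needing care is verifying that the two applications of $F \circ \invlim \simeq \invlim \circ F$ actually intertwine the forgetful map $F(R) \to F(\pi_0 R)$ with the level-wise forgetful maps $F(R_i) \to F(\pi_0 R_i)$. This compatibility is immediate from naturality of the truncation maps $R_i \to \pi_0 R_i$ and the universal property of the limit, and it is also where the coassembly hypothesis is essential: without it, $F(\pi_0 R)$ and $F(\invlim \pi_0 R_i)$ would not match up, and we could not access the level-wise staticity.
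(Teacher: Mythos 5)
Your argument is correct and is exactly the paper's (very terse) proof, spelled out: use limit-preservation of $F$ together with the coassembly hypothesis to identify $F(\invlim R_i)\to F(\pi_0\invlim R_i)$ with the limit of the level-wise maps $F(R_i)\to F(\pi_0 R_i)$, and then invoke closure of $(-1)$-truncated maps under limits. No issues.
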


\begin{proof}
This follows again because $(-1)$-truncated maps are closed under limits.
\end{proof}



We now study staticity in a few special situations.

\subsubsection{Staticity is automatic away from the residual characteristic}
\begin{prop} \label{prop:p-invert_static}
Let $R$ be a $p$-complete commutative ring spectrum and let $\GG$ be an affine abelian group scheme such that the multiplication by $p$ map $ \GG \xrightarrow{p} \GG$ is invertible. Then the map $\GG(R) \to \GG(\pi_0R)$ is an isomorphism, and in particular $R$ is $\GG$-static. 
\end{prop}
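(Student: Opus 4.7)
The plan is to reduce to an infinitesimal calculation via the Postnikov tower of $R$ and then apply deformation theory of the affine group scheme $\GG$. Assuming $R$ is connective (as required by the surrounding notion of staticity), the Postnikov tower converges: $R \simeq \varprojlim_n \tau_{\le n} R$. Since $\GG$ is affine and hence representable, it preserves all limits, giving $\GG(R) \simeq \varprojlim_n \GG(\tau_{\le n} R)$. It thus suffices to prove that each transition map $\GG(\tau_{\le n+1} R) \to \GG(\tau_{\le n} R)$ is an isomorphism for every $n \ge 0$.

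Each such map arises from the square-zero extension $\tau_{\le n+1} R \to \tau_{\le n} R$ with kernel $\pi_{n+1}R[n+1]$. By the standard square-zero deformation theory for $\E_\infty$-rings applied to the defining commutative algebra of $\GG$, the fiber of $\GG(\tau_{\le n+1} R) \to \GG(\tau_{\le n} R)$ at the identity section is canonically identified with the mapping spectrum
\[
\Map_{\ZZ}\bigl(L_e^{\GG},\, \pi_{n+1}R[n+1]\bigr),
\]
where $L_e^{\GG}$ denotes the cotangent complex of $\GG$ pulled back along the identity section $e\colon \Spec \ZZ \to \GG$. Since $\GG$ is group-valued, the fiber over any other section is equivalent to this one, so it is enough to verify vanishing here.

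It remains to observe that for any abelian group scheme, the multiplication-by-$p$ map $[p]\colon \GG \to \GG$ differentiates at the identity to multiplication by $p$ on $L_e^{\GG}$, so our hypothesis forces $L_e^{\GG}$ to be a $\ZZ[1/p]$-module. On the other hand, $R$ being $p$-complete implies that each $\pi_i R$ is derived $p$-complete as a $\ZZ$-module, and mapping in $\Mod_{\ZZ}$ from a $\ZZ[1/p]$-module into a derived $p$-complete module vanishes. Every fiber is therefore trivial, the transition maps are isomorphisms, and passing to the limit yields the desired isomorphism $\GG(R) \xrightarrow{\sim} \GG(\pi_0 R)$. I expect the main obstacle to be the deformation-theoretic identification of the fiber: it requires both the square-zero cotangent-complex formula for $\CAlg$-valued mapping spaces and the standard calculation that $[p]$ differentiates to multiplication by $p$ on $L_e^{\GG}$ (reflecting the fact that the derivative of the $n$-fold addition map of an abelian group scheme at the identity is scalar multiplication by $n$).
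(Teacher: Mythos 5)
Your proposal is correct and follows essentially the same route as the paper: reduce along the Postnikov tower using representability of $\GG$, identify the fiber of each transition map via square-zero deformation theory with maps out of the cotangent fiber of $\GG$ at the identity, and observe that this vanishes because $p$ acts invertibly on that cotangent fiber while $\Sigma^{n+1}\pi_{n+1}R$ is $p$-complete. The only cosmetic difference is that the paper runs the deformation argument through the pullback square against the \emph{split} square-zero extension $\pi_0(R)\oplus \Sigma^{n+2}\pi_{n+1}(R)$ (so that the fiber identification and surjectivity are immediate from the definition of the cotangent fiber), whereas you apply the torsor/obstruction formalism directly to the non-split extension $\tau_{\le n+1}R\to\tau_{\le n}R$; both are standard and the vanishing of the full mapping spectrum handles the obstruction to lifting as well.
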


\begin{proof}
Consider the Postnikov tower
\[
R\iso \invlim \tau_{\le m} R  \to \dots\to \tau_{\le 2} R \to \tau_{\le 1} R \to \tau_{\le 0} R = \pi_0(R).
\]
Since $\GG$ is an affine abelian group scheme, it is a limit preserving functor, and hence 
\[
\GG(R)\iso \invlim \GG(\tau_{\le m}R) \qin \Mod_\ZZ^\cn.
\]
It therefore suffices to show that the maps $\tau_{\le m+1} R \to \tau_{\le m}R$ become isomorphisms after applying  $\GG$. Each of these maps fits into a pullback square 
\[
\xymatrix{
\tau_{\le m+1} R\ar[d] \ar[r] & \tau_{\le m} R\ar[d] \\  
\pi_0 (R) \ar[r] & \pi_0(R) \oplus \Sigma^{m+1}\pi_{m+1} (R) \\  
}
\]
where the bottom right corner is the split square-zero extension of $\pi_0(R)$ by $\Sigma^{m+2}\pi_{m+1} R$. Thus, it suffices to show that the map $\pi_0 R \to \pi_0R \oplus \Sigma^{m+2}\pi_{m+1} R$ induces an isomorphism upon applying $\GG$. Since this map has a left inverse given by the canonical augmentation $\pi_0(R) \oplus \Sigma^{m+2}\pi_{m+1} (R)\to \pi_0(R)$, it remains to show that the fiber of the map 
\[
\GG(\Sigma^{m+2}\pi_{m+1} (R)\oplus \pi_0(R)) \to \GG(\pi_0(R))
\]
vanishes. Let $L$ be the cotangent fiber of $\GG$ at its identity element. Then, by definition, we have 
\[
\fib(\GG(\Sigma^{m+2}\pi_{m+1} (R)\oplus \pi_0(R)) \to \GG(\pi_0(R))) \simeq \hom_{\Sp^\cn}(L,\Sigma^{m+2} \pi_{m+1}(R))
\]
Since $p$ is invertible on $\GG$, it also acts invertibly on $L$. Since $R$ is $p$-complete, so is $\Sigma^{m+2} \pi_{m+1}(R)$. We conclude that 
\[
\hom_{\Sp^\cn}(L,\Sigma^{m+2} \pi_{m+1}(R)) \simeq 0
\] 
and the result follows.
\end{proof}


\subsubsection{$\Gmr{-}{M}$-staticity}
We now specialize to functors of the form $\Gmr{-}{M}$. 
The staticity of these functors is compatible with direct sums in $M$, in the following sense. 
\begin{prop}\label{prop:staticity_direct_sum}
Let $M$ and $N$ be abelian groups and let $R$ be a commutative ring spectrum. If $R$ is $\Gmr{-}{M}$-static and $R[M]$ is $\Gmr{-}{N}$-static, then $R$ is $\Gmr{-}{M\oplus N}$-static. Similarly, if $R=R^{\wedge}_p$ is $\Gmrp{-}{M}$-static and $R[M]^\wedge_p$ is $\Gmrp{-}{N}$-static, then $R$ is $\Gmrp{-}{M\oplus N}$-static.   
\end{prop}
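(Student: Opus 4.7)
The plan is to apply the direct-sum decomposition of \Cref{prop:red_units_dsum}, naturally in the base ring, and reduce the claim to the two given staticity hypotheses. I will describe the $p$-complete version; the integral case is analogous and strictly simpler.

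Naturality of the decomposition, applied to $R \to \pi_0(R)$, expresses the comparison map $\Gmrp{R}{M\oplus N} \to \Gmrp{\pi_0 R}{M\oplus N}$ as the direct sum of
\[
\Gmrp{R}{M} \to \Gmrp{\pi_0 R}{M} \qquad \text{and}\qquad \Gmrp{R[M]^{\wedge}_p}{N} \to \Gmrp{(\pi_0 R)[M]^{\wedge}_p}{N},
\]
where I am identifying the second summand $\Gmrp{R[M]}{N}$ of the observation with $\Gmrp{R[M]^{\wedge}_p}{N}$ using $(R[M])[N]^{\wedge}_p \simeq (R[M]^{\wedge}_p)[N]^{\wedge}_p$. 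Since direct sums of $(-1)$-truncated maps of connective $\Z$-modules remain $(-1)$-truncated on $\Omega^{\infty}$ (the fiber is a direct sum of coconnective spectra, hence coconnective), it suffices to verify that each summand is an inclusion of connected components.

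The first summand is immediate from the hypothesis that $R$ is $\Gmrp{-}{M}$-static. For the second, the subtle point is that the hypothesis of $\Gmrp{-}{N}$-staticity of $R[M]^{\wedge}_p$ bears on the map $\Gmrp{R[M]^{\wedge}_p}{N} \to \Gmrp{\pi_0(R[M]^{\wedge}_p)}{N}$, whose target is the discrete ring $\pi_0(R[M]^{\wedge}_p)$, while the target of our summand, $\Gmrp{(\pi_0 R)[M]^{\wedge}_p}{N}$, involves the possibly non-discrete spectrum $(\pi_0 R)[M]^{\wedge}_p$. To bridge this, I will use that both $\pi_0(R[M]^{\wedge}_p)$ and $(\pi_0 R)[M]^{\wedge}_p$ have the same $\pi_0$ as abelian groups (the derived $p$-completion of $(\pi_0 R)[M]$), yielding a canonical further truncation $(\pi_0 R)[M]^{\wedge}_p \to \pi_0(R[M]^{\wedge}_p)$. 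Arguing via the universal property of truncation to a $0$-truncated target, its pre-composition with $R[M]^{\wedge}_p \to (\pi_0 R)[M]^{\wedge}_p$ coincides with the truncation of $R[M]^{\wedge}_p$. Applying $\Gmrp{-}{N}$, the resulting three-term composite is an inclusion by the staticity hypothesis, and since $(-1)$-truncated maps are closed under taking left factors, the first leg, which is our summand, is also an inclusion.

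The main technical step, I expect, is precisely this bridging between $\pi_0(R[M]^{\wedge}_p)$ and $(\pi_0 R)[M]^{\wedge}_p$. In the integral case the identification $\pi_0(R[M])=(\pi_0 R)[M]$ is immediate, and no such bridging is required.
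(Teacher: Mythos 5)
Your argument is the paper's: apply the direct-sum decomposition of \Cref{prop:red_units_dsum} naturally along $R \to \pi_0 R$ and check each summand separately; for the integral case this is complete and is exactly what the paper does. You also correctly isolate the one genuine subtlety in the $p$-complete case --- the staticity hypothesis on $R[M]^{\wedge}_p$ concerns the map to $\Gmrp{\pi_0(R[M]^{\wedge}_p)}{N}$, whereas naturality of the decomposition produces the map to $\Gmrp{(\pi_0 R)[M]^{\wedge}_p}{N}$, and these targets differ because $(\pi_0R)[M]^{\wedge}_p$ need not be discrete. The paper dismisses this with ``the $p$-complete one is analogous,'' so flagging it is to your credit.

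However, your resolution of that subtlety fails at its last step: $(-1)$-truncated maps are \emph{not} closed under taking left factors. The composite $\ast \to S^1 \to \ast$ is an equivalence, hence an inclusion, but $\ast \to S^1$ is not an inclusion of connected components; in connective $\ZZ$-modules, take $0 \to \Sigma\ZZ \to 0$. Concretely, from the composite
\[
\Gmrp{R[M]^{\wedge}_p}{N} \to \Gmrp{(\pi_0 R)[M]^{\wedge}_p}{N} \to \Gmrp{\pi_0(R[M]^{\wedge}_p)}{N}
\]
being an inclusion you may deduce that the first leg is injective on $\pi_0$ and on all higher homotopy groups, but an inclusion of connected components must also be \emph{surjective} on $\pi_n$, $n \ge 1$, at every point of the source, and nothing in your argument rules out higher homotopy of $\Gmrp{(\pi_0 R)[M]^{\wedge}_p}{N}$ that dies in the second leg. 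Left cancellation does hold when the second leg is itself $(-1)$-truncated (or merely $0$-truncated), i.e.\ if $(\pi_0 R)[M]^{\wedge}_p$ were known to be $\Gmrp{-}{N}$-static --- but that is not among your hypotheses. So the $p$-complete case retains a gap at precisely the point you identified; either supply an argument that the second leg is an inclusion, or prove directly that the first leg is surjective on higher homotopy.
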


\begin{proof}
We prove the non $p$-complete version, as the $p$-complete one is analogous.
By \Cref{prop:red_units_dsum} we can identify the morphism
\[
\Gmr{R}{M\oplus N} \to \Gmr{\pi_0R}{M\oplus N} 
\]
with the direct sum of the morphisms
\begin{align*}
\Gmr{R}{M} &\to \Gmr{\pi_0R}{M} && \mathrm{ and } && \Gmr{R[M]}{N} \to \Gmr{\pi_0R[M]}{N},
\end{align*}
where in the latter we regard $R[M][N]\simeq R[M\oplus N]$ as an augmented $R[M]$-algebra. 
The result now follows from the fact that inclusions of connective spectra are closed under finite direct sums. 
\end{proof}

Next, we reformulate the notion of rigidity in terms of staticity with respect to $\Gmr{-}{M}$.
\begin{prop}
\label{prop:static_rigid}
Let $M$ be an abelian group and let $R$ be a 
commutative ring spectrum. Then, $R$ is $M$-rigid if and only if it satisfies the following two conditions:
\begin{enumerate} 
\item It is $\Gmr{-}{M}$-static, that is, the map $\Gmr{R}{M} \to \Gmr{\pi_0R}{M}$ is an inclusion, and so we can regard the left-hand side as a subgroup of the right-hand side.  
\item The subgroup 
\[
\Gmr{R}{M}\subseteq \Gmr{\pi_0R}{M}
\] 
is contained in the image of the map 
\[
\Theta_{\pi_0R}\colon \ct{M}(\pi_0R) \to \Gmr{\pi_0R}{M}. 
\]
In other words, the reduced strict units of $R[M]$ map to tautological units of $\pi_0(R[M])$.
\end{enumerate}
Similarly, if $R$ is $p$-complete, then $R$ is $p$-completely $M$-rigid if and only if it is $\Gmrp{-}{M}$-static and the reduced strict units of $R[M]^\wedge_p$ map to  tautological units of $\pi_0(R[M]^\wedge_p)$.
\end{prop}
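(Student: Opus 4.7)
The plan is to analyze the naturality square
\[
\begin{tikzcd}
\ct{M}(R) \arrow[r,"\Theta_M(R)"]\arrow[d,"\alpha"'] & \Gmr{R}{M}\arrow[d,"\beta"]\\
\ct{M}(\pi_0R) \arrow[r,"\Theta_M(\pi_0R)"] & \Gmr{\pi_0R}{M}
\end{tikzcd}
\]
associated to the canonical map $R \to \pi_0 R$. The argument will rest on two preliminary facts about this square. First, since the Zariski topological spaces of $R$ and $\pi_0 R$ coincide, the vertical map $\alpha$ is an isomorphism (strengthening \Cref{prop:const_static}). Second, for every discrete commutative ring $A$, the map $\Theta_M(A)$ is injective: by quasi-compactness of $\Spec A$, a locally constant function $f\colon |\Spec A| \to M$ takes only finitely many values, giving a decomposition $f = \sum_i m_i \cdot 1_{U_i}$ corresponding to orthogonal idempotents $e_i \in A$; then $\Theta_M(f) = \sum_i e_i [m_i]$ equals $1 = \sum_i e_i [0]$ in $A[M]$ only when $m_i = 0$ on each nonempty $U_i$, since $A[e_i^{-1}][M]$ is a free module on the basis $\{[m]\}_{m \in M}$.

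For the backward implication, I would assume conditions (1) and (2). Since $\Gmr{\pi_0R}{M}$ is discrete (as $\pi_0 R$ is a discrete ring), the hypothesis that $\beta$ is a $(-1)$-truncated map of connective spectra forces $\Gmr{R}{M}$ to be discrete as well and $\beta$ to be injective on $\pi_0$. Injectivity of $\Theta_M(R)$ then follows from commutativity of the square, since the composite $\beta \circ \Theta_M(R) = \Theta_M(\pi_0R) \circ \alpha$ is a composition of an isomorphism with an injection. For surjectivity, given $u \in \Gmr{R}{M}$, condition (2) provides $f \in \ct{M}(\pi_0R)$ with $\beta(u) = \Theta_M(\pi_0R)(f)$; setting $\widetilde f := \alpha^{-1}(f)$, the element $\Theta_M(R)(\widetilde f)$ has the same image as $u$ under $\beta$, so by injectivity of $\beta$ we conclude $u = \Theta_M(R)(\widetilde f)$.

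For the forward implication, if $R$ is $M$-rigid then $\Gmr{R}{M} \simeq \ct{M}(R)$ is discrete, and $\beta$ factors as $\Theta_M(\pi_0R) \circ \alpha \circ \Theta_M(R)^{-1}$. This factorization exhibits the image of $\beta$ inside the image of $\Theta_M(\pi_0R)$, yielding (2), and realizes $\beta$ as a composition of injective maps between discrete spectra, hence a $(-1)$-truncated map, yielding (1). The $p$-complete case is proved identically, with $\widehat{\Theta}_M$ and $\Gmrp{-}{M}$ replacing $\Theta_M$ and $\Gmr{-}{M}$ throughout. The only non-formal input is the injectivity of $\Theta_M(A)$ for discrete $A$; once that is in hand, the rest is a short diagram chase in the naturality square.
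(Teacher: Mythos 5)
Your proof is correct and follows essentially the same route as the paper: both arguments hinge on the naturality square for $R \to \pi_0R$, the fact that $\ct{M}(R)\to\ct{M}(\pi_0R)$ is an isomorphism, and the injectivity of $\Theta_M$ on discrete rings, followed by the same diagram chase. The only difference is that you supply an explicit idempotent argument for the injectivity of $\Theta_M(A)$ for discrete $A$, which the paper asserts without proof; this is a welcome but inessential addition.
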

\begin{proof}
We show the non-$p$-complete case, as the $p$-complete one is proved identically. Consider the commutative square
\[
\xymatrix{
\ct{M}(R)\ar^-{\Theta_{R}}[r]\ar^\wr[d] & \Gmr{R}{M}\ar[d]\\ 
\ct{M}(\pi_0R)\ar@{^{(}->}^-{\Theta_{\pi_0R}}[r] & \Gmr{\pi_0R}{M}
}
\]
arising from the naturality of $\Theta$. In this square, the left vertical map is an isomorphism by \Cref{prop:const_static}, and the lower horizontal map is an inclusion of abelian groups.

Now, assume that $R$ is $M$-rigid. Then, the upper horizontal map $\Theta_R$ is an isomorphism, and hence the right vertical map is an inclusion, with image contained in that of $\Theta_{\pi_0 R}$, as we wanted to show. 

Conversely, if $R$ satisfies condition $(1)$ then, since inclusions of spaces satisfy cancellation from the right, this forces the upper vertical map to be an inclusion as well. Now, by the commutativity and the fact that the left vertical map is an isomorphism, condition $(2)$ forces it to be also surjective on connected components. Together, this shows that the upper horizontal map $\Theta_R$ is an isomorphism, so that $R$ is $M$-rigid.
\end{proof}

\section{Rigidity for finite $p$-groups}\label{sec:rig}
In this section, we will show that many commutative ring spectra are $p$-rigid (i.e., rigid for every finite abelian $p$-group).
\begin{exm} \label{exm:rigfail} Consider the following failures of $p$-rigidity for discrete $R$:
\begin{enumerate}
    
    \item  If $M$ is a finite group of exponent $p^e$ and $R$ is a $\Z[1/p]$-algebra with a primitive $p^e$-th root of unity, then there is a discrete Fourier isomorphism $R[M] \cong R^{M^*}$ of algebras.  This gives an isomorphism $\Gmr{R}{M} \simeq \G_m(R)^{M^*\setminus \{0\}}$, which is usually larger than $M$.
    \item $p$-rigidity usually fails when $R$ is an $\FF_p$-algebra. For example, $\FF_p[C_p] \simeq \FF_p[t]/t^p$, and the (automatically strict) reduced units in this ring contain all the elements of the form $1 + a_1t + \dots + a_{p-1}t^{p-1}$ for arbitrary $a_i \in \FF_p$.
\end{enumerate}
\end{exm}

One can view these classical obstructions through the lens of chromatic homotopy theory: 

\begin{itemize}

\item $(1)$ originates from the existence of primitive $p$-th roots of unity, a phenomenon that occurs only when $p$ is invertible (i.e., in chromatic height $0$), cf. \cite{Sanath}. We shall thus restrict our attention to $p$-complete commutative ring spectra. 

\item $(2)$ is related to $\FF_p$-linearity: any $\FF_p$-algebra has vanishing $T(n)$-localizations for $n\geq 0$ and is therefore ``concentrated in chromatic height $\infty$''.  We will therefore work with an appropriate category of commutative ring spectra with ``no height $\infty$ part.''  
\end{itemize}
It turns out that the above obstructions are essentially the only ones.  Most of this section will be dedicated to showing that $T(n)$-local ring spectra are $p$-rigid (\Cref{T(n)_local_rigid}).  Since this condition is closed under limits, any iterated limit of $T(n)$-local rings will be $p$-rigid.  In particular, we will see that this holds for the $p$-complete sphere $\Sph_p$ and any algebra of finite Tor-amplitude over it, by chromatic convergence (\S\ref{sub:cconv}).

\subsection{$T(n)$-local rigidity}
Throughout this section we fix a prime $p$ and  we consider the $T(n)$ localization of $p$-local spectra for $n\ge 1$, so that all spectra to which we apply this localization are implicitly assumed to be $p$-local. The aim of this section is to show that every $T(n)$-local commutative ring spectrum $R$ is $p$-rigid (\Cref{T(n)_local_rigid}), i.e. that the map 
\[
\Theta_M(R)\colon \ct{M}(R)\to \Gmr{R}{M}
\] 
is an isomorphism for $M$ any finite abelian $p$-group. 

When $n=0$, we understood the $T(0)$-local (i.e. rational) failure of $p$-rigidity in \Cref{exm:rigfail}(2) using a Fourier isomorphism $R[M]\simeq R^{M^*}.$ Since the right-hand side is a product of rings, its units are easy to compute.  


In the case $n>0$, there is an analogous isomorphism known as the \emph{chromatic Fourier transform}, which identifies $R[M]$ as cochains on the classifying space $B^nM^*$.  We review this in \S\ref{sub:CFT} and use it to identify the target of
\[
\Theta_M: \ct{M} \to \Gmr{-}{M}
\]
in \S\ref{sub:thetatarget}.  More precisely, for each $r\geq 0$, we will show that if we restrict the target to $\Z/p^r$-modules $M$, it can be identified with the functor $\ct{M}$.   Then, we finish by using this to show that $\Theta_M$ induces an isomorphism in \S\ref{sub:rigfinish}.  

\begin{rem}
Let us indicate why the $n=0$ case behaves qualitatively differently.  The issue is that in the equivalence $R[M]\simeq R^{M^*}$, the right-hand side depends, as a ring, only on the number of elements of $M$ but not on the group structure. This means that $\Gmr{R}{M}$ depends only on the number of elements of $M$ whereas $\ct{M}(R)$ is clearly sensitive to the group structure of $M$ and so 
\[
\Theta_M(R): \ct{M}(R) \to \Gmr{R}{M}
\]
cannot be an isomorphism in general.  At heights $n>0$, the chromatic Fourier transform identifies $R[M]$ with $R^{B^nM^*}$, which, in contrast, \emph{does} depend on the group structure on $M^*$. 
\end{rem}


\subsubsection{Chromatic Fourier transform}\label{sub:CFT}
We start by briefly recalling the chromatic Fourier transform from \cite{CSYCycl, BCSY}.
For a $T(n)$-local commutative ring spectrum $R$, a \tdef{higher root of unity} of height $n$ and order $p^r$ in $R$ is a map 
\[
\omega \colon \Sph[\Sigma^n \Z/p^r] \to R
\]
of commutative ring spectra.  We denote the space of such higher roots of unity by $\mu_{p^r}^{(n)}(R)$.  Each $\omega \in \mu_{p^r}^{(n)}(R)$ determines a natural transformation 
\[
\mdef{\mathcal{F}_\omega(M)} \colon R[M] \to R^{\Omega^\infty\Sigma^n M^*} \qin \calg_R(\Sp_{T(n)}) 
\]
of functors of $M$, a $\pi$-finite connective $\Z/p^r$-module spectrum,
where 
\[
M^* := \hom_\Z(M,\QQ/\Z) \simeq \hom_{\Z/p^r}(M,\Z/p^r) \qin \Mod_{\Z/p^r}
\] 
denotes the Pontryagin dual of $M$.  We refer to $\mathcal{F}_\omega(M)$ as the chromatic Fourier transform.  

The natural quotient maps $\Z/p^r \to \Z/p^{r-1}$ induce maps $\mu_{p^{r-1}}^{(n)}(R)\to \mu_{p^r}^{(n)}(R)$ corresponding to the ``inclusion of $p^{r-1}$-roots of unity.''  A higher root of unity $\omega \in \mu_{p^r}^{(n)}(R)$ is called \tdef{primitive} if the base change to $\mu_{p^r}^{(n)}(R')$ for any non-zero, $T(n)$-local $R$-algebra $R'$  is not in the image of the corresponding map $\mu_{p^{r-1}}^{(n)}(R')\to \mu_{p^r}^{(n)}(R')$.  
For primitive roots, the Fourier transform is an isomorphism (i.e., one has a ``Fourier inversion''):

\begin{thm}[Chromatic Fourier Transform {{\cite[Theorem A]{BCSY}}}]\label{thm:Fourier}
Let $n\geq 0$ and let $R\in \calg(\Sp_{T(n)})$. If $\omega \in \mu_{p^r}^{(n)}(R)$ is primitive, then the Fourier transform 
\[
\mathcal{F}_\omega \colon R[M]\iso R^{\Omega^{\infty-n} M^*} \qin \calg(\Sp_{T(n)})
\]
is an isomorphism for every $\pi$-finite connective $\Z/p^r$-module spectrum $M$. 
\end{thm}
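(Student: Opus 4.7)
The plan is to reduce the assertion to the base case $M=\Z/p^r$, where it is essentially a reformulation of the definition of primitivity. The key is that both functors
\[
M \mapsto R[M], \qquad M \mapsto R^{\Omega^{\infty-n} M^*}
\]
from $\pi$-finite connective $\Z/p^r$-module spectra to $\calg_R(\Sp_{T(n)})$ are compatible with finite colimits, so that once $\mathcal{F}_\omega$ is an isomorphism at the generator $\Z/p^r$, it propagates.

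First I would establish compatibility with direct sums: $R[M\oplus N]\simeq R[M]\otimes_R R[N]$ since $\Sigma^\infty_+\Omega^\infty$ is strong symmetric monoidal on connective spectra. For the right-hand side, Pontryagin duality gives $(M\oplus N)^*\simeq M^*\oplus N^*$ and hence a product of spaces after $\Omega^{\infty-n}$, and then $\pi$-finite higher semiadditivity of $\Sp_{T(n)}$ (Hopkins--Lurie) implies $R^{X\times Y}\simeq R^X\otimes_R R^Y$ for $\pi$-finite spaces $X,Y$. Next I would show that both sides send a cofiber sequence $M\to N\to N/M$ of connective $\Z/p^r$-module spectra to a pushout of commutative $R$-algebras. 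For the left-hand side this is immediate from $\Sigma^\infty_+\Omega^\infty$ being a left adjoint on connective spectra. For the right-hand side, the Pontryagin dual of such a cofiber sequence is a fiber sequence of $\Z/p^r$-module spectra which yields, after applying $\Omega^{\infty-n}$, a fiber sequence of $\pi$-finite spaces; ambidexterity in $\Sp_{T(n)}$ then transforms this into a pushout of cochain algebras.

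Given these two compatibilities, the claim reduces to the case $M=\Z/p^r$, since every $\pi$-finite connective $\Z/p^r$-module spectrum is built from $\Z/p^r$ by iterated finite colimits (for instance via its Postnikov tower, whose layers are of the form $\Sigma^i(\Z/p^r)^{\oplus k}$ and are themselves obtained from $\Z/p^r$ by finite direct sums and suspensions, the latter encoded by cofiber sequences $N\to 0\to \Sigma N$). In the base case, both $R[\Z/p^r]$ and $R^{B^n(\Z/p^r)^{*}}$ are finite locally free $R$-algebras, and $\mathcal{F}_\omega(\Z/p^r)$ is governed by the Fourier pairing determined by $\omega$. The main obstacle is to deduce non-degeneracy of this pairing from primitivity of $\omega$: the negation would produce, on some non-zero $T(n)$-local $R$-algebra $R'$, a factorization of $\omega_{R'}$ through the inclusion $\mu_{p^{r-1}}^{(n)}(R')\to \mu_{p^r}^{(n)}(R')$, directly contradicting primitivity. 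Making this last step precise requires invoking the representability and structure of the Picard-like functors $\mu_{p^r}^{(n)}$ on $T(n)$-local commutative ring spectra, as developed in the theory of chromatic cyclotomic extensions.
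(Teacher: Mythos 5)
First, note that the paper does not prove this statement: it is quoted verbatim from \cite[Theorem A]{BCSY} and used as a black box, so what you are attempting is a from-scratch proof of a substantial external theorem. Your overall architecture (reduce to a generator by showing both $M\mapsto R[M]$ and $M\mapsto R^{\Omega^{\infty-n}M^*}$ turn finite colimits of $\pi$-finite connective $\Z/p^r$-modules into colimits of $T(n)$-local commutative $R$-algebras, via higher semiadditivity and Eilenberg--Moore for $\pi$-finite spaces) is indeed the right formal skeleton and is close in spirit to how the reduction is organized in \cite{BCSY}. But there are two genuine gaps.

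\emph{Gap 1: the generation claim is false as stated.} The class of objects obtained from $\Z/p^r$ by iterated finite colimits in $\Mod_{\Z/p^r}^{\cn}$ consists only of perfect complexes, and for $r\geq 2$ the module $\Z/p^s$ with $s<r$ is not perfect over $\Z/p^r$ (its minimal free resolution is $2$-periodic and infinite), so your Postnikov layers $\Sigma^i A$ with $A$ a non-free finite $\Z/p^r$-module are never reached. Separately, even for $r=1$, assembling $M$ from its Postnikov layers is not a colimit construction: the cofiber sequence $\Sigma^N\pi_NM\to M\to \tau_{\le N-1}M$ exhibits $\tau_{\le N-1}M$ (or, after rotation, $\Sigma M$) as a cofiber of things you control, but never $M$ itself; you would need closure of the good class under \emph{extensions} (equivalently, that both functors send the relevant fiber sequences to pullbacks/pushouts in the other variable), which is an additional statement you have not established. \emph{Gap 2: the base case is where the entire difficulty lives, and you have placed it in the wrong degree.} Primitivity is a condition on $\omega\colon \Sph[\Sigma^n\Z/p^r]\to R$, so what it gives you "essentially by definition" is the Fourier equivalence for $M=\Sigma^n\Z/p^r$, where $\Omega^{\infty-n}M^*$ is the \emph{discrete} set $\Z/p^r$ and the statement is a finite, discrete Fourier inversion. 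The heart of \cite{BCSY} is descending the suspension from $\Sigma^n\Z/p^r$ to $\Z/p^r$, i.e.\ proving $R[\Z/p^r]\simeq R^{B^n\Z/p^r}$, which requires their affineness/orientation machinery for Eilenberg--MacLane spaces in $\Sp_{T(n)}$ and is not a reformulation of non-degeneracy of a pairing. Your sketch of "deduce non-degeneracy from primitivity, else factor $\omega_{R'}$ through $\mu_{p^{r-1}}^{(n)}(R')$" assumes the degree-$0$ statement that needs to be proved.
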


While not all commutative $T(n)$-local algebras have primitive roots of order $p^r$, they do after passing to a finite Galois extension. In fact, primitive higher roots of unity are classified by a finite Galois extension of the $T(n)$-local sphere $\Sph_{T(n)}$.

\begin{thm}[Higher Cyclotomic Extensions, {{\cite[Proposition 5.2]{CSYCycl}}}]
The functor that takes $R\in \calg(\Sp_{T(n)})$ to the subspace of $\mu_{p^r}^{(n)}(R)$ consisting of primitive roots is co-representable by a $T(n)$-local $\Z/p^r$-Galois extension $R_{n,r}^f$ of $\Sph_{T(n)}$.
\end{thm}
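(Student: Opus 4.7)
The plan is to construct $R_{n,r}^f$ as a direct summand of the $T(n)$-local commutative ring spectrum corepresenting \emph{all} higher roots of unity of order $p^r$, and then to verify the $\Z/p^r$-Galois axioms using the chromatic Fourier transform (\Cref{thm:Fourier}).

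First, I would set $R_{n,r} := \Sph_{T(n)}[\Sigma^n \Z/p^r]$, so that the defining adjunction of spherical group rings identifies $\mu_{p^r}^{(n)}(R)$ with $\Map_{\calg(\Sp_{T(n)})}(R_{n,r},R)$. The quotient $\Z/p^r \onto \Z/p^{r-1}$ induces a ring map $\pi_r\colon R_{n,r}\to R_{n,r-1}$ corepresenting the inclusion $\mu_{p^{r-1}}^{(n)}\into \mu_{p^r}^{(n)}$, and a root $\omega\colon R_{n,r}\to R$ is primitive precisely when the pushout $R\otimes_{R_{n,r}}R_{n,r-1}$ vanishes; equivalently, $\omega$ does not factor through $\pi_r$ even after non-trivial base change.

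By analogy with the classical decomposition $\Z[\mu_{p^r}]\simeq \prod_{k=0}^{r}\Z[\zeta_{p^k}]$, I would show by induction on $r$ that there is a product splitting $R_{n,r}\simeq R_{n,r-1}\times R_{n,r}^f$ in $\calg(\Sp_{T(n)})$, which exhibits $R_{n,r}^f$ as the sought corepresenting object. This amounts to producing an idempotent $e\in \pi_0R_{n,r}$ cutting out the non-primitive part. To construct $e$, I would base-change along an auxiliary faithful $T(n)$-local Galois extension $\Sph_{T(n)}\to E$ (for instance a Morava $E$-theory) that already carries a primitive higher root of unity of order $p^r$. Over $E$, \Cref{thm:Fourier} yields an isomorphism $E[\Sigma^n\Z/p^r]\simeq E^{K(\Z/p^r,n)}$, and the map $K(\Z/p^{r-1},n)\to K(\Z/p^r,n)$ produced from $\Z/p^{r-1}\into \Z/p^r$ gives an explicit splitting of this function spectrum into primitive and non-primitive factors. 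The content of the argument is then to check that this idempotent is invariant under the Galois group of $E$ over $\Sph_{T(n)}$ (the Morava stabilizer), so that it descends to an idempotent in $\pi_0R_{n,r}$.

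With $R_{n,r}^f$ in hand and carrying a tautological primitive higher root of unity, I would equip it with a natural $\Z/p^r$-action coming from the Pontryagin self-duality $\Z/p^r\simeq \hom(\Z/p^r,\Z/p^r)$ acting on $\Sigma^n\Z/p^r$, and then verify the Galois axioms. Applying \Cref{thm:Fourier} over $R_{n,r}^f$ gives $R_{n,r}^f\otimes_{\Sph_{T(n)}}R_{n,r}\simeq (R_{n,r}^f)^{K(\Z/p^r,n)}$; tensoring the second factor further with the primitive idempotent from the previous step then produces the Galois equivalence $R_{n,r}^f\otimes_{\Sph_{T(n)}}R_{n,r}^f\simeq \prod_{\Z/p^r}R_{n,r}^f$. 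Faithfulness is inherited from that of $R_{n,r}$, and the fixed-point identification $(R_{n,r}^f)^{h\Z/p^r}\simeq \Sph_{T(n)}$ follows from the Galois equivalence by standard formalism. The main obstacle throughout is the Galois descent of the primitive idempotent: while cyclotomic polynomials trivialize the classical splitting, chromatically one must control the action of the Morava stabilizer on the decomposition of $E^{K(\Z/p^r,n)}$, which ultimately reduces to the behavior of the $p^r$-torsion in the Lubin--Tate formal group.
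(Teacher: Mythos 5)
This theorem is not proved in the paper at all --- it is imported verbatim from \cite{CSYCycl} --- so your proposal has to be judged against what that construction actually requires. Your overall architecture is the right one: corepresent all $p^r$-th higher roots of unity by $R_{n,r}=\Sph_{T(n)}[\Sigma^n\Z/p^r]$, observe that $\omega$ is primitive iff $R\otimes_{R_{n,r}}R_{n,r-1}=0$, and split $R_{n,r}\simeq R_{n,r-1}\times R_{n,r}^f$ by an idempotent in analogy with $\Z[1/p][C_{p^r}]\simeq\prod_k\Z[1/p][\zeta_{p^k}]$. But the step where you produce the idempotent by descent from Morava $E$-theory fails: $\Sph_{T(n)}\to E_n$ is \emph{not} a faithful $T(n)$-local Galois extension for $n\ge 2$. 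Descendability (or even faithfulness) of $E_n$ over $\Sph_{T(n)}$ would force $T(n)$-local and $K(n)$-local localization to agree, i.e.\ the telescope conjecture, which is false; the fiber of $\Sph_{T(n)}\to\Sph_{K(n)}$ is a nonzero $T(n)$-local spectrum killed by $E_n$. So you can verify $\mathbb{G}_n$-invariance of your idempotent in $\pi_0L_{K(n)}(R_{n,r}\otimes E_n)$ and descend it to $\pi_0L_{K(n)}R_{n,r}$ by Devinatz--Hopkins, but there is no mechanism for lifting it back along $R_{n,r}\to L_{K(n)}R_{n,r}$. The functor $L_{K(n)}(-\otimes E_n)$ is only nil-conservative --- which is exactly how the present paper uses it in \Cref{Rnf_connected} --- and nil-conservativity detects vanishing of rings but does not transport idempotents. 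The construction in \cite{CSYCycl} is intrinsic to $\Sp_{T(n)}$: higher semiadditivity makes $\Sph_{T(n)}[\Sigma^n\Z/p^r]$ a separable, self-dual algebra, whence the surjection onto $R_{n,r-1}$ is automatically projection onto a ring factor. (The paper's own remark after \Cref{prop:rhs}, that descent from Lubin--Tate theory is only available for the $K(n)$-local variant, is pointing at precisely this issue.)

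The second genuine error is the Galois action. The group acting on $R_{n,r}^f$ is $(\Z/p^r)^\times=\mathrm{Aut}(\Z/p^r)$, acting through functoriality of $M\mapsto\Sph_{T(n)}[\Sigma^nM]$, exactly as for $\Q(\zeta_{p^r})/\Q$; the ``$\Z/p^r$'' in the displayed statement is a typo, as the rest of the paper confirms (the proof of \Cref{T(n)_local_rigid} uses $R\simeq(R\widehat{\otimes}R_{n,r}^f)^{h(\Z/p^r)^\times}$, and \Cref{Rnf_connected} identifies the Zariski spectrum of $L_{K(n)}(R_{n,r}^f\otimes E_n)$ with the $(\Z/p^r)^\times$-set of primitive roots). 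Your proposed action of the \emph{additive} group via ``Pontryagin self-duality'' is not an action: $\hom(\Z/p^r,\Z/p^r)$ is a multiplicative monoid whose unit group is $(\Z/p^r)^\times$, and the additive group does not act on $\Sigma^n\Z/p^r$ by automorphisms. Accordingly the Galois equivalence must read $R_{n,r}^f\otimes R_{n,r}^f\simeq\prod_{(\Z/p^r)^\times}R_{n,r}^f$; your indexing over $\Z/p^r$ gives the wrong rank even after base change to $E_n$. Finally, faithfulness of $R_{n,r}^f$ is not ``inherited from that of $R_{n,r}$'': a ring factor of a faithful algebra need not be faithful (the complementary factor $R_{n,r-1}$ could carry all of it), and in \cite{CSYCycl} faithfulness is a separate argument using nil-conservativity together with the existence of primitive higher roots of unity in Lubin--Tate theory.
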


These Galois extensions $R_{n,r}^f$ are referred to in \cite{CSYCycl} as the \tdef{chromatic cyclotomic extensions}.  The upshot for us will be that, provided we work over $R_{n,r}^f$, there is a natural Fourier isomorphism:

\begin{cor}\label{cor:fourier}
There is a natural isomorphism of commutative and cocommutative Hopf algebras in $\Sp_{T(n)}$, defined for $R \in \CAlg_{R_{n,r}^f}(\Sp_{T(n)})$ and $M\in \Mod_{\Z/p^r}^{\cn}$ with $\pi_*M$ finite, of the form
\[
\mathcal{F} \colon R[M]\iso R^{\Omega^{\infty-n} M^*}.
\]
\end{cor}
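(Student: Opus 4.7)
The plan is to obtain the isomorphism by extracting a canonical primitive higher root of unity from the universal property of $R_{n,r}^f$ and then applying the Chromatic Fourier Transform (\Cref{thm:Fourier}) pointwise, checking naturality and Hopf algebra compatibility afterwards.

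First, by the corepresentability of the functor of primitive roots, there is a universal primitive root $\omega_{\mathrm{univ}} \in \mu_{p^r}^{(n)}(R_{n,r}^f)$. For every $R \in \CAlg_{R_{n,r}^f}(\Sp_{T(n)})$, base change along the structure map $R_{n,r}^f \to R$ produces a canonical higher root of unity $\omega_R \in \mu_{p^r}^{(n)}(R)$, which is primitive by the very definition of primitivity (stability under base change to non-zero $T(n)$-local algebras, which transfers from $R_{n,r}^f$ since any non-zero $R$-algebra is also a non-zero $R_{n,r}^f$-algebra). \Cref{thm:Fourier} applied to $\omega_R$ then yields an isomorphism
\[
\mathcal{F} := \mathcal{F}_{\omega_R} \colon R[M] \iso R^{\Omega^{\infty-n}M^*} \qin \calg(\Sp_{T(n)})
\]
for every $\pi$-finite connective $\Z/p^r$-module spectrum $M$.

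Naturality in $R$ and $M$ comes essentially for free: naturality in $M$ is already asserted in \Cref{thm:Fourier}, while naturality in $R$ reduces to the fact that $\omega_R$ is itself natural in $R$ by construction (it is the base change of $\omega_{\mathrm{univ}}$), together with the naturality of $\mathcal{F}_{(-)}$ in the chosen root. For the Hopf algebra structures, observe that both $R[-]$ and $R^{\Omega^{\infty-n}(-)^*}$ are symmetric monoidal functors from the abelian group objects $\pi$-finite connective $\Z/p^r$-modules to $\calg(\Sp_{T(n)})$ — the first via the group algebra construction, the second via contravariant functoriality of cochains combined with Pontryagin duality (which exchanges products and coproducts). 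Since the Fourier isomorphism is natural in $M$, it automatically carries the multiplication and comultiplication of $R[M]$ (induced by $M \oplus M \to M$ and the diagonal $M \to M \oplus M$) to those of $R^{\Omega^{\infty-n}M^*}$ (induced by the dual maps on $M^*$); the Fourier transform simply swaps which arrow induces which operation.

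The main technical obstacle — and really the only nontrivial input — is \Cref{thm:Fourier} itself, whose content is already invoked as a black box from \cite{BCSY}. Once the universal root is available, the proof is a formal unwinding: the remaining subtlety is ensuring that the construction $\omega \mapsto \mathcal{F}_\omega$ of loc. cit. is sufficiently natural so that the pointwise isomorphisms assemble into a natural transformation of symmetric monoidal functors, which one verifies by tracing through the construction of the Fourier transform as a composite involving the $\omega$-twisted coevaluation — a step that is essentially tautological given how $\mathcal{F}_\omega$ is defined in \cite{BCSY}.
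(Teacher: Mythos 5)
Your proposal is correct and follows essentially the same route as the paper: the paper's proof simply takes the universal primitive root $\omega \in \mu^{(n)}_{p^r}(R_{n,r}^f)$, sets $\mathcal{F} = \mathcal{F}_\omega$ (which, applied to an $R_{n,r}^f$-algebra $R$, amounts to your base-changed root $\omega_R$), and cites \cite[\S 3.3]{BCSY} for the Hopf algebra compatibility that you sketch by hand.
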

\begin{proof}
Let $\omega \in \mu^{(n)}_{p^r}(R_{n,r}^f)$ be the universal primitive $p^r$-th root of unity of height $n$, and let $\mathcal{F} = \mathcal{F}_{\omega}$.  For the Hopf algebra structure, see \cite[\S 3.3]{BCSY}.  
\end{proof}

Implicit in \Cref{cor:fourier} is that if $M$ is $p$-power torsion and $n$-connected, then $R[M]\simeq R$.  We will need a torsion-free variant of this fact:

\begin{lem}\label{lem:Gmtrunc}
    Suppose $R$ is a $T(n)$-local commutative ring spectrum and suppose $j> n+1$. Then the unit map $R \to L_{T(n)}R[\Sigma^j \ZZ]$ is an isomorphism.  In particular, the connective $\ZZ$-module $\mathbb{G}_m(R)$ is $(n+1)$-truncated.  
\end{lem}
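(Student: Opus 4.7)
The plan is to derive the ``in particular'' clause formally from the main equivalence, and to prove the main equivalence by bootstrapping from the chromatic Fourier transform. For the in-particular clause: by the adjunction $\Sph[-]\dashv\mathrm{gl}_1$, for $k\geq 0$ one has $\pi_k\Gm(R)\cong \pi_0\Map_{\calg}(\Sph[\Sigma^k\ZZ],R)$; since $R$ is $T(n)$-local this equals $\pi_0\Map_{\calg}(L_{T(n)}\Sph[\Sigma^k\ZZ],R)$, and applying the main statement to $R'=L_{T(n)}\Sph$ identifies $L_{T(n)}\Sph[\Sigma^k\ZZ]$ with $L_{T(n)}\Sph$ for $k>n+1$. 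The mapping space $\Map_{\calg}(L_{T(n)}\Sph,R)$ is contractible, so $\pi_k\Gm(R)=0$ for $k>n+1$.

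For the main equivalence, the first step applies the chromatic Fourier transform of \Cref{cor:fourier}. After base change to the Galois extension $R_{n,r}^f$, Fourier identifies $R[\Sigma^k\ZZ/p^r]$ with $R^{\Omega^{\infty-n}\Sigma^{-k}\ZZ/p^r}$; for $k\geq n+1$ the exponent has all its homotopy in negative degrees, so $\Omega^{\infty-n}\Sigma^{-k}\ZZ/p^r$ is contractible and the right-hand side collapses to $R$. Faithfully-flat descent along $R_{n,r}^f/L_{T(n)}\Sph$ then yields $L_{T(n)}R[\Sigma^k\ZZ/p^r]\simeq R$ for every $T(n)$-local $R$ and all $r\geq 1$.

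The second step bridges from $\Sigma^k\ZZ/p^r$ to $\Sigma^j\ZZ$ via rational modules. Since $R[-]$ and $L_{T(n)}$ both preserve colimits, and $\mathbb{Q}_p/\mathbb{Z}_p=\colim_r\ZZ/p^r$, one obtains $L_{T(n)}R[\Sigma^k\mathbb{Q}_p/\mathbb{Z}_p]\simeq R$ for $k\geq n+1$ (the transition maps on $R$ must be identities, being augmented $R$-algebra endomorphisms of $R$). The prime-to-$p$ part of $\mathbb{Q}/\ZZ$ has $p$-locally trivial reduced suspension spectrum, giving the same conclusion with $\mathbb{Q}/\ZZ$ in place of $\mathbb{Q}_p/\mathbb{Z}_p$. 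Finally, the short exact sequence $0\to\ZZ\to\mathbb{Q}\to\mathbb{Q}/\ZZ\to 0$ produces a cofiber sequence $\Sigma^{j-1}\mathbb{Q}/\ZZ\to\Sigma^j\ZZ\to\Sigma^j\mathbb{Q}$ in $\Mod_\ZZ^{\cn}$, which $R[-]$ sends to a pushout in $\calg_R^{\aug}$. Since $K(\mathbb{Q},j)$ is simply-connected rational for $j\geq 2$, $L_{T(n)}R[\Sigma^j\mathbb{Q}]\simeq R$; combining this with $L_{T(n)}R[\Sigma^{j-1}\mathbb{Q}/\ZZ]\simeq R$ (valid for $j-1\geq n+1$) and the fact that $L_{T(n)}$ is smashing and hence commutes with the relative tensor product, the pushout collapses to $L_{T(n)}R[\Sigma^j\ZZ]\simeq R$ for $j>n+1$.

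The main technical obstacle is keeping track of how $L_{T(n)}$ interacts with relative tensor products in $\calg_R^{\aug}$; concretely, one needs $L_{T(n)}(A\otimes_B C)\simeq L_{T(n)}A\otimes_{L_{T(n)}B}L_{T(n)}C$ when $B$ is $T(n)$-local, which follows from $L_{T(n)}$ being smashing on $p$-local spectra. A more conceptual subtlety is the essential use of the rational modules $\mathbb{Q}$ and the prime-to-$p$ part of $\mathbb{Q}/\ZZ$ as objects ``invisible'' to $T(n)$-localization, providing the bridge from the $\pi$-finite $p$-torsion modules where Fourier operates to the torsion-free group $\ZZ$.
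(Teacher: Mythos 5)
Your argument follows essentially the same route as the paper's: the chromatic Fourier transform collapses $R[\Sigma^k\ZZ/p^r]$ to $R$ for $k>n$ after base change to $R_{n,r}^f$, Galois descent removes the hypothesis on $R$, and passing to the colimit over $r$ together with the fiber sequence $\Sigma^{j-1}\QQ/\ZZ\to\Sigma^j\ZZ\to\Sigma^j\QQ$ and the $T(n)$-acyclicity of simply connected rational Eilenberg--MacLane spaces handles $\Sigma^j\ZZ$; the ``in particular'' clause is likewise the paper's argument of mapping into $R$. One justification should be corrected: $L_{T(n)}$ is \emph{not} smashing (for $n\ge 2$ this is exactly the failure of the telescope conjecture), so you cannot invoke smashing to commute $L_{T(n)}$ past the relative tensor product. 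What is true, and suffices here, is that $T(n)$-equivalences are stable under smashing with arbitrary spectra, so the natural map $A\otimes_B C\to L_{T(n)}A\otimes_{L_{T(n)}B}L_{T(n)}C$ is a $T(n)$-equivalence; since in your application the localized base is $R$ itself, the localized pushout collapses as claimed.
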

\begin{proof}
    The first statement follows from the formula
    \[
    L_{T(n)}R[\Sigma^j \ZZ] = L_{T(n)} \colim_r R[\Sigma^{j-1} \ZZ/p^r]
    \]
    and the aforementioned fact about $R[M]$ for $M$ $p$-power torsion and $n$-connected (cf. also \cite[Lemma 2.4.4, Proposition 3.2.1]{AmbiHeight}, for instance).  
    The ``in particular'' follows from considering $R$-algebra maps into $R$.
\end{proof}

For our applications, we will need one additional fact about the $R_{n,r}^f$:

\begin{lem}\label{Rnf_connected}
For $n,r\geq 0$, the commutative ring spectrum $R_{n,r}^f$ is Zariski-connected. 
\end{lem}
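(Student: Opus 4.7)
The goal is to show that $\pi_0(R_{n,r}^f)$ has no non-trivial idempotents. My plan is to exploit the faithful $\Z/p^r$-Galois structure of $R_{n,r}^f$ over $\Sph_{T(n)}$ (using that $\pi_0(\Sph_{T(n)}) = \Z_p$ is connected for $n \geq 1$), together with a base change to a Lubin--Tate theory where primitive higher roots of unity are available.

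First I would fix a Lubin--Tate theory $E = E_n$ at height $n$ whose residue field is large enough that $E$ admits a primitive $p^r$-th higher root of unity. The chosen root gives a map $\rho \colon R_{n,r}^f \to E$ of $T(n)$-local $\Sph_{T(n)}$-algebras, and the Galois trivialization yields
\begin{equation*}
E \otimes_{\Sph_{T(n)}} R_{n,r}^f \simeq \prod_{g \in G} E, \qquad G = \Z/p^r,
\end{equation*}
with the Galois $G$-action corresponding to translation on the product. Since $\pi_0(E)$ is a complete local ring (hence connected), idempotents in $\prod_G \pi_0(E)$ correspond to subsets of $G$, and faithfulness of $E$ over $\Sph_{T(n)}$ in the $T(n)$-local category makes the induced map on $\pi_0$-idempotents injective.

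Next I would argue that any non-trivial Zariski decomposition $R_{n,r}^f \simeq A \times B$ refines, by grouping Zariski factors into $G$-orbits, to a $G$-equivariant decomposition of the form $R_{n,r}^f \simeq \prod_{G/H} C$ for some proper subgroup $H \subsetneq G$. The identity $(R_{n,r}^f)^{hG} = \Sph_{T(n)}$ combined with connectedness of $\pi_0(\Sph_{T(n)})$ forces $C^{hH} = \Sph_{T(n)}$, so $C$ is an $H$-Galois extension of $\Sph_{T(n)}$. Since $G$ is cyclic, $H = \Z/p^k$ for some $k < r$, and by the classification of such Galois extensions, $C$ must coincide with $R_{n,k}^f$. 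The universal primitive $p^r$-th root of $R_{n,r}^f$ then projects to a primitive $p^r$-th root in $R_{n,k}^f$, yielding a map $R_{n,r}^f \to R_{n,k}^f$. This contradicts the rank comparison between faithful Galois extensions of $\Sph_{T(n)}$ (the source has rank $p^r$ over the base, the target only $p^k$).

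The main obstacle is the final rigidity step: ruling out primitive $p^r$-th higher roots inside $R_{n,k}^f$ for $k < r$. While the rank/faithfulness heuristic above captures the essential idea, making it rigorous requires a careful invocation of the Galois machinery of \cite{CSYCycl} (or an analogous direct computation) to translate ``primitive of order $p^r$'' into a genuinely larger Galois symmetry. Alternatively, one can bypass the case analysis entirely by analyzing the explicit image of $\pi_0(R_{n,r}^f)$ in $\prod_G \pi_0(E)$ under the twisted-diagonal embedding $r \mapsto (\rho_g(r))_g$ and verifying that any idempotent in this image must be fixed by all of $G$ (and hence equal to $0$ or $1$ by injectivity of the map on idempotents).
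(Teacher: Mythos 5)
Your overall strategy (base change to Lubin--Tate, exploit a group action on the resulting Zariski components) is in the right spirit, but the argument as written has gaps that the paper's proof avoids by using different inputs. First, a structural point: the Galois group of $R_{n,r}^f$ over $\Sph_{T(n)}$ is $(\Z/p^r)^\times$, not $\Z/p^r$ (the paper's statement of the Higher Cyclotomic Extensions theorem notwithstanding --- see how it is used in the proof of Theorem \ref{T(n)_local_rigid}); in particular this group is not cyclic for $p=2$, $r\ge 3$, so your reduction ``$H=\Z/p^k$, hence $C=R_{n,k}^f$'' does not even typecheck. More seriously, the step where you identify the intermediate extension $C$ with $R_{n,k}^f$ invokes ``the classification of such Galois extensions'' of $\Sph_{T(n)}$, which is not available (the $T(n)$-local Galois group of the sphere is not known for $n\ge 2$), and the terminal ``rank comparison'' is not a contradiction as stated: a map of $\Sph_{T(n)}$-algebras from a rank-$p^r$ Galois extension to a rank-$p^k$ one is not a priori impossible. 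You acknowledge this last gap yourself, but it is exactly the content one would need to supply.

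The paper's proof replaces all of this with two sharper inputs. (1) Nil-conservativity of $L_{K(n)}(-\otimes E_n)$, a consequence of the Devinatz--Hopkins--Smith nilpotence theorem: this guarantees that if $R_{n,r}^f = R_1\times R_2$ with both factors nonzero, then both $L_{K(n)}(R_i\otimes E_n)$ remain nonzero --- a point your argument needs (to know that an idempotent survives base change nontrivially) but does not address; injectivity ``on idempotents'' of a faithful extension does not come for free at the level of $\pi_0$. (2) The computation from \cite{CSYCycl} that the Zariski spectrum of $L_{K(n)}(R_{n,r}^f\otimes E_n)$ is the set $(\Z/p^r)^\times$ with the Morava stabilizer group $\mathbb{G}_n$ acting \emph{transitively} through the higher cyclotomic character. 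A nontrivial splitting would then break this spectrum into at least two nonempty $\mathbb{G}_n$-stable pieces, contradicting transitivity. Note that it is the transitivity of the $\mathbb{G}_n$-action (an action by ring automorphisms of the base change, much larger than the Galois action you use) that does the work; the Galois group $(\Z/p^r)^\times$ acting on itself by translation is transitive too, but making your orbit analysis rigorous then forces you into the classification and primitivity questions above, which is precisely the hard part you flag as unresolved. If you want to salvage your approach, the cleanest fix is to import items (1) and (2) directly rather than to argue through intermediate Galois extensions.
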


\begin{proof}

Suppose $R_{n,r}^f = R_1 \times R_2$ for nonzero rings $R_1, R_2$.  Let $E_n$ denote a Lubin-Tate theory of height $n$ and consider $L_{K(n)}(R_{n,r}^f \otimes E_n)$ with its action of the Morava stabilizer group $\mathbb{G}_n$. By \cite[Propositions 5.2, 5.4, 5.5]{CSYCycl}, this has Zariski spectrum isomorphic to $(\Z/p^r)^{\times}$, acted on transitively by $\mathbb{G}_n$ through the ``higher cyclotomic character.''

On the other hand, by the Devinatz--Hopkins--Smith Nilpotence theorem (cf. \cite[Corollary 5.1.17]{TeleAmbi}), the functor $L_{K(n)}(-\otimes E_n) : \Sp_{T(n)} \to \Sp_{T(n)}$ is nil-conservative, i.e., it does not send any rings to zero.  Thus, $L_{K(n)}(R_{n,r}^f \otimes R_i)$ is nonzero for $i=1,2$, and so the Zariski spectrum of $L_{K(n)}(R_{n,r}^f \otimes E_n)$ splits into at least two nontrivial $\mathbb{G}_n$-orbits, which is a contradiction.  
\end{proof}

\subsubsection{Identifying the Target of $\Theta$}\label{sub:thetatarget}

Recall that our goal is to show that $\Theta_M(R)$ is an isomorphism for every $R\in \calg(\Sp_{T(n)})$ and finite abelian $p$-group $M$.
In this section, we use the chromatic Fourier transform to abstractly identify $\Gmr{R}{M}$ with $\ct{M}(R)$ under the additional assumption that $R$ admits a primitive $p^r$-th root of unity of height $n$ and $M$ is $p^r$-torsion.  
Later, we will show that the existence of such natural isomorphisms forces every $T(n)$-local commutative ring spectrum to be $p$-rigid.

\begin{prop}\label{prop:rhs} 
There is an isomorphism of functors 
\[
\Psi_M: \Gmr{-}{M} \iso \ct{M} \qin \ZShv(R_{n,r}^f;\Mod_{\Z}^\cn)
\]
which is natural in the finite discrete $\Z/p^r$-module $M$. 
In particular, for every commutative $R_{n,r}^f$-algebra $R$ and every such $M$, there is an isomorphism
\[
\Gmr{R}{M}\simeq \ct{M}(R).
\]
\end{prop}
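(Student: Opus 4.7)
The plan is to combine the chromatic Fourier transform with the $(n+1)$-truncatedness of $\G_m R$ guaranteed by $T(n)$-locality, reducing the question to a concrete mapping-space calculation into $\G_m R$.

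First, I would apply \Cref{cor:fourier} to obtain the natural equivalence of augmented commutative $R$-algebras
\[
\mathcal{F}\colon R[M]\iso R^{\Omega^{\infty-n}M^*},
\]
for every $R\in\CAlg_{R_{n,r}^f}(\Sp_{T(n)})$ and every finite discrete $\Z/p^r$-module $M$ (note that the space $\Omega^{\infty-n}M^*$ is the Eilenberg--MacLane space $K(M^*,n)$ since $M^*$ is discrete and $n\ge 1$). Because $\G_m$ preserves limits and $R^X=\lim_X R$, taking reduced strict units identifies $\Gmr{R}{M}$ with a mapping spectrum into $\G_m R$; using that $\G_m R$ is canonically a connective $\Z$-module spectrum, this becomes
\[
\Gmr{R}{M}\simeq \tau_{\ge 0}\Map_{\Mod_\Z}(C_M,\G_m R),
\]
where $C_M := \Z\otimes \Sigma^\infty K(M^*,n)$ is the reduced integral chain complex of $K(M^*,n)$ (reduction coming from the basepoint/augmentation).

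Next, \Cref{lem:Gmtrunc} tells us that $\G_m R$ is $(n+1)$-truncated as a connective $\Z$-module. Hence the Postnikov truncation $C_M\to\tau_{\le n+1}C_M$ induces an equivalence on $\tau_{\ge 0}\Map_{\Mod_\Z}(-,\G_m R)$. The Hurewicz theorem gives $\pi_nC_M=H_n(K(M^*,n);\Z)=M^*$, and for $n\ge 2$ the stable counit $\Sigma^\infty \Omega^{\infty-n}M^*\to HM^*[n]$ is $(2n-1)$-connected, giving $\tau_{\le n+1}C_M\simeq HM^*[n]$ in that range.

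The candidate comparison map would come from the primitive higher root of unity: the $R_{n,r}^f$-algebra structure of $R$ provides a natural map of connective $\Z$-modules $\widetilde\omega\colon H\Z/p^r[n]\to \G_m R$, and the Pontryagin self-duality $M\simeq\hom_{\Z/p^r}(M^*,\Z/p^r)$ combines with $\widetilde\omega$ to define
\[
\Theta_M\colon \ct{M}\to \Gmr{-}{M}
\]
naturally in $M$ and in $R$, which under the Fourier transform agrees with the tautological map of \Cref{cnstr:theta}. To verify it is an isomorphism, I would use that both sides are Zariski sheaves (\Cref{prop:red_units_sheaf_affine}) and \Cref{Rnf_connected} to reduce to evaluation on $R_{n,r}^f$-algebras $R$ with connected Zariski spectrum, where $\ct{M}(R)=M$. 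The desired identification $\tau_{\ge 0}\Map_{\Mod_\Z}(HM^*[n],\G_m R)\simeq M$ then follows from $\widetilde\omega$ pinning down the top Postnikov layer of $\G_m R$ via the primitivity of $\omega$, so that mapping $HM^*[n]$ into $\G_m R$ yields precisely $\hom_{\Z/p^r}(M^*,\Z/p^r)=M$ with no higher homotopy. Then $\Psi_M:=\Theta_M^{-1}$ is the asserted natural equivalence.

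I expect the main obstacle to be the $n=1$ case, where the identification $\tau_{\le 2}C_M\simeq HM^*[1]$ fails because $H_2(K(M^*,1);\Z)=\Lambda^2M^*$ is generally nonzero. Ruling out the extra contribution requires exploiting the commutativity of the group law on $\G_m R$ (equivalently, the symmetry of the $\E_\infty$-structure) to kill the antisymmetric ``$\Lambda^2$'' term in the mapping spectrum. Control over the precise Postnikov structure of $\G_m R$ via the primitivity of $\omega$ is the key technical input throughout.
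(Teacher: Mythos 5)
Your first half reproduces the paper's argument: the Fourier transform $\mathcal{F}\colon R[M]\iso R^{B^nM^*}$, the identification of $\Gmr{R}{M}$ with the connective cover of $\Map_{\Mod_\ZZ}(\widetilde{C}_*(B^nM^*;\ZZ),\G_m(R))$, the use of \Cref{lem:Gmtrunc} to pass to $\tau_{\le n+1}$, and the Hurewicz/Eilenberg--MacLane computation identifying this truncation with $\Sigma^n M^*$. Up to that point you and the paper agree (including sharing the delicacy at $n=1$, where $H_{n+1}(B^nM^*;\ZZ)=\Lambda^2 M^*$ need not vanish; you at least flag this, but your proposed fix via ``commutativity of the group law'' is not carried out, so that case remains open in your write-up as well).

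The genuine gap is in your final step. Having reduced to $\tau_{\ge 0}\Map_{\Mod_\ZZ}(\Sigma^n M^*,\G_m(R))$, you propose to evaluate this directly by arguing that the primitive root of unity $\widetilde\omega\colon \Sigma^n\ZZ/p^r\to\G_m(R)$ ``pins down the top Postnikov layer of $\G_m(R)$,'' so that the mapping spectrum is exactly the discrete group $\hom_{\ZZ/p^r}(M^*,\ZZ/p^r)=M$. Primitivity does no such thing: it is the condition that $\omega$ does not factor through $\mu^{(n)}_{p^{r-1}}$ after any nonzero base change, and it determines neither $\pi_n\G_m(R)$ nor $\pi_{n+1}\G_m(R)$. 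In general $\pi_0\Map_{\Mod_\ZZ}(\Sigma^n M^*,\G_m(R))$ is an extension involving $\Hom(M^*,\pi_n\G_m(R))$ and $\mathrm{Ext}(M^*,\pi_{n+1}\G_m(R))$, and there is no a priori reason for the first to be $\ct{M}(R)$ or for the second to vanish. The paper's resolution is to apply the Fourier transform a \emph{second} time, now to the coefficient object: rewriting
\[
\tau_{\ge 0}\Map_{\Mod_\ZZ}(\Sigma^n M^*,\G_m(R))\simeq \Map_{\calg_{R_{n,r}^f}}(R_{n,r}^f[\Sigma^n M^*],R)
\]
and using the inverse Fourier isomorphism of Hopf algebras $R_{n,r}^f[\Sigma^n M^*]\iso (R_{n,r}^f)^{\Omega^\infty M}$, one identifies the right-hand side with $\Map_{\calg_{R_{n,r}^f}}((R_{n,r}^f)^{M},R)=\ct{M}(R)$ without ever computing a homotopy group of $\G_m(R)$. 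This double use of Fourier inversion is the idea your proposal is missing. A secondary point: the proposition only asserts the existence of \emph{some} natural isomorphism $\Psi_M$; your insistence that your comparison map agree with $\Theta_M$ conflates this statement with \Cref{prop:theta_iso_cyclo_p_r_torsion}, which the paper proves afterwards by a separate naturality-and-scalar argument.
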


\begin{proof}
By \Cref{cor:fourier}, we have for $R \in \calg_{R_{n,r}^f}(\Sp_{T(n)})$ and $M$ a finite discrete $\Z/p^r$-module a natural equivalence of $T(n)$-local commutative and cocommutative Hopf algebras
\[
\mathcal{F} \colon R[M] \iso R^{\Omega^{\infty - n}M^*} \simeq R^{B^n M^*}.
\]
This yields natural equivalences of connective spectra: 
\begin{align*}
\Gmr{R}{M} &:=   \fib( \G_m(R[M]) \to \G_m(R)) \\
&\simeq \fib( \G_m(R^{B^nM^*}) \to \G_m(R)) \\
&\simeq \fib( \G_m(R)^{B^nM^*} \to \G_m(R))\\
&\simeq \Map_{\Spc_*}( B^nM^*, \G_m(R)),
\end{align*}
where the last space is the space of pointed maps $B^nM^* \to \G_m(R)$.  
Now, $\G_m(R)$ is a connective, $(n+1)$-truncated $\ZZ$-module by \Cref{lem:Gmtrunc}, and hence there is a natural equivalence 
\[
\Map_{\Spc_*}( B^nM^*, \G_m(R)) \simeq \Map_{\Mod_\ZZ}( \tau_{\le n + 1}\widetilde{C}_*(B^nM^*;\ZZ), \G_m(R)),
\]
where $\widetilde{C}_*(B^nM^*;\ZZ)$ is the $\ZZ$-module of \emph{reduced} integral homology chains of $B^nM^*$ (that is, $\ZZ \otimes \Sigma^{\infty}B^nM^*$). Finally, by a classical computation of the homology of Eilenberg--MacLane spaces, we see that the counit map 
\[
\widetilde{C}_*(B^nM^*;\ZZ) \to \Sigma^nM^* \qin \Mod_\ZZ^\cn
\]
induces an isomorphism in homology up to degree $n+1$, and hence an isomorphism of the truncations:
\[
\tau_{\le n+1}\widetilde{C}_*(B^nM^*;\ZZ) \simeq \Sigma^nM^* \qin \Mod_{\Z}^\cn. 
\]
Combining the above, we obtain a natural equivalence 
\[
\Gmr{R}{M} \simeq \Hom_{\Mod_\ZZ^\cn}(\Sigma^n M^*,\G_m(R)) = \Hom_{\calg_{R_{n,r}^f}(\Sp_{T(n)})}(R_{n,r}^f[\Sigma^nM^*], R). 
\]

Finally, the (inverse) Fourier transform gives an isomorphism 
\[
\mathcal{F} \colon R_{n,r}^f[\Sigma^n M^*]\iso (R_{n,r}^f)^{\Omega^\infty M}
\]
of commutative and cocommutative Hopf algebras in $\Sp_{T(n)}$ and so we have an equivalence
\[
\Gmr{-}{M} \simeq \ct{M} \qin \ZShv(R_{n,r}^f;\Sp^\cn)
\] 
of Zariski sheaves of connective spectra over $R_{n,r}^f$. Since $\ct{M}$ (and hence the left-hand side) is discrete and $p^r$-torsion, this isomorphism admits a unique $\ZZ/p^r$-linear structure, and the result follows. 

\end{proof}

\begin{rem}
The reason that we work with each finite $r$ rather than assuming $R$ has all $p^r$-th height $n$ roots of unity is that the argument in \Cref{sub:rigfinish} is by Galois descent.  While one has descent from each $R_{n,r}^f$, descent for the maximal higher cyclotomic extension $\colim_r R_{n,r}^f$ fails in heights $n\ge 2$, as shown by Burklund, Hahn, Levy, and Schlank in \cite[\S6.4]{burklund2023k}. 

On the other hand, if one is satisfied with the $K(n)$-local version of \Cref{T(n)_local_rigid} (which suffices for our applications), one can simply use descent from Lubin-Tate theory where the required Fourier transform is constructed by Hopkins and Lurie and shown to be an isomorphism for all $p$-local $\pi$-finite $M$, not just $\Z/p^r$-modules for a fixed $r$ (see \cite[Corollary 5.3.26]{AmbiKn}).  
\end{rem}

\subsubsection{$\Theta$ is a $T(n)$-local equivalence}\label{sub:rigfinish}

We would like to show that $\Theta_M: \ct{M} \to \Gmr{-}{M}$ is an isomorphism for any finite abelian $p$-group $M$.  It suffices to check this in the case when $M$ is a $\Z/p^r$-module, for every $r\geq 0$.  Naturally in a finite discrete $\Z/p^r$-module $M$, we have a diagram of $\Mod_\Z^{\cn}$-valued functors
\[
\begin{tikzcd}
\ct{M} \arrow[bend right=25, rr,  swap, "\widetilde{\Theta}_M"] \arrow[r, "\Theta_M"]& \Gmr{-}{M} \arrow[r,"\Psi_M",  "\simeq"'] & \ct{M}
\end{tikzcd}
\]
where $\widetilde{\Theta}_M$ denotes the composite $\Psi_M\circ \Theta_M$.  By \Cref{prop:rhs}, $\Psi_M$ is an isomorphism and thus showing $\Theta_M$ is an isomorphism amounts to showing that $\widetilde{\Theta}_M$ is an isomorphism. 
Note that both the source and target of $\widetilde{\Theta}_M$ are discrete and $p^r$-torsion, so we may regard $\widetilde{\Theta}_M$ as a natural morphism of sheaves of ordinary $\Z/p^r$-modules.


\begin{prop}\label{prop:theta_iso_cyclo_p_r_torsion}
For every $r\ge 0$ and every finite $\Z/p^r$-module $M$,
the morphism 
\[
\widetilde{\Theta}_{M} \colon \ct{M} \to \ct{M} \qin \ZShv(R_{n,r}^f;\Mod_{\ZZ}^\cn)
\] 
is an equivalence.
\end{prop}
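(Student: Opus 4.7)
The plan is to show that $\widetilde{\Theta}_M$ is multiplication by a single unit $c \in \Z/p^r$, and then verify unitality via a direct order computation on the tautological unit $[1]$. This reduces the claim to a finite computation that can be carried out without unwinding the chromatic Fourier transform explicitly.

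First, I would reduce to identifying a single scalar. Since $\Theta_M$ is natural in $M$ by construction (\Cref{cnstr:theta}) and $\Psi_M$ is natural in $M \in \Mod_{\Z/p^r}^{\mathrm{fin,disc}}$ by \Cref{prop:rhs}, the composite $\widetilde{\Theta}_M$ is natural in $M$. Evaluating at the Zariski-connected ring $R_{n,r}^f$ (\Cref{Rnf_connected}) yields a natural family of $\Z/p^r$-linear endomorphisms $\tilde{\theta}_M \colon M \to M$, which together form a natural endomorphism of the identity functor on $\Mod_{\Z/p^r}^{\mathrm{fin,disc}}$. Any such natural endomorphism is multiplication by a fixed scalar $c := \tilde{\theta}_{\Z/p^r}(1) \in \Z/p^r$, as one sees by applying naturality to the maps $\Z/p^r \to M$, $1 \mapsto m$, for arbitrary $m \in M$. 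Since both $\ct{M}$ and $\ct{M}$ are Zariski sheaves, combining this with naturality in $R$ shows that $\widetilde{\Theta}_M$ is multiplication by $c$ on every $R_{n,r}^f$-algebra and every $M$.

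Second, I would compute $c$ explicitly. Specialize to $M = \Z/p^r$ and evaluate at the generator $1 \in \Z/p^r = \ct{\Z/p^r}(R_{n,r}^f)$. By construction of $\Theta$ (\Cref{cnstr:theta}), we have $\Theta_{\Z/p^r}(R_{n,r}^f)(1) = [1] \in \pi_0\Gmr{R_{n,r}^f}{\Z/p^r}$, the tautological unit, and so $c = \Psi_{\Z/p^r}(R_{n,r}^f)([1])$. The key observation is that $[1]$ has order exactly $p^r$ in $R_{n,r}^f[\Z/p^r]^{\times}$: indeed $[1]^k = [k]$, which equals $[0] = 1$ if and only if $p^r \mid k$. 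The same holds in the subgroup $\pi_0\Gmr{R_{n,r}^f}{\Z/p^r}$, which by \Cref{prop:rhs} is abstractly isomorphic to $\ct{\Z/p^r}(R_{n,r}^f) = \Z/p^r$. Since $\Psi_{\Z/p^r}(R_{n,r}^f)$ is a group isomorphism, it preserves orders of elements, so $c$ has order $p^r$ in $\Z/p^r$, which forces $c$ to be a generator, hence a unit.

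Consequently $\widetilde{\Theta}_M$ is multiplication by a unit in $\Z/p^r$ and is therefore an isomorphism of sheaves of $\Z/p^r$-modules (equivalently, of connective $\Z$-modules). The main potential obstacle is verifying that the chain of isomorphisms comprising $\Psi_M$---the chromatic Fourier transform, the $(n{+}1)$-truncation of integral chains on $B^nM^*$, and the inverse Fourier transform---is genuinely natural in $M$ in the required sense. However, each individual step of the construction in \Cref{prop:rhs} is manifestly natural, so this is a bookkeeping issue rather than a substantive obstacle; the actual content of the proof is the finite-group order calculation for $[1]$.
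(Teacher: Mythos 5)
Your proof is correct and follows essentially the same route as the paper: reduce $\widetilde{\Theta}_M$ to multiplication by a single scalar using naturality in $M$ and the Zariski-connectedness of $R_{n,r}^f$, then verify that scalar is a unit via the non-degeneracy of the tautological unit. The only (cosmetic) difference is in the last step: the paper reduces mod $p$ and checks that $\theta_{C_p}$ is non-zero, while you compute directly that $[1]$ has order exactly $p^r$; both come down to the fact that the elements $[k]$ are distinct in $\pi_0(R_{n,r}^f)[\Z/p^r]$.
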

\begin{proof}
Since $\ct{(-)}$ is left adjoint to the evaluation at $R_{n,r}^f$, the natural transformation $\tilde{\Theta}_M$ corresponds to a morphism $M \to \ct{M}(R_{n,r}^f)$ of $\Z/p^r$-modules.  Since $R_{n,r}^f$ is connected by \Cref{Rnf_connected}, this can be identified with a natural transformation $\theta_M: M \to M$ (for instance given by evaluating $\tilde{\Theta}_M$ at $R_{n,r}^f$), and it suffices to show that $\theta_M$ is an isomorphism for all $M$.  

Since $\theta_M$ is a natural endomorphism of the identity functor on the category of finite discrete $\Z/p^r$-modules, it is given by multiplication by some scalar $a\in \Z/p^r$.  It suffices to show that $a$ is invertible.  For this, one only needs to show that $a$ is invertible mod $p$, or equivalently, that $\theta_{C_p}: C_p \to C_p$ is invertible.  But $\theta_{C_p}$ is a $\Z/p$-module map, so this amounts to showing $\theta_{C_p}$ is \emph{non-zero}.  

Up to isomorphisms, $\theta_{C_p}$ is identified with $\Theta_{C_p}(R_{n,r}^f)$, or even $\Theta_{C_p}(\bS)$.  This is non-constant by definition, so $\theta_{C_p}$ cannot be zero.

\end{proof}

We can now prove the main result of this section.

\begin{thm} \label{T(n)_local_rigid}
For $n\ge 1$, every $T(n)$-local commutative ring spectrum is $p$-rigid. 
\end{thm}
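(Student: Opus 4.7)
The plan is to reduce to commutative $R_{n,r}^f$-algebras, where the result is already essentially proved by the preceding two propositions, and then bootstrap by Galois descent.

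Fix a finite abelian $p$-group $M$ and a $T(n)$-local commutative ring spectrum $R$; I pick $r \ge 0$ large enough that $p^r M = 0$, so that $M$ is a finite discrete $\ZZ/p^r$-module. Combining Propositions~\ref{prop:rhs} and~\ref{prop:theta_iso_cyclo_p_r_torsion}, the composite
\[
\widetilde{\Theta}_M \;=\; \Psi_M\circ \Theta_M\colon \ct{M}(-) \to \Gmr{-}{M} \to \ct{M}(-)
\]
of natural transformations of Zariski sheaves on $R_{n,r}^f$ is an isomorphism, and $\Psi_M$ is already an isomorphism by \Cref{prop:rhs}. Hence $\Theta_M(R')$ is an isomorphism for every commutative $R_{n,r}^f$-algebra $R'$ in $\Sp_{T(n)}$.

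Second, I would extend this to arbitrary $T(n)$-local $R$ by Galois descent. Set $R' := L_{T(n)}(R\otimes R_{n,r}^f)$, a $T(n)$-local commutative $R_{n,r}^f$-algebra. Since $R_{n,r}^f$ is a finite $\ZZ/p^r$-Galois extension of $\Sph_{T(n)}$, it is descendable as a commutative $\Sph_{T(n)}$-algebra (any faithful finite Galois extension is descendable, cf.\ \cite{AkhilGalois}), and this descendability is preserved by base change along $\Sph_{T(n)} \to R$. Consequently, $R$ is the totalization in $\CAlg(\Sp_{T(n)})$ of the cobar cosimplicial diagram $R'^{\otimes_R \bullet + 1}$, every term of which is a commutative $R_{n,r}^f$-algebra in $\Sp_{T(n)}$.

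To conclude, because $M$ is finite, both $\ct{M}(-)$ and $\Gmr{-}{M}$ are representable (see \Cref{rem:const} and \Cref{prop:red_units_sheaf_affine}), and so preserve arbitrary limits. Therefore $\Theta_M(R)$ is the totalization of the natural transformation $\Theta_M$ applied to each term of the cobar diagram, each of which is an isomorphism by the first step, and hence $\Theta_M(R)$ is itself an isomorphism. The main obstacle is really just verifying the descendability of $R_{n,r}^f$ over $\Sph_{T(n)}$; the substantive content of the proof is already carried by Propositions~\ref{prop:rhs} and~\ref{prop:theta_iso_cyclo_p_r_torsion}, with Galois descent serving only as a formal device to globalize.
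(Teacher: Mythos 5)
Your proof is correct and follows essentially the same route as the paper: both reduce to commutative $R_{n,r}^f$-algebras via \Cref{prop:rhs} and \Cref{prop:theta_iso_cyclo_p_r_torsion}, and then descend along the faithful $(\Z/p^r)^\times$-Galois extension $R \to R\widehat{\otimes}R_{n,r}^f$. The only difference is in the implementation of the descent: you use descendability and the cobar totalization, whereas the paper uses the homotopy fixed point formula $R \simeq (R\widehat{\otimes}R_{n,r}^f)^{h(\Z/p^r)^\times}$ together with the closure of rigidity under limits (\Cref{good_rings_closed_limits}).
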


\begin{proof}
Let $R$ be a $T(n)$-local commutative ring spectrum and let $M$ be a finite abelian $p$-group. We wish to show that $R$ is $M$-rigid. Choose $r>0$ for which  $M$ is $p^r$-torsion and let $R\widehat{\otimes} R_{n,r}^f$ be the $T(n)$-local tensor product of $R$ and $R_{n,r}^f$.  By \Cref{prop:theta_iso_cyclo_p_r_torsion}, we have that $R\widehat{\otimes} R_{n,r}^f$ is $M$-rigid.  But the unit map $R\to R\widehat{\otimes} R_{n,r}^f$ is a faithful $(\Z/p^r)^\times$-Galois extension in $\calg(\Sp_{T(n)})$, and hence 
\[
R\simeq (R\widehat{\otimes} R_{n,r}^f)^{h(\Z/p^r)^\times} \qin \calg(\Sp_{T(n)}).
\]
Since the embedding $\calg(\Sp_{T(n)}) \to \calg(\Sp)$ is limit preserving, the same formula holds in $\calg(\Sp)$. Thus, since $p$-rigid rings are closed under limits, this implies that $R$ is $M$-rigid as well and the result follows.
\end{proof}

\subsection{Chromatic convergence and $p$-rigidity over $\Sph^{\wedge}_p$}  \label{sub:cconv}

Since $p$-rigidity is closed under limits in $\CAlg(\Sp)$, we immediately deduce from \Cref{T(n)_local_rigid} that:

\begin{thm}
    Let $\mathcal{K}\subset \Sp$ be the smallest subcategory which is closed under limits and which contains $\Sp_{T(n)}$ for any $n>0$.  Then any $R\in \CAlg(\Sp)$ whose underlying spectrum is in $\mathcal{K}$ is $p$-rigid.
\end{thm}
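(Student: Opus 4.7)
The plan is to apply the two preceding results, \Cref{T(n)_local_rigid} and \Cref{good_rings_closed_limits}, directly. Let $\mathcal{R} \subseteq \CAlg(\Sp)$ denote the full subcategory of $p$-rigid commutative ring spectra. By \Cref{good_rings_closed_limits}, $\mathcal{R}$ is closed under all limits in $\CAlg(\Sp)$; by \Cref{T(n)_local_rigid}, it contains $\CAlg(\Sp_{T(n)})$ for every $n > 0$. Consequently, $\mathcal{R}$ contains the smallest limit-closed subcategory $\mathcal{L} \subseteq \CAlg(\Sp)$ generated by $\CAlg(\Sp_{T(n)})$ for $n > 0$.

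It then remains to verify that any $R \in \CAlg(\Sp)$ whose underlying spectrum lies in $\mathcal{K}$ actually belongs to $\mathcal{L}$. Two standard inputs are relevant. First, the forgetful functor $U\colon \CAlg(\Sp) \to \Sp$ creates small limits, so that limits in $\CAlg(\Sp)$ are computed on underlying spectra and the preimage $U^{-1}(\mathcal{K}) \subseteq \CAlg(\Sp)$ is itself closed under limits. Second, each Bousfield localization $L_{T(n)}\colon \Sp \to \Sp_{T(n)}$ is symmetric monoidal, so $\CAlg(\Sp_{T(n)}) \hookrightarrow \CAlg(\Sp)$ identifies with the full subcategory of commutative ring spectra whose underlying spectrum is $T(n)$-local. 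Taken together, these facts allow one to promote the construction of $\mathcal{K}$ as an iterated limit closure in $\Sp$ into a parallel iterated limit closure in $\CAlg(\Sp)$, yielding $U^{-1}(\mathcal{K}) \subseteq \mathcal{L}$ and hence $p$-rigidity for $R$.

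The main obstacle is this last promotion: an abstract presentation of $U(R)$ as an iterated limit of $T(n)$-local spectra need not \emph{a priori} lift to such a presentation of $R$ in $\CAlg(\Sp)$. The resolution is that the generating limit diagrams (notably chromatic fracture squares, and Postnikov-style towers of localizations $L_{T(n)}$) naturally live in $\CAlg(\Sp)$ because the localizations themselves are symmetric monoidal, so the creation of limits by $U$ delivers the desired lift. This is precisely what the chromatic convergence results of the next subsection formalize, and is the reason that the main applications---such as $R = \Sph_p$ and its finite Tor-amplitude algebras---fall within the scope of the theorem.
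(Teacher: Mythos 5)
Your first paragraph is correct and is precisely the paper's intended argument: the $p$-rigid rings form a limit-closed full subcategory $\mathcal{R}\subseteq\CAlg(\Sp)$ by \Cref{good_rings_closed_limits}, and $\mathcal{R}$ contains every $T(n)$-local ring by \Cref{T(n)_local_rigid}, hence contains the smallest limit-closed subcategory $\mathcal{L}\subseteq\CAlg(\Sp)$ generated by the $\CAlg(\Sp_{T(n)})$. (The paper's own deduction is exactly this one sentence.)

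The gap is in the second step, and you have located it yourself without closing it. The two ``standard inputs'' you cite prove an inclusion in the wrong direction: since $U$ creates limits and $\CAlg(\Sp_{T(n)})=U^{-1}(\Sp_{T(n)})$, the subcategory $U^{-1}(\mathcal{K})$ is limit-closed and contains each $\CAlg(\Sp_{T(n)})$, which yields $\mathcal{L}\subseteq U^{-1}(\mathcal{K})$ --- not the containment $U^{-1}(\mathcal{K})\subseteq\mathcal{L}$ that your argument needs. To get the latter you would have to lift an arbitrary iterated-limit presentation of the spectrum $U(R)$ to a presentation of $R$ by a diagram of commutative ring spectra, and there is no general mechanism for this: the diagrams witnessing $U(R)\in\mathcal{K}$ need not consist of ring maps, while $M$-rigidity is a condition on the corepresentable functors $\Map_{\CAlg}(A,-)$ of \Cref{prop:red_units_sheaf_affine}, which do not factor limit-preservingly through $U$ (already $\ct{M}(R)=\Map_{\CAlg}(\Sph^M,R)$ is not determined by the underlying spectrum of $R$). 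Your proposed resolution --- that the generating diagrams are fracture squares and localization towers, which live in $\CAlg(\Sp)$ --- is circular: it shows that the particular rings arising in \Cref{cor:p-rigid} lie in $\mathcal{L}$, but not that an arbitrary $R$ with $U(R)\in\mathcal{K}$ does, which is what the theorem quantifies over. The honest conclusion of your (and the paper's) argument is the statement for $R\in\mathcal{L}$, i.e., for the smallest limit-closed subcategory of $\CAlg(\Sp)$ containing the $T(n)$-local rings; that version is what is actually used downstream, since in the proof of \Cref{cor:p-rigid} the chromatic fracture squares and the tower $\{L_nR\}$ are diagrams of commutative ring spectra and the comparison map $R\to\lim L_nR$ is a ring map. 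Either restrict the statement accordingly, or supply the missing promotion argument; as written, the step is asserted rather than proved.
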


The class $\mathcal{K}$ contains many spectra of interest.

\begin{cor}\label{cor:p-rigid}
    Suppose $R\in \CAlg(\Sp)$ is $p$-complete and satisfies any of the following conditions:
    \begin{enumerate}
        \item $R$ is $L_n^f$-local for some $n>0$.
        \item $R/p$ is chromatically convergent (more generally, a retract of its chromatic tower).
        \item $R$ is a $\Sph_p$-algebra of finite Tor-amplitude (cf. \Cref{defn:finiteTor}).
    \end{enumerate}
    Then $R$ is $p$-rigid, i.e., there is a canonical isomorphism
    \[
\G_m(R[M]) \simeq \G_m(R) \oplus \ct{M}(R). 
\]
for any finite abelian $p$-group $M$.
\end{cor}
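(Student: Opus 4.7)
The plan is to verify, under each of the three hypotheses, that the underlying spectrum of $R$ lies in the subcategory $\mathcal{K}$ of the preceding theorem, whereupon $p$-rigidity is immediate. The main case is (2); I will show that (1) and (3) each reduce to it.

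For condition (2): $p$-completeness gives $R \simeq \lim_k R/p^k$, and each $R/p^k$ is a finite iterated cofiber of copies of $R/p$. Closure of $\mathcal{K}$ under limits (hence under fiber sequences of $\mathcal{K}$-members, and under retracts, since a retract is a split equalizer) reduces the task to showing $L_n^f(R/p) \in \mathcal{K}$ for every $n\ge 0$. I will argue this by induction on $n$, simultaneously for all $p$-power torsion spectra. The base case uses that $\Sph/p$, and hence $R/p$, is $p$-power torsion, so $L_0^f(R/p) = (R/p)_{\mathbb{Q}} = 0$. The inductive step is the chromatic fracture pullback
\[
\begin{tikzcd}
L_n^f(R/p) \ar[r]\ar[d] & L_{T(n)}(R/p)\ar[d] \\
L_{n-1}^f(R/p)\ar[r] & L_{n-1}^f L_{T(n)}(R/p),
\end{tikzcd}
\]
in which all three other corners lie in $\mathcal{K}$: $L_{n-1}^f(R/p)$ by induction on $n$, $L_{T(n)}(R/p)$ tautologically as a $T(n)$-local spectrum, and $L_{n-1}^f L_{T(n)}(R/p)$ by the inductive hypothesis applied to the $p$-power torsion spectrum $L_{T(n)}(R/p)$ (noting that $p$-power torsion is preserved by $L_{T(n)}$).

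To reduce (1) and (3) to (2): for (1), the smashing property of $L_n^f$ gives $R/p \simeq L_n^f R \otimes \Sph/p \simeq L_n^f(R/p)$, so $R/p$ trivially realizes its chromatic tower. For (3), by \Cref{defn:finiteTor}, $R$ admits a cell structure with cells in a bounded range of degrees, so $R/p$ is assembled from bounded-range shifts of the finite $p$-local spectrum $\Sph/p$; since $\Sph/p$ is chromatically convergent by Hopkins--Ravenel, this convergence (or at least the retract version) propagates to $R/p$ via bounded-cell-structure analysis of the kind developed in \cite{burklund2024note}.

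Once $R \in \mathcal{K}$, the preceding theorem produces $p$-rigidity, i.e.\ $\widehat{\Theta}_M(R)\colon \ct{M}(R) \iso \Gmr{R}{M}$ for every finite abelian $p$-group $M$ (noting $R[M] \simeq R[M]^{\wedge}_p$ for such finite $M$). Composing with the canonical augmentation splitting $\G_m(R[M]) \simeq \G_m(R) \oplus \Gmr{R}{M}$ yields the stated formula. The main obstacle I foresee is (3): finite Tor-amplitude bounds only the degree range of cells, not their cardinality, so $R/p$ can involve infinitely many cells per degree, and one must verify that chromatic convergence (or at least its retract form) survives such potentially infinite assemblies, which is the technical content one must extract from the cited work.
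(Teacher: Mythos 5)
Your treatment of cases (1) and (2) is essentially the paper's argument, lightly reorganized: the paper likewise reduces everything to membership in the limit-closed class $\mathcal{K}$, proves (1) by induction on $n$ using the same chromatic fracture square (the paper runs it on $R$ itself with $p$-completions and base case $n=1$, where $L_1^f$-local $p$-complete equals $T(1)$-local; your version on the $p$-power torsion spectrum $R/p$ with base case $L_0^f(R/p)=0$ works equally well), and proves (2) by limit closure applied to the chromatic tower. Your observation that it suffices to prove $L_n^f X\in\mathcal{K}$ for all $p$-power torsion $X$ simultaneously is a clean way to package the induction, and the reduction $R\simeq \lim_k R/p^k$ is fine.

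Case (3) is where you have a genuine gap, and you have correctly located it: after filtering $R/p$ by its bounded-range cell structure, the associated graded pieces are shifts of \emph{infinite} direct sums $\bigoplus_{i\in I}\Sph/p$ (suitably completed), and chromatic convergence of $\Sph/p$ does not formally pass to infinite direct sums, because the chromatic tower is a limit and limits do not commute with infinite coproducts. The reference you reach for (\cite{burklund2024note}) is about the Segal conjecture and the Tate construction; it does not address this. The paper closes the gap with a different tool: by \cite[Proposition 2.4]{barthel2016chromatic}, the finite spectrum $\Sph/p$ has finite projective $BP$-dimension; since a direct sum of projective resolutions is again a projective resolution, $\bigoplus_{i\in I}\Sph/p$ also has finite projective $BP$-dimension, and \cite[Theorem 3.8]{barthel2016chromatic} then gives chromatic completeness of such sums (alternatively, one can argue directly that the chromatic tower of $\Sph$ is pro-constant on each homotopy group). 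With that input, your reduction of (3) to (2) goes through; without it, case (3) is unproved.
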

\begin{proof}
    Claim (1) follows by induction, starting from the case $n=1$ (note that $L_1^f$-local and $T(1)$-local are the same for $p$-complete spectra) and using the chromatic fracture squares
    \[
\xymatrix{
(L_n^fR)^{\wedge}_p\ar[r]\ar[d] & L_{T(n)} R\ar[d] \\ 
(L_{n-1}^fR)^{\wedge}_p\ar[r] &  (L_{n-1}^fL_{T(n)} R)^{\wedge}_p,
}
\]

Claim (2) follows immediately because $\lim L_n R$ is in $\mathcal{K}$ by limit closure (note that $L_n R$ is $L_n^f$-local) and the hypothesis implies $R \to \lim L_n R$ is a mod $p$ isomorphism of $p$-complete spectra and therefore an isomorphism. The statement about retracts follows because retracts are limits.  

Finally, any $R$ satisfying (3) has a finite filtration with quotients of the form $\Sigma^k (\bigoplus_{i\in I} \Sph_p)^\wedge_p$. Thus, by (2), it suffices to see that any spectrum of the form $\Sigma^k (\bigoplus_{i\in I} \Sph/p)^\wedge_p$ is chromatically convergent. 

Since $\Sph/p$ is a finite spectrum, it has finite projective $BP$-dimension in the sense of \cite[Definition 2.1]{barthel2016chromatic} (see, e.g., \cite[Proposition 2.4]{barthel2016chromatic}). Since direct sums of projective resolutions are projective resolutions, this implies that any sum of copies of $\Sph/p$ also has finite projective $BP$-dimension, and hence is chromatically complete by \cite[Theorem 3.8]{barthel2016chromatic}\footnote{One can also  argue directly using that the chromatic convergence on $\Sph$ is pro-constant on each homotopy group.}.  
\end{proof}

\section{Rigidity for free abelian groups}\label{sec:free}
In the previous section, we gave conditions for the maps $\Theta_M(R)\colon \ct{M}(R)\to \Gmr{R}{M}$ to be isomorphisms in the case that $M$ is a finite abelian $p$-group --- this we called $p$-rigidity. In this section, we give analogous results when $M \simeq \ZZ^k$ is a free abelian group of finite rank, for a much more restrictive class of commutative ring spectra $R$.  In fact, our main result is a statement just for spherical Witt vectors (though we prove more specific statements along the way).  

\begin{thm}[{\Cref{thm:free-rig}}]
Let $\kappa$ be a perfect ring of characteristic $p$ and let $\Lambda$ be a free abelian group of finite rank. Then the ring of spherical Witt vectors $\Sph_{W(\kappa)}$ is $p$-completely $\Lambda$-rigid.\footnote{We work with $p$-complete rigidity (cf. \Cref{var:p-complete_rigid}) because for $\Lambda$ infinite, the spectrum $R[\Lambda]$ may not be $p$-complete even if $R$ is.}
\end{thm}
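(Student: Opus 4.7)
The plan is to induct on the rank $k$ of $\Lambda$, with the base case $k=0$ trivial. For the inductive step, write $\Lambda = \Lambda' \oplus \Z$ and apply \Cref{prop:red_units_dsum} to obtain
\[
\Gmrp{\Sph_{W(\kappa)}}{\Lambda} \simeq \Gmrp{\Sph_{W(\kappa)}}{\Lambda'} \oplus \Gmrp{\Sph_{W(\kappa)}[\Lambda']^\wedge_p}{\Z}.
\]
Since $\pi_0\Spec(W(\kappa)[\Lambda']^\wedge_p)\cong \pi_0\Spec(W(\kappa))$, the source $\ct{\Lambda}(\Sph_{W(\kappa)})$ splits compatibly. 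The first summand is handled by the inductive hypothesis, while the second reduces the theorem to the key statement that, for every free $\Lambda'$ of finite rank, the ring $R := \Sph_{W(\kappa)}[\Lambda']^\wedge_p$ is $p$-completely $\Z$-rigid. I will therefore strengthen the inductive hypothesis to include this assertion.

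To prove the strengthened statement, the main technical input is a cofiber sequence of connective $\Z$-modules
\[
\Gmrp{R}{\Z} \xrightarrow{f} \Gmrp{R}{\Z} \to \Gmrp{R}{\Z/p},
\]
arising from the short exact sequence $\Z \xrightarrow{p} \Z \to \Z/p$, in which the first map $f$ is identified with the map induced by the Tate-valued Frobenius of $R[\Z]^\wedge_p$. The identification $f \simeq p\cdot$ should follow from the general fact that the Tate-Frobenius acts as multiplication by $p$ on strict units (cf.\ \Cref{prop:Moore_delta_ring} and the surrounding discussion). Since $R$ is a $p$-complete $\Sph_p$-algebra of finite Tor-amplitude, \Cref{cor:p-rigid} applies to the finite $p$-group $\Z/p$ and gives $\Gmrp{R}{\Z/p} \simeq \ct{\Z/p}(R) \simeq \ct{\Z}(R)/p$, whence the cofiber sequence produces an isomorphism $\Gmrp{R}{\Z}/p \simeq \ct{\Z}(R)/p$ compatible with $\widehat{\Theta}_\Z$. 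Iterating the same argument with $p^n$ in place of $p$ (using the analogous SES and the finite $p$-group $\Z/p^n$) upgrades this to the statement that $\widehat{\Theta}_\Z$ becomes an isomorphism after derived $p$-completion.

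The principal remaining difficulty is promoting this $p$-adic equivalence to a genuine isomorphism $\widehat{\Theta}_\Z\colon \ct{\Z}(R) \xrightarrow{\sim} \Gmrp{R}{\Z}$; a priori, the cofiber of $\widehat{\Theta}_\Z$ has uniquely $p$-divisible homotopy groups and could in principle contain a nontrivial $\Z[1/p]$- or $\Q$-summand. Ruling this out is the main obstacle and will require an integrality argument tailored to the Witt vector setting: most plausibly, one exploits the $\widehat{\delta}$-ring structure on $\pi_0R[\Z]^\wedge_p$ furnished by \Cref{prop:Moore_delta_ring} and invokes the algebraic classification of rank $1$ $\widehat{\delta}$-units from \Cref{thm:delta_p_main} to show that any phantom $p$-divisible strict unit must already arise from a tautological locally constant $\Z$-valued function on $\Spec(W(\kappa))$.
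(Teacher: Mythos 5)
Your overall architecture --- a mod-$p$ cofiber sequence whose first map is identified with multiplication by $p$ via the Tate-valued Frobenius, combined with the finite-$p$-group rigidity of \Cref{cor:p-rigid} and a $\delta_p$-ring argument to control the uniquely $p$-divisible part --- is the paper's strategy. But your inductive reduction to rank one introduces a genuine gap at the central step. Once you write $\Lambda=\Lambda'\oplus\Z$ and pass to the base $R:=\Sph_{W(\kappa)}[\Lambda']^\wedge_p$, the map $f=[p]_{*,\Z}$ in your cofiber sequence is \emph{not} the map induced by the Tate-valued Frobenius of $R[\Z]^\wedge_p$. By \Cref{Frobenius_group_algebra}, the absolute Frobenius of $R[\Z]^\wedge_p\simeq\Sph_{W(\kappa)}[\Lambda'\oplus\Z]^\wedge_p$ is $[p]_{\Lambda'\oplus\Z}\circ\varphi_{\Sph_{W(\kappa)}}$, and it is this composite, not $[p]_\Z$ alone, that acts by multiplication by $p$ on strict units (\Cref{Frob_strict_units_p}). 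The two differ by $[p]_{*,\Lambda'}\circ(\varphi_{\Sph_{W(\kappa)}})_*$; while $(\varphi_{\Sph_{W(\kappa)}})_*$ is invertible because $\Sph_{W(\kappa)}$ is perfect, $[p]_{*,\Lambda'}$ is not, since $R$ itself is not perfect for $\Lambda'\neq 0$ (its Frobenius raises the $\Lambda'$-variables to $p$-th powers). So the identification $f\simeq p$ has no justification, and proving it is essentially equivalent to the rigidity you are inducting on. The paper avoids this by not splitting off a $\Z$ factor: it runs the argument for all of $\Lambda$ at once over the perfect base $\Sph_{W(\kappa)}$, where the sequence $\Lambda\xrightarrow{p}\Lambda\to\Lambda/p$ makes $[p]_{*,\Lambda}$ differ from multiplication by $p$ only by the invertible arithmetic Frobenius (\Cref{prop:p-rig-fiber}), and $\Lambda/p$ is still a finite $p$-group so \Cref{cor:p-rigid} applies.

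Separately, the step you flag as ``the principal remaining difficulty'' is genuinely the other half of the proof and is not closed by the $\widehat{\delta}$-ring classification alone. To apply \Cref{thm:deltarig} on $\pi_0$ you first need to know that $\Gmrp{\Sph_{W(\kappa)}}{\Lambda}\to\Gmrp{W(\kappa)}{\Lambda}$ is an inclusion of connected components, i.e.\ staticity. The paper obtains this from the fiber sequence $V_pX\to X\to X^\wedge_p$: the functor $V_p\G_m\simeq\Spec(\Sph[\Z[1/p]])$ has invertible $p$ and is therefore static on $p$-complete rings by \Cref{prop:p-invert_static}, while the $p$-completed part is exactly your mod-$p^n$ computation (\Cref{prop:red_units_static}). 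Only with staticity in hand does \Cref{thm:deltarig}, via \Cref{prop:rank1pi0} and \Cref{prop:static_rigid}, finish the argument. So the proposal needs both the Frobenius identification repaired (by working over the perfect base) and the staticity argument supplied.
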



Proving  rigidity for free abelian groups  presents new difficulties: first of all, the functor $\Gmr{-}{\Lambda}$ is no longer limit preserving when $\Lambda$ is infinite, and second, an analogue of the chromatic Fourier transform for infinite abelian groups is not known.  We therefore take a different approach: we access the $\ZZ$-module spectrum $\Gmr{R}{\Lambda}$ via the quotient map $\Gmr{R}{\Lambda}\to \Gmr{R}{\Lambda/p}$. Indeed, while $\Lambda$ is infinite, the group $\Lambda/p$ is a finite abelian $p$-group and therefore fits into the framework of the previous section.  We summarize our strategy as follows:


\begin{enumerate}
\item\label{enum:free_groups_plan_1} In \S \ref{sub:exactness}, we show that when $R$ is $p$-complete of finite Tor-amplitude, the short exact sequence $\Lambda \xrightarrow{[p]} \Lambda \to \Lambda/p$
induces a cofiber sequence
\[
\xymatrix{
\AG{R[\Lambda]^\wedge_p}\ar^{[p]_*}[r] & \AG{R[\Lambda]^\wedge_p}\ar[r] & \AG{R[\Lambda/p]}
}.
\]
\item \label{enum:free_groups_plan_2} We then show in \S \ref{sub:frob} that the multiplication by $p$ map of the $\ZZ$-module $\AG{R[\Lambda]^\wedge_p}$ can be written as the composite of the map $[p]_*$ and the map induced by the Frobenius of $R$. Thus, when $R$ is \emph{perfect}, that is, has invertible Frobenius, the map $[p]_*$ can be replaced by the multiplication by $p$ map in the above cofiber sequence.
\item \label{enum:free_groups_plan_3} Finally, in \S\ref{sub:Gmstaticity}, we use the above exact sequence and our results on $p$-rigidity to show that any perfect connective $R$ is $\Gmrp{-}{\Lambda}$-static, in the sense of \Cref{def:staticity}. In view of \Cref{prop:static_rigid}, this reduces our question to an algebraic problem, which we solve using our analysis of rank $1$ units in $\delta_p$-rings from \S \ref{sec:delta}, completing the proof.
\end{enumerate}


\subsection{Exactness of reduced strict units}\label{sub:exactness}
We begin by carrying out part (\ref{enum:free_groups_plan_1}) of our strategy. More generally,
given an exact sequence of abelian groups 
\[
\xymatrix{
0\ar[r] & M\ar[r] & N\ar[r] & L \ar[r] & 0
},
\]
we may ask whether the induced sequence
\[
\xymatrix{
\AG{R[M]^\wedge_p}\ar[r] &\AG{R[N]^\wedge_p} \ar[r] & \AG{R[L]^\wedge_p} 
}
\]
is a (co)fiber sequence in $\Mod_\ZZ^\cn$.  This will turn out to be true under mild hypotheses involving $L$-rigidity (\Cref{prop:units_red_exact}).  We start by analyzing the special case of a \emph{split} exact sequence.  

\begin{prop} \label{prop:red_units_direct_sum}
Let $R$ be a commutative ring spectrum and let $M$ and $N$ be abelian groups. If $R[M]^\wedge_p$ is $p$-completely $N$-rigid and the map $R\to R[M]^\wedge_p$ induces a bijection $\pi_0\Spec(R[M]^\wedge_p)\simeq \pi_0\Spec(R)$, then the assembly map
\[
\AG{R[M]^\wedge_p}\oplus\AG{R[N]^\wedge_p} \to  \AG{R[M\oplus N]^\wedge_p}
\]
is an isomorphism. Equivalently, the split exact sequence 
\[
\xymatrix{
0\ar[r] & M\ar[r] & M\oplus N\ar[r] & N \ar[r] & 0
},
\]
is taken by $\AG{R[-]^\wedge_p}$ to a split cofiber sequence.
\end{prop}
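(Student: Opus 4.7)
The plan is to combine two inputs: the observation that the group-ring unit and augmentation exhibit $R$ as a retract of $R[M]^\wedge_p$ in $\CAlg$, and the decomposition provided by the $p$-complete form of \Cref{prop:red_units_dsum}.

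First, I will transfer the rigidity hypothesis from $R[M]^\wedge_p$ to $R$. The composite $R \to R[M]^\wedge_p \to R$ (unit of the group ring followed by the augmentation) is the identity, and $\widehat{\Theta}_N \colon \ct{N}(-) \to \Gmrp{-}{N}$ is a natural transformation of functors on $\CAlg$. Therefore $\widehat{\Theta}_N(R)$ is a retract of $\widehat{\Theta}_N(R[M]^\wedge_p)$ in the arrow category of $\Mod_{\Z}^{\cn}$, and since retracts of isomorphisms are isomorphisms, the hypothesis forces $\widehat{\Theta}_N(R)$ to be an isomorphism as well.

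Next, I will apply \Cref{prop:red_units_dsum} in its $p$-complete form to split the target:
\[
\AG{R[M\oplus N]^\wedge_p} \;\simeq\; \AG{R[M]^\wedge_p} \,\oplus\, \Gmrp{R[M]}{N},
\]
where projection onto the first summand comes from the augmentation $R[M\oplus N]^\wedge_p \to R[M]^\wedge_p$ that collapses $N$. A direct inspection shows that under these coordinates, the assembly map is the identity on the first factor and some map $\beta \colon \AG{R[N]^\wedge_p} \to \Gmrp{R[M]}{N}$ on the second, with no mixing between summands: one off-diagonal entry vanishes by construction of the splitting, and the other ($\AG{R[N]^\wedge_p} \to \AG{R[M]^\wedge_p}$) vanishes because the composite $N \hookrightarrow M\oplus N \twoheadrightarrow M$ is zero, so the corresponding ring map $R[N] \to R[M]$ factors through the augmentation.

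Finally, I will use the naturality square for $\widehat{\Theta}_N$ applied to $R \to R[M]^\wedge_p$:
\[
\begin{tikzcd}
\ct{N}(R) \arrow[r]\arrow[d, "\widehat{\Theta}_N(R)"'] & \ct{N}(R[M]^\wedge_p) \arrow[d, "\widehat{\Theta}_N(R[M]^\wedge_p)"] \\
\AG{R[N]^\wedge_p} \arrow[r, "\beta"'] & \Gmrp{R[M]}{N}
\end{tikzcd}
\]
The left vertical is an isomorphism by the first step, the right vertical by the hypothesis on $R[M]^\wedge_p$, and the top horizontal by the $\pi_0\Spec$ assumption; hence $\beta$ is an isomorphism as well. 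I do not foresee a major obstacle; the main technical subtlety is to verify that the summand $\Gmrp{R[M]}{N}$ appearing in \Cref{prop:red_units_dsum} agrees with $\Gmrp{R[M]^\wedge_p}{N}$, so that the rigidity hypothesis on $R[M]^\wedge_p$ directly identifies the right vertical map above.
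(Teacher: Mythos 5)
Your proof is correct and follows essentially the same route as the paper's: apply \Cref{prop:red_units_dsum} to split the target, observe the assembly map is diagonal with identity on the first summand, transfer $N$-rigidity to $R$ via the retraction, and identify the remaining map with $\ct{N}(R)\to \ct{N}(R[M]^\wedge_p)$, which is an isomorphism by the $\pi_0\Spec$ hypothesis. The subtlety you flag at the end is handled by the standard identification $R[M][N]^\wedge_p \simeq ((R[M]^\wedge_p)[N])^\wedge_p$, so there is no gap.
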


\begin{proof}
By \Cref{prop:red_units_dsum}, the assembly map is identified with the map
\[
\AG{R[M]^\wedge_p}\oplus\AG{R[N]^\wedge_p} \to \AG{R[M]^\wedge_p}\oplus \AG{R[M][N]^\wedge_p}
\]
given by the sum of the identity of $\AG{R[M]^{\wedge}_p}$ and the map
\begin{equation}\label{eqn:agn}\tag{$*$}
\AG{R[N]^\wedge_p}\to \AG{R[M][N]^\wedge_p}
\end{equation}
induced by applying $\AG{(-)[N]^\wedge_p}$ to the unit map $R\to R[M]^\wedge_p$.  It therefore suffices to show that (\ref{eqn:agn}) is an isomorphism.  
Since $R$ is a retract of $R[M]^\wedge_p$, the assumption that $R[M]^\wedge_p$ is $p$-completely $N$-rigid implies that $R$ is $p$-completely $N$-rigid as well.  Thus, we can identify (\ref{eqn:agn}) 
with the corresponding map
\[
\ct{N}(R)\to \ct{N}(R[M]^\wedge_p).
\] 
But this is an isomorphism by the assumption that $\pi_0\Spec(R[M]^\wedge_p)\simeq \pi_0\Spec(R)$.
\end{proof}

Consider now a general exact sequence
\[
\xymatrix{
0\ar[r] & M\ar^\alpha[r] & N\ar^\beta[r] & L \ar[r] & 0
}. 
\] 
There is a standard procedure to approximate it by split exact sequences.  Namely, by forming the pushout along the map $\alpha$, we obtain a morphism of exact sequences of the form 
\[
\xymatrix{
0\ar[r]&M\ar^\alpha[r]\ar^{\alpha}[d] & N\ar^\beta[r]\ar^-{\delta_0}[d] & L\ar@{=}[d]\ar[r] & 0  \\
0\ar[r]&N\ar^-{\delta_1}[r] & N\oplus_M N  \ar^-{\beta'}[r] & L\ar[r] & 0  
}
\]
in which the bottom sequence is \emph{split} exact\footnote{A splitting is given by sending $\ell \in L$ to $(n,-n)\in N\oplus_M N$ for any lift $\beta(n)=\ell$.} and the vertical maps are all injections. The left square in the diagram is the first part of the \v{C}ech nerve of the map $\alpha$, as defined in \Cref{def:Cech_Nerve}. In fact, using the descent result of \Cref{prop:descent_inclusion_groups}, we can reduce the exactness question for $\Gmrp{R}{-}$ to the split case, and obtain the following result. 


\begin{prop}\label{prop:units_red_exact}
Let $R$ be a $p$-complete commutative ring spectrum and let 
\[
\xymatrix{
0\ar[r] & M\ar^\alpha[r] & N\ar^\beta[r] & L\ar[r] & 0
}
\]
be an exact sequence of abelian groups. 
Assume that for every $\ell\in \NN$:
\begin{itemize}
\item $R[N\oplus L^{\oplus \ell}]^\wedge_p$  is $L$-rigid. 
\item The unit map $R\to R[N\oplus L^{\oplus \ell}]^\wedge_p$ induces a bijection 
\[
\pi_0\Spec(R[N\oplus L^{\oplus \ell}]^\wedge_p)\simeq \pi_0\Spec(R). 
\]
\end{itemize}
Then, the null sequence 
\begin{equation}\label{eqn:units_red_exact}\tag{$*$}
\xymatrix{
\AG{R[M]^\wedge_p}\ar[r] & \AG{R[N]^\wedge_p} \ar[r] & \AG{R[L]^\wedge_p}
} 
\end{equation}
is a cofiber sequence in $\Mod_\ZZ^\cn$.
\end{prop}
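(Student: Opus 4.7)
The strategy combines the descent result of \Cref{prop:descent_inclusion_groups} with iterated applications of \Cref{prop:red_units_direct_sum}. Since $\alpha$ is injective, \Cref{prop:descent_inclusion_groups} yields a descent equivalence
\[
\AG{R[M]^\wedge_p} \xrightarrow{\sim} \lim_{\Delta}\, \AG{R[C^\bullet(\alpha)]^\wedge_p},
\]
where $C^\bullet(\alpha)$ is the \v{C}ech nerve of $\alpha$ in abelian groups. A direct pushout computation identifies $C^k(\alpha) \simeq N\oplus L^{\oplus k}$; under this identification the coface maps $d_0, d_1 \colon N\to N\oplus L$ in degree $1$ become the direct inclusion $n\mapsto (n,0)$ and the ``twisted'' diagonal $n\mapsto (n,\beta(n))$, while higher coface and codegeneracy maps are analogous combinations of standard and twisted inclusions.

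The hypotheses of the proposition are exactly what is needed to apply \Cref{prop:red_units_direct_sum} iteratively, at stage $j$ with $M' = N \oplus L^{\oplus j}$ and $N' = L$. This produces for every $k\ge 0$ a natural decomposition
\[
\AG{R[N\oplus L^{\oplus k}]^\wedge_p} \;\simeq\; \AG{R[N]^\wedge_p} \,\oplus\, \AG{R[L]^\wedge_p}^{\oplus k},
\]
so that the cosimplicial $\ZZ$-module $\AG{R[C^\bullet(\alpha)]^\wedge_p}$ is degreewise a direct sum of $\AG{R[N]^\wedge_p}$ and $k$ copies of $\AG{R[L]^\wedge_p}$.

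Finally, tracking the cosimplicial structure through these splittings identifies $\AG{R[C^\bullet(\alpha)]^\wedge_p}$ with the standard cobar cosimplicial object for the map $\beta_*\colon \AG{R[N]^\wedge_p}\to \AG{R[L]^\wedge_p}$ induced by $\beta$. As a sanity check in degree one: the two cofaces become $u\mapsto (u,0)$ and $u\mapsto (u,\beta_*(u))$, whose equalizer is visibly $\fib(\beta_*)$. The totalization is therefore $\fib(\beta_*)$, and combined with the descent equivalence this gives $\AG{R[M]^\wedge_p}\simeq \fib(\beta_*)$. Since all three terms of \eqref{eqn:units_red_exact} are connective, this fiber sequence is equivalently a cofiber sequence in $\Mod_\ZZ^\cn$, as claimed. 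The principal technical obstacle lies in this final cosimplicial bookkeeping: one must verify that, after the iterated application of \Cref{prop:red_units_direct_sum}, the ``twisted'' \v{C}ech cofaces really do reproduce the cobar differentials for $\beta_*$. This reduces to the naturality of the splittings of \Cref{prop:red_units_direct_sum} in the algebras $R[N\oplus L^{\oplus j}]^\wedge_p$, together with the explicit computation of the degree-one face maps performed above.
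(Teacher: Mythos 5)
Your overall strategy — descent along the \v{C}ech nerve of $\alpha$ via \Cref{prop:descent_inclusion_groups}, combined with iterated applications of \Cref{prop:red_units_direct_sum} to split $\AG{R[N\oplus L^{\oplus k}]^\wedge_p}$ — is the same skeleton as the paper's argument, and your use of the hypotheses to justify the iterated splitting is correct. The difference is in the endgame, and that is where two problems arise.

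First, the step you flag as "bookkeeping" is a real gap, not a formality: for a cosimplicial object in $\Mod_\ZZ^\cn$, the totalization is not the equalizer of the two degree-one cofaces, so checking that the degree-one equalizer is $\fib(\beta_*)$ proves nothing about $\lim_\Delta$. To run your argument you would need to identify the entire cosimplicial object (all cofaces and codegeneracies, coherently) with a cobar-type object whose conormalization vanishes above degree one. The paper sidesteps this entirely by applying \Cref{prop:descent_inclusion_groups} to all three injections $\alpha$, $\delta_0$, $\id_L$ in the morphism of exact sequences obtained by pushing out along $\alpha$: the sequence $(*)$ is then exhibited as $\lim_\Delta$ of the levelwise sequences $(*_\ell)$, each of which is a \emph{split} cofiber sequence by \Cref{prop:red_units_direct_sum}, and $\lim_\Delta$ preserves fiber sequences. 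No identification of the totalization with $\fib(\beta_*)$ is ever needed. You could adopt this and keep the rest of your write-up.

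Second, and more seriously, your final sentence is false as a general principle: a fiber sequence of connective $\ZZ$-modules is \emph{not} automatically a cofiber sequence in $\Mod_\ZZ^\cn$ (equivalently, a fiber sequence in $\Mod_\ZZ$). For example, $0\to \ZZ \oto{2} \ZZ$ is a fiber sequence in $\Mod_\ZZ^\cn$ but its cofiber is $\ZZ/2$. The missing ingredient is surjectivity of $\pi_0\AG{R[N]^\wedge_p}\to \pi_0\AG{R[L]^\wedge_p}$, and this is exactly where the $L$-rigidity hypothesis is needed a second time: since $R[N]^\wedge_p$ (hence its retract $R$) is $L$-rigid, $\pi_0\AG{R[L]^\wedge_p}\simeq \ct{L}(R)$, and every class there lifts to $\pi_0\AG{R[N]^\wedge_p}$ because $\ct{N}(R)\to \ct{L}(R)$ is surjective. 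Without this step the argument only yields a fiber sequence in $\Mod_\ZZ^\cn$, which is strictly weaker than the stated conclusion.
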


\begin{proof}
We start by showing the weaker claim that this sequence is a \emph{fiber} sequence in $\Mod_\ZZ^\cn$. 
Consider the \v{C}ech nerve functor $C^\bullet \colon (\Mod_\ZZ)^{\Delta^1} \to  \Mod_\ZZ^{\Delta}$ from \Cref{def:Cech_Nerve}.
Applying the composition  
\[
\Gmrp{R}{C^\bullet(-)}\colon (\Mod_\ZZ^{\cn})^{\Delta^1} \oto{C^\bullet} (\Mod_\ZZ^{\cn})^{\Delta} \oto{\Gmr{R}{-}}(\Mod_\ZZ^{\cn})^{\Delta}
\] 
to the exact sequence of morphisms of abelian groups
\[
\xymatrix{
0\ar[r]& M\ar^\alpha[r]\ar^{\alpha}[d] & N\ar^\beta[r]\ar^{\delta_0}[d] & L\ar@{=}[d]\ar[r] & 0 \\
0\ar[r] & N\ar^{\delta_1}[r] & N\oplus_M N  \ar^{\beta'}[r] & L \ar[r] & 0 
}
\]
from above, 
we obtain a null sequence in $(\Mod_\ZZ^\cn)^{\Delta}$
\begin{equation}\label{eqn:gmrptot}\tag{$**$}
\xymatrix{
\AG{R[C^\bullet(\alpha)]^\wedge_p}\ar[r] & \AG{R[C^\bullet(\delta_0)]^\wedge_p}\ar[r] & \AG{R[C^\bullet(\id_L)]^\wedge_p 
}.
}
\end{equation}

Since the three maps $\alpha,\delta_0$ and $\id_L$ are injections, we deduce from \Cref{prop:descent_inclusion_groups} that the sequence (\ref{eqn:units_red_exact}) from the statement of the proposition is the limit of the sequences (\ref{eqn:gmrptot}) over $\Delta$.
Since the functor
\[
\lim_\Delta\colon (\Mod_\ZZ^\cn)^\Delta \to \Mod_\ZZ^\cn
\] 
is limit preserving (and in particular preserves fiber sequences), it suffices to show that for every $\ell\in \NN$ the sequence 
\begin{equation}\label{eqn:starell}\tag{$*_{\ell}$}
\xymatrix{
\AG{R[C^\ell(\alpha)]^\wedge_p}\ar[r] & \AG{R[C^\ell(\delta_0)]^\wedge_p}\ar[r] & \AG{R[C^\ell(\id_L)]^\wedge_p}} 
\end{equation}
is a fiber sequence in $\Mod_\ZZ^\cn$. 
We do this by verifying the conditions of \Cref{prop:red_units_direct_sum}: note that the exact sequences
\[
\xymatrix{
0\ar[r] & C^\ell(\alpha)\ar[r] & C^\ell(\delta_0) \ar[r]& C^\ell(\id_L)\ar[r] & 0
}
\]
are split, as they are pushouts of the split exact sequence 
\[
\xymatrix{
0\ar[r] & N\ar[r] & N\oplus_M N\ar[r] & L\ar[r] & 0
}
\]
along the injections $N\to C^\ell(\alpha)$.  In particular, this means that for $\ell = 0,1,\dots$, we obtain that $C^\ell(\delta_0)\simeq N \oplus L^{\oplus \ell}$, and we clearly have $C^\ell(\id_L)\simeq L$.  Therefore, our hypotheses exactly check the conditions of \Cref{prop:red_units_direct_sum}.  It follows that
 (\ref{eqn:starell}) are fiber sequences, and so (\ref{eqn:units_red_exact}) is as well.

To show that (\ref{eqn:units_red_exact}) is in fact a cofiber sequence, observe that by our assumption that $R[N]^\wedge_p$ is $p$-completely $L$-rigid, the same holds for $R$ and hence we have an isomorphism 
\[
\AG{R[L]^\wedge_p} \simeq \ct{L}(R)
\]
fitting into a commutative square
\[
\xymatrix{
\ct{M}(R)\ar[d]\ar[r]\ar^{\widehat{\Theta}_M}[d] & \ct{L}(R) \ar^{\widehat{\Theta}_L}_\wr[d] \\ 
\pi_0\AG{R[M]^\wedge_p}\ar[r] & \pi_0\AG{R[L]^\wedge_p}
}.
\]
Since the upper map is surjective, we deduce that the same holds for the bottom map, and hence that (\ref{eqn:units_red_exact}) is a cofiber sequence in $\Mod_\ZZ^\cn$.
\end{proof}





We will be particularly interested in  \Cref{prop:units_red_exact} for the exact sequence
\[
\xymatrix{
0\ar[r] & \Lambda\ar^{[p]}[r] & \Lambda\ar[r] & \Lambda/p \ar[r] & 0
}.
\]

To check the conditions of the proposition, we need the following:
\begin{lem} \label{group_algebra_components}
Let $R$ be a $p$-complete commutative ring spectrum and let $M$ be a finitely generated abelian group which contains no elements of order prime to $p$. Then, the unit map $R\to R[M]^\wedge_p$ induces a bijection 
\[
\pi_0\Spec(R[M]^\wedge_p)\simeq \pi_0\Spec(R)
\]
\end{lem}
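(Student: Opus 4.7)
The plan is to reduce the question of connected components to the computation of idempotents in the reduction mod $p$, and then to an algebraic calculation in a group algebra over an $\FF_p$-algebra. Since the augmentation $R[M]^\wedge_p \to R$ retracts the unit map, the induced map on $\pi_0\Spec$ is split surjective; hence the content of the lemma is its \emph{injectivity} on connected components, equivalently, the \emph{surjectivity} of the map on idempotents $\mathrm{Idem}(\pi_0 R) \to \mathrm{Idem}(\pi_0(R[M]^\wedge_p))$.

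I would first reduce modulo $p$. Because $R$ and $R[M]^\wedge_p$ are $p$-complete, their $\pi_0$'s are derived $p$-complete rings, so (granting mild connectivity on $R$, as is in force throughout the paper) both are classically $p$-adically complete and in particular Henselian with respect to $(p)$; thus idempotents biject with those of the reduction modulo $p$. Using that $\Sph[M] = \bigoplus_{m\in M}\Sph$ and that $\pi_0$ commutes with direct sums of spectra, the identification
\[
\pi_0(R[M]^\wedge_p)/p \;\simeq\; \pi_0(R/p \otimes \Sph[M]) \;\simeq\; (\pi_0 R/p)[M]
\]
reduces the problem to the purely algebraic statement $\mathrm{Idem}((\pi_0 R/p)[M]) = \mathrm{Idem}(\pi_0 R/p)$.

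To prove this, I will decompose $M = \Lambda \oplus T$ with $\Lambda \simeq \ZZ^k$ free and $T$ a finite abelian $p$-group, writing $(\pi_0 R/p)[M] = (\pi_0 R/p)[T][\Lambda]$ and handling the two factors in turn. Set $C := (\pi_0 R/p)[T]$. For the $T$-part, working in characteristic $p$, the identity $([t]-1)^{p^e} = [p^e t] - 1 = 0$ (where $p^e$ is the exponent of $T$) together with commutativity shows the augmentation ideal of $(\pi_0 R/p)[T]$ is nilpotent, giving $\mathrm{Idem}(C) = \mathrm{Idem}(\pi_0 R/p)$. For the $\Lambda$-part, the morphism $\Spec C[\Lambda] \to \Spec C$ is faithfully flat with geometrically connected fibers (each fiber is a product of $\GG_m$'s over a residue field), so it induces a bijection of $\pi_0\Spec$ and hence $\mathrm{Idem}(C[\Lambda]) = \mathrm{Idem}(C)$.

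I expect the main obstacle to be the $\Lambda$-piece, since its augmentation ideal is not nilpotent modulo $p$ and so the quick argument used for $T$ fails; instead one must use the geometric connectedness of the fibers of $\Spec C[t_1^{\pm 1}, \dots, t_k^{\pm 1}] \to \Spec C$ (or an equivalent elementary argument that idempotents in Laurent polynomial rings coincide with those of the base). A secondary point requiring care is the K\"unneth identification $\pi_0(R[M]^\wedge_p)/p \simeq (\pi_0 R/p)[M]$ and the Henselian lifting of idempotents from the mod-$p$ reduction, both of which invoke the derived $p$-completeness of the ring spectra in play.
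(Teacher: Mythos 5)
Your proposal is correct and follows essentially the same route as the paper: reduce to $\pi_0$ and then mod $p$ using derived $p$-completeness (Henselian lifting of idempotents), split $M$ into a finite $p$-group handled by nilpotence of its augmentation ideal in characteristic $p$, and a free part handled by showing idempotents of a Laurent polynomial ring lie in the base. The only cosmetic difference is that you justify the Laurent step via geometric connectedness of the fibers of $\Spec C[\Lambda]\to\Spec C$, whereas the paper first passes to $C$ reduced and checks it by inspection.
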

\begin{proof}
As both sides depend only on $\pi_0$, we can immediately reduce to $R$ discrete.  Moreover, the rings are derived $p$-complete, so we can in fact reduce modulo $p$ and further assume that $R$ is a ring of characteristic $p$ (in which case we can drop the extra $p$-completion).

Write $M = M' \oplus \Z^k$ where $M'$ is a finite $p$-group. Then $R[M]$ is a nil-thickening of $R[\Z^k]$, so it suffices to consider the case $M'=0$, and in fact we can assume $k=1$ and $R$ is reduced.   But here, one sees by inspection that any idempotent in $R[t^{\pm 1}]$ is inside $R$, and the conclusion follows. 
\end{proof}

We are ready to prove the main result of this subsection:

\begin{prop}\label{prop:cofiber_sequence_free_group}
Let $R$ be a $p$-complete commutative ring spectrum of finite Tor-amplitude, let $\Lambda$ be a free abelian group of finite rank (i.e. $\Lambda \simeq \ZZ^k$), and let $[p]\colon \Lambda \to \Lambda$ denote the multiplication by $p$ map. Then, the null sequence
\[
\xymatrix{
\AG{R[\Lambda]^\wedge_p} \ar^{[p]_*}[r] & \AG{R[\Lambda]^\wedge_p}\ar[r] & \AG{R[\Lambda/p]} 
}
\]
is a cofiber sequence.
\end{prop}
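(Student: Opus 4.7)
The plan is to apply Proposition \ref{prop:units_red_exact} directly to the short exact sequence
\[
0 \too \Lambda \oto{[p]} \Lambda \too \Lambda/p \too 0,
\]
noting that since $\Lambda/p$ is finite, the term $\AG{R[\Lambda/p]^\wedge_p}$ coincides with $\AG{R[\Lambda/p]}$. So the entire proof reduces to verifying the two hypotheses of \Cref{prop:units_red_exact} for each $\ell \in \NN$: namely, that $R[\Lambda \oplus (\Lambda/p)^{\oplus \ell}]^\wedge_p$ is $p$-completely $(\Lambda/p)$-rigid, and that the unit map from $R$ induces a bijection on $\pi_0 \Spec$.

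The second condition is immediate from \Cref{group_algebra_components}, since $\Lambda \oplus (\Lambda/p)^{\oplus \ell}$ is a finitely generated abelian group whose only torsion is $p$-power torsion. For the rigidity condition, I would invoke \Cref{cor:p-rigid}(3): $R[\Lambda \oplus (\Lambda/p)^{\oplus \ell}]^\wedge_p$ is a $p$-complete $\Sph_p$-algebra, and the quotient $\Lambda/p \cong (\Z/p)^k$ is a finite abelian $p$-group, so it suffices to check that this algebra has finite Tor-amplitude over $\Sph_p$.

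To verify the finite Tor-amplitude, I would argue as follows. For any finitely generated abelian group $M$, the spherical group ring $\Sph[M]$ satisfies $\Sph[M] \otimes \Z \simeq \Z[M]$, which is discrete; hence $\Sph[M]$ is of finite Tor-amplitude over $\Sph$. Tensoring with $R$ (which has finite Tor-amplitude by assumption) preserves finite Tor-amplitude, so $R \otimes \Sph[M] \simeq R[M]$ is of finite Tor-amplitude over $\Sph$. Finally, $p$-completion preserves this property: $R[M]^\wedge_p \otimes \Sph/p \simeq R[M] \otimes \Sph/p$ remains bounded, and the spectrum remains bounded below, so $R[M]^\wedge_p$ is of finite Tor-amplitude over $\Sph_p$.

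I do not expect any serious obstacle here, since the work is already packaged in \Cref{prop:units_red_exact}, \Cref{cor:p-rigid}, and \Cref{group_algebra_components}; the only mildly subtle point is confirming that finite Tor-amplitude descends through tensoring with $\Sph[M]$ and through $p$-completion, which is a short direct check. The conclusion is then exactly the stated cofiber sequence.
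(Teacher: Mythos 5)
Your proposal is correct and follows essentially the same route as the paper: the paper's proof likewise applies \Cref{prop:units_red_exact} to the sequence $0 \to \Lambda \xrightarrow{[p]} \Lambda \to \Lambda/p \to 0$, verifying the rigidity hypothesis via the finite Tor-amplitude of $R[\Lambda \oplus (\Lambda/p)^{\oplus \ell}]^\wedge_p$ together with \Cref{cor:p-rigid}, and the $\pi_0\Spec$ hypothesis via \Cref{group_algebra_components}. The only difference is that you spell out the (routine) check that finite Tor-amplitude is preserved under tensoring with $\Sph[M]$ and $p$-completion, which the paper leaves implicit.
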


\begin{proof}
In this case, $R[\Lambda \oplus (\Lambda/p)^{\oplus \ell}]^\wedge_p$ is also of finite Tor-amplitude and hence $p$-rigid by \Cref{cor:p-rigid}. In particular, it is $p$-completely $\Lambda/p$-rigid. 
The result now follows by combining this with \Cref{group_algebra_components} and \Cref{prop:units_red_exact}.
\end{proof}

\subsection{The Frobenius on strict units of group algebras}\label{sub:frob}
We turn to (\ref{enum:free_groups_plan_2}) in our strategy; that is, we relate the map induced by $[p]\colon \Lambda \to \Lambda$ to the multiplication by $p$ map on the spectrum $\AG{R[\Lambda]^\wedge_p}.$
More specifically, for $R$ $p$-complete of finite Tor-amplitude, we show:
 \begin{itemize}
 \item The Frobenius map (cf. \Cref{sub:moore}) of the group algebra $R[\Lambda]^\wedge_p$ (``absolute Frobenius'') is the composite of $[p]_*\colon R[\Lambda]^\wedge_p \to R[\Lambda]^\wedge_p$ (``geometric Frobenius'') and the Frobenius $\varphi_R$ of $R$ (``arithmetic Frobenius'', cf. \S \ref{sub:moore}).
 \item The absolute Frobenius map $\varphi_{R[\Lambda]^\wedge_p}$ acts on strict units via raising to the $p$-th power. 
 \end{itemize}

 It will follow that when $R$ is perfect (i.e., $\varphi_R$ is an isomorphism), the map $[p]_*: \AG{R[\Lambda]^\wedge_p} \to \AG{R[\Lambda]^\wedge_p}$ differs from multiplication by $p$ by an isomorphism.  We therefore obtain a variant of \Cref{prop:cofiber_sequence_free_group} with $[p]_*$ replaced by $p:\AG{R[\Lambda]^\wedge_p}\to \AG{R[\Lambda]^\wedge_p}$.  



We begin with a lemma on the compatibility of Frobenius with tensor products.  Let $\widehat{\otimes}$ denote the $p$-completed tensor product in $\Sp^\wedge_p$. If $R$ and $S$ have finite Tor-amplitude, then so does $R\widehat{\otimes} S$, so we have a Frobenius $\varphi_{R\widehat{\otimes} S}$.  

\begin{lem} \label{Frob_tensor}
Let $R$ and $S$ be $p$-complete commutative ring spectra of finite Tor-amplitude. Then the two maps $R\widehat{\otimes} S \to R\widehat{\otimes} S$ given by $\varphi_R \widehat{\otimes} \varphi_S $ and $\varphi_{R\widehat{\otimes} S}$ are homotopic.  
\end{lem}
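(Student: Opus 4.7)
The strategy is to exploit the fact that both the Tate-valued Frobenius $\mathrm{frob}$ and the map $\mathrm{triv}$ refine to natural transformations $\mathrm{id}_{\calg(\Sp)} \to (-)^{tC_p}$ of \emph{lax symmetric monoidal} functors. For $\mathrm{frob}$, this is built into the Nikolaus--Scholze construction via the symmetric monoidality of the Tate diagonal; for $\mathrm{triv}$, it follows from the lax symmetric monoidality of homotopy fixed points. Before applying this, I would first verify that $R\widehat{\otimes} S$ has finite Tor-amplitude, so that $\varphi_{R\widehat{\otimes} S}$ is defined: it is $p$-complete by construction, and \Cref{lem_completion} identifies $(R\widehat{\otimes} S)\otimes \ZZ$ with the $p$-completion of $(R\otimes \ZZ)\otimes_\ZZ (S\otimes \ZZ)$, which is a bounded $\ZZ$-module.

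The lax monoidal structure next produces, for each $F \in \{\mathrm{frob}, \mathrm{triv}\}$, a commutative triangle
\[
\begin{tikzcd}[column sep=large]
R\otimes S \ar[r,"F_R\otimes F_S"] \ar[rd,"F_{R\otimes S}"'] & R^{tC_p}\otimes S^{tC_p} \ar[d,"\mu"]\\
& (R\otimes S)^{tC_p},
\end{tikzcd}
\]
whose right edge $\mu$ is the lax structural map of $(-)^{tC_p}$ and does not depend on the choice of $F$. By \Cref{prop:Moore_triv_iso}, under our hypotheses $\mathrm{triv}_R$ and $\mathrm{triv}_S$ are equivalences and $\mathrm{triv}_{R\otimes S}$ exhibits $(R\otimes S)^{tC_p}$ as the $p$-completion $R\widehat{\otimes} S$. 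The triangle for $F = \mathrm{triv}$ therefore identifies $\mu$, after these identifications, with the canonical completion map $R\otimes S \to R\widehat{\otimes} S$.

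Substituting this identification into the triangle for $F = \mathrm{frob}$ and using $\mathrm{frob}_A \simeq \mathrm{triv}_A \circ \varphi_A$, I conclude that $\varphi_{R\otimes S} \colon R\otimes S \to R\widehat{\otimes} S$ agrees with the composite of $\varphi_R\otimes \varphi_S$ followed by the completion map. By naturality of the Frobenius along $R\otimes S \to R\widehat{\otimes} S$, this composite descends to the $p$-completion of the source, and the resulting self-map of $R\widehat{\otimes} S$ is simultaneously $\varphi_{R\widehat{\otimes} S}$ and the $p$-completion of $\varphi_R\otimes \varphi_S$, which is by definition $\varphi_R\widehat{\otimes} \varphi_S$. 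The main delicate point is making precise the lax symmetric monoidal structure on $\mathrm{frob}$, which is a standard consequence of \cite{NS} but requires some care with the Tate diagonal; once that is in hand, the rest of the argument is formal.
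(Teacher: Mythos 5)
Your proof is correct and follows essentially the same route as the paper: both arguments rest on $\mathrm{frob}$ and $\mathrm{triv}$ being lax (symmetric) monoidal natural transformations $\mathrm{id} \to (-)^{tC_p}$, yielding two commuting triangles with the same structural map $\mu$, after which \Cref{prop:Moore_triv_iso} identifies everything. The paper simply leaves the completions implicit, whereas you spell out the descent along $R\otimes S \to R\widehat{\otimes} S$ explicitly; this is a presentational difference, not a different argument.
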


\begin{proof}
Immediate from the diagram (with completions left implicit for clarity)
\[
\begin{tikzcd}[row sep=large, column sep=huge]
 &(R \otimes S)^{tC_p} & \\
 R  \otimes S \arrow[ru, "\frob_{R \otimes S}"] \arrow[r,"\frob_R  \otimes \frob_S "'] & R^{tC_p}  \otimes S^{tC_p}\arrow[u] & R \otimes S \arrow[lu,"\triv_{R \otimes S}"'] \arrow[l,"\triv_R  \otimes \triv_S"],
\end{tikzcd}
\]
where the triangles arise from $\frob$ and $\triv$ being lax monoidal natural transformations \cite[Section I.3]{NS}.
\end{proof}

As a result, we obtain a formula for the Frobenius of group algebras. 

\begin{prop} \label{Frobenius_group_algebra}
Let $R$ be a $p$-complete commutative ring spectrum of finite Tor-amplitude and let $M$ be an abelian group. 
Let $[p]\colon R[M]^\wedge_p \to R[M]^\wedge_p$ be the morphism induced by the multiplication by $p$ endomorphism of $M$. 
Then, the Frobenius morphism $\varphi_{R[M]^{\wedge}_p}$ is given by the composite
\[
 R[M]^\wedge_p \oto{\varphi_R[M]^\wedge_p} R[M]^\wedge_p \oto{[p]} R[M]^\wedge_p.
\]
In particular, on $\pi_*R[M]^\wedge_p$ it is given by
\[
\varphi_{{R[M]^\wedge_p}}\left(\sum_{m\in M} r_m [m]\right) = \sum_{m\in M} \varphi_R(r_m)[pm].
\]
\end{prop}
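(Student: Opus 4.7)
The plan is to reduce to the case $R=\Sph$ via the tensor product decomposition $R[M]^\wedge_p \simeq R \widehat\otimes \Sph[M]^\wedge_p$, and then compute the Tate-valued Frobenius of $\Sph[M]$ directly using its nature as a suspension spectrum. First I would observe that both $R$ and $\Sph[M]^\wedge_p$ have finite Tor-amplitude: $R$ by assumption, and $\Sph[M]^\wedge_p$ because $\Sigma^\infty_+ M$ is connective with $\Z \otimes \Sph[M] \simeq \Z[M]$ discrete (and $p$-completion preserves these properties, since $\Z[M]^\wedge_p = \lim \Z[M]/p^n$ is discrete by Mittag-Leffler). By \Cref{Frob_tensor}, we then obtain
\[
\varphi_{R[M]^\wedge_p} \;\simeq\; \varphi_R \,\widehat\otimes\, \varphi_{\Sph[M]^\wedge_p}.
\]
Since the map $[p]_* \colon \Sph[M]^\wedge_p \to \Sph[M]^\wedge_p$ is $\Sph$-linear, the right-hand side agrees with the composite $[p]_* \circ (\varphi_R[M]^\wedge_p)$ as soon as we know that $\varphi_{\Sph[M]^\wedge_p} = [p]_*$.

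The main step is to identify $\varphi_{\Sph[M]^\wedge_p}$ with $[p]_*$. For this, I would exploit that $\Sph[M] = \Sigma^\infty_+ M$ is the suspension spectrum of $M$ regarded as a discrete $\mathbb{E}_\infty$-space, together with the fact that $\Sigma^\infty_+ \colon \Spc \to \Sp$ is symmetric monoidal. The space-level diagonal $\Delta \colon M \to M^{\times p}$ factors canonically through $(M^{\times p})^{hC_p}$, and therefore induces a map
\[
\Sph[M] \;\longrightarrow\; (\Sph[M]^{\otimes p})^{hC_p} \;\longrightarrow\; (\Sph[M]^{\otimes p})^{tC_p}.
\]
By the uniqueness of the Tate diagonal as a lax symmetric monoidal natural transformation from the identity to $(-)^{\otimes p})^{tC_p}$ on $\CAlg(\Sp)$ (cf.\ Nikolaus--Scholze), this construction must agree with the Tate diagonal of $\Sph[M]$. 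Post-composing with the multiplication $\Sph[M]^{\otimes p} \to \Sph[M]$, which is $\Sigma^\infty_+$ of the $p$-fold multiplication $\mu_p \colon M^{\times p} \to M$, identifies the Tate-valued Frobenius of $\Sph[M]$ with $\mathrm{triv} \circ \Sigma^\infty_+(\mu_p \circ \Delta) = \mathrm{triv} \circ \Sigma^\infty_+([p])$. After $p$-completion, $\mathrm{triv}$ becomes an isomorphism by \Cref{prop:Moore_triv_iso}, so $\varphi_{\Sph[M]^\wedge_p} = [p]_*$ as desired.

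Combining the two steps gives the claimed factorization, and the formula on homotopy groups is then immediate, since $\varphi_R$ acts on coefficients and $[p]_*$ takes $[m]$ to $[pm]$. The main technical obstacle is the rigorous identification of the Tate diagonal of $\Sigma^\infty_+ M$ with the map induced by the spatial diagonal $M \to M^{\times p}$; this is where one must invoke the universal characterization of the Tate diagonal from Nikolaus--Scholze. Everything else amounts to naturality and unwinding definitions.
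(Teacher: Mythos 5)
Your proposal is correct and follows essentially the same route as the paper: reduce to $R=\Sph_p$ via \Cref{Frob_tensor}, then identify $\varphi_{\Sph[M]^\wedge_p}$ with $[p]$ using the description of the Tate diagonal of a suspension spectrum in terms of the space-level diagonal. The "main technical obstacle" you flag is precisely \cite[Lemma IV.1.3]{NS}, which the paper simply cites rather than re-deriving.
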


\begin{proof}
Since $R[M]^\wedge_p \simeq R \widehat{\otimes} (\Sph_p[M]^\wedge_p)$, the compatibility of Frobenius with tensor products (\Cref{Frob_tensor}) reduces the general case to the special case $R=\Sph_p$. Since $\varphi_{\Sph_p}$ is the identity (\cite[Example IV.1.2(ii)]{NS}), it suffices to see that $\varphi_{\Sph_p[M]^\wedge_p} \simeq [p]$. This follows, by passing to $p$-completions, from the identification of the Tate diagonal of $\Sph[M]$ as in \cite[Lemma IV.1.3]{NS}.
\end{proof}


To identify the effect of these maps on strict units, we use the following result of \cite{CarmeliStrict}.  

\begin{prop}[{\cite[Proposition 4.5]{CarmeliStrict}}] 
\label{Frob_strict_units_p}
Let $R$ be a $p$-complete commutative ring spectrum of finite Tor-amplitude and let $\varphi:=\varphi_R$ be its Frobenius map.  Then the map 
\[
\varphi_*\colon \G_m(R)\to \G_m(R)
\] 
is homotopic to the multiplication by $p$ map
on the $\ZZ$-module spectrum $\G_m(R).$
\end{prop}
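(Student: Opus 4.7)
The strategy is to combine the naturality of the Frobenius $\varphi$ from \S\ref{sub:moore} with the explicit formula in \Cref{Frobenius_group_algebra}. First, since $R$ is $p$-complete, every strict unit is classified by a ring map $u \colon \Sph[\ZZ]^\wedge_p \to R$, i.e.\ $\Omega^\infty \Gm(R) \simeq \Map_{\CAlg}(\Sph[\ZZ]^\wedge_p, R)$. Under the canonical $\ZZ$-module structure on $\Gm(R)$, multiplication by $p$ corresponds to precomposition with the self-map $[p] \colon \Sph[\ZZ]^\wedge_p \to \Sph[\ZZ]^\wedge_p$ induced by $\ZZ \xrightarrow{p} \ZZ$. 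Note moreover that $\Sph[\ZZ]^\wedge_p$ is itself $p$-complete of finite Tor-amplitude, since its base-change to $\ZZ$ is the discrete ring $\ZZ[\ZZ]^\wedge_p$ by \Cref{lem_completion}, so the Frobenius of $\Sph[\ZZ]^\wedge_p$ is defined.

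Next, for any strict unit $u \colon \Sph[\ZZ]^\wedge_p \to R$, the naturality of $\varphi$ applied to $u$ yields a commutative square expressing the identity
\[
\varphi_R \circ u \;=\; u \circ \varphi_{\Sph[\ZZ]^\wedge_p}.
\]
To compute the right-hand Frobenius, I would apply \Cref{Frobenius_group_algebra} with $R = \Sph_p$ and $M = \ZZ$: combined with the identification $\varphi_{\Sph_p} = \id$ from \cite[Example IV.1.2(ii)]{NS}, this gives $\varphi_{\Sph[\ZZ]^\wedge_p} = [p]$. Hence $\varphi_*(u) = u \circ [p] = p \cdot u$ for every strict unit $u$, which establishes the desired identity at the level of points of $\Gm(R)$.

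The main obstacle will be promoting this pointwise identity to a homotopy of maps of $\ZZ$-module spectra. I plan to handle this by invoking the corepresentability of $\Gm \colon \CAlg \to \Mod_\ZZ^\cn$ (equivalently, the mate correspondence with the left adjoint $\Sph[-] \colon \Mod_\ZZ^\cn \to \CAlg$): natural transformations $\Gm \to \Gm$ of $\Mod_\ZZ^\cn$-valued functors on the subcategory of $p$-complete commutative ring spectra of finite Tor-amplitude are classified by endomorphisms of $\Sph[\ZZ]^\wedge_p$ in $\CAlg$. By the previous paragraph, both $\varphi_*$ and $p \cdot \id$ are classified by the same element $[p]$, so they agree as natural transformations and in particular as maps $\Gm(R) \to \Gm(R)$ of $\ZZ$-module spectra for each individual $R$. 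Making this last step precise —- in particular, taking care that one works with $\Mod_\ZZ^\cn$-valued natural transformations rather than merely $\Spc$-valued ones —- is the essential bookkeeping issue, but it follows from general categorical principles once the adjunction $\Sph[-] \dashv \Gm$ is in place.
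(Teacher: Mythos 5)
The paper does not actually prove this proposition; it quotes it verbatim from \cite{CarmeliStrict}, so there is no internal argument to compare against. Your reconstruction is nonetheless essentially the standard one, and its main ingredients are sound: $\Sph[\ZZ]^\wedge_p$ is indeed $p$-complete of finite Tor-amplitude, $\varphi$ is a natural endotransformation of the identity on the category of such rings (because $\frob$ and $\triv$ are natural and $\triv$ is invertible onto the $p$-completion there), the computation $\varphi_{\Sph[\ZZ]^\wedge_p}\simeq[p]$ follows from \Cref{Frobenius_group_algebra} without circularity (that proposition rests only on \Cref{Frob_tensor} and \cite[Lemma IV.1.3]{NS}), and the naturality square converts postcomposition with $\varphi_R$ into precomposition with $[p]$ on $\Map_{\CAlg}(\Sph[\ZZ]^\wedge_p,R)\simeq\Omega^\infty\Gm(R)$.

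The one step that is wrong as literally stated is the final classification claim: ring endomorphisms of $\Sph[\ZZ]^\wedge_p$ classify, via Yoneda, natural endotransformations of the \emph{space-valued} functor $\Omega^\infty\Gm$, not of the $\Mod_\ZZ^\cn$-valued one (a generic ring endomorphism of $\Sph[\ZZ]^\wedge_p$ is not co-$\ZZ$-linear and induces no $\ZZ$-module map on $\Gm$). Moreover, knowing that $\varphi_*$ and $p$ agree after applying $\Omega^\infty$ does not by itself produce a homotopy of spectrum maps, since $[X,Y]_{\Sp}\to[\Omega^\infty X,\Omega^\infty Y]_*$ need not be injective. The fix is to run your argument over all finitely generated free abelian groups at once: since $\Mod_\ZZ^\cn\simeq P_\Sigma(\mathrm{Latt})$, a homotopy between two maps in $\Mod_\ZZ^\cn$ is detected by the restricted Yoneda functors $\Map_{\Mod_\ZZ}(\ZZ^k,-)\simeq\Map_{\CAlg}(\Sph[\ZZ^k]^\wedge_p,-)$, naturally in $\ZZ^k\in\mathrm{Latt}$. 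Each $\Sph[\ZZ^k]^\wedge_p$ is still $p$-complete of finite Tor-amplitude, the identification $\varphi_{\Sph[\ZZ^k]^\wedge_p}\simeq[p]$ of \Cref{Frobenius_group_algebra} is natural in $\ZZ^k$ (it comes from the natural identification of the Tate diagonal on suspension spectra), and precomposition with $[p]=\Sph[p\cdot(-)]^\wedge_p$ corresponds under the adjunction to precomposition with $p\colon\ZZ^k\to\ZZ^k$, i.e.\ to multiplication by $p$ on $\Gm(R)$. With that modification the argument is complete.
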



Combining this with \Cref{Frobenius_group_algebra}, we obtain the following statement for group algebras:

\begin{cor} \label{Frob_on_reduced_G_m}
Let $R$ be a commutative ring spectrum of finite Tor-amplitude and let $M$ be an abelian group. Denote by $[p]:R[M]^\wedge_p \to R[M]^{\wedge}_p$ the map induced by multiplication by $p$ on $M$. Then there is a commutative triangle
\[
\xymatrix{ 
      & \AG{R[M]^\wedge_p}\ar^{[p]_*}[rd]  & \\ 
\AG{R[M]^\wedge_p}\ar^p[rr]\ar^{(\varphi_R)_*}[ru]&        & \AG{R[M]^\wedge_p}.
}
\]
\end{cor}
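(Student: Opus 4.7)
The strategy is a direct combination of the two preceding propositions, reinterpreted on the reduced strict units. The main observation is that all three maps in the triangle arise by applying $\Gm(-)$ to a commuting diagram of augmented $R$-algebra endomorphisms of $R[M]^\wedge_p$, so the triangle will follow once we interpret \Cref{Frobenius_group_algebra} on the level of strict units and invoke \Cref{Frob_strict_units_p}.

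First, I would check that $R[M]^\wedge_p$ is itself of finite Tor-amplitude. Indeed, $\Sph[M]$ is connective and $\Sph[M]\otimes \ZZ \simeq \ZZ[M]$ is discrete, so $R[M]\simeq R\otimes \Sph[M]$ has finite Tor-amplitude whenever $R$ does; and by \Cref{lem_completion}, $(R[M]^\wedge_p)\otimes \ZZ\simeq (R[M]\otimes \ZZ)^\wedge_p$ is still bounded. Hence \Cref{Frob_strict_units_p} applies to $R[M]^\wedge_p$, giving that the map induced by $\varphi_{R[M]^\wedge_p}$ on $\Gm(R[M]^\wedge_p)$ is multiplication by $p$.

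Next, by \Cref{Frobenius_group_algebra} we have a factorization
\[
\varphi_{R[M]^\wedge_p} \;=\; [p]\circ (\varphi_R[M]^\wedge_p)
\]
of endomorphisms of $R[M]^\wedge_p$. Both factors are compatible with the augmentation $R[M]^\wedge_p \to R$: the map $[p]$ restricts to the identity on $R$, while $\varphi_R[M]^\wedge_p$ restricts to $\varphi_R$, which by \Cref{Frob_strict_units_p} applied to $R$ induces multiplication by $p$ on $\Gm(R)$. It follows that $(\varphi_R[M]^\wedge_p)_*$ carries the subspectrum $\AG{R[M]^\wedge_p}=\fib(\Gm(R[M]^\wedge_p)\to \Gm(R))$ into itself: any $x\in \AG{R[M]^\wedge_p}$ has augmentation $0$, and so its image has augmentation $p\cdot 0 = 0$. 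The same is true of $[p]_*$, and so the factorization descends to a factorization
\[
(\varphi_{R[M]^\wedge_p})_*|_{\AG} \;=\; [p]_* \circ (\varphi_R)_*
\]
on $\AG{R[M]^\wedge_p}$, where $(\varphi_R)_*$ denotes the restriction of $(\varphi_R[M]^\wedge_p)_*$ to the reduced units.

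Finally, combining this factorization with the identity $(\varphi_{R[M]^\wedge_p})_*|_{\AG}=p$ from the first paragraph yields the desired commutative triangle. There is no serious obstacle here; the only delicate point is the verification that the Frobenius and coefficient-Frobenius maps respect the augmentation well enough to be restricted to $\AG{R[M]^\wedge_p}$, which we already carried out via the augmentation-versus-$p$ compatibility above.
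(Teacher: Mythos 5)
Your proof is correct and is essentially the paper's argument: the corollary is obtained by combining the factorization $\varphi_{R[M]^\wedge_p}=[p]\circ(\varphi_R[M]^\wedge_p)$ from \Cref{Frobenius_group_algebra} with the fact that $(\varphi_{R[M]^\wedge_p})_*=p$ on strict units from \Cref{Frob_strict_units_p}, then restricting along the augmentation to $\Gmred$. Your explicit verification that $R[M]^\wedge_p$ again has finite Tor-amplitude (via \Cref{lem_completion}) is a detail the paper leaves implicit, and is welcome.
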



We will be particularly interested in rings $R$ satisfying the following additional hypothesis:

\begin{defn}\label{dfn:perfect}
A $p$-complete commutative ring spectrum $R$ is \tdef{perfect} if $\frob: R \to R^{tC_p}$ is an isomorphism.
\end{defn}

\begin{exm}
    If $\kappa$ is a (discrete) perfect $\FF_p$-algebra, then the ring of spherical Witt vectors $\Sph_{W(\kappa)}$ is perfect. More generally, any algebra of the form $(\Sph_{W(\kappa)})^A,$ for a finite complex $A$, is perfect. 
\end{exm}

The notion of a perfect commutative ring spectrum was previously defined in \cite{CarmeliStrict} and \cite{yuan2022integral} in slightly different ways, but the two definitions agree (and agree with this one) for rings of finite Tor-amplitude.

\begin{rem}
If $R$ is perfect and \emph{$p$-completely flat}, then $\pi_0R/p$ is a perfect ring and $R\simeq \Sph_{W(\pi_0R/p)}$. 
\end{rem}

For a perfect commutative ring spectrum $R$, \Cref{Frob_on_reduced_G_m} implies:

\begin{prop}\label{prop:p-rig-fiber}
Let $R$ be a perfect $p$-complete commutative ring spectrum of finite Tor-amplitude and let $\Lambda$ be a free abelian group of finite rank. Then there is a cofiber sequence in $\Mod_\ZZ^\cn$
\[
\xymatrix{
\AG{R[\Lambda]^\wedge_p}\ar^{p}[r] & \AG{R[\Lambda]^\wedge_p} \ar[r] & \AG{R[\Lambda/p]}.
}
\]
\end{prop}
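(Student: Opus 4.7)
The goal is to upgrade the cofiber sequence of \Cref{prop:cofiber_sequence_free_group} by replacing the map $[p]_*$ with multiplication by $p$ on the $\ZZ$-module spectrum $\AG{R[\Lambda]^\wedge_p}$. The two key inputs are already in hand: \Cref{prop:cofiber_sequence_free_group} gives the cofiber sequence
\[
\AG{R[\Lambda]^\wedge_p} \xrightarrow{[p]_*} \AG{R[\Lambda]^\wedge_p} \to \AG{R[\Lambda/p]}
\]
for any $p$-complete commutative ring spectrum of finite Tor-amplitude, and \Cref{Frob_on_reduced_G_m} provides the factorization $p \simeq [p]_*\circ (\varphi_R)_*$ as endomorphisms of $\AG{R[\Lambda]^\wedge_p}$. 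These combine immediately once we know that $(\varphi_R)_*$ is an equivalence.

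The first step is therefore to verify that the Frobenius endomorphism $\varphi_R : R \to R$ is an equivalence. Since $R$ is $p$-complete of finite Tor-amplitude, \Cref{prop:Moore_triv_iso} shows that $\triv \colon R \to R^{tC_p}$ is an equivalence, so by the definition $\varphi_R = \triv^{-1}\circ \frob$, the perfectness hypothesis (that $\frob$ is an isomorphism) is equivalent to $\varphi_R$ being an equivalence. Applying the functor $\AG{(-)[\Lambda]^\wedge_p}$ to this equivalence, we see that $(\varphi_R)_* \colon \AG{R[\Lambda]^\wedge_p} \to \AG{R[\Lambda]^\wedge_p}$ is an equivalence of connective $\ZZ$-modules.

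The second step is the formal conclusion: any map that differs from another by precomposition with an equivalence has the same cofiber. Concretely, from the commutative triangle of \Cref{Frob_on_reduced_G_m} and the equivalence $(\varphi_R)_*$, the map $p = [p]_* \circ (\varphi_R)_*$ has cofiber canonically identified with the cofiber of $[p]_*$, namely $\AG{R[\Lambda/p]}$ by \Cref{prop:cofiber_sequence_free_group}. This yields the claimed cofiber sequence.

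\textbf{Main obstacle.} There is no genuine obstacle here --- this proposition is a packaging result that combines \Cref{prop:cofiber_sequence_free_group} with the perfectness hypothesis via \Cref{Frob_on_reduced_G_m}. The only point requiring a moment's care is to check that the naturality in \Cref{Frob_on_reduced_G_m} gives the compatibility $p = [p]_*\circ(\varphi_R)_*$ on the \emph{reduced} units $\AG{R[\Lambda]^\wedge_p}$ (rather than merely on $\Gm(R[\Lambda]^\wedge_p)$), which is immediate since the triangle is natural in augmented $R$-algebras and therefore restricts to the augmentation kernel.
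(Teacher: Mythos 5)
Your proposal is correct and follows exactly the paper's argument: combine \Cref{prop:cofiber_sequence_free_group} with the factorization $p \simeq [p]_*\circ(\varphi_R)_*$ from \Cref{Frob_on_reduced_G_m} and the fact that perfectness makes $(\varphi_R)_*$ an equivalence. The added verification that $\varphi_R$ is invertible via \Cref{prop:Moore_triv_iso} is a reasonable and accurate elaboration of what the paper leaves implicit.
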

\begin{proof}
Combine \Cref{prop:cofiber_sequence_free_group} with \Cref{Frob_on_reduced_G_m} and the fact that $(\varphi_R)_*$ is an isomorphism.

\end{proof}

This gives us $p$-completed $\Lambda$-rigidity up to $p$-completion:
\begin{cor}\label{cor:rigid_after_p_completing_the_units}
Let $R$ be a perfect $p$-complete commutative ring spectrum of finite Tor-amplitude and let $\Lambda$ be a free abelian group of finite rank. Then the map 
\[
\widehat{\Theta}_{\Lambda}\colon \ct{\Lambda}(R)\to \AG{R[\Lambda]^\wedge_p}
\]
induces an isomorphism after $p$-completion.
\end{cor}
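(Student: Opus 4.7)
The plan is to compare two cofiber sequences of connective $\Z$-modules arising from the short exact sequence $0 \to \Lambda \xrightarrow{p} \Lambda \xrightarrow{\pi} \Lambda/p \to 0$. On the spectrum side, \Cref{prop:p-rig-fiber} (where perfection of $R$ enters) provides the cofiber sequence
\[
\AG{R[\Lambda]^\wedge_p} \xrightarrow{p} \AG{R[\Lambda]^\wedge_p} \xrightarrow{\pi'} \AG{R[\Lambda/p]},
\]
in which the second map is the one induced by $\pi$. On the algebraic side, I will check that the sequence $0 \to \ct{\Lambda}(R) \xrightarrow{p} \ct{\Lambda}(R) \xrightarrow{\pi} \ct{\Lambda/p}(R) \to 0$ is short exact: injectivity of $p$ follows from torsion-freeness of $\Lambda$, while surjectivity of $\pi$ relies on the observation that any locally constant function $f \colon |\Spec R| \to \Lambda/p$ has finite image by quasi-compactness of $|\Spec R|$, so any set-theoretic lift of these finitely many values to $\Lambda$ produces the required locally constant lift of $f$.

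By $\Z$-linearity of $\widehat{\Theta}_\Lambda$ (for the left square) and naturality of $\widehat{\Theta}$ in the abelian group variable (for the right square), these two cofiber sequences fit into a commutative diagram
\[
\begin{tikzcd}
\ct{\Lambda}(R) \arrow[r, "p"] \arrow[d, "\widehat{\Theta}_\Lambda"'] & \ct{\Lambda}(R) \arrow[r, "\pi"] \arrow[d, "\widehat{\Theta}_\Lambda"'] & \ct{\Lambda/p}(R) \arrow[d, "\widehat{\Theta}_{\Lambda/p}"'] \\
\AG{R[\Lambda]^\wedge_p} \arrow[r, "p"] & \AG{R[\Lambda]^\wedge_p} \arrow[r, "\pi'"] & \AG{R[\Lambda/p]},
\end{tikzcd}
\]
in which the rightmost vertical map is an equivalence by $p$-rigidity of $R$, which holds by \Cref{cor:p-rigid}(3) given the finite Tor-amplitude assumption.

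Taking fibers of the vertical maps produces a cofiber sequence
\[
\fib(\widehat{\Theta}_\Lambda) \xrightarrow{p} \fib(\widehat{\Theta}_\Lambda) \to 0,
\]
whose first map, as the restriction of multiplication by $p$ on $\ct{\Lambda}(R)$, is multiplication by $p$ on the fiber. Hence $p$ acts invertibly on $\fib(\widehat{\Theta}_\Lambda)$, forcing $\fib(\widehat{\Theta}_\Lambda) \otimes \Sph/p \simeq 0$ and therefore $\fib(\widehat{\Theta}_\Lambda)^\wedge_p \simeq 0$, which is exactly the claimed $p$-complete equivalence. I expect the main delicate point to be the bookkeeping around the coexistence of two a priori different ``multiplication by $p$'' endomorphisms of $\AG{R[\Lambda]^\wedge_p}$ --- the $\Z$-module action and the functorial map $[p]_*$ --- which is precisely why \Cref{prop:p-rig-fiber} is the right input: it converts between them using that $(\varphi_R)_*$ is invertible, which is where the perfectness hypothesis is used.
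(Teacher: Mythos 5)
Your proof is correct and follows essentially the same route as the paper: both arguments reduce modulo $p$ via the cofiber sequence of \Cref{prop:p-rig-fiber}, identify $\ct{\Lambda}(R)/p$ with $\ct{\Lambda/p}(R)$, and conclude from the $p$-rigidity of $R$ supplied by \Cref{cor:p-rigid}. Your version merely makes explicit (via the map of cofiber sequences and the vanishing of $\fib(\widehat{\Theta}_\Lambda)^\wedge_p$) the compatibility of these identifications with $\widehat{\Theta}$, which the paper leaves implicit.
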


\begin{proof}
By definition of $p$-completion, it suffices to show that the induced map 
\[
\ct{\Lambda}(R)/p \to \AG{R[\Lambda]^\wedge_p}/p
\]
is an isomorphism.  But $\ct{\Lambda}(R)/p  \simeq \ct{\Lambda/p}(R)$ and by \Cref{prop:p-rig-fiber}, we have $\Gmrp{R}{\Lambda}/p \simeq \Gmr{R}{\Lambda/p}$, so the statement follows from \Cref{cor:p-rigid}.  
\end{proof}

\subsection{Staticity and rigidity for free abelian groups}\label{sub:Gmstaticity}

We now turn to part (3) of our plan, completing the proof of rigidity for free abelian groups.  

\begin{rem}\label{rem:restricttowitt}
    At this point, we restrict to $R= \Sph_{W(\kappa)}$ for clarity of proofs, despite the only critical additional hypothesis being that $R$ is connective.  While this restriction appears dramatic, we expect that any $p$-complete perfect connective ring $R$ is of the form $\Sph_{W(\kappa)}$ (though we do not pursue this further here), so we do not really lose any generality.    
\end{rem}

The main technique is the notion of staticity introduced in \S\ref{sub:staticity}. The following  statement asserts that the space of reduced strict units in $\Sph_{W(\kappa)}[\Lambda]^{\wedge}_p$ is a \emph{subset} of the reduced (strict) units of its $\pi_0$.  This  will reduce the computation of strict units to the algebraic question of finding rank $1$ units in the $\delta_p$-ring $W(\kappa)[\Lambda]^{\wedge}_p$.

\begin{prop}\label{prop:red_units_static}
Let $\kappa$ be a perfect $\F_p$-algebra and $\Lambda$ be a free abelian group of finite rank.  Then $\Sph_{W(\kappa)}$ is $\AG{-[\Lambda]^\wedge_p}$-static.
\end{prop}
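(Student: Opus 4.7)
Write $M := \Gmrp{\Sph_{W(\kappa)}}{\Lambda}$ and $M' := \Gmrp{W(\kappa)}{\Lambda}$, so the goal is to show the natural map $M \to M'$ induced by $\Sph_{W(\kappa)} \to W(\kappa)$ is an inclusion of connected components of connective $\ZZ$-module spectra. Since $W(\kappa)[\Lambda]^{\wedge}_p$ is a discrete commutative ring, $M'$ is discrete. Accordingly, the staticity statement is equivalent to the conjunction of (a) $\pi_i M = 0$ for all $i\ge 1$ and (b) the map $\pi_0 M \hookrightarrow \pi_0 M'$ is injective.

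The plan is to combine two pieces of information about $M$: its mod-$p$ structure, via the cofiber sequence of \Cref{prop:p-rig-fiber}, and its $p$-completion, via \Cref{cor:rigid_after_p_completing_the_units}. Since $\Sph_{W(\kappa)}$ is perfect of finite Tor-amplitude, \Cref{prop:p-rig-fiber} together with the identification of $\AG{R[\Lambda/p]}$ with $\ct{\Lambda/p}(W(\kappa))$ from the $p$-rigidity \Cref{cor:p-rigid} yields a cofiber sequence
\[
M \xrightarrow{p} M \to \ct{\Lambda/p}(W(\kappa)).
\]
Since the cofiber is discrete, the long exact sequence shows that multiplication by $p$ is an isomorphism on $\pi_i M$ for $i\ge 1$, so each such group is a $\ZZ[1/p]$-module. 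On the other hand, \Cref{cor:rigid_after_p_completing_the_units} gives $M^{\wedge}_p \simeq \ct{\Lambda}(\Sph_{W(\kappa)})^{\wedge}_p$, which is concentrated in degree zero.

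To close the loop, I would compare $M$ and $M'$ directly via the fiber $F := \fib(M \to M')$. The analogous cofiber sequence for $M'$ is obtained by applying \Cref{prop:cofiber_sequence_free_group} (which requires only finite Tor-amplitude) and using perfectness of $\kappa$ together with \Cref{Frob_on_reduced_G_m} to replace $[p]_*$ by $p$ on $M'$, so that the map $M \to M'$ assembles into a map of cofiber sequences whose induced map on cofibers is the identity on $\ct{\Lambda/p}(W(\kappa))$. It follows that $p$ acts invertibly on $F$, and (via the equivalence of $p$-completions) that $F^{\wedge}_p = 0$. Combining this with the observation that both $M$ and $M'$ receive a compatible injection from the tautological subspectrum $\ct{\Lambda}(W(\kappa))$ (whose $p$-completion already hits the whole of $M^{\wedge}_p \simeq (M')^{\wedge}_p$), one deduces that the connective cover of $F$ vanishes, which gives both (a) and (b).

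\textbf{Main obstacle.} The principal difficulty is the last step: a connective $\ZZ$-module spectrum with $\ZZ[1/p]$-modules for higher homotopy and vanishing $p$-completion need not be zero in general. Passing this hurdle requires the input from the tautological units map $\ct{\Lambda}(W(\kappa)) \hookrightarrow M$, whose image is $p$-adically dense and maps compatibly into the discrete group $M'$, so that any higher homotopy of $M$ would produce a $\ZZ[1/p]$-summand which cannot be reconciled with the discreteness of both $M'$ and $M^{\wedge}_p$. The cleanest execution will likely be to run the comparison entirely through the fiber $F$ and the map of cofiber sequences, reading off vanishing of $\tau_{\ge 0} F$ from the combined mod-$p$ and $p$-complete data.
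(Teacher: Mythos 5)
Your two computational inputs --- the cofiber sequence $M \xrightarrow{p} M \to \ct{\Lambda/p}(W(\kappa))$ from \Cref{prop:p-rig-fiber} and the identification of $M^{\wedge}_p$ from \Cref{cor:rigid_after_p_completing_the_units} --- are exactly the right ones, and they are the same inputs the paper uses. But the step you flag as the ``main obstacle'' is a genuine gap, and your proposed resolution does not close it. Knowing that $p$ acts invertibly on $\pi_{\geq 1}M$ and that $M^{\wedge}_p$ is discrete is strictly too weak: the spectrum $\ct{\Lambda}(W(\kappa)) \oplus \Sigma\Q$ satisfies both conditions, and the existence of the tautological map $\ct{\Lambda}(W(\kappa)) \to M$ (a map out of a discrete spectrum) imposes no constraint whatsoever on $\pi_{\geq 1}M$. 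Moreover, the ``analogous cofiber sequence for $M'$'' you invoke is unavailable: the Eilenberg--MacLane ring $W(\kappa)$ has neither finite Tor-amplitude over $\Sph$ nor an invertible Tate-valued Frobenius, so \Cref{prop:cofiber_sequence_free_group} and \Cref{Frob_on_reduced_G_m} do not apply to it; and on ordinary units of the discrete ring $W(\kappa)[\Lambda]^{\wedge}_p$ the map $[p]_*$ is \emph{not} $u \mapsto u^p$ up to Frobenius twist unless $\delta_p(u)=0$ (try $u = 1+p$). So the claimed map of cofiber sequences, and hence the conclusion that $p$ acts invertibly on $F$, is unjustified.

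The missing idea is to work with the fiber sequence of \emph{functors} $V_p\AG{-} \to \AG{-} \to \AG{-}^{\wedge}_p$, where $V_pX = \invlim(\cdots \xrightarrow{p} X \xrightarrow{p} X)$, and to use that staticity passes to the middle term of a fiber sequence (\Cref{prop:static_limits_groups}(2)). The $p$-complete term is handled exactly as in your second input, via the square comparing $\ct{\Lambda}(\Sph_{W(\kappa)})^{\wedge}_p$ with $\ct{\Lambda}(W(\kappa))^{\wedge}_p$. The decisive point for the divisible term is \Cref{prop:p-invert_static}: $V_p\G_m$ is an \emph{affine} group scheme (corepresented by $\Sph[\Z[\inv{p}]]$) on which $p$ acts invertibly, so for any $p$-complete ring $S$ the map $V_p\G_m(S) \to V_p\G_m(\pi_0 S)$ is an \emph{isomorphism} --- a Postnikov-tower/cotangent-complex argument that compares $M$ with $M'$ directly, rather than merely recording that $p$ is invertible on higher homotopy. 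That comparison is what your argument is missing and what rules out phantoms like $\Sigma\Q$.
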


\begin{proof}
For a connective spectrum $X$, let  
\[
\mdef{V_pX}\simeq \invlim(\dots\oto{p}X\oto{p}X\oto{p}X) \qin \Sp^\cn,
\]
and consider the fiber sequence of connective spectra
\[
V_p\AG{R[\Lambda]^\wedge_p}\to  \AG{R[\Lambda]^\wedge_p} \to \AG{R[\Lambda]^\wedge_p}^\wedge_p.
\]

By \Cref{prop:static_limits_groups}(2), the statement reduces to the following two facts:
\begin{enumerate}
    \item $\Sph_{W(\kappa)}$ is $V_p\AG{-[\Lambda]^\wedge_p}$-static.  Since $\Gmred(-)$ is a summand of $\G_m(-)$, it suffices to see that $\Sph_{W(\kappa)}[\Lambda]^{\wedge}_p$ is $V_p\G_m$-static.  But this follows from \Cref{prop:p-invert_static} and the fact that multiplication by $p$ is invertible on $V_p\G_m\simeq \Spec(\Sph[\ZZ[\inv{p}]])$.  
    \item $\Sph_{W(\kappa)}$ is $\AG{-[\Lambda]^\wedge_p}^\wedge_p$-static.  To see this, we consider the commutative square 
\[
\xymatrix{
\ct{\Lambda}(\Sph_{W(\kappa)})^\wedge_p \ar^-{\widehat{\Theta}}[r]\ar[d] & \AG{\Sph_{W(\kappa)}[\Lambda]^\wedge_p}^\wedge_p \ar[d]\\ 
\ct{\Lambda}(W(\kappa))^\wedge_p \ar^-{\widehat{\Theta}}[r] & \AG{W(\kappa)[\Lambda]^\wedge_p}^\wedge_p.
}
\]
We wish to show that the right vertical map is an inclusion. The left vertical map is clearly an isomorphism, and by \Cref{cor:rigid_after_p_completing_the_units}, the upper horizontal map is also an isomorphism.  
Thus, since the bottom horizontal map is an injection of discrete abelian groups (and in particular an inclusion), the right vertical map is an inclusion as well.   
\end{enumerate}
\end{proof}

We now turn to the main result of this section, regarding rigidity for free abelian groups.

\begin{prop}\label{prop:rank1pi0}
    The image of truncation $\Gmrp{\Sph_{W(\kappa)}}{\Lambda} \to \Gmrp{W(\kappa)}{\Lambda}$ is contained in the image of 
    \[
    \widehat{\Theta}_{W(\kappa)}: \ct{\Lambda}(W(\kappa)) \to \Gmrp{W(\kappa)}{\Lambda}.
    \]
\end{prop}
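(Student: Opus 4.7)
The plan is to reduce to the algebraic computation of \Cref{thm:deltarig} via the $\widehat{\delta}$-ring machinery of \S\ref{sec:delta}. Specifically, I expect the functor $\pi_0 \colon \CAlg^{\mathrm{M}} \to \dhring$ of \Cref{prop:Moore_delta_ring} to factor the truncation map $\Gmrp{\Sph_{W(\kappa)}}{\Lambda} \to \Gmrp{W(\kappa)}{\Lambda}$ through the subgroup of reduced rank-$1$ $\delta_p$-units $\Gmdred{W(\kappa)[\Lambda]^{\wedge}_p} \subseteq \Gmrp{W(\kappa)}{\Lambda}$, after which \Cref{thm:deltarig} will identify this subgroup with $\ct{\Lambda}(W(\kappa))$ via the tautological map $\widehat{\Theta}_{W(\kappa)}$.

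To invoke \Cref{prop:Moore_delta_ring}, I would first verify that $\Sph_{W(\kappa)}[\Lambda]^{\wedge}_p$ is a commutative ring Moore spectrum. By \Cref{lem_completion},
\[
\Sph_{W(\kappa)}[\Lambda]^{\wedge}_p \otimes \ZZ \simeq (W(\kappa)[\Lambda])^{\wedge}_p,
\]
which is discrete since $W(\kappa)[\Lambda]$ is $p$-torsion free. Accordingly, $\pi_0(\Sph_{W(\kappa)}[\Lambda]^{\wedge}_p) = W(\kappa)[\Lambda]^{\wedge}_p$ acquires a canonical $\widehat{\delta}$-ring structure, which in the $p$-complete setting is just a $\delta_p$-ring structure.

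The next step is to identify this $\delta_p$-structure with the standard tensor-product one of \Cref{cnstr:groupalgdelta}. By \Cref{Frobenius_group_algebra}, the Frobenius on $\Sph_{W(\kappa)}[\Lambda]^{\wedge}_p$ is the composite $\varphi_{\Sph_{W(\kappa)}}[\Lambda]^{\wedge}_p \circ [p]_*$, whose image on $\pi_0$ reduced modulo $p$ is the map $r[m] \mapsto \varphi_{W(\kappa)}(r)[pm]$ --- exactly the Frobenius lift underlying the standard $\delta_p$-structure on $W(\kappa)[\Lambda]^{\wedge}_p$. Functoriality of \Cref{prop:Moore_delta_ring} then ensures that $\pi_0$ of a reduced strict unit $\Sph[\ZZ] \to \Sph_{W(\kappa)}[\Lambda]^{\wedge}_p$ is a reduced $\delta_p$-ring map $\ZZ[\ZZ] \to W(\kappa)[\Lambda]^{\wedge}_p$, i.e., an element of $\Gmdred{W(\kappa)[\Lambda]^{\wedge}_p}$. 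Since $\Lambda$ is torsion free, \Cref{thm:deltarig} finally identifies this group with $\ct{\Lambda}(W(\kappa))$ via the tautological map, which matches $\widehat{\Theta}_{W(\kappa)}$, completing the argument.

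The main technical obstacle is verifying the identification of $\delta_p$-structures on $W(\kappa)[\Lambda]^{\wedge}_p$. Morally this is immediate from \Cref{Frobenius_group_algebra} together with the $\triv^{-1} \circ \frob$ construction of \S\ref{sub:moore}, but carrying it out rigorously requires some care in tracking the naturality of the Tate-valued Frobenius through the comparisons defining the $\widehat{\delta}$-structure on $\pi_0$ of a Moore spectrum. Everything else is bookkeeping: both the verification that $\Sph_{W(\kappa)}[\Lambda]^{\wedge}_p$ is a Moore spectrum and the final appeal to \Cref{thm:deltarig} are direct applications of results already established in the paper.
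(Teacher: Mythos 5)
Your proposal is correct and follows essentially the same route as the paper: view a reduced strict unit as a map $\Sph[t^{\pm 1}] \to \Sph_{W(\kappa)}[\Lambda]^{\wedge}_p$, apply the functor $\pi_0\colon \CAlg^{\mathrm{M}} \to \dhring$ of \Cref{prop:Moore_delta_ring} to land in reduced rank~$1$ $\delta_p$-units, and conclude by \Cref{thm:deltarig}. You are somewhat more explicit than the paper about checking the Moore-spectrum hypothesis and matching the Tate-valued Frobenius $\delta_p$-structure with the standard one of \Cref{cnstr:groupalgdelta} via \Cref{Frobenius_group_algebra}, but these are exactly the implicit ingredients of the paper's argument.
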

\begin{proof}
    Any (reduced) strict unit in $\Sph_{W(\kappa)}[\Lambda]^{\wedge}_p$ is classified by a map from $\Sph[t^{\pm 1}]$.  By \Cref{prop:Moore_delta_ring}, the induced map on $\pi_0$ is naturally a map of $\delta_p$-rings and in particular, the image of $t \in \Gmdred{\Z_p[t^{\pm 1}]}$ is a reduced rank $1$ unit, i.e., an element of $\Gmdred{W(\kappa)[\Lambda]^{\wedge}_p}$.  The statement then follows from \Cref{thm:deltarig}.  
\end{proof}

\begin{thm}\label{thm:free-rig}
Let $\kappa$ be a perfect $\F_p$-algebra and let $\Lambda$ be a free abelian group of finite rank. Then $\Sph_{W(\kappa)}$ is $p$-completely $\Lambda$-rigid. 
\end{thm}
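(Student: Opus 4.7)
The plan is to combine the two preparatory results \Cref{prop:red_units_static} and \Cref{prop:rank1pi0} via the static-plus-tautological criterion for rigidity (\Cref{prop:static_rigid}, in its $p$-complete form).

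First, I would invoke \Cref{prop:static_rigid} to reduce $p$-complete $\Lambda$-rigidity of $R=\Sph_{W(\kappa)}$ to the conjunction of the following two conditions:
\begin{enumerate}
\item[(i)] $R$ is $\Gmrp{-}{\Lambda}$-static, i.e.\ the truncation map $\Gmrp{R}{\Lambda} \to \Gmrp{\pi_0 R}{\Lambda}$ is an inclusion of connected components;
\item[(ii)] the image of this truncation lands in the image of $\widehat{\Theta}_{W(\kappa)}\colon \ct{\Lambda}(W(\kappa)) \to \Gmrp{W(\kappa)}{\Lambda}$.
\end{enumerate}

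Condition (i) is exactly \Cref{prop:red_units_static}, whose proof uses the fiber sequence $V_pX \to X \to X^\wedge_p$ applied to $X = \Gmrp{R}{\Lambda}$: the $V_p$-part is static because $p$ is invertible on it (\Cref{prop:p-invert_static}), and the $p$-completed part is static because the horizontal comparison maps $\widehat{\Theta}$ become isomorphisms after $p$-completion thanks to \Cref{cor:rigid_after_p_completing_the_units}. Condition (ii) is exactly \Cref{prop:rank1pi0}, whose proof notes that since $\Sph_{W(\kappa)}[\Lambda]^\wedge_p$ is a Moore spectrum, \Cref{prop:Moore_delta_ring} endows $\pi_0$ with a $\widehat\delta$-ring structure; strict units of $\Sph[t^{\pm1}]$ map to rank $1$ $\delta_p$-units in $W(\kappa)[\Lambda]^\wedge_p$, which by \Cref{thm:deltarig} are exactly the tautological ones.

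Combining (i) and (ii) yields $p$-complete $\Lambda$-rigidity of $\Sph_{W(\kappa)}$. No further obstacle arises at this stage, since all the substantive work --- the cofiber sequence of \Cref{prop:p-rig-fiber}, the identification of reduced $p$-completed units via the chromatic Fourier transform, and the $\delta_p$-calculation of \Cref{thm:deltarig} --- has been carried out in the preceding subsections. The only ``assembly'' step is to observe that an inclusion of abelian groups whose image is contained in a specified subgroup factors through that subgroup, which forces $\widehat{\Theta}_{\Sph_{W(\kappa)}}$ to be surjective (and, by staticity together with \Cref{prop:const_static}, injective), hence an isomorphism.
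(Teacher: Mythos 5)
Your proof is correct and follows exactly the paper's argument: \Cref{prop:static_rigid} reduces the claim to staticity (\Cref{prop:red_units_static}) plus the tautological-image condition (\Cref{prop:rank1pi0}). The additional recap of how those two propositions are themselves proved is accurate but not needed for the assembly step.
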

\begin{proof}
Applying \Cref{prop:static_rigid}, the statement follows from \Cref{prop:red_units_static} and \Cref{prop:rank1pi0}.
\end{proof}

Combining this with \Cref{cor:p-rigid} and \Cref{prop:red_units_direct_sum}, we obtain:

\begin{cor}\label{cor:freeprig}
    Let $\kappa$ be a perfect $\F_p$-algebra and $M$ be a finitely generated abelian group with no prime to $p$ torsion.  Then $\Sph_{W(\kappa)}$ is $M$-rigid.  
\end{cor}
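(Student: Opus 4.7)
The plan is to combine the two main rigidity results established in this section --- \Cref{cor:p-rigid} for finite abelian $p$-groups and \Cref{thm:free-rig} for free abelian groups of finite rank --- by gluing them via the assembly isomorphism furnished by \Cref{prop:red_units_direct_sum}. Since $M$ is finitely generated abelian with only $p$-power torsion, I would decompose $M = \Lambda \oplus M_p$ where $\Lambda \simeq \Z^k$ is free of finite rank and $M_p$ is a finite abelian $p$-group. Reading the conclusion as $p$-completely $M$-rigidity (which coincides with $M$-rigidity when $M$ is finite and is what the combination of cited results delivers), the goal is to show that the comparison map $\widehat{\Theta}_M(\Sph_{W(\kappa)})\colon \ct{M}(\Sph_{W(\kappa)}) \to \Gmrp{\Sph_{W(\kappa)}}{M}$ is an isomorphism.

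To apply \Cref{prop:red_units_direct_sum} with $M \leftrightarrow \Lambda$ and $N \leftrightarrow M_p$, I would verify its two hypotheses for $R := \Sph_{W(\kappa)}$. First, that $R[\Lambda]^\wedge_p$ is $p$-completely $M_p$-rigid: the spectrum $R[\Lambda]^\wedge_p$ is $p$-complete and has finite Tor-amplitude over $\Sph_p$ (its derived reduction mod $p$ is the discrete ring $\kappa[\Lambda]$), so \Cref{cor:p-rigid}(3) applies and delivers $p$-rigidity, hence in particular $M_p$-rigidity since $M_p$ is a finite abelian $p$-group. Second, that the unit $R \to R[\Lambda]^\wedge_p$ induces a bijection $\pi_0\Spec(R[\Lambda]^\wedge_p) \simeq \pi_0\Spec(R)$: this is \Cref{group_algebra_components} applied to the torsion-free group $\Lambda$. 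These two checks produce the assembly isomorphism
\[
\Gmrp{\Sph_{W(\kappa)}}{\Lambda} \oplus \Gmrp{\Sph_{W(\kappa)}}{M_p} \iso \Gmrp{\Sph_{W(\kappa)}}{M}.
\]

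It remains to identify each summand on the left via $\widehat{\Theta}$. The first is $\ct{\Lambda}(\Sph_{W(\kappa)})$ by \Cref{thm:free-rig}, and the second is $\ct{M_p}(\Sph_{W(\kappa)})$ by \Cref{cor:p-rigid} (with $\Theta_{M_p}$ and $\widehat{\Theta}_{M_p}$ coinciding since $M_p$ is finite). Since $\ct{M} \simeq \ct{\Lambda} \oplus \ct{M_p}$ canonically and $\Theta$ is natural in $M$, the composite isomorphism is precisely $\widehat{\Theta}_M$, completing the proof. There is no real obstacle here, as all the substantive work has already been done in the preceding sections; the only mildly delicate verification is the finite Tor-amplitude of $\Sph_{W(\kappa)}[\Lambda]^\wedge_p$, which follows from the fact that $\Sph[\Z^k]$ becomes concentrated in degree $0$ after smashing with $\Sph/p$.
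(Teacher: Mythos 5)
Your proposal is correct and is exactly the argument the paper intends: decompose $M=\Lambda\oplus M_p$, apply \Cref{prop:red_units_direct_sum} with the free part in the first slot (verifying its hypotheses via \Cref{cor:p-rigid}(3) and \Cref{group_algebra_components}), and identify the two summands using \Cref{thm:free-rig} and \Cref{cor:p-rigid}. Your reading of the conclusion as $p$-complete $M$-rigidity also matches how the corollary is used later in the paper.
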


\section{Strict units and maps of spherical group algebras}\label{sec:end}

In this final section, we finish proving the main results about strict units and maps between spherical group rings.  We handle the $p$-complete case in \S\ref{sub:p-comp} and the integral case in \S\ref{sub:int}.

\subsection{Group algebras over spherical Witt vectors}\label{sub:p-comp}
In the $p$-complete case, we have:



\begin{thm} \label{thm:strict_units_delta_units}
Let $M$ be a finitely generated abelian group and let $\kappa$ be a perfect ring of characteristic $p$. Then applying $\pi_0$ induces an isomorphism
\[
\GG_m(\Sph_{W(\kappa)}[M]^\wedge_p) \iso \Gmdp{W(\kappa)[M]^\wedge_p}.  
\]
\end{thm}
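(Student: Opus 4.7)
The plan is to reduce to the case where $M$ has no prime-to-$p$ torsion, where the rigidity results from the previous sections apply; the case $M = 0$ is then the key remaining point to handle.

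First I would decompose $M = M_{p'} \oplus M_p$, where $M_{p'}$ is the finite prime-to-$p$ torsion subgroup and $M_p$ has only $p$-power torsion plus free part. Since $|M_{p'}|$ is invertible in the $p$-complete ring $W(\kappa)$, the algebra $W(\kappa)[M_{p'}]$ is finite étale over $W(\kappa)$ and by separability decomposes as a finite product $\prod_i W(\kappa_i)$ with each $\kappa_i$ a perfect $\F_p$-algebra (a finite étale extension of $\kappa$). By the uniqueness of étale $\E_\infty$-lifts \cite[Example 5.2.7]{Lurie_Ell2}, this decomposition lifts to $\Sph_{W(\kappa)}[M_{p'}]^\wedge_p \simeq \prod_i \Sph_{W(\kappa_i)}$, hence
\[
\Sph_{W(\kappa)}[M]^\wedge_p \simeq \prod_i \Sph_{W(\kappa_i)}[M_p]^\wedge_p,
\]
compatibly with the corresponding decomposition on the $\delta_p$-side. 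Since both $\Gm$ and $\Gmdp$ commute with finite products, this reduces the theorem to the case where $M = M_p$ has no prime-to-$p$ torsion.

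Next, I would use the augmentation $R[M]^\wedge_p \to R$ to split both mapping groups into reduced and unreduced summands. By \Cref{cor:freeprig}, the topological reduced part is $\Gmrp{\Sph_{W(\kappa)}}{M} \simeq \ct{M}(\Sph_{W(\kappa)})$, and by \Cref{thm:deltarig}, the algebraic reduced part is $\Gmdred{W(\kappa)[M]^\wedge_p} \simeq \ct{M}(W(\kappa))$. These agree under the identification $\ct{M}(\Sph_{W(\kappa)}) = \ct{M}(W(\kappa))$, which depends only on $\pi_0$; moreover, the $\pi_0$ comparison map is the identity on this summand, as both sides are spanned by the tautological units $[m]$ for $m \in M$.

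The remaining piece is the unreduced summand: I need to verify that $\pi_0$ induces $\Gm(\Sph_{W(\kappa)}) \simeq \kappa^\times$, using \Cref{exm:wittunit} to identify $\Gmdp{W(\kappa)} \simeq \kappa^\times$. This is the $M = 0$ case, which I expect to handle by combining two inputs. Surjectivity onto $\kappa^\times$ comes from multiplicative Teichmüller lifts, i.e. the canonical map $\Sph[\kappa^\times] \to \Sph_{W(\kappa)}$ supplied by the multiplicative structure on $\Sph_{W(\kappa)}$. Injectivity and discreteness should follow from the staticity machinery of \S\ref{sec:free} (in particular \Cref{prop:red_units_static}) applied to $\Sph_{W(\kappa)}$ itself, together with the Frobenius computation $\varphi_* = p$ on $\Gm$ from \Cref{Frob_strict_units_p} and $p$-completeness of $\Sph_{W(\kappa)}$. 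This base case is the main obstacle: everything else is a formal assembly of the rigidity and $\delta_p$ results, but the $M = 0$ computation of $\Gm(\Sph_{W(\kappa)})$ requires direct analysis of the spherical Witt vectors rather than an appeal to rigidity.
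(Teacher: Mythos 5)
Your proposal follows essentially the same route as the paper: split $M$ into its prime-to-$p$ and $p$-primary parts, absorb the prime-to-$p$ part into the coefficients, split off the reduced units via the augmentation, and identify the reduced summands using \Cref{cor:freeprig} on the topological side and the $\delta_p$-rigidity results on the algebraic side. Two remarks. First, your \'etale-product decomposition $W(\kappa)[M_{p'}] \simeq \prod_i W(\kappa_i)$ is an unnecessary detour and is not valid for an arbitrary perfect $\F_p$-algebra $\kappa$ (a finite \'etale algebra over a non-connected base need not split into connected factors); the paper instead observes that $\kappa[M_{p'}]$ is \emph{itself} perfect (Frobenius is $\sum a_m[m]\mapsto \sum a_m^p[pm]$ and multiplication by $p$ is an automorphism of $M_{p'}$), so that $\Sph_{W(\kappa)}[M_{p'}]^{\wedge}_p \simeq \Sph_{W(\kappa[M_{p'}])}$ and one simply replaces $\kappa$ by $\kappa[M_{p'}]$. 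Second, the base case $\G_m(\Sph_{W(\kappa)}) \simeq \kappa^\times$ that you correctly flag as the main remaining obstacle is not reproved in the paper: it is exactly \cite[Corollary 4.15]{CarmeliStrict} and is handled by citation. Your sketch of it is only partly on target --- the appeal to \Cref{prop:red_units_static} gives nothing here, since that proposition concerns \emph{reduced} units of $R[\Lambda]^{\wedge}_p$, which vanish when the group is trivial --- but the Frobenius-acts-by-$p$ mechanism of \Cref{Frob_strict_units_p} together with perfectness is indeed the engine behind the cited computation, so the gap is one of attribution rather than of substance.
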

\begin{proof}
When $M$ is a finite group of order prime to $p$, we have $\Sph_{W(\kappa)}[M]^\wedge_p\simeq \Sph_{W(\kappa[M])}$ and so the result follows from \cite[Corollary 4.15]{CarmeliStrict} and \Cref{exm:wittunit}.

In general, we may split $M$ as $M= M_1 \oplus M_2$ where $M_1$ is finite of order prime to $p$ and $M_2$ has only $p$-primary torsion part. We have compatible splittings 
\begin{align*}
\GG_m(\Sph_{W(\kappa)}[M]^\wedge_p) &\simeq \GG_m(\Sph_{W(\kappa)}[M_1]^\wedge_p) \oplus \Gmrp{\Sph_{W(\kappa)}[M_1]}{M_2} \\
\Gmdp{W(\kappa)[M]^\wedge_p} &\simeq \Gmdp{W(\kappa)[M_1]^\wedge_p} \oplus \G_m^{\delta_p,\mathrm{red}}(W(\kappa)[M_1][M_2]^\wedge_p), 
\end{align*}
where the second summand denotes the rank $1$ units which map to $1$ in $W(\kappa)[M_1]^\times$.  By the previous special case, the $\pi_0$ map identifies the first summands, and so it suffices to show that 
\[
\Gmrp{\Sph_{W(\kappa)}[M_1]}{M_2}  \to \G_m^{\delta_p,\mathrm{red}}(W(\kappa)[M_1][M_2]^\wedge_p)
\]
is an isomorphism.  
Replacing $\kappa$ by $\kappa[M_1]$, we may further assume without loss of generality that $M_1=0$.  But here, the result follows by \Cref{prop:red_delta_p_taut_finite}, \Cref{cor:freeprig}, and the evident commutativity of the diagram 
\[
\xymatrix{
\ct{M}(\Sph_{W(\kappa)})\ar^\wr[d] \ar^-\sim[r] & \Gmrp{\Sph_{W(\kappa)}}{M} \ar[d] \\ 
\ct{M}(W(\kappa)) \ar^-\sim[r]       & \G_m^{\delta_p,\mathrm{red}}(W(\kappa)[M]^\wedge_p).
}
\]

\end{proof}
\begin{cor}\label{cor:maps_group_alg_p_complete}
Let $p$ be a prime.  Then the functor $\pi_0: \CAlg^{\mathrm{M}} \to \dhring$ of \Cref{prop:Moore_delta_ring} is fully faithful on the subcategory of rings of the form $\Sph_{W(\kappa)}[M]^{\wedge}_p$, where $\kappa$ is a perfect ring of characteristic $p$ and $M$ is a finitely generated abelian group.  
\end{cor}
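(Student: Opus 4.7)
The plan is to reduce the statement to \Cref{thm:strict_units_delta_units} through a coproduct decomposition. Given $S = \Sph_{W(\kappa_1)}[M_1]^{\wedge}_p$ and $S' = \Sph_{W(\kappa_2)}[M_2]^{\wedge}_p$, both are $p$-complete, so their $\widehat{\delta}$-structures on $\pi_0$ reduce to $\delta_p$-structures and the statement becomes that
\[
\pi_0\colon \Map_{\CAlg(\Sp)}(S,\, S') \longrightarrow \Map_{\dpring}(\pi_0 S,\, \pi_0 S')
\]
is an equivalence. Setting $B := \pi_0 S'$, I would first observe that $S \simeq (\Sph_{W(\kappa_1)} \otimes \Sph[M_1])^{\wedge}_p$ is a $p$-completed coproduct in $\CAlg(\Sp)$, and that $\pi_0 S \simeq (W(\kappa_1) \otimes \ZZ[M_1])^{\wedge}_p$ is the corresponding $p$-completed coproduct in $\dpring$, since by \Cref{cnstr:groupalgdelta} the $\delta_p$-structure on $W(\kappa_1)[M_1]$ is the tensor-product one. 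Using that $S'$ is $p$-complete, this yields matching decompositions of each mapping space into a ``Witt factor'' and a ``group ring factor.''

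For the Witt factor, both $\Map_{\CAlg(\Sp)}(\Sph_{W(\kappa_1)},\, S')$ and $\Map_{\dpring}(W(\kappa_1),\, B)$ are canonically the discrete set $\Hom_{\mathrm{Ring}}(\kappa_1,\, B/p)$: the former by the universal property of spherical Witt vectors \cite[Example 5.2.7]{Lurie_Ell2}, and the latter because $\kappa_1$ is a perfect $\FF_p$-algebra, so that Frobenius-equivariance of a ring map out of $\kappa_1$ is automatic. For the group ring factor, the adjunctions of \Cref{rem:augmented} and \Cref{rem:delta_p_right_adjoint} give
\[
\Map_{\CAlg(\Sp)}(\Sph[M_1],\, S') \simeq \Map_{\Sp^{\cn}}(M_1,\, \Gm(S')), \qquad \Map_{\dpring}(\ZZ[M_1],\, B) \simeq \Hom_{\mathrm{Ab}}(M_1,\, \Gmdp{B}).
\]
\Cref{thm:strict_units_delta_units} then identifies these: $\Gm(S')$ is discrete with $\pi_0$ giving an isomorphism to $\Gmdp{B}$, so both mapping spaces are canonically $\Hom_{\mathrm{Ab}}(M_1, \Gmdp{B})$.

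The final step is to check that the composite identification in fact agrees with the map induced by the functor $\pi_0$ of \Cref{prop:Moore_delta_ring}. This boils down to verifying that the $\widehat{\delta}$-structure on $\pi_0 S$ from \Cref{prop:Moore_delta_ring} restricts to the Witt vector Frobenius on $W(\kappa_1)$ and to the Frobenius $[m] \mapsto [pm]$ on $\ZZ[M_1]$; this follows from \Cref{Frobenius_group_algebra} together with the uniqueness of the $\delta_p$-structure on $W(\kappa_1)$ (\Cref{exm:wittunit}). With \Cref{thm:strict_units_delta_units} already in hand, the remaining work is essentially bookkeeping around universal properties, and the only mildly subtle point --- compatibility of the Frobenii on the two tensor factors --- is forced by the perfectness of $\kappa_1$.
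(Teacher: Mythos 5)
Your proposal is correct and follows essentially the same route as the paper: the paper splits the mapping spaces via the fiber sequence over $\Map(\Sph_{W(\kappa)},-)$ rather than literally invoking the coproduct decomposition, but the inputs are identical --- the universal property of spherical Witt vectors for the Witt factor and \Cref{thm:strict_units_delta_units} (through the group-algebra adjunctions of \Cref{rem:augmented} and \Cref{rem:delta_p_right_adjoint}) for the group-ring factor. Your closing compatibility check, that the $\widehat{\delta}$-structure of \Cref{prop:Moore_delta_ring} restricts to the expected Frobenii on the two factors, is the commutativity of the paper's displayed diagram, which the paper leaves implicit.
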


\begin{proof}
Consider two such objects corresponding to $(\kappa,M)$ and $(\kappa',M')$.  We have a map of fiber sequences of spaces
\[
\begin{tikzcd}
     \Map_{\Sph_{W(\kappa)}}(\Sph_{W(\kappa)}[M]^{\wedge}_p,\Sph_{W(\kappa')}[M']^{\wedge}_p)\arrow[r]\arrow[d]&\Map^{\widehat{\delta}}_{W(\kappa)/}(W(\kappa)[M]^{\wedge}_p, W(\kappa')[M']^{\wedge}_p)\arrow[d]\\
     \Map_{\calg}(\Sph_{W(\kappa)}[M]^{\wedge}_p,\Sph_{W(\kappa')}[M']^{\wedge}_p) \arrow[r]\arrow[d] &      \Map^{\widehat{\delta}}(W(\kappa)[M]^{\wedge}_p, W(\kappa')[M']^{\wedge}_p)\arrow[d] \\
     \Map_{\calg}(\Sph_{W(\kappa)},\Sph_{W(\kappa')}[M']^{\wedge}_p)\arrow[r]
      &     \Map^{\widehat{\delta}}(W(\kappa), W(\kappa')[M']^{\wedge}_p).
\end{tikzcd}
\]
The bottom horizontal map is an isomorphism because even the composite with the inclusion into $\Map(W(\kappa),W(\kappa')[M']^{\wedge}_p)$ is an isomorphism, by the universal property of spherical Witt vectors \cite[Example 5.2.7]{Lurie_Ell2}.  But the top horizontal map is also an isomorphism by considering $\Z$-module maps from $M$ into the equivalence of \Cref{thm:strict_units_delta_units}.  Therefore, the middle map is an equivalence, and the desired fully faithfulness follows. 

\end{proof}

\subsection{Group algebras over the sphere}\label{sub:int}
Over the sphere spectrum, we can describe the mapping spaces between group algebras in a particularly simple way. 

\begin{prop}\label{prop:spherestatic}
 Let $M$ be a finitely generated abelian group. Then $\Sph[M]$ is $\Gm$-static, i.e. 
 \[
 \Gm(\Sph[M]) \to \Gm(\ZZ[M])
 \]
 is an inclusion of connective $\ZZ$-module spectra.  Consequently, the same holds for $\Gmred(\Sph[M])$.
\end{prop}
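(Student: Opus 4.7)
The plan is to apply arithmetic fracture. Both $\Sph[M]$ and $\Z[M]$ sit in arithmetic fracture squares $X \simeq X_\Q \times_{(\prod_p X^\wedge_p)_\Q} \prod_p X^\wedge_p$, and the natural map $\Sph[M] \to \Z[M]$ lifts to a natural transformation between these squares. Since $\Gm$ preserves limits, applying it produces a map of pullback squares of connective $\Z$-module spectra. Because $(-1)$-truncated maps are closed under pullback, it suffices to verify this property at each of the remaining three corners of the resulting map.

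The rational corner is immediate, since $\Sph[M]_\Q \simeq \Q[M] \simeq \Z[M]_\Q$. For the product-of-$p$-completions corner, $\Gm$ commutes with products, so we reduce to working prime-by-prime. At each $p$ I invoke \Cref{thm:strict_units_delta_units} with $\kappa = \F_p$: it identifies $\pi_0\Gm(\Sph[M]^\wedge_p)$ with the subgroup $\Gmdp{\Z[M]^\wedge_p} \subseteq (\Z[M]^\wedge_p)^\times$, so the map is an injection on $\pi_0$. Combined with the discreteness of $\Gm(\Sph[M]^\wedge_p)$---which follows from the augmentation splitting $\Gm(\Sph[M]^\wedge_p) \simeq \Gm(\Sph^\wedge_p) \oplus \Gmrp{\Sph^\wedge_p}{M}$, the rigidity results of Sections~\ref{sec:rig}--\ref{sec:free} (giving discreteness of the reduced summand), and the known discreteness of $\Gm(\Sph^\wedge_p)$ from \cite{CarmeliStrict}---each factor is $(-1)$-truncated, and so is the product.

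The delicate corner is the rationalized product. My plan is to show the stronger statement that the underlying map $(\prod_p \Sph[M]^\wedge_p)_\Q \to (\prod_p \Z[M]^\wedge_p)_\Q$ is already an equivalence of commutative ring spectra, from which the $\Gm$-statement is automatic. Both sides visibly have the same $\pi_0$ (the $\Q$-linearization of $\prod_p \Z[M]^\wedge_p$), so the task reduces to showing that the source has no higher homotopy. The key input is that $\Sph[M] \otimes \Q \simeq H(\Q[M])$ is discrete, whence $\pi_n\Sph[M]$ is torsion for every $n \geq 1$. For finitely generated $M$, working with the splitting $M \simeq \Z^r \oplus M_{\mathrm{tors}}$, one verifies that at each fixed degree $n \geq 1$ only finitely many primes contribute to $\pi_n \Sph[M]$, with each $p$-part uniformly bounded. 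Consequently $\prod_p \pi_n \Sph[M]^\wedge_p$ is itself a torsion abelian group, and its rationalization vanishes.

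The main obstacle will be carrying out this last step rigorously, in particular controlling the possible discrepancy between derived and classical $p$-completion of the potentially infinitely generated homotopy groups of $\Sph[M]$. Once the proposition is established for $\Gm$, the statement for $\Gmred(\Sph[M])$ is immediate, since it is a direct summand of $\Gm(\Sph[M])$ via the canonical augmentation.
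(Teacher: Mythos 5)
Your proof is correct and follows essentially the same route as the paper: arithmetic fracture, \Cref{thm:strict_units_delta_units} (with $\kappa=\F_p$) at each prime, and discreteness at the two rational corners. The only differences are cosmetic — the paper invokes closure of static rings under limits preserved by $\pi_0$ (\Cref{prop:static_limits_rings}) and simply notes that $(\prod_p\Sph[M]^\wedge_p)\otimes\Q$ is a discrete ring, whereas you compare the two fracture squares directly and verify that the rationalized-product corner map is an equivalence, a slightly stronger statement than needed whose torsion/boundedness justification is nonetheless sound.
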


\begin{proof}
Since $\G_m$ is representable, the collection of $\G_m$-static connective commutative ring spectra are closed under limits which are preserved by $\pi_0$ by \Cref{prop:static_limits_rings}.  
By the arithmetic fracture square
\[
\begin{tikzcd}
\Sph[M] \arrow[r]\arrow[d] & \prod_p \Sph[M]^{\wedge}_p \arrow[d]\\
\Sph[M]\otimes \Q \arrow[r] & (\prod_p \Sph[M]^{\wedge}_p) \otimes \Q,
\end{tikzcd}
\]
it suffices to show that 
$\Sph[M]^{\wedge}_p$ is $\G_m$-static for all $p$ and that $\Sph[M]\otimes \Q$ and $(\prod_p \Sph[M]^{\wedge}_p) \otimes \Q$ are $\G_m$-static. The latter two rings are discrete and hence static, and the staticity of $\Sph[M]^{\wedge}_p$ is \Cref{thm:strict_units_delta_units}.   The statement about $\Gmred(\Sph[M])$ follows immediately, as $\Gmred$ is a retract of $\Gm$.
\end{proof}




To prove the rigidity of the sphere in general, it remains to establish the following:

\begin{prop}\label{prop:pi0im}
Let $M$ be a finitely generated abelian group. The image of the inclusion $\Gmr{\Sph}{M} \to \Gmr{\ZZ}{M}$ is contained in the image of the map $\Theta_M \colon M\to \Gmr{\ZZ}{M}$. 
\end{prop}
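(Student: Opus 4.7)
The plan is to leverage the $\widehat{\delta}$-ring structure on $\pi_0$ of commutative ring Moore spectra from \Cref{prop:Moore_delta_ring}, together with the algebraic rigidity statement \Cref{thm:delta_gp_alg_ff}, to reduce the claim to an algebraic fact about $\widehat{\delta}$-ring maps of group algebras. First I note that $\Sph[\ZZ]$ and $\Sph[M]$ are commutative ring Moore spectra, since $\Sph[M]\otimes \ZZ \simeq \ZZ[M]$ is discrete. Consequently, by \Cref{prop:Moore_delta_ring}, $\pi_0\Sph[M] = \ZZ[M]$ acquires a natural $\widehat{\delta}$-ring structure, and $\pi_0$ lifts to a functor from Moore spectra to $\dhring$.

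The key intermediate step is to identify this $\widehat{\delta}$-ring structure on $\ZZ[M]$ with the one from \Cref{cnstr:groupalgdelta}. For this it suffices, for each prime $p$, to compare the induced lifts of Frobenius on $\ZZ[M]^\wedge_p$. Applying \Cref{Frobenius_group_algebra} to the base ring $R = \Sph_p$, whose Frobenius is the identity by \cite[Example IV.1.2(ii)]{NS}, we obtain $\varphi_{\Sph[M]^\wedge_p} \simeq [p]_*$, which on $\pi_0$ sends $[m]\mapsto [pm]$. This agrees with the lift of Frobenius specified in \Cref{cnstr:groupalgdelta}, so the two $\widehat{\delta}$-structures on $\ZZ[M]$ coincide.

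Given this, let $u\in \Omega^\infty \Gmr{\Sph}{M}$. By \Cref{rem:augmented}, $u$ corresponds to an augmented map $\Sph[\ZZ]\to \Sph[M]$ of commutative $\Sph$-algebras. Applying the functor $\pi_0\colon \CAlg^{\mathrm{M}}\to \dhring$ of \Cref{prop:Moore_delta_ring} produces an augmented $\widehat{\delta}$-ring map $\ZZ[\ZZ]\to \ZZ[M]$. By \Cref{thm:delta_gp_alg_ff}, every such map is induced by a (unique) group homomorphism $\ZZ\to M$, i.e.\ an element $m\in M$. Unwinding the definitions, the underlying ring map sends $t\mapsto [m]$, so the image of $u$ in $\Gmr{\ZZ}{M}$ is precisely the tautological unit $\Theta_M(m)$, as required.

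The main obstacle is the comparison of the two $\widehat{\delta}$-ring structures on $\ZZ[M]$ described above; once that is settled, the conclusion reduces immediately to the algebraic rigidity statement \Cref{thm:delta_gp_alg_ff} proved in \S\ref{sec:delta}.
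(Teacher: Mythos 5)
Your proof is correct and follows essentially the same route as the paper: pass to $\pi_0$ via the $\widehat{\delta}$-ring structure on commutative ring Moore spectra (\Cref{prop:Moore_delta_ring}) and invoke the algebraic rigidity statement \Cref{thm:delta_gp_alg_ff}. You additionally spell out, via \Cref{Frobenius_group_algebra}, the identification of the induced $\widehat{\delta}$-structure on $\ZZ[M]$ with the one from \Cref{cnstr:groupalgdelta} — a step the paper leaves implicit but which is indeed needed to apply \Cref{thm:delta_gp_alg_ff}.
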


\begin{proof}
Since $\pi_0\Sph[t^{\pm 1}]\simeq \ZZ[t^{\pm 1}]$ as $\hat{\delta}$-rings, the image of $\Gmr{\Sph}{M}$ is contained in the reduced rank $1$ units, i.e. in $\Gmdhat{\ZZ[M]}\subseteq \Gm(\ZZ[M])$. Hence, the result follows from  \Cref{thm:delta_gp_alg_ff}.
\end{proof}





We are finally ready to prove our main theorem.  
\begin{thm} \label{strict_units_group_algebra_S}
Let $M$ be a finitely generated abelian group. Then the natural map
\[
\Theta: M \to \G_m(\Sph[M])
\]
is an isomorphism.  
\end{thm}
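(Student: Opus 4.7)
The plan is to assemble the theorem from two previously established propositions, after a preliminary reduction. First I would invoke the computation $\G_m(\Sph) = 0$ from \cite{CarmeliStrict} together with the augmentation splitting $\G_m(\Sph[M]) \simeq \G_m(\Sph) \oplus \Gmred(\Sph[M])$ to identify the map $\Theta$ in the theorem with the evaluation $\Theta_M(\Sph)\colon M \to \Gmred(\Sph[M])$ of the natural transformation in \Cref{cnstr:theta} (using that $\ct{M}(\Sph) = M$, since $\Sph$ is connected). So the theorem reduces to showing that this map is an isomorphism in $\Mod_\Z^{\cn}$.

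Next, I would exploit the naturality of $\Theta_M$ in the base ring along $\Sph \to \Z$, producing a commutative triangle of connective $\Z$-modules
\[
\begin{tikzcd}[column sep=small]
& M \arrow[dl, "\Theta_M(\Sph)"'] \arrow[dr, "\Theta_M(\Z)"] & \\
\Gmred(\Sph[M]) \arrow[rr] & & \Gmred(\Z[M]).
\end{tikzcd}
\]
By \Cref{prop:spherestatic}, the bottom horizontal map is an inclusion of connected components, and since its target is a discrete abelian group (sitting inside $\Z[M]^{\times}$), this exhibits $\Gmred(\Sph[M])$ as a \emph{discrete} subgroup of $\Gmred(\Z[M])$. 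Proving the theorem is thus equivalent to establishing the equality of subgroups $\Gmred(\Sph[M]) = \Theta_M(\Z)(M)$ inside $\Gmred(\Z[M])$.

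The containment $\Theta_M(\Z)(M) \subseteq \Gmred(\Sph[M])$ is immediate from commutativity of the triangle, and the reverse containment $\Gmred(\Sph[M]) \subseteq \Theta_M(\Z)(M)$ is exactly the content of \Cref{prop:pi0im}. Together these give surjectivity of $\Theta_M(\Sph)$; injectivity is automatic because the classical inclusion $\Theta_M(\Z)\colon M \hookrightarrow \Z[M]^{\times}$ factors through $\Theta_M(\Sph)$.

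The substantive difficulty has already been absorbed into the earlier propositions: \Cref{prop:spherestatic} reduces via an arithmetic fracture square to the $p$-complete staticity statement coming from \Cref{thm:strict_units_delta_units}, which in turn draws on the $p$-complete rigidity results of Sections~\ref{sec:rig} and~\ref{sec:free}; and \Cref{prop:pi0im} reduces via the $\widehat\delta$-ring structure on $\pi_0(\Sph[M])$ to the purely algebraic computation \Cref{thm:delta_gp_alg_ff} of Section~\ref{sec:delta}. Given these inputs, the final argument is essentially a sandwich of two inclusions inside a discrete abelian group, and so is formal.
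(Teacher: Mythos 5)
Your proposal is correct and follows essentially the same route as the paper: reduce via $\G_m(\Sph)\simeq 0$ to showing $\Sph$ is $M$-rigid, then combine \Cref{prop:spherestatic} (staticity) with \Cref{prop:pi0im} (the image lands in the tautological units). The only difference is that you unfold the formal "sandwich" argument explicitly, whereas the paper packages it as an appeal to \Cref{prop:static_rigid}(2).
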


\begin{proof}
Since by \cite[Theorem B, Remark 1.3]{CarmeliStrict} we have $\G_m(\Sph)\simeq 0$, it suffices to show that $\Sph$ is $M$-rigid. By \Cref{prop:spherestatic}, $\Sph$ is $M$-static, and by \Cref{prop:pi0im}, the map $\Gmr{\Sph}{M}\to \Gmr{\pi_0\Sph}{M}$ lands in the image of $\Theta$. Hence, the result follows from \Cref{prop:static_rigid}(2).  
\end{proof}

\begin{cor}
The spherical group ring functor $\Sph[-]: \Ab^{\fg} \to \CAlg$
is fully faithful.
\end{cor}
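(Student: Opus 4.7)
The plan is to deduce this corollary directly from \Cref{strict_units_group_algebra_S} by invoking the fundamental adjunction $\Sph[-] \dashv \G_m(-)$ between $\Mod_\Z^{\cn}$ and $\CAlg(\Sp)$, which is built into \Cref{defn:strictunit} (cf.\ \cite[Cons.\ 1.6.10]{Lurie_Ell2}). For any finitely generated abelian group $A$ (viewed as a discrete connective $\Z$-module) and any $R \in \CAlg(\Sp)$, this adjunction provides a natural equivalence
\[
\Map_{\CAlg(\Sp)}(\Sph[A], R) \simeq \Map_{\Mod_\Z^{\cn}}(A, \G_m(R)).
\]

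Specializing to $R = \Sph[B]$ and applying \Cref{strict_units_group_algebra_S} to identify $\G_m(\Sph[B]) \simeq B$, the right-hand side becomes $\Map_{\Mod_\Z^{\cn}}(A, B) \simeq \Hom_{\Ab}(A,B)$, which is discrete because $B$ is. To finish, I would trace the unit of the adjunction to confirm that the resulting bijection coincides with the map $\Hom_{\Ab}(A,B) \to \Map_{\CAlg(\Sp)}(\Sph[A],\Sph[B])$ induced by the functor $\Sph[-]$: a homomorphism $\varphi \in \Hom_{\Ab}(A,B)$ corresponds to the composite $A \xrightarrow{\varphi} B \xrightarrow{\Theta} \G_m(\Sph[B])$, which under the adjunction becomes precisely the ring map $\Sph[\varphi]$. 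Since all the substantive work is already contained in \Cref{strict_units_group_algebra_S}, no serious obstacle remains; the corollary is a purely formal unwinding of the definitions.
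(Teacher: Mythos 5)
Your proposal is correct and is essentially the paper's own argument: the paper likewise deduces the corollary by taking $\Z$-module maps out of $N$ into the equivalence $M \simeq \G_m(\Sph[M])$ of \Cref{strict_units_group_algebra_S}, which is exactly the adjunction $\Sph[-] \dashv \G_m(-)$ you invoke. The only detail worth making explicit is that $\Map_{\Mod_\Z^{\cn}}(A,B)$ is discrete because $\Ext_{\Z}^{-i}(A,B)=0$ for $i>0$, but this is immediate.
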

\begin{proof}
We want to see that for finitely generated abelian groups $M$ and $N$, the natural map
\[
\Hom_{\Ab^{\fg}}(N, M) \to \Hom_{\CAlg}(\Sph[N],\Sph[M])
\]
is an isomorphism.  This follows by taking $\Z$-module maps from $N$ to the equivalence of \Cref{strict_units_group_algebra_S}.
\end{proof}

\bibliographystyle{alpha}
\bibliography{grpalg}

\end{document}